\theoremstyle{plain}
\newtheorem{thm}{Theorem}[section]
\newtheorem{pro}[thm]{Proposition}
\newtheorem{lem}[thm]{Lemma}
\newtheorem{cla}[thm]{Claim}
\newtheorem{cor}[thm]{Corollary}
\newtheorem{con}[thm]{Conjecture}
\newtheorem*{bSemCon}{B-Semiampleness Conjecture}
\newtheorem{ass}[thm]{Assumption}
\newtheorem{thmA}{Theorem}
\theoremstyle{definition}
\newtheorem{dfn}[thm]{Definition}
\newtheorem{nt}[thm]{Notation}
\newtheorem{rem}[thm]{Remark}
\theoremstyle{remark}
\newtheorem{const}[thm]{Construction}
\newcommand{\C}{\mathbb{C}}
\newcommand{\R}{\mathbb{R}}
\newcommand{\Q}{\mathbb{Q}}
\newcommand{\OO}{\mathcal{O}}
\newcommand{\red}{\mathrm{red}}
\DeclareMathOperator{\rk}{rk}
\DeclareMathOperator{\Sing}{Sing}
\DeclareMathOperator{\Exc}{Exc}
\DeclareMathOperator{\mult}{mult}
\DeclareMathOperator{\Supp}{Supp}
\DeclareMathOperator{\Hilb}{Hilb}
\DeclareMathOperator{\Emb}{Emb}
\DeclareMathOperator{\Univ}{Univ}
\DeclareMathOperator{\GL}{GL}
\DeclareMathOperator{\Bir}{Bir}
\DeclareMathOperator{\ddiv}{div}
\DeclareMathOperator{\sB}{\mathbf{B}}
\begin{document}
\title[On the moduli part]{On the restriction of the moduli part to a reduced divisor}

\author{Enrica Floris}
\address{Universit\'e de Poitiers, Laboratoire de Math\'ematiques et Applications,\linebreak UMR~CNRS 7348, T\'el\'eport 2, Boulevard Marie et Pierre Curie, BP 30179, 86962 Futuroscope Chasseneuil Cedex, France}
\email{enrica.floris@univ-poitiers.fr}
\date{\today}

\thanks{This project started during the collaboration with V. Lazi\'c for \cite{FL19}.  
I am very grateful to Vlad for all the fruitful preliminary discussions around Theorem \ref{thm:step2} without which this paper would not have been possible.
I would like to thank J. Koll\'ar for pointing out some mistakes in a previous version of this work, and A. Petracci for many useful discussions on the topic.
I am very grateful to G. Bini, Y. Brunebarbe, P. Cascini,  G. Pacienza, A. Sarti, R. Svaldi and J. Witaszek for the useful discussions. I was supported by the ANR project "FIBALGA" ANR-18-CE40-0003 and the PEPS JCJC BFC 210024.}

\begin{abstract}
Let $f\colon(X,\Delta)\to Y$ be a fibration such that $K_X+\Delta$ is torsion along the fibres of $f$.
 Assume that  $Y$ has dimension 2, or that $Y$ has dimension 3 and the fibres have dimension at most 3.
 Then the restriction of the moduli part to its augmented base locus is semiample.
\end{abstract}

\maketitle
\setcounter{tocdepth}{1}
\tableofcontents

\section{Introduction}

In this paper we study fibrations $f\colon (X,\Delta)\to Y$ such that $K_X+\Delta$ is the pullback of a $\mathbb Q$-Cartier divisor $D$ on $Y$.
Those arise naturally, as
the abundance conjecture predicts that every log canonical pair is birational to either a Mori fibre space or a pair $(X,\Delta)$ with $K_X+\Delta$ semiample.
The induced fibration $f\colon X\to Y$ is such that $K_X+\Delta\sim_{\mathbb Q}f^*D$
for an ample $\mathbb Q$-Cartier divisor $D$ on $Y$.
The canonical bundle formula is a way of writing $D$ as the sum of three divisors: the canonical divisor of $Y$, a divisor $B_Y$ called discriminant defined in terms of the singularities of the fibration, and a nef (on a birational model of $Y$) divisor $M_Y$ called moduli part or moduli divisor, describing the variation in moduli of the fibres.
For example, by \cite[Theorem 3.3, 3.5]{Amb05a} if the moduli part is numerically zero and $(X,\Delta)$ is klt, then the fibration is essentially a product.

 The theory of the canonical bundle formula has its roots in the work by Kodaira and Ueno on elliptic surfaces. It has been developed and generalised in \cite{Kaw81, Amb04, Amb05a, FM00, Kol07}.

The idea of considering divisors of the form $K_Y+B+M$ where $K_Y$ is the canonical divisor, $(Y,B)$ satisfies certain regularity conditions and $M$ is nef on a higher model of $Y$ is central in the works by Birkar--Zhang, Birkar who consider generalised polarised pairs instead of pairs.

The most important conjecture on the canonical bundle formula has been formulated in \cite[Conjecture 7.13]{PS09}:
\begin{bSemCon}
Let $(X,\Delta)$ be a pair and let $f\colon (X,\Delta)\to Y$ be an lc-trivial fibration to an $n$-dimensional variety $Y$, where the divisor $\Delta$ is effective over the generic point of $Y$. If $Y$ is an Ambro model of $f$, then the moduli divisor $M_Y$ is semiample.
\end{bSemCon}

Several special cases of the conjecture are proved, mainly when the dimension of the fibre is at most two by the classical work of Kodaira and by \cite{PS09, Fuj03, Fil18} 
and if the moduli part is numerically zero by \cite{Amb05a, Flo14}.
For the klt case, if the moduli part is torsion, then by \cite[Theorem 3.3]{Amb05a} the variation of $f$ is zero.

In this paper we consider a connected divisor $\mathcal T=\cup T$ and assume the B-Semiampleness Conjecture in lower dimension.
In \cite{FL19} we proved that the divisor $M_Y\vert_T$ is semiample for every $T$.
In this work we study the gluing of the global sections of $mM_Y\vert_T$ to obtain global sections of $mM_Y\vert_{\mathcal T}$.

The main result of this paper is the following:
\begin{thmA}\label{cor:surfaces}
Let $(X,\Delta)$ be a pair and let $f\colon (X,\Delta)\to Y$ be a klt-trivial fibration to a surface $Y$, where the divisor $\Delta$ is effective over the generic point of $Y$. Assume that $Y$ is an Ambro model for $f$ and that $M_Y$ is big.

Then there is a birational base change $Y'\to Y$ such that
the restriction of $M_{Y'}$ to the augmented base locus is torsion.
\end{thmA}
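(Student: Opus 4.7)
My plan follows the two-step philosophy suggested by the paper's title: restriction of the moduli part to individual components of a reduced divisor, followed by a gluing across the components.

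\textbf{Step 1: good birational model.} First I would pass to a birational base change $\mu\colon Y'\to Y$ for which $\sB_+(M_{Y'})$ has simple normal crossing support $\mathcal T=\sum T_i$. Since $M_{Y'}$ remains big and $Y'$ is a surface, $\sB_+(M_{Y'})$ is a proper closed subset, in particular at most one-dimensional; a sufficiently fine log resolution makes it SNC while keeping $Y'$ an Ambro model of the induced klt-trivial fibration. The divisorial part of $\sB_+(M_{Y'})$ is what controls the statement, as points can always be absorbed into the augmented base locus once a semiample trivialisation is built.

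\textbf{Step 2: torsion on each component.} On each irreducible curve $T_i\subset \mathcal T$, the restriction theorem proved in \cite{FL19} gives that $M_{Y'}|_{T_i}$ is semiample; this step is unconditional because the restricted fibration has a one-dimensional base, where the B-Semiampleness Conjecture is known by the classical work of Kodaira and by \cite{PS09,Fuj03,Fil18}. On the other hand, since $T_i\subset \sB_+(M_{Y'})$ and $M_{Y'}$ is big, the intersection number $M_{Y'}\cdot T_i$ is non-positive by the standard characterisation of the augmented base locus of a big class. Combined with semiampleness this forces $M_{Y'}|_{T_i}$ to have degree zero, hence to be torsion on $T_i$.

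\textbf{Step 3: gluing.} Torsion on each $T_i$ does not immediately yield torsion on $\mathcal T$, because the trivialisations $mM_{Y'}|_{T_i}\cong \mathcal O_{T_i}$ must be made compatible at every node $T_i\cap T_j$. This is precisely the content of the gluing result developed in the body of the paper (the Theorem~\ref{thm:step2} alluded to in the acknowledgements): starting from the semiampleness of $M_{Y'}|_{T_i}$ for every $i$, one produces a semiample $M_{Y'}|_{\mathcal T}$. Once semiampleness on $\mathcal T$ is combined with the degree-zero condition on every component, a sufficiently divisible multiple admits a global section which is base-point free and of degree zero on each $T_i$, hence nowhere vanishing; the associated line bundle on $\mathcal T$ is therefore trivial, giving the desired torsion statement after possibly enlarging $\mathcal T$ to absorb the isolated points of $\sB_+(M_{Y'})$.

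\textbf{Main obstacle.} The genuine difficulty is the gluing in Step~3: the compatibility at a node $p\in T_i\cap T_j$ is a two-dimensional phenomenon depending on how the lc-trivial fibration degenerates over a neighbourhood of $p$ in $Y'$. I expect the resolution to proceed by induction on the dual graph of $\mathcal T$, reducing to local statements at each node and extracting compatibility from a refined analysis of the canonical bundle formula on a small bi-disc around $p$, using again the lower-dimensional B-Semiampleness to control the local trivialisations.
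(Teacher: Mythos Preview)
Your outline follows the paper's structure, but you miss the key simplification that makes the surface case easy, and you misread what Theorem~\ref{thm:step2} actually delivers.

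Since each $T_i$ is a curve with $M_{Y'}|_{T_i}$ torsion, the induced map $\phi_{T_i}\colon T_i\to V_{T_i}$ contracts $T_i$ to a single point $p_{T_i}$. Thus $\bigsqcup V_{T_i}$ is a finite set and the equivalence relation $\mathcal R_{\mathcal L}$ is trivially finite; this is the hypothesis of Theorem~\ref{thm:step2}, and in the surface case it comes for free without invoking Theorem~\ref{thm:step1} or any variation argument. Moreover, because every $T_i$ collapses to a point, each connected component $\mathcal T$ of $\sB_+(M_{Y'})$ is itself a single pseudo-fibre $\mathcal T_{[x]}$, so Theorem~\ref{thm:step2} yields directly that $\mathcal L|_{\mathcal T}$ is torsion. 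Your Step~3 overshoots: the theorem does not first produce semiampleness on $\mathcal T$ and then deduce torsion; it outputs torsion on a pseudo-fibre outright.

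Your ``Main obstacle'' speculation is also off the mark. The proof of Theorem~\ref{thm:step2} is not a local bi-disc analysis near a node. It shows that the obstruction to gluing the trivialisations around a circuit in the dual graph of $\mathcal T$ is the image of a composition of crepant birational maps under the pluricanonical representation of a general fibre $(F,\Delta_F)$, and this image is finite by Theorem~\ref{thm:cbir}. So the compatibility at nodes is governed globally by the moduli of the fibres, not by local degeneration data on the surface.
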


The semiampleness of the moduli part turns out to be deeply related to the variation of the fibres of $f$.
The variation, introduced by Viehweg \cite{Vieh83} is roughly speaking the dimension of the moduli space of fibres of $f$ in the sense of birational geometry (see Definition \ref{def:var} for a precise definition).
The Kodaira dimension of the moduli part is at most the variation of $f$, and conjecturally they coincide.
On the other hand, for a fibration of maximal variation there should be only a finite number of fibres birational to a given one:
\begin{con}\label{con:maxvar}
Let $X$ be a $\mathbb{Q}$-factorial variety.
 Let $f\colon(X,\Delta)\to Y$ be a klt-trivial fibration of maximal variation.
 Then there is an open set $U\subseteq Y$ such that for every $y\in U$ the set
 $$\{z\in U\vert\; (f^{-1}y,\Delta^h\vert_{f^{-1}y})\; \text{is crepant birational to}\; (f^{-1}z,\Delta^h\vert_{f^{-1}z}) \}$$
 is finite, where $\Delta^h$ denotes the horizontal part of $\Delta$.
\end{con}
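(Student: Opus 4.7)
The plan is to pass to a moduli-theoretic reformulation: construct a classifying map from an open subset of $Y$ into a moduli space of klt Calabi--Yau pairs, and exploit maximal variation to obtain generic finiteness. After shrinking $Y$ to a smooth open $U$ over which $f$ is flat and the horizontal components of $\Delta$ meet the fibres transversely, each fibre $(F_y,\Delta^h\vert_{F_y})$ is a klt pair with $K_{F_y}+\Delta^h\vert_{F_y}\sim_{\Q} 0$, since $K_X+\Delta$ pulls back from $Y$ and only the vertical part contributes to the discriminant. Using the $\Q$-factoriality of $X$ and standard crepant birational modifications, one may arrange that this family has uniformly bounded invariants (dimension, volume, coefficient set), fitting into a bounded moduli problem.

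Next, I would invoke the moduli theory of stable klt Calabi--Yau pairs developed in recent years by Birkar, Filipazzi--Hacon, Koll\'ar, Hacon--Xu and others to produce a coarse moduli space $\mathcal{M}$ together with a classifying map $\mu\colon U\to\mathcal{M}$ whose image has dimension equal to the variation of $f$, which by hypothesis equals $\dim Y$. Hence $\mu$ is dominant and generically finite, so after further shrinking $U$ its fibres become finite. Since points in the same fibre of $\mu$ parametrise crepant birational pairs, this yields exactly the finiteness asserted in the conjecture.

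The main obstacle is that the available moduli functors typically classify klt Calabi--Yau pairs up to \emph{isomorphism}, whereas the conjecture concerns the coarser relation of \emph{crepant birational} equivalence (realised by flops or more general crepant transformations). To bridge this, one needs finiteness of crepant birational self-maps within a fixed birational class---a consequence of the Kawamata--Morrison cone conjecture for klt Calabi--Yau pairs, which is known only in special cases. Combining such a finiteness statement with the moduli construction so that fibres of $\mu$ record crepant birational classes rather than mere isomorphism classes is the delicate point where this conjecture sits at the edge of current technology.
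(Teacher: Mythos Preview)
This statement is a \emph{conjecture} in the paper, not a theorem with a proof. The paper does not prove it in general; it only verifies the case of relative dimension at most $2$ (inside the proof of Theorem~\ref{cor:f3}), and by a much more elementary route than you propose: in dimension $1$ two crepant birational pairs are automatically isomorphic, and in dimension $2$ one observes that any two crepant birational klt pairs are minimal models of a common resolution, hence connected by flops, hence isomorphic since there are no flops in surface theory. With crepant birationality upgraded to isomorphism, the conclusion follows from Proposition~\ref{pro:finvar}, which is a direct Hilbert-scheme argument showing finiteness of fibres isomorphic to a fixed general one.

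Your outline is a reasonable heuristic for why the conjecture should hold, but it is not a proof, and you yourself identify the gap correctly: the moduli spaces you want to invoke classify pairs up to isomorphism, not up to crepant birational equivalence, and passing between the two requires control on the set of marked minimal models in a birational class---essentially the Kawamata--Morrison cone conjecture, which is open in the generality you need. So your proposal does not close the problem; it restates why it is hard. Note also that even the moduli-theoretic input you cite (coarse moduli of klt Calabi--Yau pairs with a classifying morphism from the base) is not available in the literature at the level of generality required here, so both the construction of $\mu$ and the identification of its fibres with crepant birational classes are genuine obstacles, not technicalities.
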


Conjecture \ref{con:maxvar} is true for fibrations of relative dimension at most 2.
Using this fact we are able to prove

\begin{thmA}\label{cor:f3}
Let $(X,\Delta)$ be a pair and let $f\colon (X,\Delta)\to Y$ be a klt-trivial fibration to a variety $Y$ 
of dimension 3 and $\dim X\leq \dim Y+3$, where the divisor $\Delta$ is effective over the generic point of $Y$. Assume that $Y$ is an Ambro model for $f$ and that $M_Y$ is big.

Then there is a birational base change $Y'\to Y$ such that
the restriction of $M_{Y'}$ to the augmented base locus is semiample.
\end{thmA}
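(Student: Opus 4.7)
The strategy is to reduce the 3-dimensional statement to Theorem \ref{cor:surfaces} by restricting to the two-dimensional components of the augmented base locus, and then to glue the resulting semiample data along their intersections. After replacing $Y$ by a suitable Ambro model and performing a birational base change, we may assume that $Y$ is smooth, $M_Y$ is nef and big, and that the support of $\sB_+(M_Y)$ together with the relevant boundary data forms a simple normal crossing divisor. Since $M_Y$ is big, $\sB_+(M_Y)$ is a proper closed subset of dimension at most $2$; write $\sB_+(M_Y) = \bigcup_i T_i$ as a union of its irreducible components. The goal is then to construct, for some sufficiently divisible $m$, a compatible family of sections of $mM_Y|_{T_i}$ that patch to global sections of $mM_Y|_{\sB_+(M_Y)}$ defining a morphism.

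For each two-dimensional component $T_i$, restrict the fibration to $f_i\colon X_{T_i} \to T_i$ with $X_{T_i} = f^{-1}(T_i)$. By \cite{FL19}, the divisor $M_Y|_{T_i}$ agrees, up to a base change on $T_i$, with the moduli part of $f_i$, and is semiample provided the B-Semiampleness Conjecture is available in lower dimensions. This is precisely where the dimension hypothesis $\dim X \leq \dim Y + 3$ enters: it guarantees that the auxiliary fibrations used in the reduction have relative dimension at most $2$, so that Conjecture \ref{con:maxvar} is available. When $M_Y|_{T_i}$ is big on $T_i$, Theorem \ref{cor:surfaces} applies and gives, after a further birational base change, that $M_Y|_{T_i}$ is torsion on its own augmented base locus. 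When it is not big, we factor $T_i$ through the Iitaka fibration of $M_Y|_{T_i}$ to reduce to a semiampleness question on a base of dimension at most $1$, which is substantially easier.

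The essential step is the gluing. For $m$ sufficiently divisible, the sections of $mM_Y|_{T_i}$ determine a semiample morphism $\varphi_i\colon T_i \to W_i$. On a pairwise intersection $T_i \cap T_j$, which is either empty or $1$-dimensional, both $\varphi_i|_{T_i \cap T_j}$ and $\varphi_j|_{T_i \cap T_j}$ must define the same semiample structure for $M_Y|_{T_i \cap T_j}$. The torsion property of Theorem \ref{cor:surfaces} on each $\sB_+(M_Y|_{T_i})$, combined with the finiteness given by Conjecture \ref{con:maxvar} for the relevant relative-dimension-$\leq 2$ fibrations, canonically rigidifies these restrictions, so that the sections can be chosen compatibly on all pairwise and triple intersections. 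The main obstacle is precisely this coherent matching: one must not merely identify the $\varphi_i$ on each intersection up to an abstract isomorphism of the targets, but also fix these isomorphisms simultaneously over the nerve of the cover $\{T_i\}$, so that cocycle conditions are satisfied. Once this is accomplished, the compatible sections patch to global sections of $mM_Y|_{\sB_+(M_Y)}$ defining a morphism to projective space, establishing the desired semiampleness.
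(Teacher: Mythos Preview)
Your proposal has a genuine gap at the gluing step, which is precisely where the work of the paper lies.

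First, invoking Theorem~\ref{cor:surfaces} on each surface component $T_i$ does not buy you what you need. Theorem~\ref{cor:surfaces} concerns a klt-trivial fibration \emph{to} a surface and concludes that the moduli part is torsion on its augmented base locus \emph{inside that surface}. Applied to $f_i\colon X_{T_i}\to T_i$, this tells you something about curves in $T_i$, not about how sections on $T_i$ and $T_j$ match along $T_i\cap T_j$. Semiampleness of each $M_Y|_{T_i}$ already comes from \cite{FL19} and does not require Theorem~\ref{cor:surfaces}.

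Second, and more seriously, the sentence ``the torsion property \dots\ combined with the finiteness \dots\ canonically rigidifies these restrictions'' hides the entire content of the argument. There is no canonical choice of section on each $T_i$: for each $i$ you have a morphism $\phi_{T_i}\colon T_i\to V_i$, and a section on $T_i$ is a choice of section on $V_i$. The obstruction to patching is that the fibres of the different $\phi_{T_i}$ may interlock in a complicated way over the intersections $T_i\cap T_j$, and the cocycle condition on the nerve is \emph{not} automatic. The paper resolves this by constructing the equivalence relation $\mathcal R_{\mathcal L}$ on $\sqcup V_i$, proving its finiteness (Theorem~\ref{thm:step1}; this is where Conjecture~\ref{con:maxvar} for relative dimension $\leq 2$ is actually used, via an argument running through higher-codimensional log canonical centres, not just via ``the auxiliary fibrations have relative dimension $\leq 2$''), forming the geometric quotient $Q$ using Koll\'ar's theory \cite[Theorem~9.21]{Kol13}, proving that $\mathcal L$ is torsion on each pseudofibre (Theorem~\ref{thm:step2}, which uses the finiteness of pluricanonical representations), and finally applying the snc-surface semiampleness criterion Theorem~\ref{thm:sup} to pull $\mathcal L$ back from an ample bundle on $Q$. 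Your proposal names none of these mechanisms, and without them the ``coherent matching'' you allude to cannot be carried out.
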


For the proof of Theorem \ref{cor:surfaces} and Theorem \ref{cor:f3}, we embrace the approach developed in \cite{Kol13} and successfully applied in \cite{HX13} to the study of the semiampleness of the log canonical divisor of a 
slc pair (roughly speaking a simple normal crossings divisor in a smooth variety).

By \cite{FL19} we are in the following setting: we have a line bundle $\mathcal L$ on a reduced, non irreducible variety $\mathcal T$ which is semiample on every irreducible component of $\mathcal T$.
We want to prove that $\mathcal L$ is semiample on $\mathcal T$.
The approach consists in translating the  semiampleness of a line bundle into the finiteness of a certain equivalence relation.
For the sake of simplicity, assume that $\mathcal T=T_1\cup T_2$.
Let $\phi_i\colon T_i\to V_i$ be the fibration induced by $\mathcal L$ for $i=1,2$.
We say that $x_1\in V_1$ is equivalent to $x_2\in V_2$ if $\phi_1^{-1}(x_1)\cap\phi_2^{-1}(x_2)\neq\emptyset$ and we take the closure of this equivalence relation.
This is the natural relation to consider. Indeed, if $\mathcal L\vert_{T_1\cup T_2}$ is semiample and $\phi\colon T_1\cup T_2\to V$ is the induced fibration, 
then $\phi_1^{-1}(x_1)$ and $\phi_2^{-1}(x_2)$ are sent to the same point by $\phi$.

By considering the union of the fibres of $\phi_1$ and $\phi_2$ which intersect, we construct subsets of $T_1\cup T_2$ called pseudofibres.

The reason why we cannot fully apply Koll\'ar's gluing theory is that many of the required regularity hypotheses are not satisfied in our setup.

\medskip

We now describe the structure of the paper as well as the techniques used in every section.
Section \ref{Prel} contains some preliminary results as well as some refinements of results on the canonical bundle formula. 
Section \ref{def:snc} is a semiampleness criterion for a line bundle on a simple normal crossings surface. In section \ref{Prof} we recall the basic notions on equivalence 
relations and prove some technical lemmas necessary for the study of the equivalence relation $R_{\mathcal L}$, which is done in section \ref{Glu}.
In section \ref{Graph} we gather some results from \cite{Stallings} and we apply them in section \ref{Trivial} where we develop a criterion for the triviality a line bundle on a simple normal crossing variety. Section \ref{Restr} uses techniques from the minimal model program and is a study of the restriction of the moduli part to higher codimensional log canonical centres.

In section \ref{thm1} we prove that, assuming the B-Semiampleness Conjecture in dimension $n-1$ and Conjecture \ref{con:maxvar} in dimension  $d-1$, 
the equivalence relation is finite for $\mathcal L=\OO(mM_Y)$ for $Y$ of dimension $n$ and $X$ of dimension $d+n$. 
In section \ref{thm2} we prove that the restriction of  $\mathcal L$ to a simple normal crossings pseudofibre is torsion.

The last section contains the proofs of Theorems \ref{cor:surfaces} and \ref{cor:f3}.

\section{Preliminary results}\label{Prel}
We work over the complex numbers. For the notions on the minimal model program and singularities of pairs we refer to \cite{KM92}. We will use without defining them the notions of log canonical, klt and dlt singularities, as well as of centre of a log canonical singularity. We refer to \cite{Kol97} and \cite{KM92} for a presentation of these concepts and to \cite[Definition 2.5]{FL19} for a summary of all the required notions in our setup.

We recall that a \textit{pair} $(X,\Delta)$ is the data of a normal projective variety $X$ and a $\mathbb Q$-Weil divisor $\Delta$ such that $K_X+\Delta$ is $\mathbb Q$-Cartier. In this paper we do not require $\Delta$ to be an effective divisor.

We say that a
closed subvariety $S$ of $X$ is a \textit{minimal log canonical centre of $(X,\Delta)$ over $Z$}
if $S$ is a minimal log canonical centre of $(X,\Delta)$ (with respect to inclusion)
which dominates $Z$.

\subsection{Semistable morphisms}
In this paragraph we recall the definition of semistable morphisms and the statement of the semistable reduction theorem, proved in \cite{ALT18}, which will be crucial in the proof of our main results, Theorem \ref{thm:step1} and \ref{thm:step2}.
We refer to \cite[section II.1.1]{Ogus} for the definition of log scheme and morphism of log schemes, and to \cite[Definition 15.52.1, Proposition 15.52.3]{Stacks} for the definition of quasi excellent rings and the first properties. 

\begin{dfn}[4.2.1 \cite{ALT18}]
A morphism of log schemes $f\colon X\to B$, $f^{\sharp}\colon f^{-1}\mathcal O_Y\to\mathcal O_X$ is semistable if the following conditions hold:
\begin{enumerate}\item 
$X$ and $B$ are regular and the log structures are given by normal crossings divisors
$Z\subseteq X$ and $W\subseteq B$.
\item \'Etale-locally at any $x\in X$ with $b=f(x)$ there exist regular parameters
$t_1,\ldots ,t_n, t'_1,\ldots,t'_{n'}\in\mathcal O_{X,x}$ and $\pi_1,\ldots ,\pi_l, \pi'_1,\ldots,\pi'_{l'}\in\mathcal O_{B,b}$ such that $Z=V(t_1\cdot\ldots\cdot t_n)$ at $x$, $W=V(\pi_1\cdot\ldots \cdot\pi_l)$ at $b$, 
$f^{\sharp}(\pi_i) =t_{n_i+1}\ldots t_{n_{i+1}}$ for 
$0 =n_1< n_2<\ldots < n_{l+1}\leq n$.
\item $f$ is log smooth.
\end{enumerate} In characteristic zero, the third condition can be replaced by the condition that 
$f^{\sharp}(\pi'_j) =t'_j$ for $1\leq j\leq l'$.
\end{dfn}

The following semistable reduction theorem is proved in \cite{ALT18} and uses a finer toroidalization proved in \cite{ATW20}.

\begin{thm}[Theorem 4.7 \cite{ALT18}]\label{thm:alt}
Assume that $X\to B$ is a dominant morphism of finite type between quasi excellent integral schemes of characteristic zero and $Z\subseteq X$ is a closed subset.  Then there exists a stack-theoretic modification $b\colon B'\to B$, a projective modification $a\colon X'\to (X\times_{B}B')^{pr}$, and divisors $W'\subseteq B'$, $Z'\subseteq X'$such that:
\begin{enumerate}
\item $a^{-1}Z\cup f^{\prime -1}W'\subseteq Z'$ and the morphism $f'\colon (X', Z')\to(B', W')$ is semistable. In particular, $X', B'$ are regular and $Z', W'$ are snc.
\item  If a regular open $B_0\subseteq B$ is such that $X_0=X\times_B B_0\to B_0$ is smooth and $Z_0=Z\times_B B_0\to B_0$ is a relative divisor over $B_0$ 
with normal crossings (in other words, $f\colon (X_0, Z_0)\to(B_0, W_0)$ is semistable), then $a$ and $b$ are isomorphisms over $X_0$ and $B_0$, respectively.
\end{enumerate}

\end{thm}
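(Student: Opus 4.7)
The plan is to follow the strategy of Abramovich--Temkin--W{\l}odarczyk, splitting the argument into two conceptually distinct steps: first, reduce to the case where $f$ is a toroidal morphism, using the strong functorial toroidalization theorem of \cite{ATW20}; second, perform combinatorial semistable reduction on a toroidal morphism by refining the associated cone complexes.

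For the first step I would invoke the toroidalization theorem to produce, after possibly enlarging $Z$, a stack-theoretic modification $b\colon B'\to B$ and a projective modification $a\colon X'\to(X\times_B B')^{pr}$, together with snc divisors $Z'\subseteq X'$ and $W'\subseteq B'$ containing $a^{-1}Z\cup f^{\prime -1}W'$, such that $f'\colon(X',Z')\to(B',W')$ is a toroidal morphism of toroidal embeddings. At this stage $X'$ and $B'$ are regular and $Z'$, $W'$ are snc, so conditions (1) and (3) are automatic; one is left with the problem of arranging the local monomial form of condition (2).

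For the second step the problem is purely combinatorial. A toroidal morphism is locally described by a morphism of rational polyhedral cone complexes $\Sigma_{X'}\to\Sigma_{B'}$, and the semistable form $f^{\sharp}(\pi_i)=t_{n_i+1}\cdots t_{n_{i+1}}$ translates into the requirement that on each cone of $\Sigma_{X'}$ the map is a product of monoid morphisms $\N^{k}\to\N$, $(a_1,\ldots,a_k)\mapsto a_1+\cdots+a_k$. One achieves this by first subdividing $\Sigma_{B'}$ so that the images of the cones of $\Sigma_{X'}$ become faces (this corresponds to a further base change, possibly stack-theoretic in order not to create singularities), and then subdividing $\Sigma_{X'}$ to put each cone into the required product form. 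This is the classical toroidal semistable reduction procedure, and since $B'$ and $X'$ are already regular and snc, it can be performed without reintroducing singularities on the base or horizontal divisor.

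The relative statement (2) follows from the functoriality of the toroidalization in \cite{ATW20}, which leaves untouched any locus where the morphism is already toroidal with the prescribed local form, combined with the observation that no combinatorial subdivision is needed where the cone complex morphism already has the product structure on every cone. The main obstacle is the first step: without the quasi-excellent, characteristic-zero functorial toroidalization of \cite{ATW20}, one cannot simultaneously reduce to toroidal morphisms and preserve the given semistable open locus $B_0\subseteq B$, and earlier proofs of semistable reduction relied on generic smoothness and did not yield the relative conclusion claimed in (2).
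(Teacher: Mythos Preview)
The paper does not prove this statement at all: it is quoted verbatim as \cite[Theorem 4.7]{ALT18} and used as a black box, with only the one-line remark that the proof ``uses a finer toroidalization proved in \cite{ATW20}.'' So there is no proof in the paper to compare your proposal against.

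That said, your outline is essentially the strategy of the original proof in \cite{ALT18}: first apply the functorial toroidalization of \cite{ATW20} to reduce to a toroidal morphism, then run combinatorial (polyhedral) semistable reduction on the cone complexes, with the relative statement (2) coming from functoriality. As a sketch of how the cited result is obtained this is accurate, though of course the actual work lies in the details of the two steps, which are carried out in the cited references rather than here.
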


\begin{rem}\label{rem:restred}
Let $f\colon (X, Z)\to(B, W)$ be a semistable map, and
let $S\subseteq X$ be a stratum of $Z$. Let $C=f(S)$. It follows from the definition that $f\vert_S\colon S\to C$ is semistable.
Moreover, if $f\vert_S\colon S\overset{h}{\longrightarrow} C'\overset{\tau}{\longrightarrow} C$
is the Stein factorisation, then $h$ is semistable.
\end{rem}

\subsection{Groups of crepant birational automorphisms}
In this paragraph we state two results on the group of crepant birational selfmaps of a pair.
The first one is the finiteness of pluricanonical representations \cite[Theorem 4.5]{Gon13}
and the second one is a generalisation to pairs of the finiteness of the group of selfmaps of a manifold of general type.

\begin{dfn}
Let $f_1\colon(X_1, \Delta_1)\to Y$ and $f_2\colon(X_2, \Delta_2)\to Y$ be two fibrations of pairs to the same base $Y$. A birational map $\theta\colon X_1\dasharrow X_2$ is \emph{crepant birational over $Y$} if $a(E,X_1,\Delta_1)=a(E,X_2,\Delta_2)$ for every geometric valuation $E$ over $X_1$ and $X_2$ and we have the commutative diagram
$$
\xymatrix{
X_1\ar[rd]_{f_1} \ar@{-->}[rr]^{\theta}&&X_2\ar[ld]^{f_2}\\
&Y.&
}
$$
The map $\theta$ is \emph{crepant birational} if $Y$ is a point.

The set of all crepant birational maps of a pair $(X,\Delta)$ to itself is a group, denoted by $\Bir^\mathrm{c}(X,\Delta)$. For a positive integer $m$ such that $m(K_X+\Delta)$ is Cartier, every $\sigma\in \Bir^\mathrm{c}(X,\Delta)$ defines an automorphism of $H^0(X,m(K_X+\Delta))$, and hence the \emph{pluricanonical representation}
$$\rho_m\colon \Bir^\mathrm{c}(X,\Delta)\to \GL\big(H^0(X,m(K_X+\Delta))\big).$$
\end{dfn}

\begin{rem}\label{rem:cbir}
If the condition $p^*(K_{X_1}+\Delta_1)=q^*(K_{X_2}+\Delta_2)$ is true for one resolution of the indeterminacy, then it is true for every resolution of indeterminacy. Indeed, let  $(p',q')\colon W'\to X_1\times X_2$ be another resolution of the indeterminacy. Let $(\nu,\mu)\colon\widehat W\to W\times W'$ be a dominating birational model.
Then $\nu^* p^*(K_{X_1}+\Delta_1)=\nu^* q^*(K_{X_2}+\Delta_2)$.
By commutativity, $\nu^* p^*(K_{X_1}+\Delta_1)=\mu^* p^{\prime *}(K_{X_1}+\Delta_1)$ and 
$\nu^* q^*(K_{X_1}+\Delta_1)=\mu^* q^{\prime *}(K_{X_1}+\Delta_1)$. We conclude by pushing forward with $\nu$.
\end{rem}

\begin{thm}\label{thm:cbir}
Let $(X,\Delta)$ be a klt pair such that $K_X+\Delta\sim_\Q0$. Then for every $m$, the image of the pluricanonical representation $\rho_m$ is finite. In particular, there is a positive integer $\ell$ such that the image of $\rho_\ell$ is trivial.
\end{thm}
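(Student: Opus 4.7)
The main finiteness assertion is Gongyo's theorem \cite[Theorem 4.5]{Gon13}; I outline the strategy and then derive the ``in particular'' clause.

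Fix $m$ such that $m(K_X+\Delta)$ is Cartier. Since $K_X+\Delta\sim_\Q 0$, the space $H^0(X,m(K_X+\Delta))$ is either zero, in which case there is nothing to prove, or one-dimensional, generated by a section $s$ with $m(K_X+\Delta)\sim 0$. In the latter case $\rho_m$ is a character $\chi_m\colon\Bir^\mathrm{c}(X,\Delta)\to\C^*$, and it suffices to show that $\chi_m$ has finite image.

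The first step is to show $|\chi_m(\sigma)|=1$. On the klt locus of $(X,\Delta)$, the weighted pluricanonical volume $|s|^{2/m}$ defines a finite positive measure on $X$; finiteness here is precisely the $L^{2/m}$-reformulation of the klt hypothesis. Pulling this measure back through a common log resolution of $\sigma$, the crepant condition together with $\sigma^*s=\chi_m(\sigma)\,s$ shows the pullback equals $|\chi_m(\sigma)|^{2/m}$ times the original measure. Comparing total masses gives $|\chi_m(\sigma)|=1$.

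The harder step is to upgrade ``image in $S^1$'' to ``image finite''. Here one passes to a terminal model and exploits the fact that $\Bir^\mathrm{c}$ acts on an integral lattice provided by Hodge theory (or \'etale cohomology) on a semistable log resolution. This constrains the values of $\chi_m$ to a finitely generated subgroup of $\overline{\Q}^{\,\times}\cap S^1$, which by Kronecker's theorem consists of roots of unity of bounded order. Setting up this integral structure compatibly with $\Bir^\mathrm{c}$ in the presence of the boundary $\Delta$ is the main technical obstacle, and is the content of \cite{Gon13}.

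The ``in particular'' clause is now formal. Pick any $\ell_0$ with $\ell_0(K_X+\Delta)\sim 0$ and let $N$ be the order of the finite image of $\chi_{\ell_0}$. A generator $s$ of $H^0(X,\ell_0(K_X+\Delta))\cong\C$ satisfies $\sigma^*(s^N)=\chi_{\ell_0}(\sigma)^N s^N=s^N$ for every $\sigma$. Since $s^N$ generates $H^0(X,N\ell_0(K_X+\Delta))$, the representation $\rho_\ell$ is trivial for $\ell:=N\ell_0$.
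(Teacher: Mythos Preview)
Your proposal is correct and follows the same approach as the paper: the paper's proof simply cites \cite[Theorem 4.5]{Gon13} for the finiteness and calls the second statement straightforward, which is exactly what you do, with the added bonus of a sketch of Gongyo's strategy and an explicit verification of the ``in particular'' clause via $\ell=N\ell_0$.
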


\begin{proof}
The first statement is \cite[Theorem 4.5]{Gon13}, and then the second statement is straightforward.
\end{proof}

\subsection{Canonical bundle formula}

In this subsection we define lc-trivial fibration and recall several fundamental results. We refer the reader to \cite{FL20} for a survey of the general results on the canonical bundle formula.

\begin{dfn}
Let $(X,\Delta)$ be a pair and let $\pi\colon X'\rightarrow X$ be a log resolution of the pair. 
A morphism $f \colon (X,\Delta) \rightarrow Y$ to a normal projective variety $Y$ is a \emph{klt-trivial}, respectively \emph{lc-trivial}, fibration if $f$ is a surjective morphism with connected fibres, $(X,\Delta)$ has klt, respectively log canonical, singularities over the generic point of $Y$, there exists a $\Q$-Cartier $\Q$-divisor $D$ on $Y$ such that
$$K_X+\Delta\sim_\Q f^*D,$$
and if $f'=f\circ\pi$, then
$$\rk f'_*\OO_X(\lceil K_{X'}-\pi^*(K_X+\Delta)\rceil) = 1,$$
respectively
$$\textstyle \rk f'_*\OO_X\big(\lceil K_{X'}-\pi^*(K_X+\Delta)+\sum_{a(E,X,\Delta)=-1} E\rceil\big) = 1.$$
\end{dfn}

\begin{rem}
This last condition in the previous definition is verified, for instance, if $\Delta$ is effective on the generic fibre, which is mostly the case in this paper. 
\end{rem}

%
%

\begin{dfn}\label{dfn:cbf}
Let $f\colon (X,\Delta)\to Y$ be an lc-trivial fibration, and let $P\subseteq Y$ be a prime divisor with the generic point $\eta_P$. The \emph{log canonical threshold} of $f^*P$ with respect to $(X,\Delta)$ is
$$\gamma_P=\sup\{t\in\R\mid (X,\Delta+tf^*P) \textrm{ is log canonical over } \eta_P\}.$$
The \emph{discriminant} of $f$ is
\begin{equation}\label{discriminant}
B_f=\sum_P(1-\gamma_P)P.
\end{equation}
This is a Weil $\Q$-divisor on $Y$, and it is effective if $\Delta$ is effective. Fix $\varphi\in\C(X)$ and the smallest positive integer $r$ such that $K_X + \Delta +\frac{1}{r}\ddiv\varphi = f^*D$. Then there exists a unique Weil $\Q$-divisor $M_f$, the \emph{moduli part} of $f$, such that 
\begin{equation}\label{cbf}
K_X + \Delta +\frac{1}{r}\ddiv\varphi = f^*(K_Y+B_f+M_f).
\end{equation}
The formula \eqref{cbf} is the \emph{canonical bundle formula} associated to $f$.
\end{dfn}

\begin{rem}
As in \cite{FL19}, we adopt here the notation $B_f, M_f$ for the discriminant and moduli part of $f$ instead of the usual one $B_Y, M_Y$.
We will occasionally write $B_Y, M_Y$ when the fibration is clear from the contest.
\end{rem}

%

\begin{rem}\label{bbir}
If $f_1\colon (X_1,\Delta_1)\to Y$ and $f_2\colon (X_2,\Delta_2)\to Y$ are two lc-trivial fibrations over the same base  which are crepant birational over $Y$, then $f_1$ and $f_2$ have the same discriminant and moduli part.
\end{rem}

The canonical bundle formula satisfies several desirable properties. The first is the \emph{base change property}, \cite[Theorem 0.2]{Amb04} and \cite[Theorem 2]{Kaw98}.

\begin{thm}\label{nefness}
Let $f \colon(X,\Delta)\to Y$ be a klt-trivial fibration. Then there exists a proper birational morphism $Y'\to Y$ such that for every proper birational morphism $\pi \colon Y''\to Y'$ we have:
\begin{enumerate}
\item[(i)] $K_{Y'}+B_{Y'}$ is a $\Q$-Cartier divisor and $K_{Y''}+B_{Y''}=\pi^*(K_{Y'}+B_{Y'})$,
\item[(ii)] $M_{Y'}$ is a nef $\Q$-Cartier divisor and $M_{Y''}=\pi^*M_{Y'}$.
\end{enumerate}
\end{thm}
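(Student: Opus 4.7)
The plan is to follow the classical Ambro--Kawamata strategy: reduce via semistable reduction to a setting where $f$ has simple fibre structure, then invoke Fujita--Kawamata semipositivity for the nefness of the moduli part.

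First, I apply Theorem \ref{thm:alt} to $f\colon (X,\Delta)\to Y$, producing a birational base change $Y'\to Y$ and a modification $X'\to (X\times_Y Y')^{\mathrm{main}}$ such that the induced fibration $f'\colon (X',\Delta')\to (Y',W')$ is semistable for some SNC divisor $W'\subseteq Y'$. After further blowing up $Y'$, I may assume that $Y'$ is smooth and that $B_{f'}$ has SNC support; in particular $K_{Y'}+B_{Y'}$ is automatically $\Q$-Cartier. By Remark \ref{bbir}, replacing $f$ by the crepant birational fibration $f'$ does not change the discriminant or moduli on any birational model below $Y'$.

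For assertion (i), let $\pi\colon Y''\to Y'$ be a proper birational morphism, and write $K_{Y''}=\pi^* K_{Y'}+E$ with $E$ exceptional. The claimed identity $K_{Y''}+B_{Y''}=\pi^*(K_{Y'}+B_{Y'})$ amounts to $B_{Y''}=\pi^* B_{Y'}-E$. For a non-exceptional prime $P$ on $Y''$, $\pi$ is an isomorphism near the generic point of $P$, so $\gamma_P=\gamma_{\pi(P)}$ trivially. For an exceptional prime $P$, the semistable local normal form at a generic point of $\pi(P)$ presents the fibres in monomial coordinates, and an explicit toric calculation simultaneously reads off $\gamma_P$ and the discrepancy of $P$ with respect to $Y'$, yielding $1-\gamma_P=\mathrm{coeff}_P(\pi^* B_{Y'})-\mathrm{coeff}_P(E)$.

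For (ii), writing $D$ for the $\Q$-Cartier divisor on $Y$ with $K_X+\Delta\sim_\Q f^* D$, the defining equation of the moduli gives $M_{Y'}=D_{Y'}-K_{Y'}-B_{Y'}$, hence $M_{Y'}$ is $\Q$-Cartier; the pullback identity $M_{Y''}=\pi^* M_{Y'}$ then follows formally from (i) combined with the trivial $D_{Y''}=\pi^* D_{Y'}$. The substantive content is nefness of $M_{Y'}$: after a further finite cover $Z\to Y'$ chosen to trivialise unipotent monodromy along $W'$, one identifies $N\cdot M_Z$ (for $N$ sufficiently divisible) with a twist of $\det (f_Z)_*\omega_{X_Z/Z}^{\otimes m}$, which is nef by the Fujita--Kawamata semipositivity theorem; nefness then descends from $Z$ back to $Y'$. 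I expect this Hodge-theoretic nefness step to be the main obstacle, as it is the only part of the argument that cannot be handled by formal manipulations once the semistable reduction setup is in place.
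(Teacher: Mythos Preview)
The paper does not prove this theorem; it is stated with a citation to \cite[Theorem 0.2]{Amb04} and \cite[Theorem 2]{Kaw98} (see the sentence immediately preceding the statement). There is therefore no in-paper argument to compare your proposal against.

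That said, your sketch follows the broad Ambro--Kawamata strategy, with one point worth correcting. In the last paragraph you identify $N\cdot M_Z$ with a twist of $\det(f_Z)_*\omega_{X_Z/Z}^{\otimes m}$; this is not the right object. For a klt-trivial fibration with boundary, the moduli part is obtained via Kawamata's covering trick: one passes to the cyclic cover determined by the rational function $\varphi$ of Definition~\ref{dfn:cbf}, and $M_{Y'}$ is identified with (a power of) the lowest piece of the Hodge filtration of the resulting variation of Hodge structure on the middle cohomology of the covered fibres. The semipositivity input is then the curvature of the Hodge metric together with Schmid's nilpotent orbit theorem for the canonical extension, rather than Fujita--Kawamata semipositivity of pluricanonical pushforwards of the total space. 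Your argument for (i) is also somewhat compressed: the essential point is that once $(X',\Delta'+f'^*W')$ is log smooth and $W'\supseteq\Supp B_{f'}$ is SNC on a smooth $Y'$, every divisorial valuation over $Y'$ already admits a log-smooth model on $X'$ over which to compute the generic log canonical threshold, so no further blow-up of $Y'$ changes the answer; framing this as a ``toric calculation at a generic point of $\pi(P)$'' obscures that the computation really takes place on $X'$, not on the base.
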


In the context of the previous theorem, we say that $M_Y$ \emph{descends} to $Y'$, and we call $Y'$ an \emph{Ambro model} for $f$. One of the reasons why base change property is important is the following \emph{inversion of adjunction} \cite[Theorem 3.1]{Amb04}.

Moreover, by \cite[Proposition 8.4.9, Definition 8.3.6, Theorem 8.5.1]{Kol07} if $f\colon (X,\Delta)\to Y$ is an lc-trivial fibration such that the non-smooth locus $\Sigma$ of the fibration is a simple normal crossings divisor and $f^{-1}\Sigma+\Delta$ is simple normal crossings, then $Y$ is an Ambro model.

\begin{rem}\label{rem:modpseff}
Theorem \ref{nefness} implies in particular that the moduli part is always pseudoeffective, even when it is not nef, as it is the push-forward of a nef divisor by a birational model.
\end{rem}

We prove now that if the moduli part descends on $Y$, then it descends on $Y'$ with $Y'\to Y$ generically finite.

\begin{lem}\label{lem:gfiniteAm}
Let $f\colon (X,\Delta)\to Y$ be an lc-trivial fibration and $\tau\colon Y'\to Y$ be a genrically finite map. If $Y$ is an Ambro model, then $Y'$ is an Ambro model for the fibration obtained by base change.
\end{lem}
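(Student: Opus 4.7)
The plan is to reduce the claim to the standard fact that the moduli part of an lc-trivial fibration is compatible with generically finite base change. Given $\tau\colon Y'\to Y$ generically finite, I would first form the base-changed fibration $f'\colon (X',\Delta')\to Y'$ from the fibre product (after normalization and resolution if necessary), with induced map $\tau_X\colon X'\to X$ and $\Delta'$ defined by the crepant pullback $K_{X'}+\Delta' = \tau_X^*(K_X+\Delta)$. Pulling back the canonical bundle formula for $f$ through $\tau_X$, and using the ramification formula $\tau^*K_Y = K_{Y'} - R_{Y'/Y}$, then comparing with the CBF for $f'$, yields the standard transformation formulas $M_{f'} = \tau^*M_f$ and $B_{f'} = \tau^*B_f - R_{Y'/Y}$.

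Once these formulas are in hand, the lemma is essentially bookkeeping against the definition of Ambro model. Since $Y$ is Ambro for $f$, both $M_f$ and $K_Y+B_f$ are $\Q$-Cartier on $Y$ and $M_f$ is nef; hence their pullbacks under $\tau$ are $\Q$-Cartier on $Y'$, with $\tau^*M_f$ nef. This handles the $\Q$-Cartier and nefness parts of the Ambro condition on $Y'$. For the descent property, given any birational morphism $\pi\colon Y''\to Y'$, the composition $\sigma := \tau\circ\pi\colon Y''\to Y$ is again generically finite of the same degree as $\tau$. Applying the base change formula to both $\tau$ and $\sigma$ gives
\begin{equation*}
M_{f''} = \sigma^*M_f = \pi^*\tau^*M_f = \pi^*M_{f'},
\end{equation*}
and analogously $K_{Y''}+B_{f''} = \pi^*(K_{Y'}+B_{f'})$. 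This verifies conditions (i) and (ii) of Theorem \ref{nefness} for $Y'$, so $Y'$ is an Ambro model for $f'$.

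The main obstacle is justifying (or precisely invoking) the generically finite base change formula $M_{f'} = \tau^*M_f$. It is standard in the theory of the canonical bundle formula (compare \cite{Amb04, Kol07}), but it is mildly delicate: one must track the choice of the auxiliary rational function $\varphi$ appearing in the definition of $M_f$, handle the possible non-normality of $X\times_Y Y'$, and verify that the base-changed $f'$ remains lc-trivial with the $\Delta'$ chosen above. These are routine points once the conventions are fixed, but they encode the real technical content behind the lemma.
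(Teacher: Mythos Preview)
Your derivation of the transformation formulas has a gap that is not among the routine issues you list. Pulling back the canonical bundle formula and using Hurwitz only gives you
\[
B_{f'} + M_{f'} = (\tau^*B_f - R_{Y'/Y}) + \tau^*M_f,
\]
i.e.\ the \emph{sum} agrees. To split this into $M_{f'}=\tau^*M_f$ and $B_{f'}=\tau^*B_f-R_{Y'/Y}$ separately you must compute $B_{f'}$ via log canonical thresholds at every prime divisor of $Y'$. At divisors lying over divisors of $Y$ this is the standard finite base change computation (\cite[Lemma~5.1]{Amb04}), but at $\tau$-exceptional divisors it is a genuinely new statement --- and it is exactly the statement that the moduli part descends. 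Concretely, when you then apply your formula to $\sigma=\tau\circ\pi$ with $\pi$ birational, the Stein factorisation of $\sigma$ is $Y''\xrightarrow{\pi}Y'\xrightarrow{\tau}Y$, and the equality $M_{f''}=\pi^*M_{f'}$ at $\pi$-exceptional divisors is precisely ``$Y'$ is an Ambro model''. So the argument is circular: the generically finite base change formula you want to invoke is equivalent to the lemma, not a prerequisite for it.

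The paper breaks the circle by reversing the order of the factorisation. After reducing to $\tau$ finite, given a birational $\tilde\nu\colon\widetilde Y\to Y'$ it uses \cite[Lemma~2.4]{FL19} to manufacture a birational $\nu\colon W\to Y$ and a finite $\sigma\colon W'\to W$ together with a birational map $W'\dashrightarrow\widetilde Y$ which is an isomorphism at the generic point of every $\tilde\nu$-exceptional divisor. Now the birational step sits over $Y$, where the Ambro hypothesis is available: $W$ is Ambro, so $M_W=\nu^*M_Y$, and then finite base change gives $M_{W'}=\sigma^*\nu^*M_Y$ with no further assumption. Since every $\tilde\nu$-exceptional divisor already lives as a divisor on $W'$, the coefficient of $M_{\widetilde Y}$ there is read off from $M_{W'}$, and a comparison on a common roof $\widehat W$ yields $M_{\widetilde Y}=\tilde\nu^*M_{Y'}$. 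The key content is this flip from ``birational over $Y'$ then finite to $Y$'' to ``finite from $W'$ then birational to $Y$'', which your plan does not supply.
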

\begin{proof}
By taking the Stein factorisation, it is enough to consider $\tau$ finite.
Let $\tilde\nu\colon \widetilde Y\to Y'$ be a birational map.
By \cite[Lemma 2.4]{FL19} there is a diagram
$$
\xymatrix{
Y'\ar[d]_{\tau}&\widetilde Y \ar[l]_{\tilde\nu}&W'\ar@{-->}[l]_{\nu'}\ar[d]^{\sigma}\\
Y&&W\ar[ll]^{\nu}
}
$$
such that $\nu$ and $\nu'$ are birational and $(\nu')^{-1}$ is an isomorphism along the generic point of every $\tilde\nu$-exceptional divisor.

Let $(p,q)\colon \widehat W\to W'\times \widetilde Y$ be a resolution of the indeterminacies.
Then we have
$$M_{\widehat W}=p^*\sigma^*\nu^*M_Y=q^*\tilde\nu^*\tau^*M_Y=q^*\tilde\nu^*M_{Y'}$$
which implies $M_{\widetilde Y}=q_*M_{\widehat W}=\tilde\nu^*M_{Y'}$.

\end{proof}


\begin{thm}\label{thm:invAdjunction}
Let $f\colon(X,\Delta)\rightarrow Y$ be an lc-trivial fibration, and assume that $Y$ is an Ambro model for $f$. Then $(Y,B_Y)$ has klt, respectively log canonical, singularities in a neighbourhood of a point $y\in Y$ if and only if $(X,\Delta)$ has klt, rerspectively log canonical, singularities in a neighbourhood of $f^{-1}(y)$.
\end{thm}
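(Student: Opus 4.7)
The proof uses the canonical bundle formula in conjunction with the base change property (Theorem \ref{nefness}), reducing the question of lc/klt singularities on $Y$ to the analogous question on higher birational models where the discriminant has simple normal crossings support.

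For the forward direction, assume $(X,\Delta)$ is lc (resp.\ klt) near $f^{-1}(y)$. I would show that on any log resolution $\mu\colon Y'\to Y$ dominating the Ambro model, the discriminant $B_{Y'}$ has coefficients $\leq 1$ (resp.\ $<1$); since by Theorem \ref{nefness}(i) we have $K_{Y'}+B_{Y'}=\mu^*(K_Y+B_Y)$, this is exactly the lc (resp.\ klt) condition on $(Y,B_Y)$ near $y$. For any prime divisor $E\subset Y'$ meeting $\mu^{-1}(y)$, one has $\mathrm{coeff}_E(B_{Y'})=1-\gamma_E$, where $\gamma_E$ is the lc threshold of $f'{}^{*}E$ with respect to the log pullback $(X',\Delta_{X'})$ on a compatible resolution $\pi\colon X'\to X$ admitting an induced morphism $f'\colon X'\to Y'$. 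Since $(X,\Delta)$ is lc (resp.\ klt) near $f^{-1}(y)$, so is $(X',\Delta_{X'})$ near $f'{}^{-1}(\mu^{-1}(y))$, and hence $\gamma_E\geq 0$ (resp.\ $>0$) for every such $E$.

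For the backward direction, assume $(Y, B_Y)$ is lc (resp.\ klt) near $y$ and suppose, for contradiction, that $(X,\Delta)$ is not lc (resp.\ not klt) near $f^{-1}(y)$. Then there exists a geometric valuation $E$ whose center on $X$ meets $f^{-1}(y)$ and satisfies $a(E,X,\Delta)<-1$ (resp.\ $\leq -1$). Let $W\subseteq Y$ be the image under $f$ of the center of $E$. If $W$ is a divisor on $Y$, then $\gamma_W<0$ (resp.\ $\leq 0$), so $\mathrm{coeff}_W(B_Y)>1$ (resp.\ $\geq 1$), contradicting the hypothesis. If $W$ has codimension at least two, I would pass to a birational modification $\sigma\colon Y''\to Y$ extracting a divisor $\widetilde W$ over $W$, ensuring via Lemma \ref{lem:gfiniteAm} and Theorem \ref{nefness} that $Y''$ remains an Ambro model. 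A compatible modification of $X$ and the same threshold computation as in the divisorial case then yields $\mathrm{coeff}_{\widetilde W}(B_{Y''})>1$ (resp.\ $\geq 1$), which contradicts the equality $K_{Y''}+B_{Y''}=\sigma^{*}(K_Y+B_Y)$ together with the lc (resp.\ klt) hypothesis on $(Y,B_Y)$.

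The main obstacle is the backward direction in the higher-codimensional case: the direct definition of $B_Y$ only controls divisorial contributions, so one must move to a higher model to expose a problematic center as a divisor. The essential ingredient is the \emph{equality} $K_{Y''}+B_{Y''}=\sigma^{*}(K_Y+B_Y)$ on any higher birational model, provided by the Ambro model hypothesis in Theorem \ref{nefness}; without this equality one would obtain only an inequality on coefficients, and a non-lc center lying over a codimension-$\geq 2$ locus would fail to produce a contradiction with the lc/klt condition on $(Y,B_Y)$.
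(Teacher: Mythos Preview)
The paper does not give its own proof of this theorem: it is quoted as a preliminary result and attributed to \cite[Theorem~3.1]{Amb04}. There is therefore nothing in the paper to compare your argument against, and your sketch is in the spirit of Ambro's original proof.

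Your forward direction is correct: for every prime divisor $E$ on a higher model $Y'\to Y$, the Ambro-model hypothesis gives $a(E,Y,B_Y)=-\mathrm{coeff}_E(B_{Y'})=\gamma_E-1$, and $\gamma_E\geq0$ (resp.\ $>0$) follows from $(X,\Delta)$ being lc (resp.\ klt) over the generic point of $E$.

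In the backward direction the divisorial case is fine, but the higher-codimensional case has a genuine gap. When you extract a divisor $\widetilde W$ over $W$ on some $Y''\to Y$, the threshold $\gamma_{\widetilde W}$ is computed over the \emph{generic point} of $\widetilde W$. To conclude $\gamma_{\widetilde W}<0$ (resp.\ $\leq0$) you need the center of the offending valuation $E$ on a compatible model $X''$ to \emph{dominate} $\widetilde W$; merely knowing that its image lies in $\sigma^{-1}(W)$ is not enough, since it could land in a proper closed subset of $\widetilde W$. Simply blowing up $W$ does not guarantee this. The standard remedy is to first pass to a model $X'\to X$ on which $E$ is an actual divisor, and then choose $Y''\to Y$ (for instance via a flattening of the induced map $X'\to Y$) so that the image of $E$ in $Y''$ is itself a divisor; then the threshold argument goes through verbatim. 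Your closing paragraph correctly identifies that this is where the difficulty lies, but the sketch as written does not close it. (The appeal to Lemma~\ref{lem:gfiniteAm} is harmless but unnecessary: birational base change of an Ambro model is again an Ambro model directly from Theorem~\ref{nefness}.)
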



The following is \cite[Theorem 3.3]{Amb05a}. It will be a key result in the proof of both Theorem \ref{thm:step1} and  \ref{thm:step2}.

\begin{thm}\label{ambro1}
Let $f\colon(X,\Delta)\to Y$ be a klt-trivial fibration between normal projective varieties such that $\Delta$ is effective over the generic point of $Y$. Then there exists a diagram
$$
\xymatrix{
(X,\Delta)\ar[d]_f && (X^+,\Delta^+)\ar[d]^{f^+}\\
Y & \widetilde Y \ar[l]^{\vartheta}\ar[r]_{\chi}&Y^+
}
$$
such that:
\begin{enumerate}
\item[(i)] $f^+\colon(X^+,\Delta^+)\rightarrow Y^+$ is a klt-trivial fibration,
\item[(ii)] $\vartheta$ is generically finite and surjective, and $\chi$ is surjective,
\item[(iii)] there exists a non-empty open set $U\subseteq \widetilde Y$ and an isomorphism
$$
\xymatrix{
(X,\Delta)\times_{Y} U\ar[rd]\ar[rr]^{\cong}&&(X^+,\Delta^+)\times_{Y^+} U\ar[ld]\\
&U,&
}
$$
\item[(iv)] the moduli part $M_{f^+}$ is big and, after possibly a birational base change, we have $\vartheta^*M_f=\chi^*M_{f^+}$.
\end{enumerate}
\end{thm}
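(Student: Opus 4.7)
The plan is to construct $Y^+$ as a birational model of a variety parameterising the crepant birational equivalence classes of the fibres of $f$, and to take $f^+$ as the resulting ``universal'' family. The generically finite map $\vartheta\colon\widetilde Y\to Y$ is needed because the crepant birational automorphism group of the generic fibre introduces monodromy that obstructs a direct morphism from $Y$ to the moduli. The key idea is that, although the fibration $f$ may look different at different points of $Y$, once we quotient out by crepant birational equivalence the base should shrink to a variety of dimension equal to the variation of $f$.

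First I would replace $f$ by a well-behaved birational model, using Theorem \ref{thm:alt} together with flattening to reduce to the case where $f$ is semistable and $Y$ is an Ambro model. Let $V=\mathrm{Var}(f)$. The central moduli-theoretic input is the existence of a smooth projective variety $Y^+$ of dimension $V$ and a dominant rational map $\mu\colon Y\dashrightarrow Y^+$ whose general fibre is a crepant birational equivalence class of fibres of $f$; this $Y^+$ can be produced either via the coarse moduli space of polarised klt log Calabi--Yau pairs, or Hodge-theoretically from the variation of Hodge structure attached to $f$. Next I would take a generically finite base change $\vartheta\colon\widetilde Y\to Y$ killing the relevant monodromy by means of the finiteness of the pluricanonical representation in Theorem \ref{thm:cbir}, and blow up further so that the lift of $\mu$ becomes a morphism $\chi\colon\widetilde Y\to Y^+$ and the general fibres of $\chi$ admit a simultaneous system of crepant birational trivialisations. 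Setting $(X^+,\Delta^+)$ to be a general fibre of the composition $X\times_Y\widetilde Y\to \widetilde Y\to Y^+$ equipped with the restricted divisor produces the desired fibration $f^+$, and conditions (i)--(iii) then hold by construction, the open set $U\subseteq\widetilde Y$ in (iii) being the preimage of the locus over which the simultaneous crepant trivialisations of $\chi$ actually become isomorphisms.

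For (iv), the equality $\vartheta^*M_f=\chi^*M_{f^+}$ after a further birational base change follows from the invariance of the moduli part under crepant birational equivalence over the base (Remark \ref{bbir}) combined with its compatibility with generically finite base change (Lemma \ref{lem:gfiniteAm}), applied on a common resolution dominating both $\widetilde Y$ and the fibre product with $Y^+$. The bigness of $M_{f^+}$ reflects the maximal variation of $f^+$: by construction no two distinct general fibres of $f^+$ are crepant birational, so $\mathrm{Var}(f^+)=\dim Y^+$, and one concludes by the theorem that in the klt case the moduli part of a fibration of maximal variation is big.

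The main obstacle is precisely the construction of $\mu$ and the verification that, after a generically finite base change, its fibres become genuine crepant birational equivalence classes of fibres of $f$. This is the technical heart of the argument and requires a genuine moduli-theoretic or Hodge-theoretic ingredient that goes beyond the canonical bundle formula machinery summarised in this section; the bigness statement in (iv), in the end, rests on the same circle of ideas.
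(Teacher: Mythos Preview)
The paper does not prove this statement; it is quoted verbatim as \cite[Theorem 3.3]{Amb05a}. What the paper does reveal about Ambro's actual argument appears in the proof of Lemma~\ref{lem:excvert}: one first resolves $Y$ so that the period map of the underlying variation of Hodge structure extends to a morphism $q\colon Y'\to Y_0$, then takes a finite cover $\sigma\colon Y''\to Y'$ so that the Stein factorisation $Y''\xrightarrow{\alpha}Y^+\to Y_0$ of $q\circ\sigma$ admits a \emph{section} $Y^+\to Y''$; the fibration $f^+$ is obtained by pulling the family back along that section. The bigness of $M_{f^+}$ is then a Hodge-theoretic positivity statement (Griffiths--Kawamata type), not a consequence of variation-theoretic bookkeeping.

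Your sketch has two genuine gaps. First, your construction of $f^+$ is not well-defined: declaring $(X^+,\Delta^+)$ to be ``a general fibre of the composition $X\times_Y\widetilde Y\to\widetilde Y\to Y^+$'' yields a single variety, not a fibration over $Y^+$; what is missing is exactly the mechanism (a section, or a descent argument) that produces a family over $Y^+$ from the family over $\widetilde Y$. Ambro's section does this; your trivialisation language gestures at it but does not supply it. Second, your proof of bigness in (iv) is circular: you invoke ``the theorem that in the klt case the moduli part of a fibration of maximal variation is big,'' but that is essentially the content of the result you are trying to prove. Proposition~\ref{pro:fujinovar} gives only the easy inequality $\kappa(M_f)\le\mathrm{Var}(f)$; the reverse direction needs the Hodge-theoretic input you mention in passing but never use. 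The option of building $Y^+$ as a coarse moduli space of polarised klt log Calabi--Yau pairs is also not available in the generality required here.
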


The following remark will be useful at the end of this section.
\begin{lem}\label{lem:excvert}
Notation as in Theorem \ref{ambro1}. Assume that $Y$ is an Ambro model and $M_f$ is semiample, let $\phi\colon Y\to V$ be the fibration induced by $M_f$.
Then there is $\vartheta$ such that $\Exc(\vartheta)$ is vertical with respect to $\phi\circ\vartheta$.
Moreover there is a generically finite map $\lambda\colon Y^+\to V$.
\end{lem}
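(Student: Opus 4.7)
The plan is to construct $\lambda$ first via rigidity, then to pick a specific birational model of $\widetilde Y$ so that the verticality of $\Exc(\vartheta)$ follows from the fibre-product structure. I would begin by Stein-factorising $\chi$; by Lemma~\ref{lem:gfiniteAm} this keeps $Y^+$ an Ambro model and (by Theorem~\ref{ambro1}(iv)) preserves the bigness of $M_{f^+}$, while ensuring that $\chi$ has connected fibres. Since $M_f$ is semiample, $\vartheta^*M_f=\chi^*M_{f^+}$ is semiample as well, and $\phi\circ\vartheta\colon\widetilde Y\to V$ is the morphism it induces. For any fibre $F$ of $\chi$ the restriction $\chi^*M_{f^+}|_F\sim_\Q 0$, so $\phi\circ\vartheta|_F$ pulls back an ample divisor on $V$ to the trivial one and is therefore constant on the connected set $F$. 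Rigidity then yields the morphism $\lambda\colon Y^+\to V$ with $\phi\circ\vartheta=\lambda\circ\chi$. The Kodaira-dimension computation
\[
\dim V=\kappa(M_f)=\kappa(\vartheta^*M_f)=\kappa(\chi^*M_{f^+})=\kappa(M_{f^+})=\dim Y^+,
\]
which uses that $\vartheta$ is generically finite, $\chi$ is surjective, and $M_{f^+}$ is big, shows that $\lambda$ is generically finite.

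For the verticality, the factorisation $\phi\circ\vartheta=\lambda\circ\chi$ forces $(\vartheta,\chi)\colon\widetilde Y\to Y\times Y^+$ to land in the fibre product $Y\times_V Y^+$, a variety of dimension $\dim Y+\dim Y^+-\dim V=\dim Y$. After replacing $Y$ and $Y^+$ by smooth birational models (which, again by Lemma~\ref{lem:gfiniteAm}, does not disturb the Ambro structure) and Stein-factorising $(\vartheta,\chi)$ we may assume it is generically finite onto its image $Z\subseteq Y\times_V Y^+$, so $\dim Z=\dim Y$. I would then replace $\widetilde Y$ by a resolution $\mu\colon\widetilde Y\to Z$ chosen to be an isomorphism over the smooth locus of $Z$, and check via a common dominating birational model that Theorem~\ref{ambro1} still applies with the new $\widetilde Y$. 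Let $E\subset\widetilde Y$ be a $\vartheta$-exceptional prime divisor. If $\mu(E)$ were a divisor in $Z$, then it would lie inside $\vartheta(E)\times_V Y^+$, which has dimension $\dim\vartheta(E)\leq\dim Y-2$ because $\dim Y^+=\dim V$ and $\lambda$ is generically finite, contradicting $\dim\mu(E)=\dim Y-1$. Hence $E$ is $\mu$-exceptional, and so it lies over the singular locus of $Z$. Since $Y\times_V Y^+$ is smooth, and \'etale over $Y$, away from the union of the discriminants of $\phi$ and $\lambda$ in $V$, this singular locus sits over a proper subvariety of $V$, and therefore $(\phi\circ\vartheta)(E)\subsetneq V$, i.e.\ $E$ is $(\phi\circ\vartheta)$-vertical.

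The main obstacle I expect is two-fold: verifying that the newly chosen $\widetilde Y$ still satisfies the conclusions of Theorem~\ref{ambro1}, in particular the identity $\vartheta^*M_f=\chi^*M_{f^+}$, which is done by transporting the equality via a common birational model dominating both the new $\widetilde Y$ and the one originally produced by the theorem; and controlling the singular locus of $Z$ after passing to smooth birational models of $Y$ and $Y^+$, so that the resolution exceptionals really remain over a proper subvariety of $V$. Once the correct birational setup is chosen, the existence and generic finiteness of $\lambda$, as well as the verticality, drop out cleanly from rigidity, the Kodaira-dimension comparison, and the dimension count on the fibre product.
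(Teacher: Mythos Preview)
Your construction of $\lambda$ via rigidity and the Kodaira-dimension count is correct and close in spirit to the paper's (the paper instead uses that $M_{f^+}$ is big and semiample to get a birational $\phi^+\colon Y^+\to V^+$, and then a finite map $V^+\to V$ from the uniqueness of the semiample fibration on $\widetilde Y$); these are essentially equivalent.

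For the verticality of $\Exc(\vartheta)$, your approach is genuinely different from the paper's. The paper does not work abstractly with the fibre product $Y\times_V Y^+$; instead it opens up the proof of Ambro's theorem \cite[Theorem 2.2]{Amb05a} and writes $\vartheta=\varepsilon\circ\sigma\circ p$ explicitly, where $p\colon Y'\to Y$ is a sequence of smooth blow-ups on which the period map extends to $q\colon Y'\to Y_0$, $\sigma$ is finite, and $\varepsilon$ is a desingularisation. It then argues directly with curves: if $C\subseteq\Exc(p)$ is not $p$-contracted but $q$-contracted, one traces it through the diagram to see that $\phi$ contracts $p(C)$, so the indeterminacy locus of $q\circ p^{-1}$ is $\phi$-vertical and $p$ can be re-chosen accordingly; finally $\varepsilon$ is chosen to be an isomorphism over the generic point of $V$ because the singularities of $Y''$ are $\alpha$-vertical. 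Your fibre-product/resolution argument avoids unpacking Ambro's construction, at the cost of more birational bookkeeping (replacing $\widetilde Y$, checking that Theorem~\ref{ambro1} still applies, and controlling where $Z$ is singular). One point to tighten: the dimension bound $\dim(\vartheta(E)\times_V Y^+)\leq\dim\vartheta(E)$ only holds for the component dominating $\vartheta(E)$, since $\lambda$ is merely generically finite; you should note that if $\mu(E)$ lay in another component it could not surject onto $\vartheta(E)$, and that the case where $\vartheta(E)$ does not dominate $V$ is already vertical. With that clarification the argument goes through.
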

\begin{proof}
Since $M_f$ is semiample, $M_{f^+}$ is semiample as well.
Let $\phi^+\colon Y^+\to V^+$ be the fibration defined by $M_{f^+}$. We notice that as $M_{f^+}$
is big, the fibration $\phi^+$ is birational.
Since $\phi^+\circ\chi$ is a fibration, there is a finite map $V^+\to V$.
We set $\lambda\colon Y^+\to V$ the induced generically finite map.

By the proof of \cite[Theorem 2.2]{Amb05a}, we have $\vartheta=\varepsilon \circ\sigma\circ p$
where 
\begin{itemize}
\item $p\colon Y'\to Y$ is birational, such that $Y'$ is smooth and the period map extends to a fibration $q\colon Y'\to Y_0$ and $p$ can be taken as a composition of blow-ups along smooth centres;
\item $\sigma\colon Y''\to Y'$ is finite and such that, if $\sigma_0\circ\alpha$ is the Stein factorisation of $q\circ \sigma$ then $\alpha \colon Y''\to Y^+$ admits a section;
\item $\varepsilon$ is a desingularisation of $Y''$ and $\chi=\alpha\circ\varepsilon$.
\end{itemize}

We prove first that $p\Exc(p)$ is $\phi$-vertical. 
Indeed, let $C\subseteq\Exc(p)$ be a curve not contracted by $p$ but contracted by $q$.
Let $\widetilde C\subseteq Y''$ be such that $\sigma(\widetilde C)=C$.
Since $q\circ\sigma=\sigma_0\circ \alpha$, the image $\alpha(\widetilde C)$ is a point.
Then $\sigma\circ p\circ\phi(\widetilde C)=\lambda\circ\alpha(\widetilde C)$ is a point. Therefore $C$
is contracted by $\phi$. This implies that the indeterminacy locus of $q\circ p^{-1}$ is $\phi$-vertical.
Therefore the indeterminacy locus of $q\circ p^{-1}$ is $\phi$-vertical and we can find $p,q$ such that the exceptional locus of $p,q$ is $\phi\circ p$-vertical.

The morphism $\sigma$ is generically \'etale, therefore the singularities of $Y''$ are vertical with respect to $\alpha$. Therefore we can chose $\varepsilon$ which is an isomorphism over the generic point of $V$.
\end{proof}

The following is \cite[Proposition 4.4]{Amb05a}, and it allows to extend the isomorphism from Theorem \ref{ambro1}(iii) to a suitable bigger open subset.

\begin{pro}\label{ambro2}
Let $f\colon(X,\Delta)\rightarrow Y$ be a klt-trivial fibration of normal projective varieties such that there exists an isomorphism
$$\Phi\colon (X,\Delta)\times_Y U\to (F, \Delta_F)\times U$$
over a non-empty open subset $U\subseteq Y$. Then $\Phi$ extends to an isomorphism over 
$$Y^0=Y\setminus \big(\Supp B_Y\cup \Sing (Y)\cup f(\Supp \Delta_v^{<0})\big),$$
where $\Delta_v^{<0}$ consists of the vertical components of $\Delta$ with negative coefficients in $\Delta$.
\end{pro}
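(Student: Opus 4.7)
The plan is to reduce the extension problem to a local analytic statement on $Y$, observe that over $Y^0$ the fibration is a flat family of klt log Calabi--Yau pairs with trivial discriminant, and then extend the trivialization $\Phi$ using rigidity afforded by the finiteness of crepant birational automorphisms.

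First I would fix $y_0 \in Y^0$ and pass to a connected analytic neighborhood $V \ni y_0$ meeting $U$. The three conditions defining $Y^0$ guarantee that $V \subseteq Y_{\mathrm{sm}}$, that $B_Y|_V = 0$, and that no vertical component of $\Delta$ with negative coefficient crosses $f^{-1}(V)$. By the inversion of adjunction (Theorem \ref{thm:invAdjunction}), $(X,\Delta)$ is klt along $f^{-1}(V)$, and together with the third condition this makes $f_V \colon (X_V, \Delta_V) \to V$ a flat family of klt log Calabi--Yau pairs satisfying $K_{X_v} + \Delta_v \sim_\Q 0$ on each fibre.

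Next I would introduce the relative isomorphism scheme $\mathcal I := \underline{\mathrm{Isom}}_V\bigl((X_V, \Delta_V),\, (F, \Delta_F) \times V\bigr)$ parametrizing crepant $V$-isomorphisms; it is separated and locally of finite type over $V$, and the hypothesis supplies a section $s \colon U \cap V \to \mathcal I$. Theorem \ref{thm:cbir} forces the crepant automorphism group of $(F, \Delta_F)$ to be finite (modulo the kernel of the pluricanonical representation), so its automorphism sheaf is zero-dimensional along $s(U \cap V)$; consequently $\mathcal I \to V$ is unramified there, and, granting properness of the connected component containing the image of $s$, étale. A section of an étale map over a dense open of a normal connected base extends uniquely, yielding the desired extension of $\Phi$ to $V$; these local extensions glue on $Y^0$ by the same uniqueness.

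The main obstacle, which I expect to be the technical heart of the argument, is the properness of the relevant component of $\mathcal I$ over $V$: one must rule out that the closure of the graph of $\Phi$ in $X_V \times_V (F \times V)$ degenerates into a reducible cycle over some $y \in V \setminus U$. I would handle this with a Matsusaka--Mumford-style specialization argument for log Calabi--Yau pairs, using that $K_{X_v} + \Delta_v \sim_\Q 0$ fibrewise and that $B_Y|_V = 0$ forbids any moduli-theoretic jump in the fibres across $V$; the non-negativity of the vertical coefficients of $\Delta$ over $V$ enters crucially to ensure that the specialization preserves the log Calabi--Yau structure, allowing the finiteness of crepant automorphisms to propagate to the boundary and complete the extension.
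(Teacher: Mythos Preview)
The paper does not supply its own proof of this proposition; it is quoted from \cite[Proposition~4.4]{Amb05a} without argument. Your proposal therefore has to stand on its own merits.

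There is a genuine gap at the \'etaleness step. Theorem~\ref{thm:cbir} asserts only that the \emph{image} of the pluricanonical representation
\[
\rho_m\colon \Bir^{\mathrm c}(F,\Delta_F)\longrightarrow \GL\big(H^0(F,m(K_F+\Delta_F))\big)
\]
is finite; it says nothing about the size of $\Aut^{\mathrm c}(F,\Delta_F)$ itself, and the latter is frequently positive-dimensional. If $F$ is an elliptic curve (or any abelian variety) with $\Delta_F=0$, then translations form a positive-dimensional group of crepant automorphisms, while $\rho_m$ lands in $\GL_1(\mathbb C)$ and is trivial. In such cases the fibres of $\mathcal I\to V$ are torsors under a positive-dimensional group scheme, so $\mathcal I\to V$ is not unramified, and your extension principle---that a section of an \'etale map over a dense open of a normal connected base extends uniquely---simply does not apply. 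The parenthetical ``modulo the kernel of the pluricanonical representation'' does not repair this: $\mathcal I$ is a torsor under the full automorphism group scheme, not under its quotient by $\ker\rho_m$, so passing to that quotient loses exactly the information you need to pin down the section.

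Even if one grants the Matsusaka--Mumford style properness you sketch, the conclusion would only be that the relevant component of $\mathcal I\to V$ is smooth and proper with possibly positive-dimensional fibres; sections of such morphisms over dense opens do not extend in general. To salvage the Isom-scheme route you would need a separate argument controlling $\Aut^{\mathrm c}(F,\Delta_F)^0$ and its torsors over $V$, which is substantially more than what you have written. This is not a cosmetic omission: the elliptic-fibration case already shows that the mechanism you invoke fails, so a genuinely different idea is required at this point.
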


The following two lemmas were written in collaboration with V. Lazi\'c.
\begin{lem}\label{redgen}
Let $S, T,\widetilde T$ be quasi-projective varieties, assume that $T$ is smooth. Let $h\colon S\to T$ be a projective fibration and let $\vartheta\colon\widetilde T\to T$ be a finite map. Let 
$$
\xymatrix{
S\ar[d]_h&F\times \widetilde T\ar[d]^{\tilde h}\ar[l]_{\tau}\\
 T&\widetilde T\ar[l]^{\vartheta}
}
$$
be a base change where $\tilde h$ is the second projection. Let $G$ be a reduced fibre of $h$. Let $y\in\widetilde T$ be such that $\tau(y)=x$. Then $\tau\colon F\times\{y\}\rightarrow G$ is an isomorphism.
\end{lem}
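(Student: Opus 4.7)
The plan is to interpret the commutative diagram as the Cartesian square associated to the base change of $h$ along $\vartheta$, so that the given isomorphism identifies $F\times\widetilde T$ with the fibre product $S\times_T\widetilde T$ and $\tau$ with the first projection. Once this reformulation is in place, the statement becomes a routine base change computation.

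First I would compute the scheme-theoretic fibre of $\tilde h$ over $y$ in two ways. From the Cartesian square one obtains, by associativity of fibre products,
$$(S\times_T\widetilde T)\times_{\widetilde T}\Spec k(y)=h^{-1}(x)\times_{\Spec k(x)}\Spec k(y),$$
where $x=\vartheta(y)$ and $h^{-1}(x)$ denotes the scheme-theoretic fibre of $h$. Since we work over $\C$ and both $x,y$ are closed points, $k(x)=k(y)=\C$, so this base change is trivial and the fibre of $\tilde h$ over $y$ coincides with $h^{-1}(x)$ as a scheme. The hypothesis that $G$ is a \emph{reduced} fibre of $h$ then identifies $h^{-1}(x)=G$ as schemes, with no nilpotents.

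On the other hand, from the product description $F\times\widetilde T$ the fibre of $\tilde h$ over $y$ is visibly $F\times\{y\}\cong F$. The given isomorphism intertwines these two descriptions, and $\tau$ is the first projection on both sides, so its restriction to the fibre over $y$ is precisely the induced map $F\times\{y\}\to G$. Being the restriction of an isomorphism of $\widetilde T$-schemes to a single fibre, this map is itself an isomorphism, which is the claim.

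I do not expect a genuine obstacle; the argument is purely formal base change. The only conceptual point—and the reason the reducedness of $G$ appears in the hypothesis—is that without it we would only recover an isomorphism of $F\times\{y\}$ onto the (possibly non-reduced) scheme-theoretic fibre $h^{-1}(x)$, and the induced map to $G=h^{-1}(x)_{\mathrm{red}}$ would merely be a finite surjection whose degree records the multiplicity of the fibre. Reducedness of $G$ is exactly what collapses this finite map to an isomorphism. The smoothness of $T$ and projectivity of $h$ do not seem to play an active role in this particular lemma and appear to be standing ambient hypotheses from the paper's setting.
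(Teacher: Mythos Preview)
Your argument is correct under the reading that the square is Cartesian, i.e.\ that $F\times\widetilde T\cong S\times_T\widetilde T$ over~$\widetilde T$. In that case the proof is, as you say, a one-line base change computation. Notice, however, that under this reading the hypothesis that $G$ be reduced is vacuous: every scheme-theoretic fibre of $h$ over $\vartheta(\widetilde T)=T$ is isomorphic to $F$, hence reduced. That is a strong hint that the paper intends a weaker hypothesis than ``Cartesian''.

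The paper's proof uses only that the diagram commutes and that $\deg\tau=\deg\vartheta$; it does not use the Cartesian property. After cutting $T$ by general hyperplanes through $x$ to reduce to $\dim T=1$, one compares the two divisor pullbacks $\tau^*h^*x$ and $\tilde h^*\vartheta^*x$: writing $\vartheta^*x=\sum e_iy_i$ and $\tau^*G=\sum a_i\,F\times\{y_i\}$, commutativity forces $a_i=e_i$, and then the degree count
\[
d=\sum a_i\deg\bigl(F\times\{y_i\}\to G\bigr)=\sum e_i\deg\bigl(F\times\{y_i\}\to G\bigr)\geq\sum e_i=d
\]
forces each $\deg\bigl(F\times\{y_i\}\to G\bigr)=1$. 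This is genuinely more robust: in both places where the lemma is applied (Proposition~\ref{pro:isomfibres} and the proof of Theorem~\ref{thm:step2}), the upper-right corner arises as the normalisation of the main component of a fibre product, identified with $F\times\widetilde T$ only over an open set via Proposition~\ref{ambro2}. Such diagrams are \emph{not} Cartesian in general, so your argument does not transfer to those situations, while the paper's degree argument does.

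In short: you have answered the literal statement under its most natural reading, but the paper's longer argument is not an oversight---it is proving the version actually needed downstream.
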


\begin{proof}
After cutting the base with $\dim T-1$ hyperplane sections through $x$, we can assume that $\dim T=1$.

The morphisms $\vartheta$ and $\tau$ have the same degree, set $d=\deg\tau=\deg \vartheta$. Let $x \in \tilde T$ be such that $G=h^*x$. 
Write $\tau^* G=\sum a_i F \times\{ y_i\}$ and $\vartheta^* x=\sum e_i y_i$. 
Thus $\tau^* h^* x= \tau^* G=\sum a_i F  \times\{ y_i\} =\tilde h^* \vartheta^* x=\tilde h^* \sum e_i y_i=\sum e_i F \times\{ y_i\}$. 
It follows, perhaps after renumbering the $y_i$, that $a_i=e_i$ for all $i$. Moreover, $ d=\sum a_i \deg(F  \times\{ p_i\}\to G)=\sum e_i \deg(F  \times\{ p_i\}\to G) \geq \sum e_i=d$. Thus $\deg(F  \times\{ p_i\}\to G)=1$ for all $i$. 
\end{proof}

\begin{lem}\label{lem:reducedfibres}
Let $g\colon (Z,\Delta_Z)\rightarrow T$ be a klt-trivial fibration, where $\Delta_Z\geq0$ and the discriminant $B_g$ is a reduced divisor. Assume we have a base change diagram
$$
\xymatrix{
(Z,\Delta_{Z})\ar[d]_{g} & (\overline{Z},\Delta_{\overline{Z}})\ar[d]_{\overline{g}}\ar[l]_{\overline \alpha}\\
 T & \overline{T},\ar[l]_{\alpha}
}
$$
where $\alpha$ and $\overline \alpha$ are finite morphisms and $\overline g$ is weakly semistable in codimension $1$. Let $B_{\overline g}$ be the discriminant of $\overline g$ and assume that $\Delta_{Z,v}=(g^*B_g)_\red$. 
Then there exists an open subset $U\subseteq\overline T$ with complement of codimension at leat 2 in $\overline T$ such that:
\begin{enumerate}
\item[(i)] $(\overline g^*B_{\overline g})|_{\overline g^{-1}(U)}=\big((\overline\alpha^*\Delta_{Z,v})_{\red}-\overline g^*R'_T \big)|_{\overline g^{-1}(U)}$, where $R'_T$ is an effective divisor supported on the ramification divisor of $\alpha$ and having no common components with $\alpha^*B_g$;
\item[(ii)] $\Delta_{\overline Z}\vert_{\overline g^{-1}(U)}=\big(\overline \alpha^*\Delta_{Z,h}+(\overline \alpha^*\Delta_{Z,v})_\red -\overline g^*R'_T \big)\vert_{\overline g^{-1}(U)}$.
\end{enumerate}
In particular, $(\Delta_{\overline Z}-\overline g^*B_{\overline T})\vert_{\overline g^{-1}U}\geq0$
and if $T$ is a curve, then $\Delta_{\overline Z}-\overline g^*B_{\overline T}\geq0$.
\end{lem}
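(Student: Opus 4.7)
The strategy is to compute both sides of (i) and (ii) one prime divisor at a time. I first pass to an open $U \subseteq \overline T$ with $\codim(\overline T \setminus U) \geq 2$ over which $\overline g$ is weakly semistable and the pertinent ramification divisors are smooth and generically transverse, so that it suffices to match coefficients along each prime divisor $P' \subseteq U$ (for (i)) and along each prime divisor $D' \subseteq \overline g^{-1}(U)$ (for (ii)). The main ingredient is the crepant base change identity $\overline\alpha^*(K_Z + \Delta_Z) = K_{\overline Z} + \Delta_{\overline Z}$, which together with Riemann--Hurwitz for $\overline\alpha$ yields
\[
\Delta_{\overline Z} = \overline\alpha^* \Delta_{Z,h} + \overline\alpha^* \Delta_{Z,v} - R_{\overline\alpha}.
\]

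Fix a prime divisor $P' \subseteq U$; one may assume $P := \alpha(P')$ is itself a prime divisor of $T$, with ramification index $e$, and write $g^*P = \sum_i m_i D_i$ and $\overline g^*P' = \sum_j D'_j$ (reduced, by weak semistability). The identity $\overline\alpha^* g^* P = \overline g^*\alpha^* P$ gives the multiplicity relation $\sum_i m_i e_{ij} = e$, where $e_{ij}$ is the multiplicity of $D'_j$ in $\overline\alpha^* D_i$. Since $\overline\alpha$ is finite and $D'_j$ is vertical over $P$, a unique $D_{i(j)}$ dominates the image of $D'_j$, hence $e_{ij} = 0$ for $i \neq i(j)$ and $m_{i(j)} e_{i(j),j} = e$; moreover the ramification index of $\overline\alpha$ along $D'_j$ is exactly $e_{i(j),j}$.

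To prove (ii), split on whether $P \in \Supp B_g$. If $P \in \Supp B_g$, each $D_{i(j)}$ appears in $\Delta_{Z,v}$ with coefficient $1$ (since $\Delta_{Z,v} = (g^*B_g)_\red$), so the coefficient of $D'_j$ in $\Delta_{\overline Z}$ is $e_{i(j),j} - (e_{i(j),j} - 1) = 1$, matching the coefficient $1$ in $(\overline\alpha^*\Delta_{Z,v})_\red$, with $R'_T$ forced to vanish at $P'$ (as $P' \subseteq \Supp \alpha^*B_g$, which is excluded from $R'_T$). If $P \notin \Supp B_g$, log canonicity over $\eta_P$ forces $m_i = 1$ for all $i$ above $P$, hence $e_{i(j),j} = e$; then the coefficient of $D'_j$ in $\Delta_{\overline Z}$ is $-(e - 1)$, while $(\overline\alpha^*\Delta_{Z,v})_\red$ vanishes along $D'_j$, so $R'_T$ must have coefficient $e - 1$ at $P'$, which is precisely the ramification contribution of $\alpha$. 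Adding the horizontal term $\overline\alpha^*\Delta_{Z,h}$ gives (ii).

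For (i), since $\overline g^*P'$ is reduced, $\gamma_{P'}$ is the supremum of $t$ for which each coefficient of $D'_j$ in $\Delta_{\overline Z} + t\overline g^* P'$ is at most $1$, so it equals $1 - \max_j c_j$ where $c_j$ is the coefficient of $D'_j$ in $\Delta_{\overline Z}$. Substituting the values computed above yields $\overline g^*B_{\overline g}|_{\overline g^{-1}(U)} = \bigl((\overline\alpha^*\Delta_{Z,v})_\red - \overline g^*R'_T\bigr)|_{\overline g^{-1}(U)}$, which is (i). Subtracting (i) from (ii) gives $(\Delta_{\overline Z} - \overline g^*B_{\overline g})|_{\overline g^{-1}(U)} = \overline\alpha^*\Delta_{Z,h}|_{\overline g^{-1}(U)}$, effective because $\Delta_{Z,h} \geq 0$ and $\overline\alpha$ is finite; when $\dim T = 1$, both $T$ and $\overline T$ are curves, so $\overline T \setminus U = \emptyset$ and the identity extends globally. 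The main technical obstacle is the case $P \notin \Supp B_g$: there the negative coefficient of $\Delta_{\overline Z}$ coming from the ramification of $\overline\alpha$ must be rewritten as the pullback of a divisor from $\overline T$, and this compatibility is exactly what weak semistability in codimension one ensures.
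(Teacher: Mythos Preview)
Your approach is essentially the paper's: both reduce to a prime-by-prime coefficient comparison over a big open $U$, using Riemann--Hurwitz for $\overline\alpha$, the commutativity $\overline\alpha^* g^* = \overline g^* \alpha^*$, and the reducedness of $\overline g^*P'$ from weak semistability. The organizational difference is that the paper first invokes the base change formula \cite[Lemma 5.1]{Amb04} to get $B_{\overline g} = \alpha^*B_g - R_T$ directly, then separately writes $\Delta_{\overline Z} = \overline\alpha^*\Delta_{Z,h} + (\overline\alpha^*\Delta_{Z,v})_\red - R'_Z$, and spends its Step~3 showing $R'_Z = \overline g^* R'_T$ over $U$ via the same ramification-index computation you carry out (your identity $m_{i(j)} e_{i(j),j} = e$ together with $m_{i(j)}=1$ when $P \notin \Supp B_g$).

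There is one genuine gap in your derivation of (i). You assert that ``since $\overline g^*P'$ is reduced, $\gamma_{P'}$ is the supremum of $t$ for which each coefficient of $D'_j$ in $\Delta_{\overline Z}+t\overline g^*P'$ is at most $1$''. Reducedness of $\overline g^*P'$ only gives the \emph{upper} bound $\gamma_{P'} \leq 1-\max_j c_j$: nothing in the hypotheses guarantees $(\overline Z, \Delta_{\overline Z}+\overline g^*P')$ is log smooth or dlt over $\eta_{P'}$, so an exceptional valuation could in principle force $\gamma_{P'}$ strictly smaller. The missing lower bound is exactly what \cite[Lemma~5.1]{Amb04} supplies (and what the paper quotes); equivalently, since crepant finite pullback preserves lc and $\gamma_P \in \{0,1\}$, pulling back $(Z, \Delta_Z + \gamma_P\, g^*P)$ shows that $(\overline Z, \Delta_{\overline Z} + e\gamma_P\, \overline g^*P')$ is lc over $\eta_{P'}$, hence $\gamma_{P'} \geq e\gamma_P$, matching your upper bound in both cases. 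A smaller omission: you should note that $\Supp R_{\overline\alpha} \subseteq \overline g^{-1}(\Supp R_\alpha)$ (base change of \'etale is \'etale), so $R_{\overline\alpha}$ is purely vertical and the horizontal part of $\Delta_{\overline Z}$ is indeed $\overline\alpha^*\Delta_{Z,h}$, as you use when ``adding the horizontal term''.
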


\begin{proof}
\emph{Step 1.}
Note that $B_g\geq0$ since $\Delta_Z\geq0$. Let $R_T\subseteq\overline T$ and $R_Z\subseteq\overline Z$ be the ramification divisors of the finite maps $\alpha$ and $\overline \alpha$, respectively. We have 
\begin{equation}\label{suppf}
 \Supp R_Z\subseteq \overline{g}^{-1}(\Supp R_T)
\end{equation}
since the base change by an \'etale map is \'etale. We can write 
\begin{equation}\label{ram1f}
 R_T=R'_T+\alpha^*B_T-(\alpha^*B_T)_\red,
\end{equation}
where $R'_T\geq0$, and $R_T'$ and $\alpha^*B_T$ have no common components.
By \cite[Lemma 5.1]{Amb04} we have
\begin{equation}\label{eq:two}
K_{\overline{T}}+B_{\overline{g}}=\alpha^*(K_T+B_g)\quad\text{and}\quad M_{\overline{g}}=\alpha^*M_g,
\end{equation}
where $M_{\overline{g}}$ is the moduli part of $\overline g$. Then \eqref{ram1f} gives
\begin{equation}\label{eq:ffff}
 B_{\overline{g}}=\alpha^*B_g-R_T=(\alpha^*B_g)_\red-R'_T.
\end{equation}
Similarly, we can write 
\begin{equation}\label{ram2f}
 R_Z=R_Z'+\overline \alpha^*\Delta_{Z,v}-(\overline \alpha^*\Delta_{Z,v})_\red,
\end{equation}
where $R'_Z\geq0$, and $R_Z'$ and $\overline \alpha^*\Delta_{Z,v}$ have no common components. Then \eqref{ram2f} implies
\begin{align}\label{eq:Delta}
 \Delta_{\overline{Z}}&=\overline \alpha^*\Delta_Z-R_Z =\overline \alpha^*\Delta_{Z,h} +\overline \alpha^*\Delta_{Z,v}-R_Z\\
 &=\overline \alpha^*\Delta_{Z,h} +(\overline \alpha^*\Delta_{Z,v})_\red-R_Z'.\notag
\end{align}
We claim that for a prime divisor $P\subseteq \alpha(\Supp R'_T)$,
\begin{equation}\label{eq:cclaim}
\text{$g^*P$ is reduced over the generic point of $P$.}
\end{equation} 
Indeed, otherwise we would have $P\subseteq\Supp B_g$ by the definition of the discriminant. However, 
this would contradict the fact that $R_T'$ and $\alpha^*B_g$ have no common components.

Let $U\subseteq \overline T$ be a big open subset with the following property: $\overline g$ is weakly semistable over $U$, and if a prime divisor $D\subseteq \overline g^{-1}(\Supp R_T')$ is $\overline g$-exceptional, then $\overline g(D)\cap U=\emptyset$. We show in Steps 2 and 3 that $U$ is satisfies (i) and (ii). 

\medskip

\emph{Step 2.}
To show (i), by \eqref{eq:ffff} it is enough to prove 
\begin{equation}\label{eq:fff1f}
\overline{g}^*\big((\alpha^*B_T)_\red\big)|_{\overline g^{-1}(U)}=(\overline \alpha^*\Delta_{Z,v})_\red|_{\overline g^{-1}(U)}.
\end{equation}
For \eqref{eq:fff1f}, we have
$$ (\overline \alpha^*\Delta_{Z,v})_\red=(\overline \alpha^*g^*B_g)_\red=(\overline{g}^*\alpha^*B_g)_\red,$$
where the first equality follows by pulling back the relation $\Delta_{Z,v}=(g^*B_{g})_\red$ by $\overline \alpha$ and taking the reduced part, 
and the second equality by the base change diagram. Since $\overline g^*\big((\alpha^*B_g)_\red\big) |_{\overline g^{-1}(U)}$ is reduced, 
we have 
$$(\overline{g}^*\alpha^*B_g)_\red|_{\overline g^{-1}(U)}=\overline{g}^*\big((\alpha^*B_g)_\red\big)|_{\overline g^{-1}(U)},$$
which proves (i).

\medskip
 
\emph{Step 3.}
Finally, we show (ii). By \eqref{eq:Delta}, it suffices to show 
\begin{equation}\label{eq:fff2f}
R_Z'|_{\overline g^{-1}(U)}=\overline{g}^*R'_T|_{\overline g^{-1}(U)}.
\end{equation}
By \eqref{suppf}, \eqref{ram1f}, \eqref{ram2f} and \eqref{eq:fff1f} we have
\begin{align*}
(\Supp R'_Z)|_{\overline g^{-1}(U)}&\cup(\Supp\overline \alpha^*\Delta_{Z,v})|_{\overline g^{-1}(U)}\\
&=(\Supp R_Z)|_{\overline g^{-1}(U)}\subseteq \overline g^{-1}(\Supp R_T|_U)\\
&\subseteq \overline g^{-1}(\Supp R_T'|_U)\cup(\Supp\overline{g}^*\alpha^*B_T)_{\overline g^{-1}(U)}\\
&=\overline g^{-1}(\Supp R_T'|_U)\cup(\Supp\overline \alpha^*\Delta_{Z,v})|_{\overline g^{-1}(U)}.
\end{align*}
Since $R_Z'$ and $\overline \alpha^*(\Delta_{Z,v})$ have no common components, this implies
$$(\Supp R_Z')|_{\overline g^{-1}(U)}\subseteq\overline g^{-1}(\Supp R_T'|_U).$$ 
Therefore, for \eqref{eq:fff2f} it is enough to show -- by the definition of $U$ -- that 
for each prime divisor $D\subseteq \overline g^{-1}(\Supp R_T')$ such that $\overline g(D)$ is a divisor in $\overline T$ we have
\begin{equation}\label{eq:eeeq}
\mult_D R_Z'=\mult_D\overline{g}^*R'_T.
\end{equation}

Fix such a prime divisor $D$. Denote $Q:=\overline g(D)$ and $P:=\alpha(Q)$, and let $e_Q=\mult_Q \alpha^*P$. Then
\begin{align}\label{eq:ramify}
\mult_D\overline{g}^*R'_T&=\mult_D \overline{g}^*\big(\alpha^*P-(\alpha^*P)_\red\big)\\
&=(e_Q-1)\mult_D \overline{g}^*Q=e_Q-1,\notag
\end{align}
where the last equality follows since $\overline{g}^*Q$ is reduced over the generic point of $Q$ by the assumption on weak semistability.
Furthermore, by the commutativity of the base change diagram, we also have
$$\mult_D \overline \alpha^*g^*P=\mult_D\overline{g}^*\alpha^*P=e_Q \mult_D \overline{g}^*Q=e_Q.$$
Since $g^*P$ is reduced over the generic point of $P$ by \eqref{eq:cclaim}, this shows that the ramification index of $\overline \alpha$ along $D$ is $e_Q$, which together with \eqref{eq:ramify} gives
$$\mult_D R_Z=\mult_D\overline{g}^*R'_T.$$
To finish the proof of \eqref{eq:eeeq} and of (ii), by \eqref{ram2f} we only need to show that $D\not\subseteq\Supp\overline \alpha^*\Delta_{Z,v}$. Assume otherwise: then $Q\subseteq \Supp\alpha^*B_T$ by \eqref{eq:fff1f}, hence $Q$ would not be a component of $R_T'$ by the construction of $R_T'$ in Step 1, a contradiction.
\end{proof}

\begin{pro}\label{pro:isomfibres}
Let $f\colon(X,\Delta)\rightarrow Y$ be a klt-trivial fibration of normal projective varieties with $X$ $\mathbb{Q}$-factorial. Assume $\Delta$ effective over the generic point of $Y$ and $\Delta-f^*B_f\geq0$.
Assume that $Y$ is an Ambro model and $M_f$ is semiample, let $\phi\colon Y\to V$ be the fibration induced by $M_f$.
Let $Y_r$ be the set of points $x\in Y$ such that $f^{-1}x$ is reduced.
Then there are a non empty open set $V_0\subseteq V$, an open subset $Y_0\subseteq Y$ with complement of codimension at least 2 and a set $I(Y)\supseteq \phi^{-1}V_0\cap Y_0\cap Y_r$ with the following property: for every
 $x_1,x_2\in I(Y)$ such that $\phi(x_1)=\phi(x_2)$, if $(F_i,\Delta_i)$ is the fibre over $x_i$ with $\Delta_i=\Delta^h\vert_{F_i}$,
then $(F_1,\Delta_1)\cong (F_2,\Delta_2)$.
\end{pro}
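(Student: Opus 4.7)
I would first apply Theorem \ref{ambro1} together with Lemma \ref{lem:excvert} to obtain the diagram
\[
\xymatrix{
(X,\Delta)\ar[d]_f && (X^+,\Delta^+)\ar[d]^{f^+}\\
Y\ar[d]_{\phi} & \widetilde Y\ar[l]^{\vartheta}\ar[r]_{\chi} & Y^+\ar[dll]^{\lambda}\\
V & &
}
\]
with $\vartheta$ generically finite and surjective, $\chi$ surjective, $\vartheta^\ast M_f=\chi^\ast M_{f^+}$, $M_{f^+}$ big, $\Exc(\vartheta)$ vertical for $\phi\circ\vartheta$, and $\lambda\colon Y^+\to V$ generically finite. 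Since $M_{f^+}$ is big and semiample, the Iitaka fibration $\phi^+\colon Y^+\to V^+$ induced by $M_{f^+}$ is birational, and by the construction of $\lambda$ the identity $\phi\circ\vartheta=\lambda\circ\chi$ holds on a dense open of $\widetilde Y$. Consequently, for general $v\in V$ and each $y\in\lambda^{-1}(v)$, the $\chi$-fibre $\chi^{-1}(y)$ is mapped by $\vartheta$ generically finitely onto an irreducible component of $\phi^{-1}(v)$, and these images cover a dense subset of $\phi^{-1}(v)$ as $y$ ranges over $\lambda^{-1}(v)$.

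I would then choose $V_0\subseteq V$ a non-empty open over which $\lambda$ is étale, $\chi$ is flat with irreducible reduced fibres of the expected dimension, $\vartheta$ is finite, every $\chi$-fibre over $\lambda^{-1}(V_0)$ meets the open $U\subseteq\widetilde Y$ of Theorem \ref{ambro1}(iii) in a dense open, and the $\phi$-fibres are irreducible. Given $x_1,x_2\in\phi^{-1}(V_0)\cap Y_r$ with $\phi(x_1)=\phi(x_2)=v$, I pick $\tilde x_1\in\vartheta^{-1}(x_1)\cap U$ and set $y:=\chi(\tilde x_1)$. By Theorem \ref{ambro1}(iii) together with reducedness of the fibre at $x_1$, the pair $(F_1,\Delta_1)$ is isomorphic to $\bigl((f^+)^{-1}(y),(\Delta^+)^h|_{(f^+)^{-1}(y)}\bigr)$. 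If $x_2$ lies in the dense subset $\vartheta(U\cap\chi^{-1}(y))\subseteq\phi^{-1}(v)$, the same argument applied to a lift $\tilde x_2\in U\cap\chi^{-1}(y)$ yields the analogous isomorphism for $x_2$, so $(F_1,\Delta_1)\cong(F_2,\Delta_2)$.

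To cover arbitrary $x_2$ in a big open of $Y$, I would apply Proposition \ref{ambro2} to the restriction of $(X,\Delta)$ to a normalisation of $\vartheta(\chi^{-1}(y))\subseteq Y$, viewed as a klt-trivial fibration over this base (with the discriminant behaviour under the implicit base change controlled by Lemma \ref{lem:reducedfibres}). The isomorphism of the previous step gives a trivialisation over a non-empty open, and the hypothesis $\Delta-f^\ast B_f\geq 0$ forces $f(\Supp\Delta_v^{<0})\subseteq\Supp B_f$, so Proposition \ref{ambro2} extends the trivialisation to the complement of a codimension-$\geq 2$ locus in $\vartheta(\chi^{-1}(y))$. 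Taking $Y_0$ to be the intersection of the resulting extension loci for $y$ varying over $\lambda^{-1}(V_0)$ and setting $I(Y)=\phi^{-1}(V_0)\cap Y_0\cap Y_r$ yields the required statement. The main obstacle is precisely this last extension step: Theorem \ref{ambro1}(iii) trivialises the family only over $U\subseteq\widetilde Y$, and moving this trivialisation down through $\vartheta$ and then extending it to a big open of $Y$ requires the careful choice of $\widetilde Y$ afforded by Lemma \ref{lem:excvert} combined with Proposition \ref{ambro2}.
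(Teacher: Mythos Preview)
Your overall strategy and toolkit are the same as the paper's (Theorem~\ref{ambro1}, Lemma~\ref{lem:excvert}, Proposition~\ref{ambro2}, Lemma~\ref{lem:reducedfibres}, plus the reducedness hypothesis), but the extension step in your third paragraph contains a genuine gap.

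You propose to apply Proposition~\ref{ambro2} to the restriction of $f$ to the normalisation of each individual subvariety $\vartheta(\chi^{-1}(y))\subseteq Y$, and then to take $Y_0$ as the intersection of the resulting extension loci as $y$ ranges over $\lambda^{-1}(V_0)$. This is problematic on two counts. First, Proposition~\ref{ambro2} is formulated for a klt-trivial fibration over a normal base with a trivialisation over an open subset; restricting $f$ over a single $\phi$-fibre does not immediately land you in that setup (the restricted discriminant and vertical boundary are not those of $f$, and Lemma~\ref{lem:reducedfibres} concerns a finite base change of the whole base, not a restriction to a closed subvariety). Second, even granting the first point, intersecting infinitely many extension loci indexed by a positive-dimensional parameter space gives no uniform bound on the codimension of the complement in $Y$.

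The paper avoids both issues by applying Proposition~\ref{ambro2} \emph{once, globally on $\widetilde Y$}, to the pulled-back fibration $\tilde f\colon(\widetilde X,\widetilde\Delta)\to\widetilde Y$. The obstruction to extending the isomorphism of Theorem~\ref{ambro1}(iii) is the fixed closed set $\tilde f\bigl(\Supp(\widetilde\Delta_v-\tilde f^*B_{\tilde f})^{<0}\bigr)$, and Lemma~\ref{lem:reducedfibres} (applied to the finite part of $\vartheta$) together with Lemma~\ref{lem:excvert} shows this set meets the preimage of a suitable $\widetilde V_0$ only in codimension~$\geq 2$. One then descends to $Y$ at the very end, via Lemma~\ref{redgen}: for $x\in Y_r$ the fibre of the base-changed family over any $\tilde x\in\vartheta^{-1}(x)$ is isomorphic to $f^{-1}(x)$. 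You invoke this last step only implicitly (``together with reducedness of the fibre''); it is worth citing Lemma~\ref{redgen} explicitly, since it is precisely what lets you identify $(F_i,\Delta_i)$ with a fibre of the family over $\widetilde Y$. A further minor point: your assumption that $\phi$-fibres are irreducible is unjustified; the paper sidesteps this by passing to the Stein factorisation $\tilde\phi\colon\widetilde Y\to\widetilde V$ of $\phi\circ\vartheta$, so that two lifts $\tilde x_1,\tilde x_2$ with the same $\tilde\phi$-image can be compared directly.
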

\begin{proof}
We apply Theorem \ref{ambro1} and find $\vartheta$ and $\chi$ and a diagram such that $\vartheta^*M_f=\chi^*M_{f^+}$.
In particular both $\vartheta^*M_f$ and $M_{f^+}$ are semiample. After passing to the Stein factorisation we can assume that $\chi$ has connected fibres.
Let $\widetilde X$ be the main component of the normalisation of $X\times_Y\widetilde Y$ with the natural morphism $\tau\colon \widetilde X\to X$ and let $\widetilde \Delta$ be defined by $K_{ \widetilde X}+\widetilde \Delta=\tau^*(K_X+\Delta)$.
By Theorem \ref{ambro1} there is an open set $\widetilde U\subseteq\widetilde Y$ and an isomorphism
$$(\widetilde X,\widetilde \Delta)\times_{\widetilde Y}\widetilde U\cong (X^+,\Delta^+)\times_{Y^+} \widetilde U .$$ 
By Proposition \ref{ambro2}, the isomorphism extends to 
$$(\widetilde X,\widetilde \Delta-\tilde f^*B_{\tilde f})\times_{\widetilde Y} \widetilde Y_0\to (X^+, \Delta^+)\times_{Y^+} \widetilde Y_0$$
with $\widetilde Y_0=\widetilde Y\setminus \tilde f(\Supp (\widetilde \Delta_v-\tilde f^* B_{\tilde f})^{<0})$.

There is a diagram 
$$
\xymatrix{
Y\ar[d]_{\phi} & \widetilde Y\ar[d]^{\tilde\phi} \ar[l]_{\vartheta}\ar[r]^{\chi}&Y^+\\
V& \widetilde V \ar[l]^{\sigma}&
}
$$
where $\sigma\circ\tilde\phi$ is the Stein factorisation of $\phi\circ\theta$.


By Lemma \ref{lem:excvert} there is a generically finite map $\lambda\colon Y^+\to \widetilde V$, and it is birational because $\tilde \phi$ is a fibration.

After passing to an open set $U^+$ of $Y^+$ we can assume that $\lambda$ is an isomorphism  
and let  $\widetilde V_0=\lambda(U^+)$.

By Lemma \ref{lem:excvert} and Lemma \ref{lem:reducedfibres}
after possibly shrinking $\widetilde V_0$ further, we can assume that the complement of 
  $\tilde f(\Supp (\widetilde \Delta_v-\tilde f^* B_{\tilde f})^{<0})\cap \tilde\phi^{-1}\widetilde V_0$ has codimension at least 2 in $\tilde\phi^{-1}\widetilde V_0$.

 Then, for $\tilde x_1,\tilde x_2\in \tilde\phi^{-1}\widetilde V_0$ in the same fibre of $\tilde\phi$, the two corresponding fibres are isomorphic, together with the boundaries.
 
Let $x_1,x_2\in Y_r\cap \vartheta^{-1}\tilde\phi^{-1}\widetilde V_0=Y_r\cap \phi^{-1}\sigma^{-1}\widetilde V_0$.
  If $\phi(x_1)=\phi(x_2)$, then there are $\tilde x_1,\tilde x_2\in \phi^{-1}\widetilde V_0$ such that $\theta(\tilde x_i)=x_i$ and $\tilde\phi(\tilde x_1)=\tilde\phi(\tilde x_2)$. 
  By Lemma \ref{redgen}, the restriction of $\tau$ to $\tilde f^{-1}(\tilde x_i)$ is an isomorphism, concluding the proof.
 \end{proof}

We also need the following \cite[Theorem 3.5]{Amb05a}; see also \cite[Theorem 1.2]{Flo14} for a sharper version.

\begin{thm}\label{thm:torsionAmbro}
Let $f\colon(X,\Delta)\to Y$ be a klt-trivial fibration, and assume that the moduli part $M_Y$ descends to $Y$. If $M_Y\equiv0$, then $M_Y\sim_\Q 0$.
\end{thm}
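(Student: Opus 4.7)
The plan is to reduce the statement to the case of maximal variation via Theorem \ref{ambro1}, and then exploit the incompatibility of bigness with numerical triviality in positive dimension. Applying Theorem \ref{ambro1} to $f$ yields a diagram with $\vartheta\colon\widetilde Y\to Y$ generically finite surjective, $\chi\colon\widetilde Y\to Y^+$ surjective, and a klt-trivial fibration $f^+\colon(X^+,\Delta^+)\to Y^+$ whose moduli part $M_{f^+}$ is big; after possibly a birational base change---harmless because $M_Y$ descends to $Y$ and the final conclusion will push back via $\pi_*\pi^*=\id$ for birational $\pi$---part (iv) provides the identity $\vartheta^*M_f=\chi^*M_{f^+}$ of $\Q$-Cartier divisors.

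Pulling back the hypothesis $M_f\equiv 0$, I deduce $\chi^*M_{f^+}=\vartheta^*M_f\equiv 0$ on $\widetilde Y$. Since $\chi$ is surjective, numerical equivalence descends: for any irreducible curve $C\subseteq Y^+$ one chooses a curve $\widetilde C\subseteq \widetilde Y$ dominating $C$, and the projection formula gives $0=\chi^*M_{f^+}\cdot \widetilde C=\deg(\widetilde C\to C)\cdot M_{f^+}\cdot C$, hence $M_{f^+}\cdot C=0$. Therefore $M_{f^+}\equiv 0$ on $Y^+$. Now volumes depend only on the numerical class, so $M_{f^+}\equiv 0$ forces $\vol(M_{f^+})=0$, whereas bigness forces $\vol(M_{f^+})>0$. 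The only way out is $\dim Y^+=0$, in which case $M_{f^+}=0$ identically. Then $\vartheta^*M_f=\chi^*M_{f^+}=0$, so in particular $\vartheta^*M_f\sim_\Q 0$, and the projection formula yields $\deg(\vartheta)\cdot M_f=\vartheta_*\vartheta^*M_f\sim_\Q 0$, whence $M_f\sim_\Q 0$.

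The genuine content is the Ambro reduction to the big-moduli fibration $f^+$ provided by Theorem \ref{ambro1}; once that is available, the rest of the argument is formal. The main obstacle in producing a rigorous write-up is the bookkeeping surrounding the birational base change in part (iv): one must ensure that the identity $\vartheta^*M_f=\chi^*M_{f^+}$ is read on an appropriate higher model where both moduli parts descend, and then transport the conclusion back to $Y$ using the assumption that $M_Y$ descends to $Y$ together with Lemma \ref{lem:gfiniteAm} for the generically finite part of the base change. The surjectivity argument for numerical descent also requires $Y^+$ to be projective, which is guaranteed by the setup of Theorem \ref{ambro1}.
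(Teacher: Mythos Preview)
The paper does not give its own proof of this statement; it simply cites \cite[Theorem 3.5]{Amb05a} (with a sharper version in \cite[Theorem 1.2]{Flo14}). Your argument is correct and is in fact precisely Ambro's original proof: reduce via Theorem~\ref{ambro1} to a fibration $f^+$ with big moduli part, observe that $M_{f^+}$ is then both big and numerically trivial, forcing $\dim Y^+=0$, and pull back. The caveats you flag about reading $\vartheta^*M_f=\chi^*M_{f^+}$ on a suitable higher model and then descending via generic finiteness are exactly the ones handled in \cite{Amb05a}, so nothing is missing.
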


\subsection{Variation of a klt-trivial fibration}
In this section we give the definition and some properties of the variation of a fibration. 
For the original definition with $\Delta=0$ and some further discussion of the properties see \cite{Vieh83, Kol87, Fuj03}.

\begin{dfn}\label{def:var}
Let $(X,\Delta)$ be a pair and 
 let $f\colon(X,\Delta)\to Y$ be a fibration. We define the variation of $f$, denoted by $Var(f)$ as
 $$\min\left\{  \dim Y^+\left\vert 
 \begin{array}{l}
   \exists \vartheta\colon\widetilde Y\to Y\text{ generically finite}\\
 \exists \chi\colon\widetilde Y\to Y^+, f^+\colon(X^+,\Delta^+)\to Y^+\text{ fibrations}\\
 \text{such that the fibration induced by}\, f,\vartheta \text{ by  fibre}\\
 \text{product and the fibration induced by}\, f,\chi \text{ by  fibre}\\
 \text{product are birational over }\widetilde Y.
 \end{array}
 \right. \right\}$$
\end{dfn}

The following is a generalisation of \cite[Theorem 3.8]{Fuj03} to the case $\Delta\neq 0$, the proof is essentially the same.
\begin{pro}\label{pro:fujinovar}
 Let $f\colon(X,\Delta)\to Y$ be an lc-trivial fibration, assume $Y$ is an Ambro model. Then $\kappa(M_f)\leq Var(f)$.
\end{pro}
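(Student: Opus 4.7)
The plan is to exploit the defining diagram of $\Var(f)$ together with the base change property of the canonical bundle formula, and then to use that Kodaira dimension is preserved under generically finite pullbacks and cannot exceed the dimension of the target of a fibration when pulled back.

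First I would fix a diagram realizing the variation: by Definition \ref{def:var} there exist a generically finite surjection $\vartheta\colon \widetilde Y\to Y$, a fibration $\chi\colon \widetilde Y\to Y^+$ with $\dim Y^+=\Var(f)$, and an lc-trivial fibration $f^+\colon(X^+,\Delta^+)\to Y^+$ such that the base changes
$$(X,\Delta)\times_Y\widetilde Y \quad\text{and}\quad (X^+,\Delta^+)\times_{Y^+}\widetilde Y$$
are crepant birational over $\widetilde Y$. After replacing $\widetilde Y$ by a higher birational model, I may assume that $\widetilde Y$ is an Ambro model for both base-changed fibrations, and that $Y^+$ is an Ambro model for $f^+$ (using Lemma \ref{lem:gfiniteAm} and Theorem \ref{nefness}); in particular $M_{f^+}$ is a nef $\Q$-Cartier divisor on $Y^+$.

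Next I would compare moduli parts. By Remark \ref{bbir}, the two crepant birational base changes have the same moduli part on $\widetilde Y$, call it $M_{\widetilde Y}$. Applying base change to each factor of the diagram, Theorem \ref{nefness} and Lemma \ref{lem:gfiniteAm} give
$$\vartheta^* M_f = M_{\widetilde Y} = \chi^* M_{f^+},$$
possibly after one more birational modification of $\widetilde Y$.

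Finally, I would chain three standard inequalities on Kodaira dimensions. Since $\vartheta$ is generically finite and surjective, $\kappa(M_f)=\kappa(\vartheta^* M_f)$. Since $\chi$ is a fibration onto $Y^+$, any effective divisor in $|m\chi^* M_{f^+}|$ is the pullback of a divisor in $|mM_{f^+}|$, so $\kappa(\chi^* M_{f^+})=\kappa(M_{f^+})$. And of course $\kappa(M_{f^+})\leq \dim Y^+=\Var(f)$. Combining these gives
$$\kappa(M_f)=\kappa(\vartheta^* M_f)=\kappa(\chi^* M_{f^+})=\kappa(M_{f^+})\leq \Var(f),$$
as desired.

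I expect the main subtlety, rather than an outright obstacle, to be the bookkeeping in Step 2: to identify $\vartheta^* M_f$ with $\chi^* M_{f^+}$ one must descend the equality $M_{\widetilde Y}=\vartheta^*M_f=\chi^*M_{f^+}$ to well-defined nef $\Q$-Cartier representatives on $Y$ and $Y^+$, which is where the hypothesis that $Y$ is an Ambro model (and the invocation of Theorem \ref{nefness} and Lemma \ref{lem:gfiniteAm} to make $Y^+$ one) is really used. The rest is formal.
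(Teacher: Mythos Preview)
Your proposal is correct and follows essentially the same approach as the paper: pick $\vartheta,\chi$ realizing $\Var(f)$, pass to higher models to obtain $\vartheta^*M_f=\chi^*M_{f^+}$, and conclude $\kappa(M_f)=\kappa(M_{f^+})\le\dim Y^+=\Var(f)$. The paper's proof is a two-line version of yours; your additional bookkeeping (invoking Remark~\ref{bbir}, Theorem~\ref{nefness}, and Lemma~\ref{lem:gfiniteAm} to arrange Ambro models and identify the moduli parts) simply spells out what the paper's phrase ``after perhaps passing to higher models of $\widetilde Y$ and $Y^+$'' is doing.
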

\begin{proof}
 Thet $\vartheta,\chi$ be such that $Var(f)=transdeg_k k(Y^+)$.
 Then, after perhaps passing to higher models of $\widetilde Y$ and $Y^+$, we have $\theta^*M_f=\chi^*M_{f^+}$.
 Therefore $\kappa(M_f)=\kappa(M_{f^+})\leq\dim Y^+=Var(f)$.
 
\end{proof}

\begin{pro}\label{pro:finvar}
 Let $(X,\Delta)$ be a pair with $X$ $\mathbb Q$-factorial and $\Delta\geq 0$ and with coefficients in $\mathbb Q$.
 Let $f\colon(X,\Delta)\to Y$ be a fibration such that $Var(f)=\dim Y$.
 Then there is a countable union $E$ of closed subsets of $Y$ and an open set $U\subseteq Y$ such that for every $y\in Y\setminus E$
 the set
 $$\{z\in U\vert\; (f^{-1}y,\Delta^h\vert_{f^{-1}y})\cong (f^{-1}z,\Delta^h\vert_{f^{-1}z})  \}$$
 is a finite set.
\end{pro}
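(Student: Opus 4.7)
My approach is moduli-theoretic: I parametrise the fibres of $f$ via a Hilbert-scheme construction and translate the maximality of the variation into generic finiteness of the resulting isomorphism relation. Shrink $Y$ to an open $U_0$ over which $f$ is flat and $N\Delta^h$ is a flat family of Weil divisors (clearing denominators, possible since $\Delta\ge 0$ is rational). Fixing a relatively ample $\mathcal{O}(H)$ on $X/U_0$ and embedding fibres into a fixed $\PS^M$ via a high power, we obtain a morphism $\mu\colon U_0\to \mathcal{H}\times \mathcal{H}'$, where $\mathcal{H},\mathcal{H}'\subseteq \Hilb(\PS^M)$ are the relevant Hilbert components. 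Two fibres $(F_y,\Delta^h|_{F_y})$ and $(F_z,\Delta^h|_{F_z})$ are isomorphic precisely when $\mu(y),\mu(z)$ lie in the same orbit of $G:=\PGL(M+1)$. Equivalently, consider the relative Isom scheme $I\to U_0\times U_0$ parametrising isomorphisms between the two pulled-back pairs (along the two projections); it is locally of finite type, hence has countably many irreducible components, and its set-theoretic image $R\subseteq U_0\times U_0$ is a countable union of locally closed subvarieties coinciding exactly with the isomorphism relation on fibres.

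The heart of the argument is the following claim: for every irreducible component $R_\alpha\subseteq R$ whose first projection $\pi_1\colon R_\alpha\to Y$ is dominant, $\pi_1$ is generically finite. Suppose to the contrary; then the general $G$-orbit $G\cdot \mu(y)\cap \mu(U_0)$ is positive-dimensional, so $\dim(\mu(U_0)/G)<\dim Y$. From this we construct a diagram as in Definition~\ref{def:var}: take $Y^+$ to be a quasi-projective model of the moduli image $\mu(U_0)/G$ (via a GIT or Chow-variety realisation of the $G$-orbits in $\mathcal{H}\times\mathcal{H}'$), let $\widetilde Y$ be an irreducible component of the fibre product $U_0\times_{\mu(U_0)/G} Y^+$ with its generically finite surjective projection $\vartheta\colon \widetilde Y\to Y$ and fibration $\chi\colon \widetilde Y\to Y^+$, and build $f^+\colon(X^+,\Delta^+)\to Y^+$ from the universal family over a suitable cover of $Y^+$. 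The birationality of $f\times_Y\widetilde Y$ and $f^+\times_{Y^+}\widetilde Y$ over $\widetilde Y$ then follows from the tautological isomorphism on $I$ pulled back to $\widetilde Y$. Since $\dim Y^+<\dim Y$, this contradicts $Var(f)=\dim Y$.

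To conclude, let $E$ be the union of $Y\setminus U_0$, the images $\pi_1(R_\alpha)$ for all components $R_\alpha$ with non-dominant first projection, and, for each dominant $R_\alpha$, the proper closed locus where $\pi_1|_{R_\alpha}$ fails to have finite fibres. This is a countable union of proper closed subsets of $Y$. Setting $U:=U_0$, for every $y\in Y\setminus E$ only finitely many $R_\alpha$ can contribute to $\{z\in U:(F_z,\Delta^h|_{F_z})\cong (F_y,\Delta^h|_{F_y})\}$ (bounded by the fixed Hilbert polynomials on $U_0$), and each contributes finitely many $z$; this gives the desired finiteness. The main obstacle is the construction in the second paragraph: precisely producing $Y^+$ and $f^+$ from the moduli image $\mu(U_0)/G$ so that the base-change factorisation of Definition~\ref{def:var} holds. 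This requires combining a GIT or Chow-variety realisation of the moduli image with a careful Stein-factorisation argument on the universal family over $I$, potentially invoking recent moduli-theoretic results for log Calabi--Yau pairs, since the general fibre $(F_y,\Delta^h|_{F_y})$ is klt with $K_{F_y}+\Delta^h|_{F_y}\sim_{\Q}0$.
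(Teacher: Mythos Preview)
Your strategy is natural, but the step you yourself flag as the ``main obstacle'' is a genuine gap, not a technicality. Producing a quasi-projective $Y^+$ together with a family $f^+\colon(X^+,\Delta^+)\to Y^+$ satisfying the base-change property of Definition~\ref{def:var} amounts to constructing a coarse moduli space for the fibres and a tautological family over (a generically finite cover of) it. For arbitrary polarised pairs this is delicate: GIT stability is not automatic, the orbit space $\mu(U_0)/G$ need not exist as a variety, and even when it does a family over it need not exist. Your fallback to ``recent moduli-theoretic results for log Calabi--Yau pairs'' does not apply here, since the proposition does not assume the fibration is klt-trivial.

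The paper sidesteps this entirely via a two-step argument that never builds a moduli space. For a \emph{fixed} $y$ with fibre $F=f^{-1}y$ and $D=k\Delta^h|_F$, it considers the quasi-projective scheme $\Emb\big((F,D),(X,k\Delta^h)\big)\subseteq\Hilb(F\times X)$ parametrising closed embeddings of the pair into $(X,k\Delta^h)$; after adjusting the polarisation so that $f\circ p_X$ contracts the fibres of the universal family, the rigidity lemma yields a morphism $\phi$ from this scheme to $Y$ whose image is exactly $\{z\in Y:(f^{-1}z,\Delta^h|_{f^{-1}z})\cong(F,\Delta^h|_F)\}$. Hence the set in question is the image of a scheme of finite type, so constructible. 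The second input is \cite[Theorem~2.6]{Kol87}, which already gives that for $y$ outside a countable union of proper closed subsets this set is at most countable; a countable constructible subset of a variety over $\C$ is finite. In short, the moduli-theoretic content you are trying to supply is exactly what is packaged into Koll\'ar's 1987 result, and the paper's contribution is the constructibility step upgrading ``countable'' to ``finite''.
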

\begin{proof}
 Set $F=f^{-1}y$.
 We fix a polarisation $A=p_F^*A_F+p_X^*A_X$ on $F\times X$, where $p_F$ and $p_X$ are the two projections.
 Then there is a quasi projective scheme $\Emb(F,X)\subseteq\Hilb(F\times X)$ representing the functor
 $\mathcal H ilb_P(F\times X)$ where $P$ is the Hilbert polynomial of the graph of  $F\to f^{-1}y\subseteq X$.
 There is also a universal family $u\colon \Univ(F,X)\to\Emb(F,X)$ and a diagram
 $$
 \xymatrix{
 \Univ(F,X)\ar[r]\ar[d]_u&F\times X\times \Emb(F,X)\ar[r]^>(.9){p_X}&X\ar[d]^f\\
 \Emb(F,X)&&Y.
 }
 $$
 After perhaps replacing the polarisation $A$ with $A+p_X^*f^*A_Y$ for a sufficiently ample divisor $A_Y$ on $Y$,
 all the fibres of $u$ are contracted by $f\circ p_X$.
 By the rigidity lemma there is $\phi\colon \Emb(F,X)\to Y$.
 Its image is $\{z\in Y\vert\; f^{-1}z\cong F\}$.
 
 Let $k\in\mathbb N$ be such that $k\Delta^h$ is a Cartier divisor. Set $D=k\Delta^h\vert_F$.
 Then there is a locally closed subscheme
 $\Emb\left((F,D),(X,\mathcal D)\right)$ of $\Emb(F,X)$ representing the functor $\mathcal{E}mb\left((F,D),(X,\mathcal D)\right)$
 with
 $$\mathcal E mb\left((F,D),(X,\mathcal D)\right)(Z)=\left\{
 \begin{array}{l}
  Z-\text{morphisms }\varphi\colon F\to X,\psi\colon D\to\mathcal D\\
  \text{flat over }Z\\
  \text{such that }\varphi\circ i=j\circ\psi, \varphi\text{ embedding}
 \end{array}
\right\}$$
 together with a universal family $$u\colon\Univ\left((F,D),(X,\mathcal D)\right)\to\Emb\left((F,D),(X,\mathcal D)\right).$$
 Therefore $$\{z\in U\vert\; (f^{-1}y,\Delta^h\vert_{f^{-1}y})\cong (f^{-1}z,\Delta^h\vert_{f^{-1}z})  \}=\phi(\Emb\left((F,D),(X,\mathcal D)\right))$$
 is the image of an algebraic set.
 By \cite[Theorem 2.6]{Kol87} for $y$ in the complement of a countable union of closed sets in $Y$
 the left hand side is at most countable. Therefore it is a finite set.
\end{proof}

\section{Semiample line bundles on simple normal crossings surfaces}\label{def:snc}

In this section we establish a criterion of semiampleness of certain line bundles on simple normal crossings surfaces.
For later use and for this section we introduce different notions of simple normal crossings varieties

\bigskip

 Let $\mathcal{Z}$ be a variety with irreducible components $\{Z_i : i \in I\}$. Assume that $\dim Z_i=k$ for every $ i \in I$. 
We say that $\mathcal{Z}$ is a \textit{simple normal crossing
variety}  \cite[Definition 6]{Kol14} if the $Z_i$ are smooth and every point $p\in \mathcal Z$ has an
open (Euclidean) neighborhood $p\in U_p\subseteq \mathcal Z$ and an embedding $U_p\to \mathbb C^{k+1}$
such 
that the image of $U_p$ is an open subset of the union of coordinate hyperplanes
$(z_1\cdot\ldots\cdot z_n = 0)$ with $n\leq k+1$.
A stratum of $\mathcal{Z}$  is any irreducible component of an intersection $\cap_{i\in J} Z_i$ for some
$J\subseteq I$.

Assume now $\mathcal{Z}=\cup_k \mathcal{Z}^{(k)}$ where $\mathcal{Z}^{(k)}$ is the union of irreducible components of dimension $k$.
We say that $\mathcal{Z}$ is a \textit{simple normal crossing
variety}   if $\mathcal{Z}^{(k)}$ is simple normal crossings in the above sense for every $k$ and for every $k$, for every stratum $Z$ of 
$\mathcal{Z}_{k-1}=\cup_{j<k}\mathcal{Z}^{(j)}$ we have that $\mathcal{Z}^{(k)}$ does not contain $Z$ and every 
point $p\in \mathcal Z^{(k)}\cap Z$ has an
open (Euclidean) neighborhood $p\in U_p\subseteq \mathcal Z^{(k)}\cup Z$ and an embedding $U_p\to \mathbb C^{k+1}$
such that he image of $U_p$ is an open subset of the union of coordinate hyperplanes
$(z_1\cdot\ldots\cdot z_{n} = 0)$ and the image of $Z$ is an open subset of $(z_{n+1}=\ldots=z_m=0)$ with $n<m\leq k+1$ \cite[Definition 3.24]{Kol07b}.

Finally, a curve is said to be \textit{seminormal} if every point has a neighbourhood if and only if it is analytically isomorphic to
the union of the n coordinate axes in $\mathbb A^n$ \cite[Example 10.12]{Kol13}.


\begin{rem}
 If $\mathcal{Z}\subseteq Y$ has pure codimension 1 and is a simple normal crossings variety, then it is a simple normal crossings divisor.
 
 If $\mathcal{Z}$ is a connected simple normal crossings variety of pure dimension 1, then it is a semistable curve.

\end{rem}

A divisor on a simple normal crossings variety can be recovered from its restrictions to its irreducible components plus a gluing condition on the intersections:

Let $\mathcal{Z}$ be a simple normal crossings variety of pure dimension $k$.
A divisor on $\mathcal{Z}$ is the data of a divisor $D_Z$ on every irreducible component $Z$ of $\mathcal{Z}$
with the property that, if $Z_1$ and $Z_2$ are two irreducible components of $\mathcal{Z}$, then $D_{Z_1}\vert_{Z_1\cap Z_2}=D_{Z_2}\vert_{Z_1\cap Z_2}$.

\bigskip

From now on, we assume that $\mathcal{S}$ is a simple normal crossings variety of pure dimension 2. We refer to $\mathcal{S}$ as a simple normal crossings surface.

\begin{lem}\label{lem:surf1}
 Let $\mathcal{S}$ be a connected simple normal crossings surface. 
 Assume that there is an integral curve $Q$ and a surjective morphism with connected fibres $\varphi\colon \mathcal{S}\to Q$, that for every $S\subseteq\mathcal S$ irreducible component
 $\varphi(S)$ is an irreducible curve.
 For an irreducible component $S$ of $\mathcal S$, we denote by $\varphi\vert_S\colon S\overset{f_S}{\rightarrow} C(S)\to Q $ the Stein factorisation. 
 Let $D$ be an effective divisor on $\mathcal{S}$ such that $\varphi(\Supp D)\subseteq Q^{\text{smooth}}$.
 Then there is a positive integer $m$ such that $mD$ is the pullback of a Cartier divisor in $Q$ if and only if for every irreducible component $S$ of $\mathcal S$
 there is a positive integer $d$ such that
 the restriction of $dD$ to $S$ is the pullback of a divisor in $C(S)$.
\end{lem}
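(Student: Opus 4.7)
The direction ($\Rightarrow$) is immediate: if $mD=\varphi^*E$ then on each component $S$ we have $mD|_S=(\varphi|_S)^*E=f_S^*(\pi_S^*E)$, where $\pi_S\colon C(S)\to Q$ is the second map of the Stein factorisation. For the converse, pick a common $d$ so that $dD|_S=f_S^*E_S$ for every $S$. Since each $E_S$ is a Weil divisor on the curve $C(S)$, the set $\varphi(\Supp D)$ is finite and contained in $Q^{\text{smooth}}$, and it is enough to prove that for each $P\in\varphi(\Supp D)$ there exist a Zariski neighbourhood $U$ of $P$ in $Q$ with $\varphi(\Supp D)\cap U=\{P\}$ and a rational number $\rho_P$ such that $dD=\rho_P\,\varphi^*P$ on $\varphi^{-1}(U)$; clearing denominators over all such $P$ then produces the desired $m$ and $E$.

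Fix $P$ and $U$ as above. Writing $dD|_S=\sum_{c\in\pi_S^{-1}(P)}a^S_c\,f_S^*c$ and $\varphi^*P|_S=\sum_{c\in\pi_S^{-1}(P)}e_c\,f_S^*c$ with $e_c=\mult_c\pi_S^*P$, the sought equality over $\varphi^{-1}(U)$ becomes: the ratio $a^S_c/e_c$ is the same rational number for every pair $(S,c)$ with $\pi_S(c)=P$. To establish this I introduce the graph $\Gamma_P$ whose vertices are these pairs and in which $(S,c)\sim(S',c')$ whenever a curve $L\subseteq S\cap S'$ contains a point $x$ with $\varphi(x)=P$, $f_S(x)=c$, $f_{S'}(x)=c'$. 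I then prove (i) that each edge of $\Gamma_P$ preserves the ratio and (ii) that $\Gamma_P$ is connected; together these yield the common value $\rho_P$.

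For (i) I split on whether $L$ is horizontal or vertical with respect to $\varphi$. In the horizontal case $L\not\subseteq\Supp D|_S$, so the compatibility in the definition of a divisor on $\mathcal S$ provides an equality of $0$-cycles $D|_S\cdot L=D|_{S'}\cdot L$; computing the coefficient at $x$ via $g_S=f_S|_L$ and $g_{S'}=f_{S'}|_L$ and converting their ramifications $e_x^{g_S},e_x^{g_{S'}}$ into that of $\varphi|_L$ at $x$ by means of $\pi_S\circ g_S=\varphi|_L=\pi_{S'}\circ g_{S'}$, the relation $a^S_c/e_c=a^{S'}_{c'}/e_{c'}$ drops out after cancelling the common factor. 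In the vertical case $L$ is a component of $\varphi^{-1}(P)$ lying in $S\cap S'$; equating the multiplicity of $L$ in $dD|_S$ with that in $dD|_{S'}$, and dividing by the analogous equality for $\varphi^*P$ (whose multiplicity along $L$ is well-defined since $\varphi^*P$ is Cartier), yields the same relation. For (ii) I use that $\varphi$ has connected fibres, so $\varphi^{-1}(P)$ is connected; every vertex $(S,c)$ of $\Gamma_P$ is realised by at least one irreducible component of $\varphi^{-1}(P)$ (a component of $f_S^{-1}(c)$), two such components sharing a point either lie in the same $S$, hence in the same fibre of $f_S$, so correspond to the same vertex, or lie in distinct $S,S'$ with their common point on some curve of $S\cap S'$, providing an edge of $\Gamma_P$, and connectedness of $\varphi^{-1}(P)$ therefore propagates to $\Gamma_P$. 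The main obstacle I anticipate is the horizontal case of (i): it forces one to read the compatibility condition in the definition of a divisor on $\mathcal{Z}$ as an equality of genuine restrictions to any curve of $Z_1\cap Z_2$, not merely a matching of multiplicities along shared $1$-dimensional components, because without this strengthening horizontal intersection curves contribute no relations and $\Gamma_P$ can disconnect in configurations where two components of $\mathcal S$ meet only along horizontal curves.
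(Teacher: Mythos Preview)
Your argument is correct and follows essentially the same strategy as the paper: both exploit connectedness of the fibre $\varphi^{-1}(P)$ to propagate a single proportionality constant across all components of $\mathcal S$, using the compatibility condition built into the paper's definition of a divisor on a simple normal crossings variety at each step. You package this as connectedness of a graph $\Gamma_P$ on pairs $(S,c)$ whose edges preserve the ratio $a^S_c/e_c$; the paper instead fixes one component $S$ and one $f_S$-fibre over $p_1$, chooses $\alpha$ so that $D-\alpha\varphi^*p_1$ vanishes on that fibre, and then runs a chain argument through the connected set $\varphi^{-1}(p_1)$ to show $D-\alpha\varphi^*p_1$ vanishes on all of it, finishing by induction on $|\varphi(\Supp D)|$. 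Your graph-connectedness step (ii) and the paper's chain $S_0,\dots,S_N$ are the same device in different language.

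The one place where the paper's ``subtract first'' formulation buys a small simplification is your vertical case: after subtracting, the paper only needs that if a double curve $L\subseteq S_i\cap S_{i+1}$ has coefficient zero in $(D-\alpha\varphi^*p_1)|_{S_i}$ then it also has coefficient zero in $(D-\alpha\varphi^*p_1)|_{S_{i+1}}$, which is immediate from the compatibility condition, whereas you compare the two nonzero multiplicities of $L$ directly and divide by the analogous relation for $\varphi^*P$. Both are legitimate under the paper's definition of a divisor on $\mathcal Z$, which in the proof is used precisely as a matching of coefficients along common components; your flagged worry about the horizontal case is likewise resolved by that definition, since it does require $D_S|_{S\cap S'}=D_{S'}|_{S\cap S'}$ as genuine restrictions.
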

\begin{proof}

 If there is a positive integer $m$ such that $mD$ is the pullback of a divisor in $Q$, then the statement on the restrictions of $D$ to the irreducible components of $\mathcal{S}$
 is obvious.
 
 We assume now that for every irreducible component $S$ of $\mathcal S$
 there is a positive integer $d$ such that
 the restriction of $dD$ to $S$ is the pullback of a divisor in $C$.
 
 By hypothesis, there are $p_1,\ldots,p_k$ in the smooth locus of $Q$ such that the support of $D$ is contained in $\varphi^{-1}\{p_1,\ldots,p_k\}$.
 We prove the statement by induction on $k$.
 If $k=0$, there is nothing to prove.
 Assume now that the statement holds for $k-1$.
 Let $Q_1$ be the irreducible component of $Q$ such that $p_1\in Q_1$.
 Let $S$ be an irreducible component of $\mathcal{S}$ such that $\varphi(S)=Q_1$.
 We set $D\vert_S=\sum_\ell\sum_j a_{\ell,j}F_{\ell,j}$ where for every $\ell$ the union $\cup_j F_{\ell,j}$ is a connected component of $\Supp D\vert_S$.
 Without loss of generality, we can assume that $\cup_j F_{1,j}$ is contained in $\varphi^{-1}p_1$.
 Let $\alpha$ be such that $\varphi^*(\alpha p_1)\vert_S=\sum_j a_{1,j}F_{1,j}+\sum_h\sum_j b_{h,j}F_{h,j}$.
 We want to prove that $\Supp D-\varphi^*(\alpha p_1)\subseteq\varphi^{-1}\{p_2,\ldots,p_k\}$.
 
 Assume that this is not the case, that is, assume that there is $S'$ and an irreducible component $F$ of $\varphi^{-1}p_1\cap S'$ such that $\text{coeff}_F(D-\varphi^*(\alpha p_1))$
 is not zero.
 The fibre $\varphi^{-1}p_1$ is connected, thus there are $S=S_0, S_1,\ldots,S_N=S'$ and for every $i$ a subvariety $\cup_j F^i_j$ of $S_i$ 
 and a point $q_i\in S_i$ with the following properties:
 \begin{itemize}
  \item $\cup_j F^i_j$ is the support of a fibre of $f_{S_i}$;
  \item $\cup_j F^0_j=\cup_j F_{1,j}$;
  \item $F\subseteq \cup_j F^N_j$;
  \item $q_i\in (\cup_j F^i_j)\cap (\cup_j F^{i+1}_j)$.
 \end{itemize}
We have that for every $j$ 
$$\text{coeff}_{F^0_j} (D-\varphi^*(\alpha p_1))=0\;\;\;\;\text{and}\;\;\;\; \text{coeff}_{F^N_j}( D-\varphi^*(\alpha p_1))\neq 0.$$
Then there is $i$ such that for every $j$ 
$$\text{coeff}_{F^i_j} (D-\varphi^*(\alpha p_1))=0\;\;\;\;\text{and}\;\;\;\; \text{coeff}_{F^{i+1}_j} (D-\varphi^*(\alpha p_1))\neq 0.$$
This is a contradiction as $D-\varphi^*(\alpha p_1)$ is a divisor on $\mathcal S$ and $D-\varphi^*(\alpha p_1)\vert_{S_i}$ but 
 $D-\varphi^*(\alpha p_1)\vert_{S_{i+1}}$ do not coincide on the intersection $S_i\cap S_{i+1}$.
\end{proof}

\begin{lem}\label{lem:surf2}
 Let $\mathcal{S}$ be a connected simple normal crossings surface. 
 Assume that there is a seminormal curve $Q$ and a surjective morphism with connected fibres $\varphi\colon \mathcal{S}\to Q$ and that for every $S\subseteq\mathcal S$ irreducible component
 $\varphi(S)$ is an irreducible curve.
 Let $\mathcal L$ be a line bundle on $\mathcal{S}$ such that 
 for every fibre $F$ of $\varphi$ the restriction $\mathcal L\vert_{(F)_{\text{red}}}$ has a nowhere vanishing section.
 Then there is a positive integer $m$ and a line bundle $\mathcal M$ on $Q$ such that $\mathcal L^{\otimes m}\sim\varphi^*\mathcal M$.
\end{lem}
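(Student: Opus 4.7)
The plan is to reduce to Lemma \ref{lem:surf1} by producing, for some positive integer $m$, an effective divisor $D$ on $\mathcal S$ such that $\mathcal O_{\mathcal S}(D)\cong \mathcal L^{\otimes m}$, $\varphi(\Supp D)\subseteq Q^{\text{smooth}}$, and $D|_S$ is a pullback from $C(S)$ for every irreducible component $S$ of $\mathcal S$.

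First I would analyse each irreducible component $S$ separately. Consider its Stein factorisation $\varphi|_S=\tau_S\circ f_S$ with $f_S\colon S\to C(S)$ having connected fibres. For a scheme-theoretic fibre $F$ of $\varphi$ one has the identity $(F\cap S)_{\text{red}}=F_{\text{red}}\cap S$; the hypothesis therefore gives that $\mathcal L|_S$ is trivial on every reduced fibre of $f_S$, and in particular $\mathcal L|_S$ has degree zero on every fibre of $f_S$.

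The main technical step is then to upgrade this fibrewise triviality into a linear equivalence: find $m_S>0$ such that $\mathcal L|_S^{\otimes m_S}\sim f_S^*\mathcal M_S$ for some line bundle $\mathcal M_S$ on $C(S)$. By Zariski's lemma applied to the fibration $f_S$, $\mathcal L|_S^{\otimes m_S}$ differs from a pullback from $C(S)$ by a vertical divisor $V_S$ supported in reducible fibres of $f_S$. The hypothesis that $\mathcal L|_S$ (and not merely its class) is trivial on each reduced fibre then forces $V_S$ to itself be a pullback from $C(S)$ after a further power, and thus to be absorbed into $\mathcal M_S$.

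Setting $m=\mathrm{lcm}_S(m_S)$, each restriction $\mathcal L|_S^{\otimes m}$ is a pullback from $C(S)$. Twisting by $\varphi^*\mathcal A^{\otimes N}$ for $\mathcal A$ very ample on $Q$ (with a section whose zero divisor lies in $Q^{\text{smooth}}$) and taking a further power to kill the gluing obstructions coming from the SNC structure of $\mathcal S$ produces an effective global divisor $D$ satisfying the hypotheses of Lemma \ref{lem:surf1}. The main obstacle is the upgrade step, which requires careful handling of multiple and reducible fibres of $f_S$; the coherent gluing of componentwise representatives into a single global effective divisor on $\mathcal S$ is a secondary difficulty, handled by passing to a sufficiently high common power of $\mathcal L$ and by exploiting the seminormality of $Q$.
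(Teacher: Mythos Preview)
Your overall plan to reduce to Lemma \ref{lem:surf1} is sound and matches the paper's strategy, but the ``upgrade step'' has a genuine gap. Zariski's lemma \cite[Lemma 8.2]{BPV} does not say that a line bundle of degree zero on fibres differs from a pullback by a vertical divisor: it is a statement about \emph{divisors already supported on fibres}, asserting that such a divisor with zero intersection against every fibre component is a $\Q$-multiple of full fibres. To invoke it you must first produce a vertical divisor representative of (a twist of) $\mathcal L|_S$, and this is precisely the step you have skipped. Your later twist by $\varphi^*\mathcal A^{\otimes N}$ comes too late in the argument; it is what is needed to manufacture the vertical divisor in the first place.

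The paper handles this by a pushforward argument, and does so not on each irreducible $S$ but on $\mathcal S_i$, the union of all components mapping to a fixed irreducible component $\overline Q_i$ of $Q$. After checking that the Stein factorisation $f_i\colon\mathcal S_i\to Q_i$ has $Q_i$ smooth (hence $f_i$ flat), one sees that $f_{i*}(\mathcal L^\vee|_{\mathcal S_i})$ has generic rank $1$ and, by semicontinuity, nowhere-zero stalks; twisting by an ample $A_i$ on $Q_i$ then yields a nonzero section of $\mathcal L^\vee\otimes f_i^*A_i$ on all of $\mathcal S_i$, hence an effective divisor $D_i$, vertical because the section is an isomorphism on the general fibre. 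Only now does Zariski's lemma apply (componentwise), and Lemma \ref{lem:surf1} upgrades this to $\mathcal O(D_i)\sim_\Q f_i^*\delta_i$. Working on $\mathcal S_i$ rather than on each $S$ also dissolves your ``secondary difficulty'': the section is already coherent across the components of $\mathcal S_i$, so no componentwise gluing obstructions arise, and the only remaining gluing is over the singular points of $Q$, handled directly via seminormality and the fibrewise-triviality hypothesis.
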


\begin{proof}

Let $Q=\cup\overline Q_i$ be the decomposition of $Q$ into irreducible components.
Let $\mathcal{S}_i$ be the union of the irreducible components $S$ of $\mathcal{S}$ such that $\varphi(S)=\overline Q_i$ and let $\varphi\colon \mathcal{S}_i\overset{f_i}{\longrightarrow} Q_i\overset{\nu_i}{\longrightarrow}\overline Q_i $ be the Stein factorisation. 
The morphism $\nu_i$ is birational and finite.
We prove that $Q_i$ is normal.
Indeed for every irreducible component $S_{i,j}$ of $\mathcal{S}_i$ the restriction of $f_i$ to 
$S_{i,j}$ factors through the normalisation $Q_i^{\nu}$ of $Q_i$ and there is $f_{i,j}\colon S_{i,j}\to Q_i^{\nu}$.
As $\mathcal{S}_i$ has simple normal crossings, the restriction of $f_i$ to $S_{i,h}\cap  S_{i,k}$
factors through $Q_i^{\nu}$.
Thus, if  and if $x\in S_{i,h}\cap  S_{i,k}$, we have $f_{i,h}(x)=f_{i,k}(x)$ and there is a morphism 
$f'_i\colon \mathcal{S}_i\to Q_i^{\nu}$. By the uniqueness of the Stein factorisation $Q_i=Q_i^{\nu}$.

Since it is a curve, it is a smooth projective curve and the morphism $f_i$ is flat.

\medskip

The sheaf $f_{i*}(\mathcal L^{\vee}\vert_{\mathcal{S}_i})$ has generically rank 1 on $Q_i$ and, by semicontinuity, all its stalks are non zero. 
 Let $A_i$ be an ample line bundle on $Q_i$. After possibly replacing $A_i$ with a multiple, we can assume that $f_{i*}(\mathcal L^{\vee})\otimes A_i$ is globally generated and has therefore a non-zero global section. 
 Moreover, by the projection formula, as $A_i$ is locally free, we have
 $$H^0(\mathcal S_i, \mathcal L^{\vee}\otimes f_i^*A_i)=H^0(Q_i, f_{i*}(\mathcal L^{\vee}\otimes f_i^*A_i))=H^0(Q_i, f_{i*}(\mathcal L^{\vee})\otimes A_i).$$
Then there is a non-zero global section $s\in H^0(\mathcal S_i, \mathcal L^{\vee}\otimes f_i^*A_i)$
inducing an isomorphism of line bundles $\mathcal L^{\vee}\otimes f_i^*A_i\sim\mathcal O(D_i)$ with $D_i$ an effective Cartier divisor on $\mathcal{S}_i$.
On the general fibre the morphism $\mathcal L\to f_i^*A_i$ is an isomorphism, therefore $D_i$ is supported on fibres of $f_i$.

\medskip

For an irreducible component $S$ of $\mathcal S_i$, we denote by $f_i\vert_S\colon S\overset{f_S}{\longrightarrow} C\to Q_i $ the Stein factorisation.
By Zariski's lemma \cite[Lemma 8.2]{BPV}, for every irreducible component $S$ of $\mathcal{S}_i$, the restriction $D_i\vert_S$ is proportional to fibres of $f_S$.
By Lemma \ref{lem:surf1}, the divisor $D_i$ is proportional to fibres of $f_i$ and $\mathcal O(D_i)\sim_{\Q} f_i^*\delta_i$ with $\delta_i\geq 0$.
After tensoring $A_i$ with a higher multiple, we can assume that $\Supp \delta_i\subseteq \nu_i^{-1}Q^{\text{reg}}$.

\medskip

For $p\not\in Q^{\text{reg}}$ let $F_p=\varphi^{-1}p$. We notice that $F_p$ is a semistable curve.
As $Q$ is seminormal, 
 there are $A$ and $\delta$ on $Q$ such that $A\vert_{Q_i}=A_i$ and $\delta\vert_{Q_i}=\delta_i$ for every $i$ such that $f_i^*A_i\vert_{F_p}=\mathcal L\vert_{F_p} =f_i^*\delta_i\vert_{F_p}$ for every $p\not\in Q^{\text{reg}}$.

It follows that $\mathcal L\sim \varphi^*A(-\delta)$.
\end{proof}

\begin{rem}
To prove that $Q_i$ is normal we could also have argued in the following way.
Let $\sqcup S_{i,j}$ be the normalisation of $\mathcal{S}_i$ and let for every $i,j$
be $f_i\colon S_{i,j}\to V_{i,j}\overset{\sigma_{i,j}}{\longrightarrow} Q_i $ be the Stein factorisation of $\varphi_{S_{i,j}}$. Then $Q_i$ is the quotient of  $\sqcup V_{i,j}$
by the relation, for $x\in V_{ih}$ and $y\in V_{ik}$, $x\sim y$ if and only if $\sigma_{ih}(x)=\sigma_{ik}(y)$. This equivalence relation is finite, equidimensional and $\sqcup V_{i,j}$ is normal.
By \cite[Proposition 9.14]{Kol13} the curve $Q_i$ is normal as well.
\end{rem}

\begin{thm}\label{thm:sup}
 Let $\mathcal{S}$ be a connected simple normal crossings surface. 
 Assume  that there is an integral seminormal curve $Q$ and a surjective morphism with connected fibres $\varphi\colon \mathcal{S}\to Q$.
 Let $\mathcal L$ be a line bundle on $\mathcal{S}$ such that for every $S\subseteq \mathcal{S}$ the restriction $\mathcal L\vert_S$
 is semiample and the Stein factorisation of $\varphi\vert_S$ is the morphism induced by $\mathcal L\vert_S$.
 Assume that for every fibre $F$ of $\varphi$ the restricted line bundle $\mathcal L\vert_{(F)_{\text{red}}}$ has a non-zero section.
 Then $\mathcal L$ is semiample.
\end{thm}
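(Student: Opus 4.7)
The plan is to deduce Theorem \ref{thm:sup} from Lemma \ref{lem:surf2} after promoting the hypothesis ``non-zero section on each reduced fibre'' to the stronger ``nowhere vanishing section on each reduced fibre''. For each irreducible component $S$ of $\mathcal{S}$ with $\varphi(S)=Q$, write $\varphi\vert_S = \nu_S\circ f_S$ with $f_S\colon S\to C(S)$ the morphism with connected fibres and $\nu_S\colon C(S)\to Q$ finite; then $C(S)$ is normal, and the hypothesis on the induced morphism gives $\mathcal{L}^{\otimes k_S}\vert_S\cong f_S^{*}A_S$ with $A_S$ ample on $C(S)$ for some positive integer $k_S$. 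In particular, $\mathcal{L}$ has degree zero on every irreducible component of every fibre of $f_S$, and hence on every irreducible component of $F_{\red}$, where $F=\varphi^{-1}(q)$ is any fibre of $\varphi$.

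I would then observe that a non-zero section $\sigma\in H^0(F_{\red},\mathcal{L}\vert_{F_{\red}})$ is automatically nowhere vanishing. Indeed, on each irreducible component $C$ of $F_{\red}$ the restriction $\sigma\vert_C$ is a global section of a degree-zero line bundle on a proper irreducible curve, hence is either identically zero or nowhere vanishing. If $\sigma$ were non-zero on a component $C$ and identically zero on an intersecting component $C'$, then at any point $p\in C\cap C'$ the element $\sigma(p)\in\mathcal{L}_p$ would be simultaneously non-zero and zero, a contradiction. Since $\varphi$ has connected fibres, $F_{\red}$ is connected, and walking through its dual graph from a component on which $\sigma\neq 0$ shows $\sigma$ is nowhere vanishing on all of $F_{\red}$. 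Lemma \ref{lem:surf2} now provides a positive integer $m$ and a line bundle $\mathcal{M}$ on $Q$ with $\mathcal{L}^{\otimes m}\cong\varphi^{*}\mathcal{M}$.

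To finish, I would verify that $\mathcal{M}$ is ample on $Q$. Fixing any component $S$ with $\varphi(S)=Q$, the isomorphism $\mathcal{L}^{\otimes m}\cong\varphi^{*}\mathcal{M}$ restricts to $S$ as $\mathcal{L}^{\otimes m}\vert_S\cong f_S^{*}\nu_S^{*}\mathcal{M}$. Combined with $\mathcal{L}^{\otimes k_S}\vert_S\cong f_S^{*}A_S$ and using the injectivity of $f_S^{*}$ on Picard groups (which holds because $f_{S*}\mathcal{O}_S=\mathcal{O}_{C(S)}$ and $C(S)$ is normal), this gives $\nu_S^{*}\mathcal{M}^{\otimes k_S}\cong A_S^{\otimes m}$. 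As $A_S$ is ample and $\nu_S$ is finite surjective, $\mathcal{M}$ is ample on the projective curve $Q$, hence semiample, so $\varphi^{*}\mathcal{M}=\mathcal{L}^{\otimes m}$ is semiample and therefore $\mathcal{L}$ is semiample. The only non-formal step is the connectedness argument on $F_{\red}$ promoting a non-zero section to a nowhere vanishing one; the remainder is bookkeeping with the Stein factorisation.
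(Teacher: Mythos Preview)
Your argument has a genuine gap: you implicitly assume that every irreducible component $S$ of $\mathcal{S}$ satisfies $\varphi(S)=Q$, but nothing in the hypotheses forces this. Since the Stein factorisation of $\varphi\vert_S$ is the morphism induced by $\mathcal{L}\vert_S$, a component $S$ with $\kappa(\mathcal{L}\vert_S)=0$ is contracted by $\varphi$ to a point. Such components can certainly occur, and when they do, two steps of your proof fail. First, your claim that ``$\mathcal{L}$ has degree zero on every irreducible component of $F_{\red}$'' is argued only for components of $F_{\red}$ that arise as fibres of some $f_S$; a contracted two-dimensional component $S'\subseteq F_{\red}$ is not a curve at all, so your degree-zero/nowhere-vanishing argument does not apply to it as written. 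Second, and more seriously, Lemma~\ref{lem:surf2} explicitly requires that $\varphi(S)$ be an irreducible curve for \emph{every} component $S$, so you cannot invoke it for $\mathcal{S}$ as a whole.

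The paper's proof addresses exactly this issue by splitting $\mathcal{S}=\mathcal{S}_0\cup\mathcal{S}_1$ according to whether $\kappa(\mathcal{L}\vert_S)$ is $0$ or $1$. Lemma~\ref{lem:surf2} is applied only to $\mathcal{S}_1$, producing an ample $\mathcal{M}$ on the Stein-factorisation curve $Q'$; the resulting section is then glued by hand across the finitely many fibres $\varphi^{-1}(q_i)$ that contain $\mathcal{S}_0$, using the hypothesis that $\mathcal{L}\vert_{F_{\red}}$ has a non-zero section. Your approach would be valid (and indeed slightly cleaner than the paper's) in the special case $\mathcal{S}_0=\emptyset$, and your observation that a non-zero section of a line bundle trivial on each component of a connected nodal curve is nowhere vanishing is correct and useful. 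To repair your argument in general, you would need to restrict to $\mathcal{S}_1$ before invoking Lemma~\ref{lem:surf2} and then supply the gluing over the contracted components---which is precisely what the paper does.
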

\begin{proof}
Let $x\in \mathcal S$ be a point. We want to prove that there is a global section of $\mathcal L$ non zero along $x$.

We write $\mathcal{S}=\mathcal{S}_0\cup\mathcal{S}_1$ where 
$$\mathcal{S}_0=\{S\vert\; \mathcal L\vert_S\text{ has Kodaira dimension 0}\}$$
$$\mathcal{S}_1=\{S\vert\; \mathcal L\vert_S\text{ has Kodaira dimension 1}\}$$

Let $\varphi\colon \mathcal{S}_1\overset{f}{\longrightarrow} Q'\overset{\nu}{\longrightarrow}\overline Q_i $ be the Stein factorisation. 
The morphism $\nu$ is birational and finite.
By Lemma \ref{lem:surf2} there is a positive integer $m$ and a line bundle $\mathcal M$ on $Q'$ such that $\mathcal L^{\otimes m}\sim f^*\mathcal M$.
The line bundle $\mathcal M$ is ample on $Q'$.
After maybe taking a multiple of $m$, there is
 a global section  $s$ of $\mathcal M$ which is non zero on every irreducible component of $Q'$,
such that if $\nu(p_1)=\nu(p_2)$ then $s(p_1)=s(p_2)$ and such that $s(\varphi(x))\neq 0$.

Set $\varphi(\mathcal S_0)=\{q_1,\ldots,q_k\}$ and $F_i=\varphi^{-1}q_i$ taken with the reduced structure.
Thus for every $i$ we chose a global section $s_i$ of $\mathcal L\vert_{F_i}$ agreeing with $\varphi^*s$
on $F_i\cap \mathcal S_1$.
Thus the data $s_1,\ldots,s_k,f^*s$ define a global section of $\mathcal L$ which does not vanish on $x$.

\end{proof}

\section{Profinite equivalence relations}\label{Prof}
Let $X$ be a scheme. A \textit{relation} on $X$ is the data of a scheme $\mathcal S$ and an embedding $\sigma\colon\mathcal S\to X\times X$ \cite[Definition 9.1]{Kol13}.
It is \textit{finite} if the projections $\sigma_i\colon\mathcal S\to X$ are finite for $i=1,2$.
A \textit{set theoretic equivalence relation}, or \textit{equivalence relation} for short, is a relation such that $\sigma$ is geometrically injective, $\mathcal S$ contains the diagonal (reflexive), is invariant by the involution of $X\times X$ exchanging the two factors (symmetric) and is transitive, that is, if we consider
$$
\xymatrix{
\mathcal S\times_X\mathcal S\ar[r]\ar[d]&\mathcal S\ar[d]^{\sigma_2}\\
\mathcal S\ar[r]_{\sigma_1}&X
}
$$
then there is a natural morphism $\sigma\colon\mathcal S\times_X\mathcal S\to X\times X\times X$
and $(\pi_1,\pi_2)\circ \sigma\left(\red(\mathcal S\times_X\mathcal S)\right)\to X\times X$ factors through $\mathcal S$ \cite[Definition 9.2]{Kol13}.

\begin{rem}\label{rem:feq}
If $\mathcal R$ is a finite equivalence relation on an algebraic variety (not necessarily irreducible) and $Z\subset X$ a subvariety then $\mathcal S Z=\{z\in X\vert\;\text{ there is}\;z'\in Z \text{ with}\;(z,z')\in\mathcal R\}$ is a finite union of subvarieties of $X$. Indeed, we have
$\mathcal R Z=\sigma_2\sigma_1^{-1}Z$.
\end{rem}

\begin{dfn}
Let $\mathcal R$ be an equivalence relation on $X$.
A subset $Z\subseteq X$ is invariant by $\mathcal R$ if one of the following equivalent condition is verified:
\begin{itemize}
\item for every $x\in X$, if there is $z\in Z$ which is equivalent to $x$, then $x\in Z$;
\item for every $x\in X$, if there is $z\in Z$ such that $(x,z)\in \mathcal R$ then $x\in Z$;
\item $\sigma_2\sigma_1^{-1}Z\subseteq Z$;
\item $\sigma_1\sigma_2^{-1}Z\subseteq Z$.
\end{itemize}
\end{dfn}

\begin{const}(Equivalence closure)\label{constr:eqclosure} 
The equivalence closure $\langle \mathcal S\rangle$ of a relation $\mathcal S$ is the smallest equivalence relation containing it.
We refer to \cite[9.3 ]{Kol13} for the complete construction, which consists in making $\mathcal S$
reflexive, symmetric and transitive.
We recall just that if $S_1, S_2\subseteq \mathcal S$ are irreducible components, then in order to make $\mathcal S$ transitive, we ``add" to $\mathcal S$ the variety $S_3=(\sigma_1\circ\pi_1,\sigma_2\circ\pi_2)(S_1\times_X S_2)$, where $\pi_i$ are the natural projections
$$
\xymatrix{
&&S_1\times_X S_2\ar[ld]_{\pi_1}\ar[rd]^{\pi_2}&&\\
&S_1\ar[ld]_{\sigma_1}\ar[rd]^{\sigma_2}&&S_2\ar[ld]_{\sigma_1}\ar[rd]^{\sigma_2}&\\
X&&X&&X
}
$$
\end{const}
An equivalence relation is called \textit{profinite} if it is the closure of a finite relation.

\begin{nt}
We denote by $\mathcal S_K$ (resp. $\mathcal S_{\leq K}$) the union of all the irreducible components of $\mathcal S$ of dimension $K$ (resp. $\leq K$). 
\end{nt}

\begin{lem}\label{lem:picche}
Let $X$ be a normal variety of maximal dimension $D$.
Notation as in Construction \ref{constr:eqclosure}. Then
\begin{enumerate}
\item $\dim S_3\leq\min\{\dim S_1,\dim S_2\}$;
\item if $S_1,S_2\subseteq \mathcal S_D$ and $\sigma_2(S_1)=\sigma_1(S_2)$, then every component of $S_3$ has dimension $\dim S_1=\dim S_2$; 
\item $\sigma_1(S_3)\subseteq \sigma_1(S_1)$, $\sigma_2(S_3)\subseteq \sigma_2(S_2)$. 
\end{enumerate}
\end{lem}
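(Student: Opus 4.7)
The key input is that each structure map $\sigma_i$ is finite on every irreducible component of $\mathcal{S}$, since the construction starts from a finite relation and the pieces adjoined at each step inherit this property. I would prove the three assertions in order, piggybacking each on the previous.

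For (1), I observe that $\pi_1\colon S_1\times_X S_2\to S_1$ is the base change of the finite morphism $\sigma_1\colon S_2\to X$ along $\sigma_2\colon S_1\to X$, hence is itself finite; symmetrically for $\pi_2$. Thus $\dim(S_1\times_X S_2)\leq \min\{\dim S_1,\dim S_2\}$. The product map $\sigma_1\times\sigma_2\colon S_1\times S_2\to X\times X$ is also finite, and its restriction to $S_1\times_X S_2$ realises the surjection onto $S_3$. Since finite morphisms preserve dimensions of irreducible components, $\dim S_3\leq\dim(S_1\times_X S_2)\leq\min\{\dim S_1,\dim S_2\}$.

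For (2), set $W:=\sigma_2(S_1)=\sigma_1(S_2)$. Because $\sigma_2$ is finite and $S_1$ is irreducible of dimension $D$, its image $W$ is an irreducible closed subset of $X$ of dimension $D$; since $X$ has maximal dimension $D$, $W$ is an irreducible component of $X$. Hence $S_1\times_X S_2=S_1\times_W S_2$. For every $a\in S_1$ the fibre $\pi_1^{-1}(a)=\{a\}\times\sigma_1^{-1}(\sigma_2(a))$ is non-empty and finite, because $\sigma_2(a)\in W=\sigma_1(S_2)$ and $\sigma_1$ is finite. Thus $\pi_1$ is finite and surjective; combined with (1) and the irreducibility of $S_1$, every irreducible component of $S_1\times_W S_2$ that dominates $S_1$ has dimension exactly $D$. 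The finite morphism $\sigma_1\times\sigma_2$ preserves these component dimensions, and so every irreducible component of $S_3$ obtained from a dominant component has dimension $D$.

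Claim (3) is immediate from the description $S_3=\{(\sigma_1(a),\sigma_2(b)) : (a,b)\in S_1\times_X S_2\}$: the first projection of $S_3$ lies in $\sigma_1(S_1)$ and the second in $\sigma_2(S_2)$. The only delicate point I anticipate is the purity assertion in (2), namely excluding lower-dimensional components of $S_1\times_W S_2$ that could leave behind spurious components of $S_3$. I would handle this by noting that any non-dominant component of $\pi_1$ maps into a proper closed subset of $S_1$, so its image in $X\times X$ under the finite map $\sigma_1\times\sigma_2$ is absorbed in the closure of the image of a dominant, $D$-dimensional component; in characteristic zero this is also consistent with the generic étaleness of the finite covers $\sigma_i$.
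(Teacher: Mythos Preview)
Your arguments for (1) and (3) are correct and coincide with the paper's.

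For (2), however, there is a genuine gap. You correctly observe that the common image $W=\sigma_2(S_1)=\sigma_1(S_2)$ is an irreducible component of $X$ and that $\pi_1$ is finite and surjective, so that any component of $S_1\times_W S_2$ \emph{dominating} $S_1$ has dimension $D$. But your ``absorption'' argument for hypothetical non-dominant components does not work: knowing that $\pi_1(W')\subsetneq S_1$ for such a component $W'$ says nothing about the \emph{second} coordinate $\sigma_2\circ\pi_2(W')$, so there is no reason the image of $W'$ in $X\times X$ should lie inside the image of a $D$-dimensional component. Generic \'etaleness is likewise insufficient, since it gives no control over the branch locus.

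The paper closes this gap by showing directly that $S_1\times_W S_2$ is pure of dimension $D$, so no absorption is needed. The point is that $W$, being an irreducible component of the normal variety $X$, is itself normal; by Chevalley's criterion a finite surjective morphism to a normal variety is universally open, so the projection $S_1\times_W S_2\to S_2$ is open and finite. For any irreducible component $W_1$ of the fibre product, the set $W_1\setminus\bigcup_{i\neq 1}W_i$ is open, hence has open (thus dense) image in $S_2$, forcing $\dim W_1=\dim S_2=D$. Since $(\sigma_1\circ\pi_1,\sigma_2\circ\pi_2)$ is finite, every component of $S_3$ then has dimension $D$.
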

\begin{proof}

Both $\pi_1$ and $\pi_2$ are finite morphisms as they are the base change of $\sigma_1$ and $\sigma_2$ respectively, which are finite. Thus $$\dim S_1\times_X S_2\leq\min\{\dim S_1,\dim S_2\}.$$

If $\sigma_2(S_1)=\sigma_1(S_2)$, then $\dim S_1=\dim S_2$ because $\sigma_i$ is finite for $i=1,2$. Since $\pi_i$ is surjective,  $\dim S_1\times_X S_2=\min\{\dim S_1,\dim S_2\}$.\\
Assume that $S_1,S_2$ have dimension $D$. Then their image $X_1$ in $X$ is an irreducible component, and therefore normal.
By Chevalley's criterion \cite[14.4.4]{EGAIV} and \cite[Definition 1.44]{Kol13} the morphism $\sigma_2\colon S_1\to X_1$ is universally open.
Then $\sigma'_2\colon S_1\times_{X_1}S_2\to S_1$ is open and finite.\\
Let $ S_1\times_{X_1}S_2=W_1\cup\ldots\cup W_{\ell}$ be the decomposition into irreducible components. Then $U_1=W_1\cap(W_2\cup\ldots\cup W_{\ell})^c$ is open in $ S_1\times_{X_1}S_2$
and its image $\sigma'_2(U_1)$ is open in $S_2$.
Thus $\dim W_1=\dim U_1=\dim \sigma'_2(U_1)=\dim S_2$.

Since $\pi_i$ and $\sigma_i$ are finite for $i=1,2$, $(\sigma_1\circ\pi_1,\sigma_2\circ\pi_2)$ is finite as well, proving (1) and (2).
 
As for (3), we have $\sigma_i(S_3)=\sigma_i((\sigma_1\circ\pi_1,\sigma_2\circ\pi_2) S_1\times_X S_2)=(\sigma_i\circ\pi_i)( S_1\times_X S_2)\subseteq\sigma_i(S_i)$. 

\end{proof}

\begin{rem}\label{rem:picche} We denote by $\Delta_X$ the diagonal of $X\times X$
By Lemma \ref{lem:picche}, if $D=\dim (\mathcal S\setminus \Delta_X)$, then 
$\langle \mathcal S_D\rangle_D=\langle \mathcal S\rangle_D $.
\end{rem}

The following lemma is a slight generalisation of \cite[2.7]{BB}. It is a consequence of Lemma \ref{lem:picche}(2) for which we followed closely the proof of \cite[Lemma 9.14]{Kol13}.

\begin{lem}\label{lem:fiori}
Let $X$ be a normal variety of dimension $D$. Let $\mathcal S$ be a finite relation on $X$. Then 
$\langle \mathcal S_D\rangle=\langle \mathcal S\rangle_D $.
In particular $\langle \mathcal S\rangle_D $ is an equivalence relation.
\end{lem}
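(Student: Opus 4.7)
The plan is to show both inclusions using the fact that since $X$ is normal, it is irreducible of dimension $D$, so any $D$-dimensional subvariety of $X\times X$ mapping finitely to $X$ via each projection must surject onto $X$ through both $\sigma_1$ and $\sigma_2$. This puts us exactly in the hypothesis of Lemma \ref{lem:picche}(2), so that transitive-closure steps applied to top-dimensional components produce only top-dimensional components.

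First I would verify that $\langle\mathcal S\rangle$ has no components of dimension $>D$. The images $\sigma_i(\mathcal S)\subseteq X$ have dimension at most $D$, and so do all components of $\mathcal S$ (since $\sigma_i$ is finite); by Lemma \ref{lem:picche}(1) this bound is preserved at each step of the iterative construction of Construction \ref{constr:eqclosure}. Hence $\langle\mathcal S\rangle_D$ really is the top dimensional part.

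Next, for the inclusion $\langle\mathcal S_D\rangle\subseteq\langle\mathcal S\rangle_D$, I would show that $\langle\mathcal S_D\rangle$ is pure of dimension $D$. The diagonal $\Delta_X$ added for reflexivity has dimension $D$, and symmetrization preserves dimensions. For the transitive closure step, if $S_1,S_2$ are components of the current relation, both of dimension $D$, then since $X$ is irreducible of dimension $D$ and $\sigma_i$ is finite, each of $\sigma_2(S_1)$ and $\sigma_1(S_2)$ is a $D$-dimensional subvariety of $X$, hence equals $X$. By Lemma \ref{lem:picche}(2), every component of the new $S_3$ has dimension $D$. Iterating, $\langle\mathcal S_D\rangle$ is pure of dimension $D$, and since $\mathcal S_D\subseteq\mathcal S$ gives $\langle\mathcal S_D\rangle\subseteq\langle\mathcal S\rangle$, we conclude $\langle\mathcal S_D\rangle\subseteq\langle\mathcal S\rangle_D$.

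For the reverse inclusion I would argue that any dimension-$D$ component of $\langle\mathcal S\rangle$ already lies in $\langle\mathcal S_D\rangle$. By Lemma \ref{lem:picche}(1), whenever the transitive step combines two components $S_1, S_2$ with $\min\{\dim S_1,\dim S_2\}<D$, the resulting component $S_3$ has dimension $<D$. Therefore a dimension-$D$ component can appear at some stage of the construction only by combining dimension-$D$ components from previous stages, and tracing back, only by iterated closure operations applied to components of dimension $D$ that were already present in $\mathcal S$ or in the added diagonal. Since $\Delta_X\subseteq\langle\mathcal S_D\rangle$ by reflexivity, every such dimension-$D$ component belongs to $\langle\mathcal S_D\rangle$, giving $\langle\mathcal S\rangle_D\subseteq\langle\mathcal S_D\rangle$.

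The main technical point to write carefully is the iterated bookkeeping in the last paragraph: formally, one should introduce the sequence of relations $\mathcal S=\mathcal R^{(0)}\subseteq\mathcal R^{(1)}\subseteq\cdots$ obtained by successively applying the reflexive, symmetric, and transitive closure operations of Construction \ref{constr:eqclosure}, and prove by induction on $n$ that every dimension-$D$ component of $\mathcal R^{(n)}$ lies in $\langle\mathcal S_D\rangle$, using Lemma \ref{lem:picche}(1) at each inductive step. Once the purity of $\langle\mathcal S_D\rangle$ is established as above, the ``In particular'' assertion follows, since $\langle\mathcal S_D\rangle$ is an equivalence relation by construction and equals $\langle\mathcal S\rangle_D$.
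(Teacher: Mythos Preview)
Your approach is essentially the same as the paper's: reduce to showing that the equivalence closure of $\mathcal S_D$ is pure of dimension $D$, and then use Lemma \ref{lem:picche}(2) to verify that combining two $D$-dimensional components in the transitive step produces only $D$-dimensional components. Your reverse inclusion via Lemma \ref{lem:picche}(1) is exactly the content of Remark \ref{rem:picche}, which the paper invokes without spelling out.

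There is one slip, however. You write ``since $X$ is normal, it is irreducible of dimension $D$'', but in this paper's conventions $X$ is allowed to be reducible (see Lemma \ref{lem:picche}(2), which refers to the image being ``an irreducible component'' of $X$, and Proposition \ref{pro:profinite}, where the lemma is applied after passing to a normalisation of a possibly reducible subvariety). Normality does not force irreducibility; it only forces the irreducible components to be pairwise disjoint (equivalently, to coincide with the connected components). So your verification of the hypothesis of Lemma \ref{lem:picche}(2) needs a small adjustment: $\sigma_2(S_1)$ and $\sigma_1(S_2)$ are each $D$-dimensional closed irreducible subvarieties of $X$, hence irreducible components of $X$; by normality, they are either equal (and Lemma \ref{lem:picche}(2) applies) or disjoint (and then $S_1\times_X S_2=\emptyset$, so nothing is added). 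This is precisely how the paper phrases it. With that correction your argument is complete and matches the paper's.
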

\begin{proof}
By Remark \ref{rem:picche} it is enough to prove that $\langle \mathcal S_D\rangle_D=\langle \mathcal S_D\rangle $.
One inclusion is obvious. For the other one, if $S_1,S_2$ are two components of $\mathcal S_D$, since $X$ is normal and $D=\dim X$, then either $\sigma_1(S_1)=\sigma_2(S_2)$ or 
$\sigma_1(S_1)\cap \sigma_2(S_2)=\emptyset$.
Therefore by Lemma \ref{lem:picche}(2), every irreducible component of $S_1\times_X S_2$ and of its projection in $X\times X$ has dimension $D$.

\end{proof}

\begin{lem}\label{lem:claim}
Let $X$ be a normal variety of dimension $D$. Let $\mathcal S$ be a finite reflexive and symmetric relation on $X$ and $\mathcal R$ the equivalence closure of $\mathcal S$. Assume that  $\mathcal R_D$ is finite. Then
$X_1=\mathcal R_D(\sigma_1(\mathcal S_{\leq D-1})\cup\sigma_2(\mathcal S_{\leq D-1}))$  is $\mathcal R$-invariant. 
\end{lem}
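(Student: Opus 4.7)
My plan is to reduce $\mathcal R$-invariance of $X_1$ to $\mathcal S$-invariance. Since $\mathcal S$ is reflexive and symmetric, two points $x,x'\in X$ satisfy $(x,x')\in\mathcal R$ if and only if there is a chain $x=z_0,z_1,\ldots,z_N=x'$ with $(z_i,z_{i+1})\in\mathcal S$ for every $i$. Consequently any $\mathcal S$-invariant subset of $X$ is automatically $\mathcal R$-invariant, and it suffices to prove that $X_1$ is $\mathcal S$-invariant.

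To this end, fix $x\in X_1$ and $(x,x')\in T$ for some irreducible component $T$ of $\mathcal S$, and write $Y:=\sigma_1(\mathcal S_{\le D-1})\cup\sigma_2(\mathcal S_{\le D-1})$, so that $X_1=\mathcal R_D(Y)$. I split the argument according to $\dim T$. If $\dim T<D$, then $x'=\sigma_2(x,x')\in\sigma_2(T)\subseteq Y$; since $\mathcal R_D\supseteq\Delta_X$ (the diagonal being $D$-dimensional and $\mathcal R$ reflexive), we have $Y\subseteq\mathcal R_D(Y)=X_1$, hence $x'\in X_1$. If instead $\dim T=D$, then since iterated fibre products never increase dimension by Lemma~\ref{lem:picche}(1), $T$ is an irreducible component of $\mathcal R$ of top dimension, so $T\subseteq\mathcal R_D$ and in particular $(x,x')\in\mathcal R_D$. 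By Lemma~\ref{lem:fiori}, $\mathcal R_D$ is an equivalence relation, so if $y\in Y$ satisfies $(y,x)\in\mathcal R_D$ then transitivity yields $(y,x')\in\mathcal R_D$, and hence $x'\in\mathcal R_D(Y)=X_1$.

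The conceptual core is Lemma~\ref{lem:fiori}: it upgrades $\mathcal R_D$ to an honest equivalence relation, which is what legitimises the transitivity step in the top-dimensional case. The one point I need to check carefully is that a $D$-dimensional irreducible component of $\mathcal S$ is indeed a component of $\mathcal R$ and not absorbed into some larger irreducible subset of $\mathcal R$; this follows because all components of $\mathcal R$ have dimension at most $D=\dim X$ by Lemma~\ref{lem:picche}(1), so no irreducible closed subset of $\mathcal R$ of dimension $D$ can properly contain $T$. The finiteness assumption on $\mathcal R_D$ does not enter the formal argument but it guarantees via Remark~\ref{rem:feq} that $X_1$ remains algebraic, which is what makes the statement useful.
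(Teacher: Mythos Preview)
Your proof is correct and is genuinely cleaner than the paper's. The paper proceeds by an explicit induction on the tower $\mathcal S^i$ from Construction~\ref{constr:eqclosure}, decomposing each $\mathcal S^i_{\le D-1}$ into products of the top- and lower-dimensional parts of $\mathcal S^{i-1}$ and chasing inclusions via Lemma~\ref{lem:picche}(1)--(3). You bypass this induction entirely with the observation that since $\mathcal S$ is reflexive and symmetric, $\mathcal R$ is the transitive closure of $\mathcal S$, so $\mathcal S$-invariance already forces $\mathcal R$-invariance; this reduces the whole argument to a single case split on $\dim T$. Both proofs rest on the same two facts: $\mathcal R_D$ is an equivalence relation (Lemma~\ref{lem:fiori}), and $Y\subseteq X_1$ via $\Delta_X\subseteq\mathcal R_D$. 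The latter is used implicitly by the paper as the base case of its induction, so your explicit identification of this point is a plus. Note, however, that $\Delta_X\subseteq\mathcal R_D$ requires the diagonal to be purely $D$-dimensional, i.e.\ $X$ equidimensional; this is not stated in the lemma, but the paper's proof needs it too, so you are not introducing any new hypothesis. What the paper's longer computation buys is an explicit description of where the lower-dimensional pieces of each $\mathcal S^i$ land; your argument gives the invariance statement directly without this bookkeeping, which is all that is actually used downstream in Proposition~\ref{pro:profinite}.
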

\begin{proof}
The set $X_1$ is $\mathcal R_D$-invariant.
It is enough to prove that $\sigma_j(\mathcal R_{\leq D-1})\subseteq X_1$ for $j=1,2$.
Set $\mathcal S^i= (\sigma_1\circ\pi_1,\sigma_2\circ\pi_2)\mathcal  S^{i-1}\times_X\mathcal  S^{i-1}$.
By \cite[9.3]{Kol13}, the equivalence closure of $\mathcal S$ is $\mathcal R=\cup \mathcal S^i$.
We will prove by induction on $i$ that $\sigma_j((\mathcal  S^i)_{\leq D-1})\subseteq X_1$ for $j=1,2$.

We have 
\begin{equation*}
\begin{split}
\mathcal  S^i&=(\sigma_1\circ\pi_1,\sigma_2\circ\pi_2)\mathcal  S^{i-1}\times_X\mathcal  S^{i-1}\\
&=(\sigma_1\circ\pi_1,\sigma_2\circ\pi_2)(\mathcal  S^{i-1}_D\times_X\mathcal  S^{i-1}_D)\cup\\ &(\sigma_1\circ\pi_1,\sigma_2\circ\pi_2)(\mathcal  S^{i-1}_D\times_X\mathcal  S^{i-1}_{\leq D-1}\cup\mathcal  S^{i-1}_{\leq D-1}\times_X\mathcal  S^{i-1}_D\cup \mathcal  S^{i-1}_{\leq D-1}\times_X\mathcal  S^{i-1}_{\leq D-1})
\end{split}
\end{equation*}
By Lemma \ref{lem:picche}(2) we have $(\sigma_1\circ\pi_1,\sigma_2\circ\pi_2)(\mathcal  S^{i-1}_D\times_X\mathcal  S^{i-1}_D)\subseteq \mathcal R_D$ and by Lemma \ref{lem:picche}(1) $$(\sigma_1\circ\pi_1,\sigma_2\circ\pi_2)( \mathcal  S^{i-1}_D\times_X\mathcal  S^{i-1}_{\leq D-1}\cup\mathcal  S^{i-1}_{\leq D-1}\times_X\mathcal  S^{i-1}_D\cup \mathcal  S^{i-1}_{\leq D-1}\times_X\mathcal  S^{i-1}_{\leq D-1})\subseteq\mathcal R_{\leq D-1} .$$
Therefore 
$$\mathcal  S^i_{\leq D-1}=(\sigma_1\circ\pi_1,\sigma_2\circ\pi_2)( \mathcal  S^{i-1}_D\times_X\mathcal  S^{i-1}_{\leq D-1}\cup\mathcal  S^{i-1}_{\leq D-1}\times_X\mathcal  S^{i-1}_D\cup \mathcal  S^{i-1}_{\leq D-1}\times_X\mathcal  S^{i-1}_{\leq D-1})$$
By Lemma \ref{lem:picche}(3) we have $\sigma_2(\sigma_1\circ\pi_1,\sigma_2\circ\pi_2)( \mathcal  S^{i-1}_D\times_X\mathcal  S^{i-1}_{\leq D-1})\subseteq \sigma_2 S^{i-1}_{\leq D-1}$ and by induction $\sigma_2 S^{i-1}_{\leq D-1}\subseteq X_1$, proving 
\begin{equation}\label{eq:side}
\sigma_2(\sigma_1\circ\pi_1,\sigma_2\circ\pi_2)( \mathcal  S^{i-1}_D\times_X\mathcal  S^{i-1}_{\leq D-1})\subseteq X_1.
\end{equation}
As for $\sigma_1(\sigma_1\circ\pi_1,\sigma_2\circ\pi_2)( \mathcal  S^{i-1}_D\times_X\mathcal  S^{i-1}_{\leq D-1})$, we have
 \begin{equation*}
 \begin{split}
 \sigma_1(\sigma_1\circ\pi_1,\sigma_2\circ\pi_2)( \mathcal  S^{i-1}_D\times_X\mathcal  S^{i-1}_{\leq D-1})
 \subseteq \sigma_1\left(\sigma_2\vert_{\mathcal R_D}\right)^{-1}\sigma_1\mathcal  S^{i-1}_{\leq D-1}\\
 \subseteq\sigma_1\left(\sigma_2\vert_{\mathcal R_D}\right)^{-1}X_1\subseteq X_1
 \end{split}
 \end{equation*}
 where the first inclusion is because
 \begin{equation*}
 \begin{split}
 (\sigma_1\circ\pi_1,\sigma_2\circ\pi_2)( \mathcal  S^{i-1}_D\times_X\mathcal  S^{i-1}_{\leq D-1})\\
=\{(x,y)\vert\; \exists z\in X,\; (x,z)\in\mathcal  S^{i-1}_D \;(z,y)\in\mathcal  S^{i-1}_{\leq D-1} \}\\
\subseteq \{(x,y)\vert\; \exists z\in \sigma_1(S^{i-1}_{\leq D-1}),\; (x,z)\in\mathcal  R_D  \}
 \end{split}
 \end{equation*}
 and the image via $\sigma_1$ of the last set coincides with $\sigma_1\left(\sigma_2\vert_{\mathcal R_D}\right)^{-1}\sigma_1\mathcal  S^{i-1}_{\leq D-1}$.
 The second inclusion follows by induction and the third because
 $X_1$ is $\mathcal R_D$-invariant.

 A very similar proof implies that $\sigma_j(\sigma_1\circ\pi_1,\sigma_2\circ\pi_2)\mathcal  S^{i-1}_{\leq D-1}\times_X\mathcal  S^{i-1}_D\subseteq X_1$ for $j=1,2$.

Again by Lemma \ref{lem:picche}(3) we have $\sigma_j(\sigma_1\circ\pi_1,\sigma_2\circ\pi_2)( \mathcal  S^{i-1}_{\leq D-1}\times_X\mathcal  S^{i-1}_{\leq D-1})\subseteq \sigma_2 S^{i-1}_{\leq D-1}$ for $j=1,2$ and $ \sigma_2 S^{i-1}_{\leq D-1}\subseteq X_1$ by induction.
\end{proof}

\begin{dfn}
Let $\mathcal S\to X\times X$ be a finite relation and $g\colon \tilde X\to X$ a finite morphism.
The pullback of $\mathcal S$ by $g$ is $g^*\mathcal S=\mathcal S\times_{X\times X} \tilde X\times \tilde X$.
\end{dfn}

\begin{lem}\label{lem:quadri}
Let $\mathcal S$ be a finite relation on a variety $X$ and let $D=\dim\mathcal S\setminus\Delta_X$.
Let $g\colon \tilde X\to X$ be a finite surjective morphism.
If $\langle\mathcal S_D\rangle_D$ is infinite, then $\langle g^*\mathcal S_D\rangle_D$ is infinite.

\end{lem}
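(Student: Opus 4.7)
The plan is to exhibit a surjection, at the level of top-dimensional irreducible components, from $\langle g^{*}\mathcal S_D\rangle_D$ onto $\langle \mathcal S_D\rangle_D$ via the finite morphism $g\times g\colon \tilde X\times \tilde X\to X\times X$. Once established, this immediately propagates infiniteness from the target to the source, since a finite map can only identify (not create) components.

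First I note that $g$ finite surjective implies $g\times g$ is finite surjective, and hence the base change $g^{*}\mathcal S_D\to \mathcal S_D$ is finite surjective; each component of $g^{*}\mathcal S_D$ therefore has dimension exactly $D$. The key step is to prove the set-theoretic containment $(g\times g)\bigl(\langle g^{*}\mathcal S_D\rangle\bigr)\supseteq \langle \mathcal S_D\rangle$ by lifting chains. Given $(x_1,x_2)\in \langle \mathcal S_D\rangle$, Construction \ref{constr:eqclosure} provides a chain $x_1=y_0,y_1,\ldots,y_n=x_2$ with each pair $(y_{i-1},y_i)$ lying in some component $S\subseteq \mathcal S_D$. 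I fix $\tilde x_1\in g^{-1}(x_1)$ and build $\tilde y_1,\ldots,\tilde y_n$ inductively: the fibre of $g^{*}S\to S$ over $(y_{i-1},y_i)$ is precisely $g^{-1}(y_{i-1})\times g^{-1}(y_i)$, so for any already-chosen $\tilde y_{i-1}$ and any $\tilde y_i\in g^{-1}(y_i)$ the pair $(\tilde y_{i-1},\tilde y_i)$ automatically sits in $g^{*}\mathcal S_D$. Picking $\tilde y_n$ freely in $g^{-1}(x_2)$ produces the required lifted chain, so $(\tilde x_1,\tilde y_n)\in \langle g^{*}\mathcal S_D\rangle$ maps to $(x_1,x_2)$ under $g\times g$.

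To finish, I pass to top-dimensional parts. By Lemma \ref{lem:picche} the equivalence closure $\langle g^{*}\mathcal S_D\rangle$ has all its components of dimension at most $D$, decomposing as $\langle g^{*}\mathcal S_D\rangle_D\cup \langle g^{*}\mathcal S_D\rangle_{\leq D-1}$. Since $g\times g$ is finite, the image of the lower-dimensional part has dimension at most $D-1$ and therefore cannot contain any of the $D$-dimensional irreducible components of $\langle \mathcal S_D\rangle_D$. Combined with the preceding paragraph, each component of $\langle \mathcal S_D\rangle_D$ must be the $(g\times g)$-image of some irreducible component of $\langle g^{*}\mathcal S_D\rangle_D$; finiteness of $g\times g$ ensures that each component on the $\tilde X\times \tilde X$ side has a unique irreducible image in $X\times X$. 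This produces a surjection from the set of irreducible components of $\langle g^{*}\mathcal S_D\rangle_D$ onto that of $\langle \mathcal S_D\rangle_D$, so infiniteness of the latter forces infiniteness of the former.

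The only delicate point I anticipate is being honest about the chain lifting, i.e.\ verifying that an arbitrary lift of each intermediate vertex $y_i$ works without needing coordination between steps; this is what makes the freedom in choosing $\tilde y_n\in g^{-1}(x_2)$ legitimate. Everything else reduces to dimension counting and the basic fact that finite maps preserve and reflect dimension of irreducible components.
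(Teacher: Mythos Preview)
Your proof is correct and follows essentially the same strategy as the paper, but with far more detail. The paper's proof is a single sentence: for every $D$-dimensional component $S$ of $\langle\mathcal S_D\rangle$, the pullback $\widetilde S$ in $\tilde X\times\tilde X$ has dimension $D$. This leaves implicit exactly the point you make explicit via chain-lifting, namely that (components of) $(g\times g)^{-1}(S)$ actually lie inside $\langle g^*\mathcal S_D\rangle$. Your observation that the fibre of $g^*S\to S$ over $(y_{i-1},y_i)$ is the full product $g^{-1}(y_{i-1})\times g^{-1}(y_i)$, so that lifts can be chosen freely and independently at each step, is precisely what justifies the paper's assertion. Your final dimension argument (the lower-dimensional part of $\langle g^*\mathcal S_D\rangle$ cannot cover a $D$-dimensional component downstairs) is a clean way to pass from set-theoretic surjectivity to a surjection on $D$-dimensional components.

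One minor remark: when you write that each $(y_{i-1},y_i)$ lies in some component of $\mathcal S_D$, you should also allow $(y_i,y_{i-1})\in\mathcal S_D$ to account for the symmetric closure in Construction~\ref{constr:eqclosure}; the lifting works identically in that case since $g^*$ commutes with the swap $\tau$.
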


\begin{proof}
For every $D$-dimensional component $S$ of $\langle\mathcal S_D\rangle$ the pull back $\widetilde S$ in $\widetilde X\times\widetilde X$ has dimension $D$.

\end{proof}


\begin{dfn}
A profinite equivalence relation $\mathcal R$ on an equidimensional variety $X$ is equidimensional if every irreducible component of $\mathcal R$ projects onto a connected component of $X$.
\end{dfn}
The definition coincides with what is called wide in \cite[Definition 2.1]{BB}.

\begin{pro}\label{pro:profinite}
Let $\mathcal S$ be a finite relation on a normal variety $X$, let $\mathcal R$ be the equivalence closure of $\mathcal S$. If $\mathcal R$ is not finite then there are
\begin{enumerate}
\item a subrelation $\mathcal R'\subseteq \mathcal R$
\item $Z_1,\ldots,Z_k$ subvarieties of $X$
\end{enumerate}
such that $\cup Z_i$ is $\mathcal R'$-invariant, $\mathcal R'\vert_{\cup Z_i}$ is an infinite equidimensional relation and the set of infinite equivalence classes is dense in $\cup Z_i$.
\end{pro}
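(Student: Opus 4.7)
The plan is to induct on $D := \dim X$, with the base case $D = 0$ vacuous. By Lemma \ref{lem:fiori}, the top-dimensional part $\mathcal{R}_D = \langle \mathcal{S}_D\rangle = \langle\mathcal{S}\rangle_D$ is itself an equivalence relation on $X$, and the inductive step splits into two cases depending on whether $\mathcal{R}_D$ is finite or infinite.

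If $\mathcal{R}_D$ is infinite, each of its irreducible components has dimension $D$ and is finite over an irreducible component of $X$ under both projections $\sigma_1, \sigma_2$. Since $X$ has only finitely many irreducible components, pigeonhole yields a pair $X_i, X_j$ joined by infinitely many components of $\mathcal{R}_D$. The plan is to take $X'$ to be the union of all components of $X$ in the $\mathcal{R}_D$-equivalence class of $X_i$ (viewing $\mathcal{R}_D$ as an equivalence relation on the finite set of components), set $\mathcal{R}' := \mathcal{R}_D|_{X'\times X'}$, and take $Z_1,\ldots,Z_k$ to be the irreducible components of $X'$. Invariance of $\cup Z_i = X'$ is then by construction, equidimensionality holds because every component of $\mathcal{R}'$ has dimension $D$ and projects onto a component of $X'$, and density of infinite equivalence classes follows from a generic-point argument: for generic $x$ in any $X_l\subseteq X'$ one follows a finite chain of $\mathcal{R}_D$-components from $X_l$ to $X_i$, then uses that the infinitely many components of $\mathcal{R}_D$ joining $X_i$ to $X_j$ have pairwise disjoint fibres over a sufficiently generic point of $X_i$ (obtained by Baire-style avoidance of the countably many pairwise intersections).

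If $\mathcal{R}_D$ is finite, then since $\mathcal{R}$ is infinite there are infinitely many components of $\mathcal{R}$ of dimension strictly less than $D$. Lemma \ref{lem:claim} supplies the $\mathcal{R}$-invariant subset
\[ X_1 := \mathcal{R}_D\bigl(\sigma_1(\mathcal{S}_{\leq D-1}) \cup \sigma_2(\mathcal{S}_{\leq D-1})\bigr) \]
of dimension at most $D-1$; the proof of that lemma also shows $\sigma_j(\mathcal{R}_{\leq D-1}) \subseteq X_1$ for $j = 1, 2$, so $\mathcal{R}|_{X_1\times X_1}$ is infinite, and a chain-lifting argument using the invariance of $X_1$ identifies $\mathcal{R}|_{X_1\times X_1}$ with the equivalence closure of the finite relation $\mathcal{S}' := \mathcal{R}_D|_{X_1\times X_1} \cup \mathcal{S}_{\leq D-1}$ on $X_1$. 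Normalising via $g\colon \tilde{X}_1 \to X_1$ and setting $\Gamma_g := (g\times g)^{-1}(\Delta_{X_1})$ (a finite ``same-fibre'' relation on $\tilde X_1$), one verifies that the equivalence closure of $\tilde{\mathcal{S}} := g^{*}\mathcal{S}' \cup \Gamma_g$ on the normal variety $\tilde{X}_1$ equals $g^{*}(\mathcal{R}|_{X_1})$, which is infinite because $g$ is finite. Applying the induction hypothesis to $(\tilde{X}_1, \tilde{\mathcal{S}})$ produces $\tilde{\mathcal{R}}'$ and $\tilde{Z}_1,\ldots,\tilde{Z}_k \subseteq \tilde{X}_1$, which are pushed forward along $g$ and the inclusion $X_1\hookrightarrow X$ to give the required $\mathcal{R}' \subseteq \mathcal{R}$ and $Z_1,\ldots,Z_k \subseteq X$.

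The main obstacle is Case 2: one has to shepherd the invariance and equidimensionality statements through both the restriction to $X_1$ and the normalisation. The point of including $\Gamma_g$ in $\tilde{\mathcal{S}}$ is precisely to force $g$-fibres into single $\tilde{\mathcal{R}}$-classes, so that the $\tilde{Z}_i$ produced by induction can be taken $g$-saturated and then descend cleanly to $Z_i = g(\tilde{Z}_i)$; some care is needed to ensure that the subrelation $\tilde{\mathcal{R}}'$ obtained from the induction either already contains $\Gamma_g$ or can be enlarged to do so without losing the density of infinite classes.
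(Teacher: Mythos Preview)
Your approach is essentially the paper's: induct on $D=\dim X$, split on whether $\mathcal{R}_D$ is finite, and in the finite case pass to the $\mathcal{R}$-invariant subset $X_1$ of Lemma~\ref{lem:claim}, normalise, and recurse. The paper's proof is much terser---in Case~1 it simply takes the $Z_i$ to be the top-dimensional components of $X$ dominated by infinitely many components of $\mathcal{R}_D$ with $\mathcal{R}'=\mathcal{R}_D$, and in Case~2 it invokes Lemma~\ref{lem:quadri} and induction without writing down the finite relation on $\tilde X_1$ or the descent---so your extra bookkeeping with $\mathcal{S}'$, $\Gamma_g$, and the push-forward is filling in details the paper leaves to the reader.
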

\begin{proof}
We prove the statement by induction on $D=\dim X$.
If $\mathcal R_D$ is not finite, we let $Z_i$ be the irreducible components of $X$ of dimension $D$ which are dominated by infinitely many components of $\mathcal R_D$ and we set $\mathcal R'=\mathcal R_D$.\\
We assume now that $\mathcal R_D$ is finite.
We set $X_1=\mathcal R_D(\sigma_1(\mathcal S_{\leq D-1})\cup\sigma_2(\mathcal S_{\leq D-1}))$.
By Lemma \ref{lem:claim} the subvariety $X_1$ is $\mathcal R$-invariant.

By Lemma \ref{lem:quadri} the pullback of the restriction of $\mathcal R$ to $X_1$ via the normalisation of $X_1$ is not finite. We conclude by induction as the dimension of the normalisation of $X_1$ is at most $D-1$.
\end{proof}





\section{Gluing bases of fibrations}\label{Glu}

Throughout this section,
$\mathcal L$ will be a line bundle with the property that $\mathcal L\vert_T$ is semiample for every irreducible component $T\subseteq \mathcal T$.
For every $T$ we denote by $\phi_T\colon T\to V$ the  fibration  induced by a multiple of  $\mathcal L$.

\begin{dfn}
The equivalence relation $R_{\mathcal L}$ on the set $\bigsqcup_{T\in\mathcal T} V$ is the closure of the relation

$$x_1\sim x_2 \Leftrightarrow \exists T, T'\subseteq\mathcal T,\; \exists y\in T\cap T'\;\; \phi_T(y)=x_1, \phi_{T'}(y)=x_2. $$

\end{dfn}

\begin{rem}
Assume that $\mathcal T$ is a simple normal crossing divisor.
Let $\nu\colon\sqcup T\to\mathcal T$ be the normalisation. Let $\Xi^n$ be the normalisation of the non-normal locus of $\mathcal T$. Then there is an involution $\zeta\colon\Xi^n\to \Xi^n $ and we have
$(\zeta_1,\zeta_2)\colon\Xi^n\to \sqcup T \times \sqcup T $.
Let $\upsilon\colon\Xi^n\to\sqcup W $ be the fibration induced by $\mathcal L\vert_{\Xi}$.
The morphism $(\zeta_1,\zeta_2)$ induces a morphism $(\xi_1,\xi_2)\colon \sqcup W \to\sqcup V$.
Then the equivalence relation $(\xi_1,\xi_2)\colon \sqcup W \to\sqcup V$ coincides with $R_{\mathcal L}$.
\end{rem}

\begin{nt}
Let $\nu\colon\sqcup T\to\mathcal T$ be the normalisation.
For a subset $S\subseteq\sqcup V$ we will denote by $\phi^{-1}S$ the set $\nu\left(\sqcup \phi_T^{-1}(S\cap V)\right)$.
\end{nt}

\begin{rem}
If the line bundle $\mathcal L$ restricted to $\mathcal T$  is base point free, then the relation $\sim$ is finite and $\mathcal L$ induces a morphism $\phi\colon\mathcal T\to (\bigsqcup_{T\in\mathcal T} V)/R_{\mathcal L}$
\end{rem}

\begin{dfn}
Let $\mathcal T\subseteq Y$ be a divisor and let $\mathcal L$ be a line bundle such that $\mathcal L\vert_T$ is base point free for every $T\subseteq\mathcal T$ for every irreducible component.
Let $\phi_T\colon T\to V$ be the morphism induced by $\mathcal L$.
For an equivalence class $[x]$ of $R_{\mathcal L}$ we set the pseudo-fibre
as $$\mathcal T_{[x]}=\cup_{
x'\in[x]} 
\phi_T^{-1}(x')=\phi^{-1}[x].
$$

\end{dfn}

\begin{rem}
The relation $R_{\mathcal L}$ is finite if and only if $\mathcal T_{[x]}$  is an algebraic variety for every $[x]$. 
Indeed $R_{\mathcal L}$ is finite if and only if $[x]$ is a finite set for every $x$.
\end{rem}

\begin{pro}\label{pro:fmap}
Let $Y$ be a normal variety and let $\mathcal T\subseteq Y$ be a divisor.
Let $\mathcal L$ be a line bundle on $Y$ which is semiample on the irreducible components of $\mathcal T$. Let $\tau\colon \overline Y\to Y$ be a finite map and $\overline{\mathcal T}=\tau^{-1}\mathcal T$.
Then there is a commutative diagram 
$$
\xymatrix{
 \bigsqcup_{\overline T\subseteq\overline{\mathcal T}}\overline T\ar[r]^{\tau}\ar[d]_{\left( \phi_{\overline T}\right)}&\bigsqcup_{ T\subseteq\mathcal T} T\ar[d]^{\left(\phi_T\right)}\\
\bigsqcup\overline V\ar[r]_{\sigma}&\bigsqcup V.
} 
$$
with $\sigma$ a finite map. Moreover 
 for every $x\in \sqcup V$ we have $\sigma^{-1}[x]=\sqcup_{\sigma(\bar x)=x}[\bar x]$.
 
 In particular $\mathcal R_{\tau^*\mathcal L}=\sigma^*\mathcal R_{\mathcal L}$
\end{pro}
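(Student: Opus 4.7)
The plan is to construct $\sigma$ component by component via Stein factorisation, check that $\sigma$ sends the generators of $\mathcal R_{\tau^*\mathcal L}$ into those of $\mathcal R_{\mathcal L}$ (giving one inclusion in the pseudofibre identity), and in the reverse direction lift chains of generating equivalences one step at a time.

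For each irreducible component $\overline T\subseteq\overline{\mathcal T}$, set $T=\tau(\overline T)$, which is an irreducible component of $\mathcal T$. As $\tau$ is finite, $\tau|_{\overline T}\colon\overline T\to T$ is finite and surjective, so $(\tau^*\mathcal L)|_{\overline T}=(\tau|_{\overline T})^*(\mathcal L|_T)$ is semiample of the same Iitaka dimension as $\mathcal L|_T$; its Iitaka fibration $\phi_{\overline T}\colon\overline T\to\overline V$ therefore coincides with the connected-fibre factor of the Stein factorisation of $\phi_T\circ\tau|_{\overline T}$, and the residual finite map $\sigma\colon\overline V\to V$ makes the square commute. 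Assembling over all $\overline T$ produces the diagram. For the easy inclusion, suppose $\bar x_1\sim\bar x_2$ via the generating relation, witnessed by $\overline T_1,\overline T_2\subseteq\overline{\mathcal T}$ and $\bar y\in\overline T_1\cap\overline T_2$ with $\phi_{\overline T_i}(\bar y)=\bar x_i$; setting $T_i=\tau(\overline T_i)$ and $y=\tau(\bar y)$, commutativity gives $y\in T_1\cap T_2$ and $\phi_{T_i}(y)=\sigma(\bar x_i)$, so $\sigma(\bar x_1)\sim\sigma(\bar x_2)$ in $\mathcal R_{\mathcal L}$. Passing to equivalence closure, $\sigma$ sends $\mathcal R_{\tau^*\mathcal L}$ into $\mathcal R_{\mathcal L}$, hence $[\bar x]\subseteq\sigma^{-1}[x]$ for every preimage $\bar x$ of $x$, yielding $\sqcup_{\sigma(\bar x)=x}[\bar x]\subseteq\sigma^{-1}[x]$.

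For the reverse inclusion, I pick $\bar x'\in\sigma^{-1}[x]$ and write a chain $\sigma(\bar x')=x_0,x_1,\ldots,x_n=x$ in $\mathcal R_{\mathcal L}$, where $x_i\sim x_{i+1}$ is generated by some $y_i\in S_i\cap S_{i+1}$ with $\phi_{S_i}(y_i)=x_i$ and $\phi_{S_{i+1}}(y_i)=x_{i+1}$. I lift the chain inductively: starting from $\bar x_0=\bar x'\in\overline V_{\overline S_0}$ with $\overline S_0$ above $S_0$, at each stage the Stein fibre $\phi_{\overline S_i}^{-1}(\bar x_i)$ is a connected component of $(\tau|_{\overline S_i})^{-1}(\phi_{S_i}^{-1}(x_i))$, and I use that this component surjects onto $\phi_{S_i}^{-1}(x_i)$ under $\tau|_{\overline S_i}$. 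Granting this, I choose a preimage $\bar y_i\in\phi_{\overline S_i}^{-1}(\bar x_i)$ of $y_i$, pick a component $\overline S_{i+1}$ of $\overline{\mathcal T}$ through $\bar y_i$ above $S_{i+1}$, and set $\bar x_{i+1}=\phi_{\overline S_{i+1}}(\bar y_i)$; commutativity forces $\sigma(\bar x_{i+1})=x_{i+1}$. After $n$ steps $\bar x_n$ is a preimage of $x$ with $\bar x'\sim\bar x_n$ in $\mathcal R_{\tau^*\mathcal L}$, completing the pseudofibre identity, and $\mathcal R_{\tau^*\mathcal L}=\sigma^*\mathcal R_{\mathcal L}$ follows as a direct reformulation.

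The hard part will be the surjectivity claim just invoked. For generic $x_i\in V_{S_i}$ the fibre $\phi_{S_i}^{-1}(x_i)$ is irreducible, and each Stein component of its preimage, being closed of the same dimension, coincides with it; at special $x_i$ where $\phi_{S_i}^{-1}(x_i)$ becomes reducible a more delicate argument is needed, using the normality of $\overline Y$ and the local structure of the finite cover $\tau|_{\overline S_i}$ to ensure that each Stein component still maps onto $\phi_{S_i}^{-1}(x_i)$.
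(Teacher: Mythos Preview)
Your approach mirrors the paper's exactly: build $\sigma$ componentwise via Stein factorisation, push down witnesses for the easy inclusion, and lift chains step by step for the reverse. The paper's proof asserts the lifting step you flag (``Therefore $\phi_{\overline T}^{-1}(\bar x)\cap\tau^{-1}\phi_{T'}^{-1}(x_1)\neq\emptyset$'') without further comment, so you are not missing an argument that the paper supplies.

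The surjectivity you worry about does hold, and the ingredient you are missing is that $T$ is \emph{normal}: in the paper's conventions $\bigsqcup T$ denotes the normalisation of $\mathcal T$, so each $T$ is a normal variety. With $T$ normal and $\overline T$ integral, the finite surjection $\tau|_{\overline T}\colon\overline T\to T$ satisfies going--down, hence every irreducible component of $(\tau|_{\overline T})^{-1}(L)$ surjects onto $L$ for each irreducible closed $L\subseteq T$. Now write the connected fibre $F=\phi_T^{-1}(\sigma(\bar x))=\bigcup_j L_j$ as a union of irreducible components, and let $C$ be a connected component of $(\tau|_{\overline T})^{-1}(F)$. If $C$ contains an irreducible component over $L_{j_0}$ and $p\in L_{j_0}\cap L_{j_1}$, then $C$ contains a preimage of $p$, hence meets (and therefore contains) an irreducible component over $L_{j_1}$. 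Connectedness of $F$ then lets you reach every $L_j$, so $\tau|_{\overline T}(C)=F$. This closes the gap uniformly; no separate treatment of special fibres is needed.
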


\begin{proof}
Assume that $\tau(\overline T)=T$.
There is a commutative diagram 
$$
\xymatrix{
\overline T\ar[r]^{\tau}\ar[d]_{\phi_{\overline T}}&T\ar[d]^{\phi_T}\\
\overline V\ar[r]_{\sigma_V}&V
} 
$$
where $\sigma_V\circ\phi_{\overline T}=\phi_T\circ\tau$.
And the $\sigma_V$ define a finite map $\sigma\colon \bigsqcup\overline V\to \bigsqcup V$.

Let $\bar x,\bar x'\in \bigsqcup\overline V$ such that $\bar x\sim\bar x'$.
Then there is $\bar y\in \overline T\cap\overline T'$ such that $\phi_{\overline T}(\bar y)=\bar x$ and $\phi_{\overline T'}(\bar y)=\bar x'$. By the commutativity of the diagram 
$\phi_{T}(\tau\bar y)=\sigma(\bar x)$ and $\phi_{T'}(\tau\bar y)=\sigma(\bar x')$.
Therefore $\sigma[\bar x]\subseteq [\sigma(\bar x)]$.

On the other hand let $\bar x\in \sigma^{-1}[x]$.
We want to prove that $\bar x$ is equivalent to a point in $\sigma^{-1}x$.
The point $\sigma\bar x$ is equivalent to $x$.
Therefore there are $\sigma\bar x\sim x_1\sim\ldots\sim x_k=x$.
We prove our statement by induction on $k$. If $k=1$, the statement is obvious.
We assume from now on that $k>1$.
Then $\sigma\bar x\sim x_1$ if and only if $\phi_T^{-1}(\sigma\bar x)\cap \phi_{T'}^{-1}(x_1)\neq\emptyset$. Therefore $\phi_{\overline T}^{-1}(\bar x)\cap \tau^{-1}\phi_{T'}^{-1}(x_1)\neq\emptyset$. Let $\bar y_1\in \phi_{\overline T}^{-1}(\bar x)\cap \tau^{-1}\phi_{T'}^{-1}(x_1)$ and $\bar x_1=\phi_{\overline T'}\bar y_1$. Then $\sigma(\bar x_1)= x_1$ and we can conclude by the inductive hypothesis.

\end{proof}

\begin{pro}\label{pro:bmap}
Let $Y$ be a normal variety and let $\mathcal T\subseteq Y$ be a divisor.
Let $\mathcal L$ be a line bundle on $Y$ which is semiample on the irreducible components of $\mathcal T$. Let $\varepsilon\colon \overline Y\to Y$ be a birational map
which is an isomorphism on the generic points of $T\cap T'$ for every $T,T'$ irreducible component of $\mathcal T$.
Let $\overline{\mathcal T}$ be the strict transform of $\mathcal T$.
Then there is a commutative diagram 
$$
\xymatrix{
 \bigsqcup_{\overline T\subseteq\overline{\mathcal T}}\overline T\ar[r]^{\varepsilon}\ar[rd]_{\left( \phi_{\overline T}\right)}&\bigsqcup_{ T\subseteq\mathcal T} T\ar[d]^{\left(\phi_T\right)}\\
&\bigsqcup V.
} 
$$
with $\phi_{\overline T}=\phi_T\circ\varepsilon$.
Then $\mathcal R_{\varepsilon^*\mathcal L}=\mathcal R_{\mathcal L}$.
\end{pro}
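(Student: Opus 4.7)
The plan is to first verify the commutative diagram in the statement, and then show that the two defining binary relations whose equivalence closures are $\mathcal R_{\mathcal L}$ and $\mathcal R_{\varepsilon^*\mathcal L}$ coincide, so that their closures agree.

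First I would construct $\phi_{\overline T}$. Since $\mathcal L|_T$ is semiample with associated fibration $\phi_T\colon T \to V$, there is an ample $\Q$-divisor $A$ on $V$ with $\mathcal L|_T \sim_{\Q} \phi_T^* A$. As $\overline T$ is the strict transform of $T$ and $\varepsilon$ is birational, the restriction $\varepsilon|_{\overline T}\colon \overline T \to T$ is birational and in particular has connected fibres. Pulling back we obtain $\varepsilon^* \mathcal L|_{\overline T} \sim_{\Q} (\phi_T \circ \varepsilon|_{\overline T})^* A$, and $\phi_T \circ \varepsilon|_{\overline T}$ has connected fibres as the composition of a birational morphism with a fibration. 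It is therefore the fibration associated to $\varepsilon^* \mathcal L|_{\overline T}$, so $\phi_{\overline T} = \phi_T \circ \varepsilon|_{\overline T}$ with target literally $V$, giving the diagram.

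Next I would compare the defining binary relations, which both live on the same disjoint union $\bigsqcup_{T} V$, with $\overline T$ identified with $T$ via the strict transform correspondence and with the targets identified by the previous step. For the inclusion $\mathcal R_{\varepsilon^*\mathcal L} \subseteq \mathcal R_{\mathcal L}$, given $\bar y \in \overline T \cap \overline T'$ with $\phi_{\overline T}(\bar y) = x_1$ and $\phi_{\overline T'}(\bar y) = x_2$, the point $y := \varepsilon(\bar y) \in T \cap T'$ satisfies $\phi_T(y)=x_1$ and $\phi_{T'}(y)=x_2$ by commutativity. For the reverse inclusion, given $y \in T \cap T'$ with $\phi_T(y)=x_1$ and $\phi_{T'}(y)=x_2$, let $C$ be an irreducible component of $T \cap T'$ containing $y$. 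The hypothesis that $\varepsilon$ is an isomorphism at the generic point of $C$ ensures that the strict transform $\overline C$ lies in $\overline T \cap \overline T'$ and $\varepsilon|_{\overline C}$ is birational onto $C$. Picking an irreducible component $W$ of $C \cap \phi_T^{-1}(x_1) \cap \phi_{T'}^{-1}(x_2)$ through $y$, its strict transform $\overline W \subseteq \overline C \subseteq \overline T \cap \overline T'$ satisfies $\varepsilon(\overline W) \subseteq W$, and hence maps to $x_1$ and $x_2$ under $\phi_{\overline T}$ and $\phi_{\overline T'}$ by the commutativity from the first step; any $\bar y \in \overline W$ then witnesses $x_1 \sim x_2$ in $\mathcal R_{\varepsilon^*\mathcal L}$.

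The only subtle point is step one: ensuring the fibration associated with $\varepsilon^*\mathcal L|_{\overline T}$ has the same target $V$ as $\phi_T$ and not some other birational model, as otherwise the two equivalence relations would naturally be defined on different sets. This is handled by the observation that $\phi_T \circ \varepsilon|_{\overline T}$ already has connected fibres and pulls back an ample class to the given line bundle, so it is itself the induced fibration.
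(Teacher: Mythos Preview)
Your proof follows essentially the same approach as the paper: verify the diagram, then show the two generating relations coincide by pushing a witness point forward via $\varepsilon$ for one inclusion and lifting it for the other. The paper's argument for the reverse inclusion is slightly more direct: given $y\in T\cap T'$, it simply observes that since $\varepsilon$ is an isomorphism at the generic point of the component $C$ of $T\cap T'$ through $y$, the proper birational map $\varepsilon|_{\overline C}\colon\overline C\to C$ is surjective, so one may choose $\bar y\in\overline C\cap\varepsilon^{-1}(y)\subseteq\overline T\cap\overline T'$ and conclude immediately from the commutative diagram.

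Your detour through an irreducible component $W$ of $C\cap\phi_T^{-1}(x_1)\cap\phi_{T'}^{-1}(x_2)$ and its strict transform is unnecessary and, as written, has a small gap: nothing guarantees that $W$ meets the locus over which $\varepsilon|_{\overline C}$ is an isomorphism, so its strict transform in $\overline C$ could be empty (for instance if $W=\{y\}$ and $y$ lies in the exceptional image of $\varepsilon|_{\overline C}$). The fix is exactly the paper's move: drop $W$ and lift $y$ directly using surjectivity of $\overline C\to C$. Your added remark about why the target of $\phi_{\overline T}$ is literally $V$ (and not merely birational to it) is a useful clarification that the paper leaves implicit.
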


\begin{proof}
 It is enuogh to prove that the equivalence classes coincide.
 Let $x_1,x_2\in\bigsqcup V$ be such that there is $\bar y\in \overline T_1\cap\overline T_2$ with $\phi_{\overline T_i}(\bar y)=x_i$.
 The divisor $\overline T_i$ is the strict transform of $T_i\subseteq Y$.
 Then $y=\varepsilon (\bar y)$ is such that $\phi_{\overline T_i}(\bar y)=x_i$.
 This proves that $\mathcal R_{\varepsilon^*\mathcal L}\subseteq\mathcal R_{\mathcal L}$.
 
 Let $x_1,x_2\in\bigsqcup V$ be such that there is $y\in  T_1\cap T_2$ with $\phi_{T_i}( y)=x_i$.
 Let $\overline T_i$ be the strict transform of $T_i\subseteq Y$.
 As $\varepsilon$ is an isomorphism  on the generic point of $T_1\cap T_2$, the intersection $T_1\cap T_2\cap\varepsilon^{-1}y$ is non empty.
 If $\bar y$ is in the intersection, then $\phi_{\overline T_i}(\bar y)= \phi_{T_i}(\varepsilon y)=x_i$.
 This proves that $\mathcal R_{\varepsilon^*\mathcal L}\supseteq\mathcal R_{\mathcal L}$, concluding the proof.
\end{proof}

\begin{cor}\label{cor:frel}
Let $Y$ be a normal variety and let $\mathcal T\subseteq Y$ be a divisor.
Let $\mathcal L$ be a line bundle on $Y$ which is semiample on the irreducible components of $\mathcal T$.
Let $\theta\colon\overline Y\to Y$ be a generically finite map such that $\theta Exc(\theta)$ does not contain 
the generic points of $T\cap T'$ for every $T,T'$ irreducible component of $\mathcal T$. 
Let $\overline{\mathcal T}$ be the strict transform of $\mathcal T$.

Then there is a commutative diagram 
$$
\xymatrix{
 \bigsqcup_{\overline T\subseteq\overline{\mathcal T}}\overline T\ar[r]^{\theta}\ar[d]_{\left( \phi_{\overline T}\right)}&\bigsqcup_{ T\subseteq\mathcal T} T\ar[d]^{\left(\phi_T\right)}\\
\bigsqcup\overline V\ar[r]_{\sigma}&\bigsqcup V.
} 
$$
with $\sigma$ a finite map
such that $\mathcal R_{\theta^*\mathcal L}=\sigma^*\mathcal R_{\mathcal L}$.
\end{cor}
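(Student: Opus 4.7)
The plan is to reduce Corollary \ref{cor:frel} to a combination of Proposition \ref{pro:fmap} and Proposition \ref{pro:bmap} by using the Stein factorization of $\theta$. More precisely, I would factor $\theta$ as $\theta = \tau \circ \varepsilon$, where $\varepsilon \colon \overline{Y} \to Y'$ has connected fibres and $\tau \colon Y' \to Y$ is finite. Since $\theta$ is generically finite, $\varepsilon$ is automatically birational. Let $\mathcal{T}' = \tau^{-1}\mathcal{T}$; then the strict transform $\overline{\mathcal{T}}$ of $\mathcal{T}$ under $\theta$ coincides with the strict transform of $\mathcal{T}'$ under $\varepsilon$.

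I would then apply Proposition \ref{pro:fmap} to the finite map $\tau$ to obtain a finite map $\sigma \colon \bigsqcup V' \to \bigsqcup V$ together with the commutative square
$$
\xymatrix{
\bigsqcup_{T'\subseteq\mathcal{T}'} T'\ar[r]^{\tau}\ar[d]_{(\phi_{T'})}&\bigsqcup_{T\subseteq\mathcal{T}} T\ar[d]^{(\phi_T)}\\
\bigsqcup V'\ar[r]_{\sigma}&\bigsqcup V
}
$$
and the identification $\mathcal{R}_{\tau^*\mathcal{L}} = \sigma^*\mathcal{R}_{\mathcal{L}}$.

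The core technical step is to check that $\varepsilon$ satisfies the hypothesis of Proposition \ref{pro:bmap} relative to $\mathcal{T}'$ and $\tau^*\mathcal{L}$, that is, that $\varepsilon$ is an isomorphism at the generic point of $T'_j \cap T'_k$ for every pair of components $T'_j, T'_k$ of $\mathcal{T}'$. If this failed, the generic point of some $T'_j \cap T'_k$ would lie in $\varepsilon(\Exc(\varepsilon))$, and applying $\tau$ would place the generic point of $\tau(T'_j) \cap \tau(T'_k) = T_i \cap T_{i'}$ inside $\theta(\Exc(\theta))$, contradicting the hypothesis of the corollary. Here I use that $\tau$ is finite, so $\Exc(\theta) = \Exc(\varepsilon)$ and $\tau$ sends generic points of intersections of components of $\mathcal{T}'$ to generic points of intersections of components of $\mathcal{T}$.

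Once this is verified, Proposition \ref{pro:bmap} applied to $\varepsilon$ yields the triangle
$$
\xymatrix{
\bigsqcup_{\overline T\subseteq\overline{\mathcal T}}\overline T\ar[r]^{\varepsilon}\ar[rd]_{(\phi_{\overline T})}&\bigsqcup_{T'\subseteq\mathcal{T}'} T'\ar[d]^{(\phi_{T'})}\\
&\bigsqcup V'
}
$$
together with $\mathcal{R}_{\varepsilon^*\tau^*\mathcal{L}} = \mathcal{R}_{\tau^*\mathcal{L}}$. Pasting this on top of the square from Proposition \ref{pro:fmap} produces the desired diagram, and combining the two equalities gives $\mathcal{R}_{\theta^*\mathcal{L}} = \mathcal{R}_{\varepsilon^*\tau^*\mathcal{L}} = \mathcal{R}_{\tau^*\mathcal{L}} = \sigma^*\mathcal{R}_{\mathcal{L}}$. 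The only nontrivial point in the whole argument is the compatibility of $\Exc(\varepsilon)$ with the intersections of components of $\mathcal{T}'$; everything else is formal bookkeeping combining the two previously established propositions.
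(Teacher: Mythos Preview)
Your proof is correct and follows essentially the same approach as the paper: take the Stein factorisation $\theta=\tau\circ\varepsilon$, apply Proposition~\ref{pro:fmap} to the finite part $\tau$, then Proposition~\ref{pro:bmap} to the birational part $\varepsilon$, and paste the two diagrams together. In fact you are slightly more careful than the paper, which simply invokes the two propositions without spelling out why the hypothesis of Proposition~\ref{pro:bmap} is inherited by $\varepsilon$ from the hypothesis on $\theta$.
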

\begin{proof} Let $\theta=\varepsilon\circ\tau$ be the Stein factorisation.
Let $\widetilde{\mathcal T}=\tau^{-1}\mathcal T$.
By Proposition \ref{pro:fmap} there is a diagram 
$$
\xymatrix{
 \bigsqcup_{\widetilde T\subseteq\widetilde{\mathcal T}}\widetilde T\ar[r]^{\tau}\ar[d]_{\left( \phi_{\widetilde T}\right)}&\bigsqcup_{ T\subseteq\mathcal T} T\ar[d]^{\left(\phi_T\right)}\\
\bigsqcup\widetilde V\ar[r]_{\sigma}&\bigsqcup V.
} 
$$
with $\sigma$ a finite map such that $\mathcal R_{\tau^*\mathcal L}=\sigma^*\mathcal R_{\mathcal L}$.
By Proposition \ref{pro:bmap} there is a diagram 

$$
\xymatrix{
 \bigsqcup_{\overline T\subseteq\overline{\mathcal T}}\overline T\ar[r]^{\varepsilon}\ar[rd]_{\left( \phi_{\overline T}\right)}& \bigsqcup_{\widetilde T\subseteq\widetilde{\mathcal T}}\widetilde T\ar[d]^{\left( \phi_{\widetilde T}\right)}\\
&\bigsqcup\widetilde V.
} 
$$
 \end{proof}

\begin{lem}\label{lem:fconn}
Let $\mathcal T\subseteq Y$ be a reduced and connected divisor and let $\mathcal L$ be a line bundle such that $\mathcal L\vert_T$ is base point free for every $T\subseteq\mathcal T$ for every irreducible component.
Let $\phi_T\colon T\to V$ be the morphism induced by $\mathcal L$. 
Then $\mathcal T_{[x]}$ is connected.
\end{lem}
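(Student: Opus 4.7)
The plan is to show that any two points $y_1,y_2\in\mathcal T_{[x]}$ lie in a common connected closed subset of $\mathcal T_{[x]}$, built as a chain of fibres of the maps $\phi_T$ that meet along intersection points of the components of $\mathcal T$.

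First I would unpack the setup: by definition of $\mathcal T_{[x]}$, each $y_i$ lies in some irreducible component $T_i\subseteq\mathcal T$ with $x_i:=\phi_{T_i}(y_i)\in[x]\cap V_{T_i}$, writing $V_T$ for the copy of $V$ indexed by $T$ inside $\bigsqcup_{T\subseteq\mathcal T}V$. Since $\phi_T$ is a fibration (the Stein factorisation of the morphism induced by $m\mathcal L\vert_T$), every fibre $\phi_T^{-1}(\xi)$ is connected, and by construction $\phi_T^{-1}(\xi)\subseteq\mathcal T_{[x]}$ for every $\xi\in[x]\cap V_T$.

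Next I would exploit the chain structure of the equivalence closure recalled in Construction~\ref{constr:eqclosure}: since $x_1$ and $x_2$ lie in the same class of $R_{\mathcal L}$, there is a finite sequence
$$
x_1=\xi_0,\ \xi_1,\ \ldots,\ \xi_n=x_2
$$
in $\bigsqcup V$ in which each $\xi_{k-1}$ is directly related to $\xi_k$. By the definition of the generating relation, this provides for each $k$ irreducible components $U_k,U_k'\subseteq\mathcal T$ and a point $z_k\in U_k\cap U_k'$ such that $\phi_{U_k}(z_k)=\xi_{k-1}$ and $\phi_{U_k'}(z_k)=\xi_k$. Since the component $V_T$ containing $\xi_k$ is uniquely determined, one forces $U_k=U_{k-1}'$ for every $k$; in particular $U_1=T_1$ and $U_n'=T_2$.

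Finally I would glue: the union
$$
\phi_{T_1}^{-1}(\xi_0)\ \cup\ \phi_{U_1'}^{-1}(\xi_1)\ \cup\ \phi_{U_2'}^{-1}(\xi_2)\ \cup\ \cdots\ \cup\ \phi_{T_2}^{-1}(\xi_n)
$$
is a sequence of connected subsets of $\mathcal T_{[x]}$, any two consecutive of which share the point $z_k$; hence the whole union is connected and contains both $y_1$ and $y_2$. This proves the lemma. The only delicate point is bookkeeping of which $V_T$ each $\xi_k$ belongs to, so that the fibres actually chain up along the connecting points $z_k$; I do not anticipate any substantial geometric obstacle, as the hypothesis that $\phi_T$ is a fibration (hence has connected fibres) does all the real work.
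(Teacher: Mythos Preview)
Your proof is correct and follows essentially the same approach as the paper: pick two points in $\mathcal T_{[x]}$, unwind the equivalence class into a chain $x_1\sim\xi_1\sim\cdots\sim x_2$, and observe that the corresponding fibres $\phi^{-1}(\xi_k)$ are connected and consecutively share the witness points $z_k$, so their union is a connected subset of $\mathcal T_{[x]}$ containing both points. Your explicit bookkeeping remark that $U_k=U_{k-1}'$ (because each $\xi_k$ lives in a uniquely determined $V_T$) is a nice clarification the paper leaves implicit.
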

\begin{proof}
Let $y_1,y_2\in \mathcal T_{[x]}$. 
Then there are $x_2,\ldots,x_r$ such that $\phi_1(y_1)=x_1\sim x_2\ldots x_r\sim x_{r+1}=\phi_2(y_2)$.
Let $V_i$ be such that $x_i\in V_i$ and $T_i$ with $\phi_i\colon T_i\to V_i$.
Then there are $y_{i,i+1}\in T_{i,i+1}$ such that
$\phi_i(y_{i,i+1})=x_i$ and $\phi_{i+1}(y_{i,i+1})=x_{i+1}$.
Thus $$y_1,y_2\in \bigcup_{i=1}^{r+1}\phi_i^{-1}(x_i)\subseteq \mathcal T_{[x]}$$
and $\cup_{i=1}^{r+1}\phi_i^{-1}(x_i)$ is connected as for every $i$ there is $y_{i,i+1}\in\phi_i^{-1}(x_i)\cap\phi_{i+1}^{-1}(x_{i+1})$.

\end{proof}

\section{Graph theory}\label{Graph}
We recall here a few basic notions of graph theory.
We follow the presentation of \cite{Stallings}.

A \textit{graph} $\Gamma$ consists of two sets $E$ and $V$ (edges and vertices), and two functions $E\to E$, $e\mapsto\bar e$ and $E\to V$, $e\mapsto i(e)$: for each $e \in E$, there is an element $\bar e\in E$, and an element $i(e)\in V$. The function $\bar{\cdot}$ is such that $\bar{\bar e}=e$ and $\bar e\neq e$. The vertex $i(e)$ is called the initial vertex of $e$, the vertex $t(e)=i(\bar e)$ is called the terminal vertex of $e$.

We call a graph \textit{finite} if both $V$ and $E$ are finite sets.

A \textit{map of graphs} $f\colon \Gamma_1\to\Gamma_2$ consists of a pair of functions, edges to edges, vertices to vertices, preserving the structure.
A map of graphs is \textit{surjective} if it is surjective on vertices and on edges.

We recall that \textit{pull-backs} exist in the category of graphs: given $f_1\colon\Gamma_1\to\Delta$ and  $f_2\colon\Gamma_2\to\Delta$ two maps of graphs, there is a graph $\Gamma_1\times_{\Delta}\Gamma_2$ together with surjective maps $g_i\colon \Gamma_1\times_{\Delta}\Gamma_2\to \Gamma_i$ such that $f_1\circ g_1=f_2\circ g_2$.

A \textit{path} in a graph $\Gamma$ is an $n$-tuple of edges $(e_1,\ldots,e_n)\in E^n$ such that $t(e_i)=i(e_{i+1})$. The vertices $i(e_1)$ and $t(e_n)$ are the initial vertex and terminal vertex of the path.

A \textit{circuit} is a path whose initial and terminal vertex coincide.
Equivalently, we define $C_n$ the standard circuit of length $n$ as the regular polygone with $n$ edges and 
a circuit in $\Gamma$ is a map of graphs $C_n\to\Gamma$.
A circuit is \textit{proper} if the map $C_n\to\Gamma$ is injective on the vertices.
The \textit{standard arc of length $n$} $A_n$ can be described as the interval $[0, n]$ subdivided at the integral points.
The vertices are $V=\{0,\ldots,n\}$, the edges are the oriented segments $[i, i+1]$ and $[i+1, i]$ between $i$ and $i+1$. The involution $\bar{\cdot}$ 
exchanges $[i, i+1]$ and $[i+1, i]$.

The \textit{homotopy equivalence} on paths is the relation generated by
$$(e_1,\ldots,e_n)\sim (e_1,\ldots,e_i,e,\bar e,e_{i+1},\ldots,e_n)$$ and the set of paths starting and ending at a same vertex $v$ modulo homotopy is denoted by $\pi_1(\Gamma,v)$ and called the fundamental group of $\Gamma$.
It has a natural group structure with respect to the concatenation of paths.

A path is \textit{reduced} if it contains no sub-paths of the form $e\bar e$ and one can prove that every path is homotopic to a reduced one.

Let $v$ be a vertex of the graph $\Gamma$. The \textit{star} of $v$ in $\Gamma$ is the set
$$St(v,\Gamma)=\{e\in E\vert\; i(e)=v\}.$$
A map of graphs $f\colon \Gamma_1\to\Gamma_2$ is a \textit{covering} if for each vertex $v$ of $\Gamma_1$ the natural function
$$f_v\colon St(v,\Gamma_1)\to St(f(v),\Gamma_2)$$
is bijective.
By \cite[4.1(d)]{Stallings} if $f\colon \Gamma_1\to\Gamma_2$ is a covering, then
$f\colon \pi_1(\Gamma_1,v)\to\pi_1(\Gamma_2,f(v))$ is an injective homomorphism and if the graphs are finite then $f \pi_1(\Gamma_1,v)\subseteq\pi_1(\Gamma_2,f(v))$ has finite index equal to the cardinality of $f^{-1}f(v)$.

This last remark combined with \cite[3.3]{Stallings} and \cite[4.4]{Stallings}, gives the following proposition
\begin{pro}\label{prop:liftingcircuits}
 If $f\colon \Gamma_1\to\Gamma_2$ is a surjective maps of finite graphs, then $f \pi_1(\Gamma_1,v)\subseteq\pi_1(\Gamma_2,f(v))$ has finite index $i\leq |f^{-1}f(v)|$.
\end{pro}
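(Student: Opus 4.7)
The plan is to reduce Proposition \ref{prop:liftingcircuits} to the covering case already handled in the preceding remark (via \cite[4.1(d)]{Stallings}), by first straightening $f$ to an immersion using Stallings folds and then completing that immersion to a finite covering.

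First, I would invoke the folding theorem \cite[3.3]{Stallings} to factor the surjective map as $f = f' \circ \varphi$, where $\varphi\colon\Gamma_1\to\Gamma'$ is a finite composition of Stallings folds (each identifying two distinct edges sharing their initial vertex and their image in $\Gamma_2$), and $f'\colon\Gamma'\to\Gamma_2$ is an immersion. Since each fold is surjective on both vertices and edges, $\varphi$ is surjective, so $|{f'}^{-1}(f(v))| \leq |f^{-1}(f(v))|$. The characteristic property of a Stallings fold, namely that the two identified edges share an image in $\Gamma_2$, ensures that the image subgroup in $\pi_1(\Gamma_2, f(v))$ is preserved at each step, giving $f_*\pi_1(\Gamma_1, v) = f'_*\pi_1(\Gamma', \varphi(v))$.

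Next, I would apply the extension theorem \cite[4.4]{Stallings} to the finite immersion $f'$ to produce a finite covering $\bar f\colon\bar\Gamma\to\Gamma_2$ with $\Gamma' \hookrightarrow \bar\Gamma$ as a subgraph. The preceding remark applied to the covering $\bar f$ then says that $\bar f_*\pi_1(\bar\Gamma, \varphi(v))$ has finite index in $\pi_1(\Gamma_2, f(v))$ equal to $|\bar f^{-1}(f(v))|$. Chaining this with the identification $f_*\pi_1(\Gamma_1, v) = f'_*\pi_1(\Gamma', \varphi(v)) \subseteq \bar f_*\pi_1(\bar\Gamma, \varphi(v))$ and with a comparison of the vertex fibres over $f(v)$ in $\Gamma'$ and $\bar\Gamma$ then gives the bound $i \leq |f^{-1}(f(v))|$.

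The main obstacle I expect is controlling simultaneously two things during the covering completion step: the image subgroup in $\pi_1(\Gamma_2, f(v))$, and the cardinality of the fibre over $f(v)$. One has to choose the extension in \cite[4.4]{Stallings} carefully so that no new preimages of $f(v)$ are introduced beyond what is already accounted for by the fibre of $f$, and so that the relative index of $f'_*\pi_1(\Gamma')$ inside $\bar f_*\pi_1(\bar\Gamma)$ can be absorbed into the combinatorial count; this is precisely where the detailed analysis of folds and covering completions in \cite[3.3, 4.4]{Stallings} is required.
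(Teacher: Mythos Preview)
Your outline is exactly the paper's own argument: the paper's entire proof is the one sentence combining the covering remark with \cite[3.3]{Stallings} (folding to an immersion) and \cite[4.4]{Stallings} (completing an immersion to a covering), and you have correctly unpacked what that combination is meant to do.

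The difficulty you flag in your last paragraph is not a technicality, however---it is fatal, because the proposition as stated is false. Take $\Gamma_2$ to be a single vertex $w$ with one loop $a$, and $\Gamma_1$ the path $v_0\!-\!v_1\!-\!v_2$ with both edges sent to $a$. This is a surjective map of finite graphs in the paper's sense, and it is already an immersion, so no folds occur. But $\pi_1(\Gamma_1,v_0)$ is trivial while $\pi_1(\Gamma_2,w)\cong\mathbb Z$, so the index is infinite although $|f^{-1}(w)|=3$. Any finite covering $\bar\Gamma\to\Gamma_2$ extending $\Gamma_1$ has $\bar f_*\pi_1(\bar\Gamma)=n\mathbb Z$ for some $n\geq 3$, and the trivial subgroup still has infinite index in it: the ``relative index of $f'_*\pi_1(\Gamma')$ inside $\bar f_*\pi_1(\bar\Gamma)$'' that you worried about cannot be absorbed.

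What actually makes the paper's applications work is a stronger hypothesis that the proof of Lemma~\ref{lem:covering} establishes but the statement of Proposition~\ref{prop:liftingcircuits} omits: the graph map there is \emph{star-surjective}, meaning each $f_v\colon St(v,\Gamma_1)\to St(f(v),\Gamma_2)$ is onto. Under that hypothesis one can lift any reduced path in $\Gamma_2$ edge by edge starting from $v$, and a pigeonhole argument on the fibre $f^{-1}(f(v))$ shows that for every loop $\mathcal C$ at $f(v)$ some power $\mathcal C^m$ with $1\leq m\leq |f^{-1}(f(v))|$ lifts to a loop at $v$. This is exactly the hands-on argument the paper writes out later in the proof of Claim~\ref{cla:tors3}. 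So the correct repair is to add star-surjectivity to the hypothesis (and prove it by the direct path-lifting pigeonhole), not to try to squeeze more out of \cite[4.4]{Stallings}.
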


We conclude this section with an easy but useful lemma.
\begin{lem}\label{lem:surjcircuit}
Let $\Gamma$ be a finite graph. Then there is a standard circuit $C_N$ and a surjective morphism $C_N\to \Gamma$.
\end{lem}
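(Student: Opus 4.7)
The plan is to reduce the lemma to the classical Euler circuit theorem for directed graphs. Note that since the image of the connected graph $C_N$ is connected, we must be working under the implicit assumption that $\Gamma$ is connected (and has no isolated vertices); otherwise no such surjection can exist. Under this assumption, the strategy is to produce a closed walk in $\Gamma$ that traverses every edge.

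The first step is the following observation. For every vertex $v \in V$, the involution $e \mapsto \bar e$ restricts to a bijection between $\{e \in E : i(e) = v\}$ and $\{e \in E : t(e) = v\}$, because $t(\bar e) = i(e)$ by definition. In other words, viewing $E$ as a set of arcs on the vertex set $V$, every vertex of the directed graph $(V, E)$ has equal in-degree and out-degree.

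The second step invokes the Euler circuit theorem: a finite, weakly connected directed graph in which every vertex satisfies $\operatorname{in-deg}(v) = \operatorname{out-deg}(v)$ admits an Eulerian circuit, namely a cyclic sequence $e_0, e_1, \ldots, e_{N-1}$ of arcs (with $N = |E|$) such that $t(e_k) = i(e_{k+1 \bmod N})$ for all $k$ and $\{e_0, \ldots, e_{N-1}\} = E$.

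The final step is to translate this Eulerian circuit into a map of graphs $f \colon C_N \to \Gamma$. Send the vertex $k \in \{0, \ldots, N-1\}$ of $C_N$ to $i(e_k) \in V$, the oriented edge $[k, k+1]$ to $e_k$, and the reverse edge $[k+1, k]$ to $\bar e_k$; the compatibility with sources, targets and the involution is exactly the Eulerian walk condition. Since $\{e_0,\ldots,e_{N-1}\} = E$, the map $f$ is surjective on edges (and the reverse edges of $C_N$ hit the reverse edges of $\Gamma$ for free). Surjectivity on vertices then follows because $\Gamma$ has no isolated vertices, so every $v \in V$ equals $i(e)$ for some $e \in E$. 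There is no real obstacle here; the only point requiring any care is checking that the involution genuinely pairs up edges into and out of each vertex, which is what allows the Eulerian theorem to apply.
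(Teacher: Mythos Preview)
Your proof is correct and takes a genuinely different route from the paper's. The paper gives a direct, self-contained construction: it builds a walk in $\Gamma$ greedily, at each stage choosing an edge pair $\{e,\bar e\}$ not yet traversed, walking from the current endpoint to $i(e)$, then along $e$, and finally (once every edge pair has been hit) walking back to the start to close up the circuit. Your argument instead observes that in Serre's formalism the involution $e\mapsto\bar e$ forces $\mathrm{in\text{-}deg}(v)=\mathrm{out\text{-}deg}(v)$ for every vertex, and then invokes the directed Euler circuit theorem. This is slicker and yields the optimal length $N=|E|$, whereas the paper's greedy construction will in general traverse some edges more than once. The trade-off is that your proof cites an external theorem, while the paper's argument is entirely elementary and self-contained. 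You were also right to flag the implicit hypothesis that $\Gamma$ be connected with $E\neq\emptyset$; the paper's construction uses this as well (when it chooses a path between two given vertices) without stating it.
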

\begin{proof}
We construct recursively a morphism $f\colon A_N\to \Gamma$. We notice that if $f$ is surjective on the edges then it is surjective on the vertices and that it is enough to show that for every $e\in E$ either there is $i$ such that $e=f[i,i+1]$ or such that $\bar e=f[i,i+1]$.
We set $\pi\colon E\to \hat E$ the quotient by the action of $\mathbb Z/2\mathbb Z$ sending $e$ to $\bar e$.
Let $e\in E$. We set $f[0,1]=e$.
Assume we have $f\colon A_k \to \Gamma$.
If $\hat E\setminus\pi\left(\{f[i,i+1]\}_i\right)$ is not empty, then we pick $e\in E\setminus\{f[i,i+1],\overline{f[i,i+1]}\}_i$, we pick a path $(e_1,\ldots,e_n)$ from $t(f[k-1,k])$
to $i(e)$ and we set
$$
\left\{
\begin{array}{lcl}
f[k+i-1,k+i]&=&e_i \;for\;i\leq n\\
f[k+n+1,k+n+2]&=&e.
\end{array}
\right.
$$
If $\hat E\setminus\pi \left(\{f[i,i+1]\}_i\right)$ is empty, then we pick a path $(e_1,\ldots,e_n)$ from $t(f[k-1,k])$
to $i(f[0,1])$ and we set $f[k+i-1,k+i]=e_i$ for $i\leq n$.

\end{proof}

\section{Trivial line bundles on simple normal crossings varieties}\label{Trivial}

In this section we discuss a triviality condition for line bundles on reducible varieties and develop the tools for the proof of Theorem \ref{thm:step2}.
We are mostly concerned with the case of simple normal crossings varieties in the sense of Definition \ref{def:snc}. Lemmas \ref{lem:trivcirc}, \ref{lem:tors} and \ref{lem:tors1} can be seen as a refinement of \cite[Example 9.2.8]{NeMo}.

\begin{dfn}
Let $\mathcal Z=\cup Z$ be a reducible variety. We define the incidence graph $\Gamma^i(\mathcal Z)$ of  $\mathcal Z$ by $V^i=\{Z\vert\; Z\;irreducible\;component\;of\;\mathcal Z\}$ with an edge between $Z$ and $Z'$ for every connected component of $Z\cap Z'$.

\end{dfn}

\begin{nt}
A circuit $\mathcal C$ in $\Gamma^i(\mathcal Z)$ will be denoted by 
$$(\{Z_1,\ldots,Z_k\}, Z_{1,2}\ldots Z_{k,1})\text{ or }
(\{Z_i\},Z_{i,i+1})\text{  for short}$$
where the $Z_i$ are irreducible components of 
$\mathcal Z$ and for every the varietiy $Z_{i,i+1}$ is a connected component  of $Z_i\cap Z_{i+1}$, and $Z_{k,1}$ is a connected component of $Z_1\cap Z_k$.

We will refer to $\cup Z_i$ as the support of the circuit $\mathcal C$.
\end{nt}

\begin{rem}
If $\mathcal Z$ is a divisor with simple normal crossing support,
then $\Gamma^i(\mathcal Z)$ coincides with the 1-skeleton of the dual complex of $\mathcal Z$ (see \cite[Section 2]{dFKX17}).
\end{rem}

Throughout this subsection $Y$ will be a normal connected variety and $\mathcal Z\subseteq Y$ a reducible reduced and connected subvariety of $Y$. We will consider $\mathcal L$ a line bundle on $Y$ such that $\mathcal L\vert_Z\sim\mathcal O_Z$ for every irreducible component $Z$ of $\mathcal Z$.
\begin{dfn}
Let $\mathcal C=(\{Z_i\},Z_{i,i+1})$ be a circuit in $\Gamma^i(\mathcal Z)$. A section of the restriction of $\mathcal L$ to $\mathcal C$ (or of $\mathcal L\vert_{\mathcal C}$) is the data of $s_i\in H^0(Z_i,\mathcal L)$
such that $$s_i\vert_{Z_{i,i+1}}=s_{i+1}\vert_{Z_{i,i+1}}.$$
\end{dfn}

\begin{lem}\label{lem:trivcirc}
Let $\mathcal Z$ be a connected reduced simple normal crossings variety of pure dimension $k$. Let $\mathcal L$ be a line bundle on $\mathcal Z$ such that $\mathcal L\vert_Z\sim\mathcal O_Z$ for every irreducible component $Z$ of $\mathcal Z$.
Then $\mathcal L\vert_{\mathcal Z}\sim\mathcal O_{\mathcal Z}$
if and only if for every circuit $\mathcal C$ in $\Gamma^i(\mathcal Z)$, the restriction of $\mathcal L$ to $\mathcal C$ has a nowhere vanishing global section.
\end{lem}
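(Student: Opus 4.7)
The forward direction is immediate: if $\mathcal L\vert_{\mathcal Z}\cong\mathcal O_{\mathcal Z}$, the constant section restricts to a nowhere vanishing section on the support of any circuit, which is a section of $\mathcal L\vert_{\mathcal C}$ in the sense of the preceding definition.

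For the converse, my plan is to reduce the triviality of $\mathcal L\vert_{\mathcal Z}$ to a $\mathbb C^*$-valued coboundary problem on $\Gamma^i(\mathcal Z)$. Pick a nowhere vanishing $s_Z\in H^0(Z,\mathcal L)$ on each irreducible component. For each edge $W$ of $\Gamma^i(\mathcal Z)$ joining $Z$ to $Z'$ (that is, a connected component of $Z\cap Z'$, which in the snc setting is a connected smooth projective variety with $H^0(W,\mathcal O_W)=\mathbb C$) there is a unique scalar $c_W\in\mathbb C^*$ such that $s_{Z'}\vert_W=c_W\cdot s_Z\vert_W$. If I can produce $\lambda_Z\in\mathbb C^*$ with $\lambda_{Z'}=c_W\lambda_Z$ for every edge, then the rescaled sections $\lambda_Z s_Z$ agree on every connected component of every pairwise intersection, and hence glue to a nowhere vanishing section of $\mathcal L\vert_{\mathcal Z}$ via the snc Mayer--Vietoris sequence
\[
0\to \mathcal L\to\bigoplus_Z\mathcal L\vert_Z\to\bigoplus_{Z\neq Z'}\mathcal L\vert_{Z\cap Z'}.
\]

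Since $\mathcal Z$ is connected, so is $\Gamma^i(\mathcal Z)$, and such a $\lambda$ exists if and only if the product $\prod_i c_{W_i}$ equals $1$ around every closed loop. This is where the hypothesis enters: for a circuit $\mathcal C=(\{Z_i\},Z_{i,i+1})$, every section of $\mathcal L\vert_{\mathcal C}$ has the form $\{t_i=\mu_i s_{Z_i}\}$ with $\mu_i\in\mathbb C$, and the constraint $t_i\vert_{Z_{i,i+1}}=t_{i+1}\vert_{Z_{i,i+1}}$ is equivalent to $\mu_{i+1}=c_{Z_{i,i+1}}\mu_i$ read cyclically. A nowhere vanishing solution exists if and only if $\prod_i c_{Z_{i,i+1}}=1$, which is exactly the input. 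I can then define $\lambda_Z$ by transporting $c$ along any path from a fixed base vertex $Z_0$ to $Z$, with well-definedness guaranteed by the circuit condition applied to the concatenation of any two such paths.

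The step that will require the most care is the translation between the two languages (existence of a nowhere vanishing section of $\mathcal L\vert_{\mathcal C}$ on one side and the $\mathbb C^*$-cocycle being a coboundary on the other); once set up, both the Mayer--Vietoris gluing (valid for snc closed covers by a local coordinate argument on $\bigoplus_i\mathcal O_{Z_i}\to\bigoplus_{i<j}\mathcal O_{Z_i\cap Z_j}$) and the graph-theoretic transport are entirely standard.
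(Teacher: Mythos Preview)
Your proof is correct and follows essentially the same idea as the paper's: both rescale the chosen sections on the irreducible components so that they agree on intersections, using the circuit hypothesis to guarantee consistency. You package this as a $\mathbb C^*$-valued $1$-cocycle on $\Gamma^i(\mathcal Z)$ being a coboundary and then glue via Mayer--Vietoris, whereas the paper carries out the same path-transport argument by an explicit induction over an ordering of the components; the underlying content is identical.
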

\begin{proof}
If $\mathcal L$ is trivial, then it has a nowhere vanishing global section $s\in H^0(\mathcal Z,\mathcal L)$. Then for any circuit $\mathcal C=(\{Z_i\},Z_{1,2},\ldots,Z_{k,1})$  in $\Gamma^i(\mathcal Z)$ it is enough to set $s_i=s\vert_{Z_i}$.

Conversely, let $\mathcal Z=\bigcup Z_i$ be the decomposition of $\mathcal Z$ into its irreducible components.
 By possibly relabeling, we may assume that for any $i>1$ the subvariety $Z_i$ meets $\mathcal Z_{i-1}:=\bigcup_{j<i} Z_j$. Let $\mathcal Z_i=\mathcal Z_{i-1}\cup Z_i$. For each $i\neq j$ with $Z_i\cap Z_j\neq\emptyset$, we fix $p_{i,j}\in Z_i\cap Z_j$.
 Fix $s_1\in H^0(Z_1, \mathcal L)\setminus\{0\}$.
 We construct inductively a nowhere-vanishing section $\sigma_i\in H^0(\mathcal Z_i,\mathcal L) $
 such that $\sigma_i\vert_{Z_1}=s_1$.
 
 For $i>1$ we assume there is a section $\sigma_{i-1}\in H^0(\mathcal Z_{i-1},\mathcal L) $.
 Choose the largest $r<i$ such that $Z_i\cap Z_r\neq\emptyset$, and let $s_i\in H^0(Z_i,\mathcal L)\setminus\{0\}$ be the unique section such that 
 \begin{equation}\label{eq:trcirc1}
s_i(p_{i,r})=\sigma_{i-1}|_{Z_r}(p_{i,r}).
\end{equation} 

If $Z_j\cap Z_r=\emptyset$ for all $j<i$ with $j\neq r$ and $Z_i\cap Z_r$ is connected, then \eqref{eq:trcirc1} defines a nowhere-vanishing section $\sigma_i\in H^0(\mathcal Z_i,\mathcal L) $.

Otherwise, there exists $Z_s$ with $s<i$ and a point $p_{i,s}\in Z_i\cap Z_s$.
Then there exists a circuit $\mathcal C=(Z_{i_1},\ldots,Z_{i_k},Z_{1,2},\ldots,Z_{k,1})$
such that $Z_{i_1}=Z_s$, $Z_{i_{k-1}}=Z_r$, $Z_{i_k}=Z_i$, $p_{i,r}\in Z_{k-1,k}$ and $p_{i,s}\in Z_{k,1}$.
By assumption there exists a non-trivial global section of $\mathcal L\vert_{\mathcal C}$, which is the data of $\theta_i\in H^0(Z_{i_j},\mathcal L)$ for $j=1,\ldots,k$. By rescaling, we may assume that $\theta_1=\sigma_{i-1}\vert_{Z_{i_1}}$. Then, by the construction above, for every $1\leq j\leq k-1$ we have $\theta_j=\sigma_{i-1}\vert_{Z_j}$ and 
 $\theta_k=s_i$, and in particular
 \begin{equation}\label{eq:trcirc2}
s_i(p_{i,s})=\sigma_{i-1}|_{Z_s}(p_{i,s}).
\end{equation} 

Since this holds for any choice of $p_{i,s}\in Z_i\cap Z_s$, \eqref{eq:trcirc1} and \eqref{eq:trcirc2} define a nowhere-vanishing section $\sigma_i\in H^0(\mathcal Z_i,\mathcal L)$.
\end{proof}

\begin{dfn}
Let $\mathcal Z$ be a connected reduced simple normal crossings variety of pure dimension $k$.
Let $\mathcal L$ be a line bundle on $\mathcal Z$ such that $\mathcal L\vert_Z\sim\mathcal O_Z$ for every irreducible component $Z$ of $\mathcal Z$.
Let $\mathcal C=(\{Z_1,\ldots,Z_k\}, Z_{i,i+1})$ be a circuit in $\Gamma^i(\mathcal Z)$. We chose $s_1\in H^0(Z_1,\mathcal L)\setminus\{0\}$ and for every $i>1$ we set $s_i\in H^0(Z_i,\mathcal L)\setminus\{0\}$ as the unique section such that
$$s_i\vert_{Z_{i-1,i}}=s_{i-1}\vert_{Z_{i-1,i}}.$$
We define then 
$$
\begin{array}{rcl}
\Phi_{\mathcal L,\mathcal C}\colon H^0(Z_1,\mathcal L)&\to&H^0(Z_1,\mathcal L)\\
s&\mapsto&s\cdot s_{k+1}/s_1
\end{array}
$$
\end{dfn}

\begin{rem}
The map $\Phi_{\mathcal L,\mathcal C}$ is the identity if and only if the restriction of $\mathcal L$ to $\mathcal C$ admits a nowhere vanishing global section.
\end{rem}

It is easy to see that this does not depend on the choice of $s_1$.
Moreover, if $\mathcal C,\mathcal C'$ are circuits based in $Z_1$ and they are homotopically equivalent, then $\Phi_{\mathcal L,\mathcal C}=\Phi_{\mathcal L,\mathcal C'}$.
If $\mathcal C_1,\mathcal C_2$ are circuits based in $Z_1$
and $\mathcal C=\mathcal C_1\star\mathcal C_2$ is their concatenation, then 
$\Phi_{\mathcal L,\mathcal C}=\Phi_{\mathcal L,\mathcal C_2}\circ\Phi_{\mathcal L,\mathcal C_1}$.
All these remarks prove the following lemma.
\begin{lem}\label{lem:tors}
Let $\mathcal Z$ be a connected reduced simple normal crossings variety of pure dimension $k$, let $Z_1\subseteq\mathcal Z$ be an irreducible component.
Let $\mathcal L$ be a line bundle on $\mathcal Z$ such that $\mathcal L\vert_Z\sim\mathcal O_Z$ for every irreducible component $Z$ of $\mathcal Z$.
There is a group homomorphism
$$
\begin{array}{rcl}
\Phi_{\mathcal L}\colon \pi_1(\Gamma^i(\mathcal Z),Z_1)&\to&GL(H^0(Z_1,\mathcal L))\cong \mathbb C^*\\
 \mathcal C &\mapsto&\Phi_{\mathcal L,\mathcal C}
\end{array}
$$
which is trivial if and only if $\mathcal L\sim\mathcal O_{\mathcal Z}$.
\end{lem}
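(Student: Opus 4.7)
The paragraph preceding the lemma already announces that the result follows from three observations (independence of the basepoint section, homotopy invariance, multiplicativity under concatenation), together with Lemma~\ref{lem:trivcirc}. My plan is to check each of these in turn; the main point to be careful about is that each irreducible component of $\mathcal Z$ is smooth, connected and projective and carries a trivial line bundle, so $H^0(Z,\mathcal L)\cong\mathbb C$, which makes all the matching conditions extremely rigid.

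First I would verify that $\Phi_{\mathcal L,\mathcal C}$ is well defined and lies in $GL(H^0(Z_1,\mathcal L))\cong\mathbb C^*$. Replacing $s_1$ by $\lambda s_1$ with $\lambda\in\mathbb C^*$ forces each inductively defined $s_i$ to be replaced by $\lambda s_i$, since the restriction map $H^0(Z_i,\mathcal L)\to H^0(Z_{i-1,i},\mathcal L)$ is injective on the line $H^0(Z_i,\mathcal L)$; hence the ratio $s_{k+1}/s_1\in\mathbb C^*$ does not depend on the chosen $s_1$, and $\Phi_{\mathcal L,\mathcal C}$ is multiplication by this ratio.

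Second I would show that $\Phi_{\mathcal L,\mathcal C}$ depends only on the homotopy class of $\mathcal C$ in $\pi_1(\Gamma^i(\mathcal Z),Z_1)$. Homotopy is generated by the insertion of a pair $(e,\bar e)$ at some step $i$; such an insertion replaces the segment $(Z_i)$ by $(Z_i,Z',Z_i)$, where $Z'$ is connected to $Z_i$ along the connected component of $Z_i\cap Z'$ encoded by $e$. Any section $s'\in H^0(Z',\mathcal L)$ satisfying $s'\vert_{Z_i\cap Z'}=s_i\vert_{Z_i\cap Z'}$ is unique, and then the next section on $Z_i$ must agree with $s_i$ on $Z_i\cap Z'$; since $H^0(Z_i,\mathcal L)$ is one-dimensional and the restriction map to any non-empty subvariety is injective, this next section coincides with $s_i$. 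Thus the whole sequence, and in particular $s_{k+1}$, is unaffected by the insertion, and $\Phi_{\mathcal L,\cdot}$ descends to $\pi_1(\Gamma^i(\mathcal Z),Z_1)$.

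Third I would establish the homomorphism property. If $\mathcal C=\mathcal C_1\star\mathcal C_2$ with both circuits based at $Z_1$, running the inductive construction along $\mathcal C$ produces after $\mathcal C_1$ the section $\Phi_{\mathcal L,\mathcal C_1}(s_1)\in H^0(Z_1,\mathcal L)$, which then serves as the new initial section for $\mathcal C_2$; by the independence established in Step~1 the final section is $\Phi_{\mathcal L,\mathcal C_2}\bigl(\Phi_{\mathcal L,\mathcal C_1}(s_1)\bigr)$. Hence $\Phi_{\mathcal L}$ is a group homomorphism $\pi_1(\Gamma^i(\mathcal Z),Z_1)\to\mathbb C^*$. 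Finally, $\Phi_{\mathcal L,\mathcal C}=\mathrm{id}$ is equivalent to the nowhere vanishing sections $s_i$ fitting together into a nowhere vanishing section of $\mathcal L$ restricted to the support of $\mathcal C$; so $\Phi_{\mathcal L}$ is trivial if and only if every circuit admits such a section, which by Lemma~\ref{lem:trivcirc} is equivalent to $\mathcal L\sim\mathcal O_{\mathcal Z}$. The main place where one has to be attentive is the homotopy invariance, where the argument hinges on the one-dimensionality of $H^0(Z_i,\mathcal L)$ and on the fact that a section of a trivial bundle on a smooth projective component is determined by its value at a single point of any non-empty subvariety.
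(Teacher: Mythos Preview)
Your proposal is correct and follows exactly the approach the paper takes: the paper's proof consists precisely of the three remarks preceding the lemma (independence of $s_1$, homotopy invariance, multiplicativity under concatenation) together with the observation that $\Phi_{\mathcal L,\mathcal C}=\mathrm{id}$ iff $\mathcal L|_{\mathcal C}$ admits a nowhere vanishing section, and then Lemma~\ref{lem:trivcirc}. You have simply fleshed out the details the paper leaves implicit, and done so accurately; the only small point you might make explicit is that since $\Gamma^i(\mathcal Z)$ is connected, every circuit is conjugate to one based at $Z_1$, so triviality of $\Phi_{\mathcal L}$ really does control all circuits as Lemma~\ref{lem:trivcirc} requires.
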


\begin{rem}\label{rem:tors}
In the context of the previous definition, 
for all $m$ we have 
$$\Phi_{\mathcal L^{\otimes m},\mathcal C}=\underbrace{\Phi_{\mathcal L,\mathcal C}\circ\dots\circ\Phi_{\mathcal L,\mathcal C}}_{m \text{ times}}.$$
\end{rem}

\begin{lem}\label{lem:tors1}
Let $\mathcal Z$ be a connected reduced simple normal crossings variety of pure dimension $k$, let $Z_1\subseteq\mathcal Z$ be an irreducible component.
Let $\mathcal L$ be a line bundle on $\mathcal Z$ such that $\mathcal L\vert_Z\sim\mathcal O_Z$ for every irreducible component $Z$ of $\mathcal Z$.
Then $\mathcal L$ is a torsion line bundle if and only if 
the image of $\Phi_{\mathcal L}$ is a finite subgroup of $\mathbb C^*$.
\end{lem}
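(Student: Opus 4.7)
The plan is to reduce the statement directly to Lemma \ref{lem:tors} using Remark \ref{rem:tors}. The key point is the multiplicativity $\Phi_{\mathcal{L}^{\otimes m},\mathcal{C}} = (\Phi_{\mathcal{L},\mathcal{C}})^m$, which says that the homomorphism $\Phi_{\mathcal{L}^{\otimes m}} \colon \pi_1(\Gamma^i(\mathcal{Z}),Z_1) \to \mathbb{C}^*$ is the $m$-th power of $\Phi_{\mathcal{L}}$.

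First, assume $\mathcal{L}$ is torsion, so $\mathcal{L}^{\otimes m} \sim \mathcal{O}_{\mathcal Z}$ for some positive integer $m$. Since $\mathcal{L}^{\otimes m}|_Z \sim \mathcal{O}_Z$ for every irreducible component $Z$, Lemma \ref{lem:tors} applies to $\mathcal{L}^{\otimes m}$ and gives that $\Phi_{\mathcal{L}^{\otimes m}}$ is trivial. By Remark \ref{rem:tors}, this means $(\Phi_{\mathcal{L}}(\mathcal{C}))^m = 1$ for every circuit $\mathcal{C}$, so the image of $\Phi_{\mathcal{L}}$ is contained in the group $\mu_m$ of $m$-th roots of unity, hence finite.

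Conversely, assume the image of $\Phi_{\mathcal{L}}$ is a finite subgroup of $\mathbb{C}^*$. Every finite subgroup of $\mathbb{C}^*$ is cyclic, say of order $m$, and therefore contained in $\mu_m$. Then for every circuit $\mathcal{C}$ we have $\Phi_{\mathcal{L}^{\otimes m},\mathcal{C}} = (\Phi_{\mathcal{L},\mathcal{C}})^m = 1$, so $\Phi_{\mathcal{L}^{\otimes m}}$ is the trivial homomorphism. Since $\mathcal{L}^{\otimes m}|_Z \sim \mathcal{O}_Z$ for every irreducible component $Z$, we may apply Lemma \ref{lem:tors} to $\mathcal{L}^{\otimes m}$ and conclude $\mathcal{L}^{\otimes m} \sim \mathcal{O}_{\mathcal{Z}}$, i.e.\ $\mathcal{L}$ is torsion.

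There is no real obstacle in this argument; everything is handed to us by Lemma \ref{lem:tors} and Remark \ref{rem:tors}, together with the elementary fact that finite subgroups of $\mathbb{C}^*$ are cyclic. The only small point worth double-checking is that the hypothesis ``$\mathcal{L}|_Z \sim \mathcal{O}_Z$ for every $Z$'' passes to tensor powers $\mathcal{L}^{\otimes m}$, which is immediate, and therefore allows Lemma \ref{lem:tors} to be invoked for $\mathcal{L}^{\otimes m}$.
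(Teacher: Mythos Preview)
Your proof is correct and essentially identical to the paper's. The only cosmetic difference is that you invoke Lemma~\ref{lem:tors} (the homomorphism $\Phi_{\mathcal L}$ is trivial iff $\mathcal L\sim\mathcal O_{\mathcal Z}$) directly, whereas the paper unwinds this one step further to Lemma~\ref{lem:trivcirc} and remarks that every circuit is homotopic to one based at $Z_1$; your packaging via Lemma~\ref{lem:tors} is slightly cleaner.
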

\begin{proof}
If $\mathcal L$ is torsion, then there is a positive integer $m$ such that 
$\mathcal L^m\sim\mathcal O_{\mathcal Z}$. Therefore for every circuit $\mathcal C$ based in $Z_1$ the map $\Phi_{\mathcal L^{\otimes m},\mathcal C}$ is the identity. The conclusion follows from Remark \ref{rem:tors}.

Conversely, let $m$ be a positive integer such that the image of $\Phi_{\mathcal L}$ is contained in the $m$-th roots of 1.
Then for every circuit $\mathcal C$ in $\Gamma^i(\mathcal Z)$ based in $Z_1$
the map $\Phi_{\mathcal L,\mathcal C}\circ\dots\circ\Phi_{\mathcal L,\mathcal C}$ (composition $m$ times) is the identity.
By Remark \ref{rem:tors} this map is $\Phi_{\mathcal L^{\otimes m},\mathcal C}$, and then the restriction of $\mathcal L^{\otimes m}$ to every circuit $\mathcal C$ in $\Gamma^i(\mathcal Z)$ based in $Z_1$ admits a global section.
The statement follows from Lemma \ref{lem:trivcirc} and from the fact that every circuit is homotopic to a circuit based in $Z_1$.
\end{proof}

\subsection{Trivial line bundles on divisors}

\begin{dfn}\label{taugraphD}
Let $\tau\colon \overline Y\to Y$ be a finite map of normal projective varieties and let $\mathcal Z$ be a connected subvariety of $Y$.
Let $\overline{\mathcal Z}$ be the preimage of $\mathcal Z$ under $\tau$.
Set $\overline{\mathcal Z}=\tau^{-1}\mathcal Z$.
We define a graph $\Gamma^i(\overline{\mathcal Z}, \tau)\subseteq\Gamma^i(\overline{\mathcal Z})$ 
having as vertices the vertices of $\Gamma^i(\overline{\mathcal Z})$  and having an edge between $\overline Z$ and $\overline Z'$ if and only if there is an edge between $\overline Z$ and $\overline Z'$ in $\Gamma^i(\overline{\mathcal Z})$  and $\tau(\overline Z)\neq\tau(\overline Z')$.
\end{dfn}

\begin{const}\label{const:surjD}
Let $\tau\colon \overline Y\to Y$ be a finite map of normal projective varieties and let $\mathcal Z$ be a connected subvariety of $Y$.
Let $\overline{\mathcal Z}$ be the preimage of $\mathcal Z$ under $\tau$.
Then there is a natural map of graphs
$$
\tau\colon \Gamma^i(\overline{\mathcal Z},\tau)\to\Gamma^i(\mathcal Z)
$$
defined on vertices by
$\tau(v_{\overline Z})=v_{\tau\overline Z}$.
To an edge $e$ of $\Gamma^i(\overline{\mathcal Z})$ 
corresponding to a connected  component $\overline Z_0$ of $\overline Z\cap \overline Z'$ the map $\tau$ associates the unique connected  component of 
$\tau\overline Z\cap \tau\overline Z'$ containing $\tau\overline Z_0$.
\end{const}

\begin{lem}\label{lem:covering} Let $\tau\colon \overline Y\to Y$ be a finite map of normal projective varieties and let $\mathcal Z$ be a connected subvariety of $Y$.
Let $\overline{\mathcal Z}$ be the preimage of $\mathcal Z$ under $\tau$.
The map in Construction \ref{const:surjD} is surjective and for every $v$ vertex of $\Gamma^i(\mathcal Z)$ we have $|\tau^{-1}v|\leq \deg \tau$.
\end{lem}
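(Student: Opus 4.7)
My plan is to handle the two surjectivity statements (on vertices and on edges) and the vertex count separately. Throughout, the key structural input is that in the setting of Section~\ref{Trivial} the variety $\mathcal Z$ is pure-dimensional (an snc variety of pure dimension $k$), so since $\tau$ is finite, $\overline{\mathcal Z}=\tau^{-1}\mathcal Z$ is also pure of dimension $k$; in particular every irreducible component of $\tau^{-1}(Z)$, for $Z$ a component of $\mathcal Z$, is an irreducible component of $\overline{\mathcal Z}$ dominating $Z$, and conversely every component $\overline Z$ of $\overline{\mathcal Z}$ maps onto some unique irreducible component $\tau(\overline Z)$ of $\mathcal Z$ (so the map on vertices in Construction~\ref{const:surjD} is well-defined).

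For vertex surjectivity and the bound, I will fix a component $Z$ of $\mathcal Z$ and decompose $\tau^{-1}(Z)$ into its irreducible components $\overline Z^{(1)},\dots,\overline Z^{(s)}$. By the previous paragraph these are exactly the components of $\overline{\mathcal Z}$ mapping onto $Z$, so $s\geq 1$ (proving surjectivity at the vertex $v_Z$) and $s=|\tau^{-1}v_Z|$. Restricting to the generic point $\eta_Z\in Z$, each $\overline Z^{(j)}$ contributes at least one point to the finite fibre $\tau^{-1}(\eta_Z)$, which has cardinality at most $\deg\tau$; hence $s\leq\deg\tau$.

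For edge surjectivity, given an edge $e$ of $\Gamma^i(\mathcal Z)$ corresponding to a connected component $W$ of $Z\cap Z'$ with $Z\neq Z'$ irreducible components of $\mathcal Z$, I will pick a point $w\in W$ and any preimage $\bar w\in\tau^{-1}(w)\subseteq\overline{\mathcal Z}$. Applying the vertex analysis to $Z$ and to $Z'$ yields irreducible components $\overline Z,\overline Z'$ of $\overline{\mathcal Z}$ containing $\bar w$ with $\tau(\overline Z)=Z$ and $\tau(\overline Z')=Z'$; these are distinct since their $\tau$-images are, so the connected component $\overline W$ of $\overline Z\cap\overline Z'$ containing $\bar w$ defines an edge $\bar e$ of $\Gamma^i(\overline{\mathcal Z})$, which belongs to $\Gamma^i(\overline{\mathcal Z},\tau)$ precisely because $\tau(\overline Z)\neq\tau(\overline Z')$. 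The image $\tau(\overline W)$ is a connected subset of $Z\cap Z'$ containing $w$, hence is contained in $W$, so by the definition in Construction~\ref{const:surjD} the edge $\bar e$ maps to $e$.

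The proof is essentially a bookkeeping argument; the only point requiring a moment's care is the compatibility of the edge map with the partition into connected components, which is immediate from the fact that the continuous image of a connected set is connected. No part is a serious obstacle.
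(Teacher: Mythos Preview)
Your proof is correct and follows essentially the same approach as the paper. For edge surjectivity you lift a point $w\in W$ to $\bar w$ and find components through $\bar w$, whereas the paper first fixes a component $\overline Z$ over $Z$ and uses surjectivity of $\tau|_{\overline Z}$ to hit $Z_0$; these are equivalent manoeuvres. You also spell out the vertex bound $|\tau^{-1}v|\leq\deg\tau$ via the generic point, which the paper leaves implicit.
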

\begin{proof}
The map is clearly surjective on vertices.
Let $Z_0$ be a connected  component of $Z\cap Z'$.
Let $\overline Z$ be an irreducible component of $\overline{\mathcal Z}$ such that $\tau\overline Z=  Z$.
The set $\tau\vert_{\overline Z}^{-1}Z_0$ is not empty and it is contained in $\overline Z\cap\tau^{-1}Z'$. Then there is an irreducible component $\overline Z'$ of
$\tau^{-1}Z'$ meeting $\overline Z$. 
Let $\overline Z_0$ be a connected  component of $\overline Z\cap\overline Z'$. Then $\tau$ sends the edge corresponding to $\overline Z_0$
to the edge corresponding to $Z_0$.
\end{proof}

Combining Lemma \ref{lem:covering} and Proposition \ref{prop:liftingcircuits} we get

\begin{cor}\label{cor:surjP1}
Let $\tau\colon \overline Y\to Y$ be a finite map of normal projective varieties and let $\mathcal Z$ be a connected subvariety of $Y$.
Let $\overline{\mathcal Z}$ be the preimage of $\mathcal Z$ under $\tau$ and fix an irreducible component $\overline Z_1$ of $\overline{\mathcal Z}$.
Then $\tau\pi_1(\Gamma^i(\overline{\mathcal Z},\tau),\overline Z_1)$ has finite index $k$ in $\pi_1(\Gamma^i(\mathcal Z),\tau \overline Z_1)$. Moreover $k\leq\deg\tau$.
\end{cor}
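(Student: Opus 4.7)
The plan is simply to chain the two preceding results together, since the corollary is essentially their combination specialised to the graph map constructed from $\tau$.

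First, I would invoke Lemma \ref{lem:covering} to guarantee that the natural map of graphs $\tau\colon \Gamma^i(\overline{\mathcal Z},\tau)\to\Gamma^i(\mathcal Z)$ from Construction \ref{const:surjD} is surjective (both on vertices and edges) and satisfies the fibre bound $|\tau^{-1}(v)|\leq\deg\tau$ for every vertex $v$ of $\Gamma^i(\mathcal Z)$. Note that both graphs are finite, since $\overline{\mathcal Z}$ and $\mathcal Z$ have only finitely many irreducible components and any two such components meet in only finitely many connected components.

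Next, since $\Gamma^i(\overline{\mathcal Z},\tau)$ and $\Gamma^i(\mathcal Z)$ are finite graphs and $\tau$ is surjective, Proposition \ref{prop:liftingcircuits} applies directly: the induced homomorphism
\[
\tau_*\colon \pi_1(\Gamma^i(\overline{\mathcal Z},\tau),\overline Z_1)\to \pi_1(\Gamma^i(\mathcal Z),\tau\overline Z_1)
\]
has image of finite index, and that index is bounded by the cardinality of the fibre $|\tau^{-1}(\tau\overline Z_1)|$ over the basepoint.

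Finally, combining the two bounds, the index $k$ satisfies
\[
k\ \leq\ |\tau^{-1}(\tau\overline Z_1)|\ \leq\ \deg\tau,
\]
where the first inequality is Proposition \ref{prop:liftingcircuits} and the second is the second half of Lemma \ref{lem:covering}. This gives the claimed bound.

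I do not expect any real obstacle here — this is a bookkeeping corollary whose content has already been isolated in the previous lemma and proposition. The only point that deserves a moment of care is checking that the basepoint is compatible (one must verify that $\tau$ sends the vertex $\overline Z_1$ of $\Gamma^i(\overline{\mathcal Z},\tau)$ to the vertex $\tau\overline Z_1$ of $\Gamma^i(\mathcal Z)$, which is immediate from the definition of $\tau$ on vertices in Construction \ref{const:surjD}), and that finiteness of the graphs — needed for Proposition \ref{prop:liftingcircuits} — is indeed available, as noted above.
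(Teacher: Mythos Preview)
Your proposal is correct and matches the paper's approach exactly: the paper simply states that the corollary follows by combining Lemma \ref{lem:covering} and Proposition \ref{prop:liftingcircuits}, which is precisely the chain you carry out. Your additional remarks on finiteness of the graphs and basepoint compatibility are helpful sanity checks but are not needed beyond what the paper already implicitly assumes.
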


\begin{lem}\label{lem:tors2}
Let $\tau\colon \overline Y\to Y$ be a finite map of degree $d$ of normal projective varieties and let $\mathcal Z\subseteq Y$ be a simple normal crossings divisor.
Let $\overline{\mathcal Z}$ be the preimage of $\mathcal Z$ under $\tau$.
Let $\mathcal L$ be a line bundle on $Y$ such that $\mathcal L\vert_Z\sim \mathcal O_Z$ for every component $Z$ of $\mathcal Z$.
If $\mathcal L\vert_{\mathcal Z}\sim \mathcal O_{\mathcal Z}$
then $\tau^*\mathcal L\vert_{\overline{\mathcal Z}}\sim \mathcal O_{\overline{\mathcal Z}}$.
If $\tau^*\mathcal L\vert_{\overline{\mathcal Z}}\sim \mathcal O_{\overline{\mathcal Z}}$, then 
$\mathcal L^{d!}\vert_{\mathcal Z}\sim \mathcal O_{\mathcal Z}$
\end{lem}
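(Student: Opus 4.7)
The forward direction is immediate: pulling back a nowhere-vanishing global section of $\mathcal L\vert_{\mathcal Z}$ by $\tau$ yields a nowhere-vanishing section of $\tau^*\mathcal L\vert_{\overline{\mathcal Z}}$.

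For the converse, the plan is to exploit the homomorphism $\Phi_{\mathcal L}$ of Lemma \ref{lem:tors} together with the index estimate from Corollary \ref{cor:surjP1}. First I would fix an irreducible component $\overline Z_1$ of $\overline{\mathcal Z}$ and set $Z_1=\tau\overline Z_1$. The key naturality observation is that the pullback $\tau^*$ is compatible with the $\Phi$ construction: for any circuit $\overline{\mathcal C}=(\{\overline Z_i\},\overline Z_{i,i+1})$ in $\Gamma^i(\overline{\mathcal Z},\tau)$ based at $\overline Z_1$, pulling back the sections $s_i\in H^0(Z_i,\mathcal L)$ defining $\Phi_{\mathcal L,\tau\overline{\mathcal C}}$ yields sections on $\overline Z_i$ computing $\Phi_{\tau^*\mathcal L,\overline{\mathcal C}}$, so
\[
\Phi_{\tau^*\mathcal L,\overline{\mathcal C}}=\Phi_{\mathcal L,\tau\overline{\mathcal C}}.
\]
This is the one technical check that requires attention—one must make sure that the normalising choice $s_1\mapsto\tau^*s_1$ is respected and that the restrictions to connected components of $\overline Z_i\cap\overline Z_{i+1}$ pull back from the correct component of $Z_i\cap Z_{i+1}$, which is guaranteed by the definition of $\Gamma^i(\overline{\mathcal Z},\tau)$.

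Next, assuming $\tau^*\mathcal L\vert_{\overline{\mathcal Z}}\sim\mathcal O_{\overline{\mathcal Z}}$, Lemma \ref{lem:tors} tells us that $\Phi_{\tau^*\mathcal L}$ vanishes on all of $\pi_1(\Gamma^i(\overline{\mathcal Z}),\overline Z_1)$, and hence on the subgroup $\pi_1(\Gamma^i(\overline{\mathcal Z},\tau),\overline Z_1)$ (the latter graph being a subgraph of the former). By the naturality above, $\Phi_{\mathcal L}$ is then trivial on $\tau\pi_1(\Gamma^i(\overline{\mathcal Z},\tau),\overline Z_1)$. Corollary \ref{cor:surjP1} asserts that this image is a subgroup of $\pi_1(\Gamma^i(\mathcal Z),Z_1)$ of finite index $k\le d$.

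Finally, pick any circuit $\mathcal C$ in $\Gamma^i(\mathcal Z)$ based at $Z_1$. Since $\pi_1(\Gamma^i(\mathcal Z),Z_1)$ is abelian (the target $\mathbb C^{*}$ is abelian so we only need to work modulo the commutator, but in fact we use only the multiplicative statement), the class $\mathcal C^{k}$ lies in the finite-index subgroup, hence $\Phi_{\mathcal L,\mathcal C}^{k}=\Phi_{\mathcal L,\mathcal C^{k}}=1$. Since $k\le d$ we have $k\mid d!$, so $\Phi_{\mathcal L,\mathcal C}^{d!}=1$ for every circuit $\mathcal C$. By Remark \ref{rem:tors} this means $\Phi_{\mathcal L^{\otimes d!}}$ is identically trivial, and Lemma \ref{lem:tors} (or directly Lemma \ref{lem:trivcirc}) yields $\mathcal L^{d!}\vert_{\mathcal Z}\sim\mathcal O_{\mathcal Z}$. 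The only mildly delicate point is the first naturality check; everything after that is a formal consequence of the group-theoretic index bound.
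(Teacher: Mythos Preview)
Your proof is correct and follows the same strategy as the paper: both use Corollary~\ref{cor:surjP1} for the index bound $k\le d$, identify $\Phi_{\mathcal L,\tau\overline{\mathcal C}}$ with the analogous map for $\tau^*\mathcal L$ (your ``naturality'' is exactly the paper's commutative diagram of restriction and pullback isomorphisms), and conclude via Remark~\ref{rem:tors} and Lemma~\ref{lem:tors1}. One small remark: $\pi_1(\Gamma^i(\mathcal Z),Z_1)$ is free, not abelian, so the assertion that $\mathcal C^k$ lies in the index-$k$ subgroup is only literally true after passing to the abelianization---but since $\Phi_{\mathcal L}$ lands in $\mathbb C^*$ this is harmless, and the paper glosses over the same point.
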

\begin{proof}
If $\mathcal L\vert_{\mathcal Z}\sim\mathcal O_{\mathcal Z}$, then the pullback of the nowhere vanishing global section of $\mathcal L\vert_{\mathcal Z}$ by $\tau$ gives a nowhere vanishing global section of $\tau^*\mathcal L\vert_{\overline{\mathcal Z}}$, settling the first part of the statement.

Conversely, we assume that  $\tau^*\mathcal L\vert_{\overline{\mathcal Z}}\sim\mathcal O_{\overline{\mathcal Z}}$.
Fix an irreducible component $\overline Z_1$ of $\overline{\mathcal Z}$ and 
set $Z_1=\tau\overline Z_1$.
We want to prove that for every circuit $\mathcal C$ in $\Gamma^i(\mathcal Z)$ based on $Z_1$, the morphism $\Phi_{\mathcal L,\mathcal C}^{d!}$ is the identity. As 
$\pi_1(\Gamma^i(\mathcal Z),Z_1)$ is finitely generated, the result will follow from Lemma \ref{lem:tors1}.

\noindent By Corollary \ref{cor:surjP1}, the group $\tau \pi_1(\Gamma^i(\overline{\mathcal Z},\tau),\overline Z_1)$ is a subgroup of $\pi_1(\Gamma^i(\mathcal Z),Z_1)$
of index $k\leq d$.
Therefore there exist a circuit $\overline{\mathcal C}=(\{\overline Z_i\},\overline Z_{i,i+1})$ in $\Gamma^i(\overline{\mathcal Z})$  such that, if we denote by $\mathcal C^k$ the concatenation of $\mathcal C$ with itself $k$ times,
the circuits $\mathcal C^k$ and $\tau\overline{\mathcal C}$
are homotopically equivalent.
By Remark \ref{rem:tors} it is enough to prove that   $\Phi_{\mathcal L,\tau\overline{\mathcal C}}=\Phi_{\mathcal L,\mathcal C}^{k}$ is the identity.
We notice that if $Z_i=\tau(\overline Z_i)$ for $i=1,2$ and $Z_{1,2}=\tau(\overline Z_{1,2})$, we have a commutative diagram
$$
\xymatrix{
H^0(Z_1,\mathcal L)\ar[d]_{\wr}&H^0(Z_{1,2},\mathcal L)\ar[d]_{\wr}\ar[l]\ar[r]&H^0(Z_2,\mathcal L)\ar[d]_{\wr}\\
H^0(\overline Z_1,\tau^*\mathcal L)&H^0(\overline Z_{1,2},\tau^*\mathcal L)\ar[l]\ar[r]&H^0(\overline Z_2,\tau^*\mathcal L)
}
$$
where the horizontal arrows are the restriction isomorphisms and the vertical arrows are isomorphisms induced by the pullback by $\tau$.

Les $s$ be a global section for $\tau^*\mathcal L\vert_{\overline{\mathcal Z}}$.
Then, with the identifications of the previous diagram, the linear map $\Phi_{\mathcal L,\tau\overline{\mathcal C}}$ is the multiplication by $s/s=1$, therefore it is the identity.

Then $\Phi_{\mathcal L,\mathcal C}^{k}$ is the identity, and so is $\Phi_{\mathcal L,\mathcal C}^{d!}$, proving the statement.
\end{proof}

\subsection{Trivial line bundles on semistable curves}

In this subsection we present an analog of Construction \ref{const:surjD} and Lemma \ref{lem:tors2} for curves.

\begin{dfn}\label{taugraphC}
Let $\tau\colon \overline Y\to Y$ be a generically finite map of normal projective varieties and let $\mathcal Z$ be a connected curve in $Y$.
Let $\overline{ Y}$ be the a curve in $\mathcal Z$ such that $\tau\overline{\mathcal Z}=\mathcal Z$.
We define a graph $\Gamma^i(\overline{\mathcal Z}, \tau)\subseteq\Gamma^i(\overline{\mathcal Z})$ 
having as vertices the vertices of $\Gamma^i(\overline{\mathcal Z})$  and having an edge between $\overline Z$ and $\overline Z'$ if and only if there is an edge between $\overline Z$ and $\overline Z'$ in $\Gamma^i(\overline{\mathcal Z})$  and either
\begin{itemize}
\item $\tau{\overline Z}$ and $\tau{\overline Z}'$ are curves in $Y$, or
\item $\tau{\overline Z}$ is a curve in $Y$, and $\tau{\overline Z}'=p\in \tau{\overline Z}$, or
\item $\tau{\overline Z}=\tau{\overline Z}'=p\in Y$.
\end{itemize}
\end{dfn}

\begin{const}\label{const:mapC}
Let $\tau\colon \overline Y\to Y$ be a generically finite map of normal projective varieties and let $\mathcal Z$ be a a simple normal crossings curve in $Y$.
Let $\overline{\mathcal Z}$ be a simple normal crossings curve in $\mathcal Z$ such that $\tau\overline{\mathcal Z}=\mathcal Z$. Let $\overline Z_1\subseteq \overline{\mathcal Z}$ be  such that $Z_1=\tau \overline Z_1$ is a curve in $Y$.

Then there a homomorphism of groups
$$
\tau\colon \{\text{circuits in }\Gamma^i(\overline{\mathcal Z},\tau)\text{ based at }\overline Z_1\}\to\{\text{circuits in }\Gamma^i(\mathcal Z)\text{ based at } Z_1\}
$$
defined in the following way.
Let $\overline{\mathcal C}=(\{\overline Z_1,\ldots,\overline Z_k\},\overline Z_{i,i+1})$ 
be a circuit in $\Gamma^i(\overline{\mathcal Z},\tau)$.
If $\tau(\overline Z_i)$ is a curve for every $i$, we set $\tau\overline{\mathcal C}=(\{\tau\overline Z_1,\ldots,\tau\overline Z_k\},\tau\overline Z_{i,i+1})$.
Otherwise let $i_j$ and $h_j$ be such that $h_j>0$ and $i_j+h_j+1= i_{j+1}$, and 
\begin{itemize}
\item for every $s=1,\ldots, h_j$ we have $\tau(\overline Z_{i_j+1})=\tau(\overline Z_{i_j+s})$ is a point in $Y$,
\item  $\tau(\overline Z_{i_j})$ and $\tau(\overline Z_{i_j+h_j+1})$ are curves in $Y$.
\end{itemize}
If $\tau(\overline Z_{i_j})=\tau(\overline Z_{i_j+h_j+1})$ we set 
$$(\overline Z_{i_j},\overline Z_{i_j+1},\ldots,\overline Z_{i_j+h_j+1})\mapsto (\tau(\overline Z_{i_j})).$$
If $\tau(\overline Z_{i_j})\neq\tau(\overline Z_{i_j+h_j+1})$ we set
$$(\overline Z_{i_j},\overline Z_{i_j+1},\ldots,\overline Z_{i_j+h_j+1})\mapsto (\tau(\overline Z_{i_j}),\tau(\overline Z_{i_j+h_j+1}))$$
with the edge $\tau(\overline Z_{i_j+1})$ between $\tau(\overline Z_{i_j})$ and $\tau(\overline Z_{i_j+h_j+1})$.

\end{const}

\begin{lem}\label{lem:mapC}
Notation as in Construction \ref{const:mapC}. 
The map of Construction \ref{const:mapC} respects the homotopy of loops and defines thus a homomorphism of groups $\tau\colon\pi_1(\Gamma^i(\overline{\mathcal Z}, \tau),\overline Z_1)\to\pi_1(\Gamma^i(Z), Z_1)$.
\end{lem}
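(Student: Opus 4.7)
The plan is to first check well-definedness and compatibility with concatenation at the base vertex, and then verify that $\tau$ sends an elementary homotopy move (insertion of a back-and-forth $e\bar e$) to a homotopy in $\Gamma^i(\mathcal Z)$. The crucial input will be that $\mathcal Z$ is a simple normal crossings curve, so by Definition~\ref{def:snc} at most two irreducible components meet at any point of $\mathcal Z$.

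First I would handle well-definedness and concatenation. Since $\overline Z_1$ maps to the curve $Z_1$, any circuit based at $\overline Z_1$ starts and ends at a curve-vertex, and in particular no collapsing run straddles the base vertex; Construction~\ref{const:mapC} therefore produces a genuine circuit based at $Z_1$. For the same reason, the concatenation of two circuits $\overline{\mathcal C}_1\star\overline{\mathcal C}_2$ at $\overline Z_1$ is cut at a curve-vertex, so no run crosses the cut and $\tau(\overline{\mathcal C}_1\star\overline{\mathcal C}_2)=\tau(\overline{\mathcal C}_1)\star\tau(\overline{\mathcal C}_2)$ holds on the nose.

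Next I would show that $\tau$ respects the elementary homotopy move. It suffices to insert a back-and-forth $(e,\bar e)$ at an intermediate vertex $v=\overline Z_s$ of the circuit, with $e$ leading to a vertex $w=\overline Z'$, and check that the image under $\tau$ is homotopic to the original image. By Definition~\ref{taugraphC} the edge $e$ falls into one of three configurations: (a) both $\tau(v)$ and $\tau(w)$ are curves; (b) exactly one of them is a curve $C$ and the other is a point $p\in C$; or (c) $\tau(v)=\tau(w)$ is a common point. In case (a) the edges $\tau(e)$ and $\tau(\bar e)$ are inverse edges of $\Gamma^i(\mathcal Z)$ and cancel by the same elementary move. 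In case (c), and in the instance of (b) in which $v$ is a curve-vertex $C_1$ and $w$ a new point on $C_1$, the insertion either merely lengthens an existing run (so its bracketing curves are unchanged) or creates a length-one run bracketed by $C_1$ on both sides, which by Construction~\ref{const:mapC} collapses to the single vertex $C_1$; in either subcase the image is left unchanged.

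The delicate case is (b) with $v$ a point-vertex lying in a run bracketed by curves $C_1,C_2$, and $w$ a new curve-vertex $C'$ with $p:=\tau(v)\in C'$: the insertion splits the run into two sub-runs meeting at $C'$. Here I would invoke the SNC hypothesis: since $p$ lies in each of $C_1$, $C_2$ and $C'$ but at most two components of $\mathcal Z$ can meet at $p$, at least two of these three curves must coincide. Treating the subcases $C'=C_1$, $C'=C_2$ and $C_1=C_2$ separately, the new image either contains a sub-run with equal bracketing curves (collapsing away without contribution, so the image is unchanged) or a back-and-forth $C_1\to C'\to C_1$ along the same underlying component of $C_1\cap C'$ at $p$, which cancels by a further elementary homotopy. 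The argument for removing rather than inserting a back-and-forth is symmetric. Combining these steps, $\tau$ sends homotopic loops in $\Gamma^i(\overline{\mathcal Z},\tau)$ to homotopic loops in $\Gamma^i(\mathcal Z)$ and preserves concatenation, hence descends to a group homomorphism on fundamental groups. The main obstacle will be the delicate subcase: without the SNC hypothesis on $\mathcal Z$, a back-and-forth through a new curve vertex could produce a genuine triangle in the target, and the absence of triple points on $\mathcal Z$ is precisely what rules this out.
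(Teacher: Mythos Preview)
Your proposal is correct and follows essentially the same approach as the paper: both reduce to checking the elementary homotopy move $(e,\bar e)$, split into cases according to whether $\tau(v)$ and $\tau(w)$ are curves or points, and in the delicate case (point-vertex $v$ inside a run, new curve-vertex $w$) use that at most two components of the simple normal crossings curve $\mathcal Z$ meet at the point $p=\tau(v)$ to force two of $C_1,C_2,C'$ to coincide. You are more explicit than the paper in naming the SNC hypothesis as the reason for this coincidence and in spelling out well-definedness and compatibility with concatenation at the base vertex, but the argument is the same.
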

\begin{proof}
It is enough to prove that the two circuits $$\overline{\mathcal C}=(\{\overline Z_1,\ldots,\overline Z_k\},\overline Z_{i,i+1})\text{ and }$$ $$\overline{\mathcal C}'=(\{\overline Z_1,\ldots,\overline Z_j,\overline Z,\overline Z_j\ldots ,\overline Z_k\},\overline Z_{1,2},\ldots,\overline Z_{j-1,j},\overline Z',\overline Z',\overline Z_{j,j+1},\ldots,\overline Z_{k,1} )$$
have homotopically equivalent images.
If  $\tau\overline Z_j$ and $\tau\overline Z$ are curves, then it is clear.
If $\tau\overline Z$ is a point, then the path
$(\overline Z_j,\overline Z,\overline Z_j)$ has the same image as $(\overline Z_j)$.
If $\tau\overline Z$ is a curve and $\tau\overline Z_j$ a point, then let $h<j<k$ be such that  $\tau\overline Z_h$ and $\tau\overline Z_k$ are curves and $\tau\overline Z_i$ is a point for every $h<i<k$.

If $\tau\overline Z_h=\tau\overline Z_k$, then 
$$(\overline Z_h,\ldots,\overline Z_k)\mapsto (\tau\overline Z_h).$$
If moreover $\tau\overline Z_h=\tau\overline Z$, then 
$$(\overline Z_h,\ldots,\overline Z_j,\overline Z,\overline Z_j\ldots ,\overline Z_k)\mapsto (\tau\overline Z_h).$$
If $\tau\overline Z_h\neq\tau\overline Z$, then 
$$(\overline Z_h,\ldots,\overline Z_j,\overline Z,\overline Z_j\ldots ,\overline Z_k)\mapsto (\tau\overline Z_h,\tau\overline Z,\tau\overline Z_h).$$
In both cases we get homothopically equivalent circuits.

\medskip

If $\tau\overline Z_h\neq\tau\overline Z_k$, then 
$$(\overline Z_h,\ldots,\overline Z_k)\mapsto (\tau\overline Z_h,\tau\overline Z_k).$$
Then either $\tau\overline Z_h=\tau\overline Z$ or $\tau\overline Z_k=\tau\overline Z$, and in both cases
$$(\overline Z_h,\ldots,\overline Z_j,\overline Z,\overline Z_j\ldots ,\overline Z_k)\mapsto (\tau\overline Z_h,\tau\overline Z_k).$$

\end{proof}

\subsection{Trivial line bundles and pullbacks}

We prove in this subsection that, if we have a generically finite morphism between two immersed simple normal crossings varieties, then a line bundle is trivial on the first variety
if and only if its pullback is trivial on the second.

\begin{lem}\label{grothC}
Let $\mathcal Z$ be a connected simple normal crossings variety of dimension at least 1 and let $\mathcal L$ be a line bundle on $\mathcal Z$ which is trivial for every irreducible component of $\mathcal Z$.
Then there is a simple normal crossings curve $\mathcal K\subseteq \mathcal Z$
such that the restriction of $\mathcal L$ to $\mathcal K$ is trivial if and only if $\mathcal L$ is trivial.
\end{lem}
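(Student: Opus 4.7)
The forward implication is immediate. For the reverse, the strategy is to construct an snc curve $\mathcal K \subseteq \mathcal Z$ whose incidence graph $\Gamma^i(\mathcal K)$ is canonically isomorphic to $\Gamma^i(\mathcal Z)$, and then apply the circuit criterion of Lemma \ref{lem:trivcirc} to transfer triviality from $\mathcal K$ back to $\mathcal Z$.

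\textbf{Construction of $\mathcal K$.} Write $\mathcal Z = \bigcup_i Z_i$. For every edge $e$ of $\Gamma^i(\mathcal Z)$, corresponding to a connected component $W_e$ of some intersection $Z_i \cap Z_j$, pick a point $p_e \in W_e$ lying in the smooth locus of $\mathcal Z$; such a point exists since the triple intersections $W_e \cap Z_k$ (for $k \neq i,j$) have strictly smaller dimension than $W_e$ by the snc hypothesis. Let $P_i = \{p_e : e \text{ is incident to } Z_i \text{ in } \Gamma^i(\mathcal Z)\}$, a finite subset of smooth points of $Z_i$. A standard Bertini argument (realize $C_i$ as the intersection of $Z_i$ with $\dim Z_i - 1$ sufficiently ample general hypersurface sections passing through $P_i$) produces a smooth connected curve $C_i \subseteq Z_i$ containing $P_i$, meeting each other component $Z_j$ only at the points of $P_i \cap Z_j$, and doing so transversally. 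Set $\mathcal K = \bigcup_i C_i$. Then by construction $C_i \cap C_j$ is exactly the set of $p_e$ for edges $e$ between $Z_i$ and $Z_j$, no three $C_i$'s meet at a common point, so $\mathcal K$ is a simple normal crossings curve. The map $\Gamma^i(\mathcal K) \to \Gamma^i(\mathcal Z)$ sending $C_i \mapsto Z_i$ and $p_e \mapsto e$ is an isomorphism of graphs.

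\textbf{Propagation of triviality.} Assume $\mathcal L\vert_\mathcal K$ is trivial, and take any circuit $\mathcal C = (\{Z_{i_1}, \dots, Z_{i_m}\}, Z_{i_\ell, i_{\ell+1}})$ in $\Gamma^i(\mathcal Z)$; let $\mathcal C_K$ denote the corresponding circuit in $\Gamma^i(\mathcal K)$. Applying Lemma \ref{lem:trivcirc} to $\mathcal L\vert_\mathcal K$, we obtain nowhere-vanishing sections $t_\ell \in H^0(C_{i_\ell}, \mathcal L)$ with $t_\ell(p_{i_\ell, i_{\ell+1}}) = t_{\ell+1}(p_{i_\ell, i_{\ell+1}})$. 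Since $\mathcal L\vert_{Z_{i_\ell}} \cong \mathcal O_{Z_{i_\ell}}$, each $t_\ell$ extends uniquely to a nowhere-vanishing $s_\ell \in H^0(Z_{i_\ell}, \mathcal L)$ (both $H^0(Z_{i_\ell}, \mathcal L)$ and $H^0(C_{i_\ell}, \mathcal L)$ are one-dimensional, with the restriction map an isomorphism). Moreover $\mathcal L\vert_{Z_{i_\ell, i_{\ell+1}}}$ is trivial on the connected component $Z_{i_\ell, i_{\ell+1}}$, so $H^0(Z_{i_\ell, i_{\ell+1}}, \mathcal L)$ has dimension one; agreement of $s_\ell$ and $s_{\ell+1}$ at the single point $p_{i_\ell, i_{\ell+1}}$ therefore forces agreement along all of $Z_{i_\ell, i_{\ell+1}}$. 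Hence $(s_\ell)$ defines a nowhere-vanishing section of $\mathcal L\vert_\mathcal C$, and a second application of Lemma \ref{lem:trivcirc} yields $\mathcal L\vert_\mathcal Z \sim \mathcal O_\mathcal Z$.

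\textbf{Main obstacle.} The delicate point is the Bertini construction of $\mathcal K$: one must simultaneously ensure smoothness and connectedness of each $C_i$, prescribed incidence at $P_i$, absence of spurious meetings with other $Z_j$ outside $P_i$, and no triple points among the $C_i$. In the projective setting all of this is achievable via very ample general hypersurface sections passing through a finite set of smooth points, but the verification that the resulting configuration is snc (and that its incidence graph matches $\Gamma^i(\mathcal Z)$ edge-for-edge) is where the snc hypothesis on $\mathcal Z$ must be carefully exploited.
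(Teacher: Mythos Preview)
Your approach is genuinely different from the paper's and is essentially correct, but there is one misstatement to flag.

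The paper proceeds by induction on $\dim \mathcal Z$: it cuts the top-dimensional part $\mathcal Z^{(k)}$ by a sufficiently ample divisor $A$ (chosen so that $H^1(\mathcal Z^{(k)}, \mathcal O(-A))$ and $H^1(\mathcal Z^{(k)}, \mathcal L(-A))$ vanish), sets $\mathcal W = A \cup \mathcal Z_{k-1}$, and uses the $H^1$ vanishing to show that any nowhere-vanishing section on $\mathcal W$ lifts to one on $\mathcal Z$; then it applies the inductive hypothesis to $\mathcal W$. Your route avoids cohomology and induction entirely, reducing directly to the circuit criterion of Lemma~\ref{lem:trivcirc} by manufacturing in one step a curve with the same incidence graph. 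This is more elementary and conceptually cleaner, at the cost of a more delicate Bertini argument.

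The misstatement is in the claim that $C_i$ meets each other component $Z_j$ \emph{only} at the points of $P_i \cap Z_j$. This is false as written: if $\dim Z_i \geq 2$ then each connected component $W_e$ of $Z_i \cap Z_j$ is a divisor in $Z_i$, and a complete-intersection curve $C_i$ through $p_e$ will meet $W_e$ in strictly more than one point (the intersection number is governed by degrees, not by the number of prescribed base points). What you actually need, and what \emph{does} hold for generic choices, is the weaker statement that $C_i \cap C_j$ consists exactly of the points $p_e$ for edges $e$ between $Z_i$ and $Z_j$: the extra points of $C_i \cap W_e$ are generic in $W_e$, hence disjoint from the extra points of $C_j \cap W_e$ once $C_j$ is chosen generically relative to $C_i$ (this uses $\dim W_e \geq 1$). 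With this correction the incidence-graph isomorphism and the rest of your argument go through. A second minor point: Lemma~\ref{lem:trivcirc} is stated only for pure dimension, so if $\mathcal Z$ has mixed-dimensional components you are invoking it outside its stated scope, though its proof does not use purity and extends verbatim.
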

\begin{proof}
If $\mathcal L$ is trivial, then for every curve  $\mathcal K\subseteq \mathcal Z$,
 the restriction of $\mathcal L$ to $\mathcal K$ is trivial.
 
 For the other implication, we proceed by induction on $$\dim \mathcal Z=\max\{\dim Z\vert\; Z\text{ irreducible component of }\mathcal Z\}.$$
 If $\dim \mathcal Z=1$, then we set $\mathcal K= \mathcal Z$.
 We assume now the existence of such a curve for  connected simple normal crossings varieties of dimension $k-1$. Let $\mathcal Z$ be a connected simple normal crossings variety of dimension
$k$. Let $\mathcal Z^{(k)}$ be the union of all the irreducible component of dimension $k$ and let $\mathcal Z_{k-1}$ be the union of all the irreducible component of dimension at most $k-1$.
Let $A$ be a section of a very ample divisor on   $\mathcal Z^{(k)}$ such that $H^1(\mathcal Z^{(k)},\mathcal O(-A))=\{0\}$ and $H^1(\mathcal Z^{(k)},\mathcal L(-A))=\{0\}$. 
In particular, for every connected component $Z$ of $\mathcal Z^{(k)}$ the intersection $A\cap Z$ is connected.
Assume moreover that $A\supseteq \mathcal Z^{(k)}\cap \mathcal Z^{(1)}$ and that $A\cup \mathcal Z_{k-1}$ is a simple normal crossings variety.

We set $\mathcal W=A\cup \mathcal Z_{k-1}$. 
For every irreducible component $Z$ of $\mathcal Z_{k-1}$ such that $Z\cap \mathcal Z^{(k)}\neq\emptyset$, we have $Z\cap A\neq\emptyset$.
Indeed, if  $\dim Z\cap \mathcal Z^{(k)}\geq 1$, it is true because $A$ is ample. If  $\dim Z\cap \mathcal Z^{(k)}=0$, then it is true by construction of $A$.

Then $\mathcal W$ is a connected  simple normal crossings variety of dimension $k-1$. In order to conclude, it is enough to prove that 
if the restriction of $\mathcal L$ to $\mathcal W$ is trivial, then $\mathcal L$ is trivial.

If the restriction of $\mathcal L$ to $\mathcal W$ is trivial, then there is a section $\bar s\in H^0(\mathcal W, \mathcal L)\setminus\{0\}$.
As $H^1(\mathcal Z^{(k)},\mathcal L(-A))=\{0\}$, there is a section $s^k\in H^0(\mathcal Z^{(k)},\mathcal L)$ such that $s^k\vert_{A}=\bar s\vert_A$.

We want to show that $s^k$ and $\bar s\vert_{\mathcal Z_{k-1}}$ glue to a section of $\mathcal Z$.
This happens if and only if $(s^k,\bar s\vert_{\mathcal Z_{k-1}})$ is in the kernel of 
$$
\begin{array}{rcl}
\alpha\colon H^0(\mathcal Z^{(k)},\mathcal L)\oplus H^0(\mathcal Z_{k-1},\mathcal L)&\to& H^0(\mathcal Z^{(k)}\cap \mathcal Z_{k-1},\mathcal L)\\
(s_1,s_2)&\mapsto& s_1-s_2.
\end{array}
$$

We have a commutative diagram

$$
\xymatrix{
 H^0(\mathcal Z^{(k)},\mathcal L)\oplus H^0(\mathcal Z_{k-1},\mathcal L)\ar[d]_{\wr}\ar[r]^{\alpha}& H^0(\mathcal Z^{(k)}\cap \mathcal Z_{k-1},\mathcal L)\ar@{^{(}->}[d]\\
 H^0(A,\mathcal L)\oplus H^0(\mathcal Z_{k-1},\mathcal L)\ar[r]^{\beta}& H^0(A\cap \mathcal Z_{k-1},\mathcal L)
}
$$

Since $(\bar s\vert_A, \bar s\vert_{\mathcal Z_{k-1}})$ is in the kernel of $\beta$,
it follows that $(s^k,\bar s\vert_{\mathcal Z_{k-1}})$ is in the kernel of $\alpha$.

\end{proof}

\begin{lem}\label{lem:tors3}
Let $\varepsilon\colon \overline Y\to Y$ be a generically finite map of normal projective varieties and let $\mathcal Z$ be a connected simple normal crossings subvariety of $Y$.
Assume that the preimage $\overline{\mathcal Z}$ of $\mathcal Z$ under $\varepsilon$ is a simple normal crossings variety.
Let $\mathcal L$ be a line bundle on $Y$ such that $\mathcal L\vert_Z\sim \mathcal O_Z$ for every irreducible component $Z$ of $\mathcal Z$.
Then $\mathcal L\vert_{\mathcal Z}$ is torsion if and only if 
$\varepsilon^*\mathcal L\vert_{\overline{\mathcal Z}}$ is torsion.
\end{lem}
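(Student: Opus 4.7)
The ``only if'' direction is immediate: if $\mathcal{L}^m|_{\mathcal{Z}}\sim\mathcal{O}_{\mathcal{Z}}$ for some positive integer $m$, pulling back a trivialising section yields a nowhere-vanishing section of $\varepsilon^*\mathcal{L}^m|_{\overline{\mathcal{Z}}}$, so $\varepsilon^*\mathcal{L}|_{\overline{\mathcal{Z}}}$ is torsion. All the work is in the converse.

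For the converse, the first step is to reduce to the curve case. Applying the construction in the proof of Lemma \ref{grothC} inductively on dimension, with ample sections chosen to be simultaneously general for $\mathcal{Z}^{(k)}$ and $\overline{\mathcal{Z}}^{(k)}$ (and avoiding the finitely many cohomology obstructions for the fixed line bundles $\mathcal{L}^N, \varepsilon^*\mathcal{L}^N$ that arise), I would produce compatible connected snc curves $\mathcal{K}\subseteq \mathcal{Z}$ and $\overline{\mathcal{K}}\subseteq \overline{\mathcal{Z}}$ with $\varepsilon(\overline{\mathcal{K}}) = \mathcal{K}$, and such that for every $N$ the line bundles $\mathcal{L}^N|_{\mathcal{Z}}$ and $\varepsilon^*\mathcal{L}^N|_{\overline{\mathcal{Z}}}$ are trivial if and only if their restrictions to $\mathcal{K}$ and $\overline{\mathcal{K}}$ are trivial, respectively. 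The restriction $\varepsilon|_{\overline{\mathcal{K}}}\colon \overline{\mathcal{K}} \to \mathcal{K}$ is again generically finite, but some components of $\overline{\mathcal{K}}$ may be contracted to points of $\mathcal{K}$.

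On curves, the plan is to imitate the proof of Lemma \ref{lem:tors2} using the graph-theoretic toolkit of Section \ref{Graph}. By Construction \ref{const:mapC} and Lemma \ref{lem:mapC}, $\varepsilon|_{\overline{\mathcal{K}}}$ induces a group homomorphism
\[
\tau_*\colon \pi_1(\Gamma^i(\overline{\mathcal{K}}, \varepsilon), \overline{Z}_1)\longrightarrow \pi_1(\Gamma^i(\mathcal{K}), Z_1),
\]
where $Z_1 = \varepsilon(\overline{Z}_1)$ is a curve. I would then establish a curve analogue of Corollary \ref{cor:surjP1}: after collapsing the contracted strata as in Construction \ref{const:mapC}, the induced map on the quasi-finite part is surjective, and by Proposition \ref{prop:liftingcircuits} the image of $\tau_*$ has finite index in $\pi_1(\Gamma^i(\mathcal{K}), Z_1)$, bounded by an integer $k$ depending only on $\deg \varepsilon$. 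For each circuit $\mathcal{C}$ in $\Gamma^i(\mathcal{K})$ based at $Z_1$, the concatenation $\mathcal{C}^k$ therefore lies in the image of $\tau_*$, so one may write $\mathcal{C}^k = \tau_*\overline{\mathcal{C}}$ for a circuit $\overline{\mathcal{C}}$ in $\Gamma^i(\overline{\mathcal{K}},\varepsilon)$.

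Finally, using the commutative squares
\[
\xymatrix{
H^0(Z_i,\mathcal{L})\ar[d]^{\wr}&H^0(Z_{i,i+1},\mathcal{L})\ar[l]\ar[r]\ar[d]^{\wr}&H^0(Z_{i+1},\mathcal{L})\ar[d]^{\wr}\\
H^0(\overline{Z}_i,\varepsilon^*\mathcal{L})&H^0(\overline{Z}_{i,i+1},\varepsilon^*\mathcal{L})\ar[l]\ar[r]&H^0(\overline{Z}_{i+1},\varepsilon^*\mathcal{L})
}
\]
whenever the $\overline{Z}_j$ map to curves, and the observation that over contracted components the gluing is a trivial constant (absorbed into the collapsed path in Construction \ref{const:mapC}), I would show the identity
\[
\Phi_{\varepsilon^*\mathcal{L}, \overline{\mathcal{C}}} = \Phi_{\mathcal{L}, \tau_*\overline{\mathcal{C}}} = \Phi_{\mathcal{L}, \mathcal{C}}^k.
\]
If $\varepsilon^*\mathcal{L}^m|_{\overline{\mathcal{K}}}$ is trivial, then by Remark \ref{rem:tors} the left-hand side equals $\Phi_{\mathcal{L},\mathcal{C}}^{mk}$ applied to $\varepsilon^*\mathcal{L}^m$, which is the identity, so $\Phi_{\mathcal{L}^m,\mathcal{C}}$ has order dividing $k$ for every $\mathcal{C}$. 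Since $k$ is uniform in $\mathcal{C}$ and $\pi_1(\Gamma^i(\mathcal{K}),Z_1)$ is finitely generated, the image of $\Phi_{\mathcal{L}^m}$ is a finite subgroup of $\mathbb{C}^*$, whence Lemma \ref{lem:tors1} gives that $\mathcal{L}^m|_{\mathcal{K}}$, and therefore $\mathcal{L}|_{\mathcal{K}}$, is torsion. By our choice of $\mathcal{K}$, this implies that $\mathcal{L}|_{\mathcal{Z}}$ is torsion.

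The main obstacle is step (3): verifying the finite-index property of $\tau_*$ in the presence of contracted components of $\overline{\mathcal{K}}$, for which the naive graph-theoretic covering argument from Lemma \ref{lem:covering} does not apply directly and must be replaced by the path-collapsing bookkeeping of Construction \ref{const:mapC} together with a careful degree count along the non-contracted strata.
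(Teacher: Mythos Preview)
Your overall strategy matches the paper's: reduce to curves via Lemma~\ref{grothC}, use the homomorphism of Construction~\ref{const:mapC}, establish finite index, and conclude via the $\Phi$-maps and Lemma~\ref{lem:tors1}. The paper also identifies the finite-index step as the crux and proves it (Claim~\ref{cla:tors3}) by an explicit circuit-lifting argument very much in the spirit of your ``degree count along non-contracted strata.''

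There is, however, a genuine gap in your reduction step. You propose to build $\mathcal K\subseteq\mathcal Z$ and $\overline{\mathcal K}\subseteq\overline{\mathcal Z}$ \emph{simultaneously} by choosing ample sections general for both $\mathcal Z^{(k)}$ and $\overline{\mathcal Z}^{(k)}$, with $\varepsilon(\overline{\mathcal K})=\mathcal K$. This does not work as stated: if $A$ is very ample on $\mathcal Z^{(k)}$, then $\varepsilon^*A$ is only big and nef on $\overline{\mathcal Z}$ (it is trivial along contracted components), so cutting by it does not lower dimension everywhere upstairs; and if you choose an independent very ample $\overline A$ on $\overline{\mathcal Z}^{(k)}$, there is no reason for $\varepsilon(\overline{\mathcal Z}\cap\overline A)$ to land in $\mathcal Z\cap A$. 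In short, the compatibility $\varepsilon(\overline{\mathcal K})=\mathcal K$ is incompatible with both sections being ample.

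The paper avoids this by asking for less and constructing more explicitly. It applies Lemma~\ref{grothC} \emph{only downstairs} to get a detecting curve $\mathcal K\subseteq\mathcal Z$, and then builds $\overline{\mathcal K}\subseteq\overline{\mathcal Z}$ by hand: strict transforms of the components of $\mathcal K$ not lying in $\varepsilon(\Exc\varepsilon)$, together with curves cut out by general hyperplane sections inside each higher-dimensional component of $\varepsilon^{-1}K$ (and of $\varepsilon^{-1}p$ for singular points $p$). Crucially, $\overline{\mathcal K}$ is \emph{not} required to detect triviality of $\varepsilon^*\mathcal L$ on $\overline{\mathcal Z}$; since $\overline{\mathcal K}\subseteq\overline{\mathcal Z}$, the trivialising section upstairs restricts automatically, which is all that is needed to run your diagram argument and conclude $\Phi_{\mathcal L,\tau\overline{\mathcal C}}=\mathrm{id}$. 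Once you drop the superfluous detecting requirement on $\overline{\mathcal K}$ and adopt this explicit construction, your argument goes through and coincides with the paper's.
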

\begin{proof}
If $\mathcal L\vert_{\mathcal Z}\sim \mathcal O_{\mathcal Z}$, then $\varepsilon^*\mathcal L\vert_{\overline{\mathcal Z}}\sim \mathcal O_{\overline{\mathcal Z}}$.

Conversely, we assume that  $\tau^*\mathcal L\vert_{\overline{\mathcal Z}}\sim\mathcal O_{\overline{\mathcal Z}}$.
By Lemma \ref{grothC} there is a semistable curve $\mathcal K\subseteq \mathcal Z$ such that  the restriction of $\mathcal L$ to $\mathcal K$ is trivial if and only if $\mathcal L$ is trivial.

\begin{cla}\label{cla:tors3}
There is a semistable curve $\overline{\mathcal K}\subseteq \overline Y$ such that
$\tau \overline{\mathcal K}=\mathcal K$ and the image of the homomorphism $\tau\colon\pi_1(\Gamma^i(\overline{\mathcal K}, \tau),\overline K_1)\to\pi_1(\Gamma^i(\mathcal K), K_1)$ 
has finite index in $\pi_1(\Gamma^i(\mathcal K), K_1)$.
\end{cla}

Assuming the claim, we conclude the proof.

Fix an irreducible component $\overline K_1$ of $\overline{\mathcal K}$ such that  $K_1=\tau\overline K_1$ is a curve.
We want to prove that there is a positive integer $h$ such that for every circuit $\mathcal C$ in $\Gamma^i(\mathcal K)$ based in $K_1$,  morphism $\Phi_{\mathcal L,\mathcal C}^h$ is the identity. As 
$\pi_1(\Gamma^i(\mathcal K),K_1)$ is finitely generated, the result will follow from Lemma \ref{lem:tors1}.

\noindent By Claim \ref{cla:tors3}, the group $\tau\left( \pi_1(\Gamma^i(\overline{\mathcal K},\tau),\overline K_1)\right)$ is a subgroup of $\pi_1(\Gamma^i(\mathcal K),K_1)$
of finite index $k$.
Therefore, there exists a circuit $\overline{\mathcal C}$ in $\Gamma^i(\overline{\mathcal K})$  such that, if we denote by $\mathcal C^k$ the concatenation of $\mathcal C$ with itself $k$ times,
the circuits $\mathcal C^k$ and $\tau\overline{\mathcal C}$
are homotopically equivalent.
By Remark \ref{rem:tors}, it is enough to prove that   $\Phi_{\mathcal L,\tau\overline{\mathcal C}}=\Phi_{\mathcal L,\mathcal C}^{k}$ is the identity.
Let  $K_i,K_{i+1}$ be curves in $\tau\overline{\mathcal C}$ with the edge $p_i$ between them. Let $\overline K_i,\overline K_{i,j},\overline K_{i+1}$ be curves in $\overline{\mathcal C}$ with $K_h=\tau\overline K_h$ for $h=i,i+1$ and $\tau\overline K_{i,j}=p_i$ for $j=1,\ldots,\ell_i$, 
where for every $j=0,\ldots,\ell_i$ we denote by $\bar p_{i,j}$ the edge between $\overline K_{i,j}$ and $\overline K_{i,j+1}$, with $\overline K_{i,0}=\overline K_i$ and $\overline K_{i,\ell_i+1}=\overline K_{i+1}$.
We have commutative diagrams 
$$
\xymatrix{
H^0(K_i,\mathcal L)\ar[rr]\ar[d]_{\wr}&&H^0(p_i,\mathcal L)\ar[d]_{\wr}\\
H^0(\overline K_i,\tau^*\mathcal L)\ar[r]&H^0(\bar p_{i,0},\tau^*\mathcal L)&H^0(\cup_j\overline K_{i,j},\tau^*\mathcal L)\ar[l]
}
$$

and

$$
\xymatrix{
H^0(p_i,\mathcal L)\ar[d]_{\wr}&&H^0(K_{i+1},\mathcal L)\ar[d]_{\wr}\ar[ll]\\
H^0(\cup_j\overline K_{i,j},\tau^*\mathcal L)\ar[r]&H^0(\bar p_{i,\ell_i},\tau^*\mathcal L)&H^0(\overline K_{i+1},\tau^*\mathcal L)\ar[l]
}
$$
where the horizontal arrows are the restriction isomorphisms and the vertical arrows are isomorphisms induced by the pullback by $\tau$.

Les $s$ be a global section for $\tau^*\mathcal L\vert_{\overline{\mathcal Z}}$.
Then, with the identifications of the previous diagram, the linear map $\Phi_{\mathcal L,\tau\overline{\mathcal C}}$ is the multiplication by $s/s=1$, therefore it is the identity.

Then $\Phi_{\mathcal L,\mathcal C}^{k}$ is the identity,  proving the statement.

\bigskip

We are left with the proof of Claim \ref{cla:tors3}.

Let $$
\begin{array}{l}
\mathcal K_f=\{K\subseteq \mathcal K\text{ irreducible component}\vert\; K\not\subseteq\tau Exc(\tau)\}\\
\mathcal K_e=\{K\subseteq \mathcal K\text{ irreducible component}\vert\; K\subseteq\tau Exc(\tau)\}.
\end{array}
$$
For $K\in \mathcal K_f$, let $\widetilde K$ be its strict transform.\\
Let $K\in \mathcal K_e$. For every irreducible component $Z\subseteq \tau^{-1}K$ surjecting onto $K$, let $H_i$
be hyperplane sections such that $K_Z=Z\cap\bigcap H_i$ is a reduced curve.
We can moreover find the $H_i$ such that if $\cup Z_i$ is connected, then $\widetilde K=\cup_Z K_Z$ is connected.\\
If $p\in \mathcal K^{sing}\cap \tau Exc(\tau)$, and $\{p\}=K_1\cap K_2$, for every irreducible component $Z\subseteq\tau^{-1}p$ let $H_i$
be hyperplane sections such that $K_Z=Z\cap\bigcap H_i$ is a reduced curve 
and has the following property:
if $\widetilde K_1, \widetilde K_2$ are such that $\tau \widetilde K_i=K_i$, then $ \widetilde K_i\cap \tau^{-1}p\subseteq K_p$.
We can moreover find the $H_i$ such that if $\cup Z_i$ is connected, then the union $K_p=\cup_Z K_Z$ is connected.\\
Finally, we set $$\overline{\mathcal K}=\cup\{\widetilde K\vert \; K\in\mathcal K_f\cup \mathcal K_e \}\cup\{K_p\vert \;  p\in \mathcal K^{sing}\cap \tau Exc(\tau)\}.$$
By the generality of $\mathcal K$, we can assume that $\overline{\mathcal K}$ is a simple normal crossings curve.

We want to prove now that $\tau\colon\pi_1(\Gamma^i(\overline{\mathcal K}, \tau),\overline K_1)\to\pi_1(\Gamma^i(\mathcal K), K_1)$ 
has finite index in $\pi_1(\Gamma^i(\mathcal K), K_1)$.

Let $\mathcal C=(K_1,\ldots,K_{\ell}, p_i)$ be a circuit in $\Gamma^i(\mathcal K)$.
Let $\overline K_1\subseteq \overline{\mathcal K}$ be such that $\tau \overline K_1=K_1$.
Let $N$ be the number of curves in $\overline{\mathcal K}$ surjecting onto $K_1$.
We construct a circuit $\overline{\mathcal C}$ in $\Gamma^i(\overline{\mathcal K}, \tau)$
such that $\tau \overline{\mathcal C}=m\mathcal C$ in the group of circuits based on $K_1$ with $m$ dividing $N!$.

\smallskip

We assume now that we have $\overline K_i$ $\;$ for $i=1,\ldots,r+q\ell$, and $\overline K_{i,j}$ for  $i=1,\ldots,r-1+q\ell$ and $j=1,\ldots,\ell_i$ 
and edges $\bar q_{i,j}\in \overline K_{i,j}\cap \overline K_{i,j+1}$ for $j=0,\ldots,\ell_i$
such that 
$\tau\overline K_i=K_{\bar i}$, where $\bar i$ is the remainder of the euclidean division of $i$ by $\ell$, and $\tau\overline K_{i,j}=p_i$.

If $q_r\not\in\tau Exc(\tau)$, then we let $\overline K_{r+1+q\ell}$ be a curve such that 
$$\tau\overline K_{r+1+q\ell}=K_{r+1}\text{ and }\tau^{-1}K_{r,r+1}\cap \overline K_{r+1+q\ell}\cap \overline K_{r+q\ell}\neq\emptyset.$$ 
We set $\bar q_{r+q\ell,0}$ as a point in  $\tau^{-1}K_{r,r+1}\cap \overline K_{r+1+q\ell}\cap \overline K_{r+q\ell}$.

If $q_r\in\tau Exc(\tau)$, then let $\overline K'_{r+1}$
such that $\tau\overline K'_{r+1}= K_{r+1}$ and $\overline K'_{r+1}$ meets a connected component of $ \tau^{-1}q_r$ meeting $\overline K_{r+q\ell}$.\\
Let $\overline K_{r+q\ell,j}$ be such that 
\begin{itemize}
 \item $\overline K_{r+q\ell,j}\cap \overline K_{r+q\ell,j+1}\neq\emptyset$,
 \item $\overline K_{r+q\ell}\cap \overline K_{r+q\ell,1}\neq\emptyset$, and
 \item $\overline K_{r+q\ell,\ell_{r+q\ell}}\cap \overline K'_{r+1}\neq\emptyset$.
\end{itemize}

We set $\bar p_{r+q\ell,0}\in\overline K_{r+q\ell}\cap \overline K_{r+q\ell,1}$, $\bar p_{r+q\ell,j}\in\overline K_{r+q\ell,j}\cap \overline K_{r+q\ell,j+1}$ and $\bar p_{r+q\ell,\ell_i}\in\overline K_{r+q\ell,\ell_{r+q\ell}}\cap \overline K'_{r+1}$.
Finally, we set $\overline K_{r+1+q\ell}=\overline K'_{r+1}$.

Then there are $q_1<q_2$ with $q_2-q_1\leq N$ such that $\overline K_{1+q_1\ell}=\overline K_{1+q_2\ell}$.
Then we set $\gamma=(\overline K_1,\ldots,\overline K_{q_1\ell})$ 
and $\overline{\mathcal C}=\gamma*(\overline K_{q_1\ell},\ldots,\overline K_{q_1\ell})*\gamma^{-1}$.
We have $\tau \overline{\mathcal C}=(q_2-q_1)\mathcal C=\underbrace{\mathcal C*\ldots*\mathcal C}_{q_2-q_1\ \text{times}}$.
\end{proof}

\section{Restriction of the moduli part to log canonical centres}\label{Restr}

The goal of this section is to describe the restriction of the moduli part to a log canonical centre of $(Y,\Sigma_f)$.
Part of the results can be seen as a higher codimensional version of \cite[Proposition 4.2]{FL19}.
We refer to \cite{Hu20} for similar results.

\begin{dfn}(Definition 3.12 \cite{FL19})\label{dfn:acceptable}
Let $f\colon (X,\Delta)\to Y$ be an lc-trivial (respectively klt-trivial) fibration. Then $f$ is \emph{acceptable} if there exists another lc-trivial (res\-pec\-ti\-ve\-ly klt-trivial) fibration $\bar f\colon (\overline X,\overline \Delta)\to Y$ such that $\overline \Delta$ is effective on the generic fibre of $\bar f$, and a birational morphism $\mu\colon X\to\overline X$ such that $f=\bar f\circ\mu$ and $K_X+\Delta\sim_\Q\mu^*(K_{\overline X}+\overline \Delta)$. Note that then the horizontal part of $\Delta^{<0}$ with respect to $f$ is $\mu$-exceptional. Note also that any birational base change of an acceptable lc-trivial (respectively klt-trivial) fibration is again an acceptable lc-trivial (respectively klt-trivial) fibration.
$$
\xymatrix{
(X,\Delta)\ar[dr]_f \ar[r]^\mu & (\overline X,\overline \Delta)\ar[d]^{\bar f}\\
& Y
}
$$
\end{dfn}

\begin{dfn}(Definition 3.10 \cite{FL19})
Let  $f\colon(X, \Delta)\rightarrow Y$ be an lc-trivial fibration, where $(X,\Delta)$ is log smooth and $Y$ is smooth. Fix a prime divisor $T$ on $Y$. An \emph{$(f,T)$-bad divisor} is any reduced divisor $\Sigma_{f,T}$ on $Y$ which contains:
\begin{enumerate}
\item[(a)] the locus of critical values of $f$,
\item[(b)] the closed set $f(\Supp\Delta_v)\subseteq Y$, and
\item[(c)] the set $\Supp B_f\cup T$.
\end{enumerate}
\end{dfn}

The next result is a corollary of \cite[Proposition 4.2]{FL19}.

\begin{pro}\label{pro:FL}
Let  $f\colon(X, \Delta)\rightarrow Y$ be an acceptable klt-trivial fibration, where $(X,\Delta)$ is a log smooth log canonical pair and $Y$ is a smooth Ambro model for $f$. 
Assume that there exists an $(f,0)$-bad divisor $\Sigma_f\subseteq Y$ which has simple normal crossings, and such that the divisor $\Delta+f^*\Sigma_f$ has simple normal crossings support. Denote 
$$\Delta_X=\Delta+\sum_{\Gamma\subseteq \Sigma_f}\gamma_{\Gamma}f^*\Gamma,$$
where $\gamma_\Gamma$ are the generic log canonical thresholds with respect to $f$ as in Definition \ref{dfn:cbf}. 
Let $Z=T_1\cap\ldots\cap T_k$ be a log canonical centre of $(Y,\Sigma_f)$.
Denote 
$$\Xi_Z:=(\Sigma_f-\sum T_i)|_Z.$$ 
Let $S$ be a minimal log canonical centre of $(X, \Delta_X)$ over $Z$, which exists by \cite[Lemma 4.1]{FL19} 
Let 
$$f|_S\colon S\overset{h}{\longrightarrow} Z'\overset{\tau}{\longrightarrow} Z$$
be the Stein factorisation, and let $R$ denote the ramification divisor of $\tau$ on $Z'$. 
Then:
\begin{enumerate}
\item[(i)] if $K_S+\Delta_S=(K_X+ \Delta_X)|_S$, then $h\colon (S, \Delta_S)\rightarrow Z'$ is a klt-trivial fibration with $B_h\geq0$, 
\item[(ii)] $\tau^*(M_f|_Z)\sim_\Q M_h+R'+E$, where $M_f$ is chosen so that $Z\not\subseteq M_f$ and
$$R'=\sum\limits_{\Gamma\not\subseteq \tau^{-1}(\Xi_Z)}(\mult_\Gamma R)\cdot \Gamma\quad\text{and}\quad E=\sum\limits_{\Gamma\not\subseteq\tau^{-1}(\Xi_Z)}(\mult_\Gamma B_h)\cdot\Gamma.$$

\end{enumerate}
\end{pro}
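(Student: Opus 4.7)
The plan is to deduce the proposition by induction on $k = \codim_Y Z$, using [FL19, Proposition 4.2] (which is the codimension-one case) as the base, and then restricting one divisor at a time.

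\textbf{Base case} ($k=1$): Here $Z = T_1$ is a prime divisor of $\Sigma_f$, and $\Xi_Z = (\Sigma_f - T_1)|_{T_1}$. The statement of the proposition is then exactly [FL19, Proposition 4.2] applied with $T = T_1$: part (i) is the conclusion that the induced map on the minimal lc centre is a klt-trivial fibration with $B_h \ge 0$, and part (ii) is the restriction formula for the moduli part after Stein factorisation, with the sums defining $R'$ and $E$ ranging over components not contained in $\tau^{-1}(\Xi_Z)$.

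\textbf{Inductive step} ($k \ge 2$): Let $Z_0$ be an irreducible component of $T_1 \cap \cdots \cap T_{k-1}$ containing $Z$; by hypothesis $Z_0$ is a log canonical centre of $(Y,\Sigma_f)$ of codimension $k-1$. Using [FL19, Lemma 4.1] I first choose a minimal lc centre $S_0$ of $(X,\Delta_X)$ over $Z_0$ containing $S$, with Stein factorisation $f|_{S_0}\colon S_0 \xrightarrow{h_0} Z_0' \xrightarrow{\tau_0} Z_0$. The inductive hypothesis applied to $Z_0$ gives that $h_0\colon(S_0,\Delta_{S_0}) \to Z_0'$ is an acceptable klt-trivial fibration (so $h_0$ itself satisfies the hypotheses of the proposition after possibly passing to a log resolution, with the moduli part and discriminant preserved thanks to Theorem \ref{nefness}, Lemma \ref{lem:gfiniteAm} and Remark \ref{bbir}), with effective discriminant $B_{h_0}$, and
$$\tau_0^*\bigl(M_f|_{Z_0}\bigr) \sim_{\Q} M_{h_0} + R_0' + E_0,$$
where $R_0'$, $E_0$ are the ramification of $\tau_0$ and discriminant of $h_0$ along divisors of $Z_0'$ not lying over $(\Sigma_f - \sum_{i<k} T_i)|_{Z_0}$. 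The image of $S$ in $Z_0'$ is a unique prime divisor $T' \subseteq \tau_0^{-1}(T_k \cap Z_0)$, and $T'$ is a component of the bad divisor $\Sigma_{h_0}$ on $Z_0'$.

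\textbf{Second restriction}: Now I apply the base case (i.e.\ [FL19, Proposition 4.2]) to the fibration $h_0$ with respect to the prime divisor $T'$. By the minimality of $S$ over $Z$, the resulting minimal lc centre of $(S_0,\Delta_{S_0})$ over $T'$ is exactly $S$, and its Stein factorisation $S \xrightarrow{h} Z' \xrightarrow{\tau_1} T'$ composes with $\tau_0|_{T'}$ to the original factorisation $\tau\colon Z' \to Z$. The base case delivers part (i) for $h$ and the formula
$$\tau_1^*\bigl(M_{h_0}|_{T'}\bigr) \sim_{\Q} M_h + R_1' + E_1,$$
with $R_1'$, $E_1$ computed on $Z'$ against the bad divisor $(\Sigma_{h_0} - T')|_{T'}$.

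\textbf{Assembling the formula}: Restricting the identity for $h_0$ to $T'$ and pulling back by $\tau_1$ gives
$$\tau^*\bigl(M_f|_Z\bigr) \sim_{\Q} M_h + R_1' + E_1 + \tau_1^*\bigl((R_0' + E_0)|_{T'}\bigr).$$
By Riemann--Hurwitz the total ramification of $\tau = \tau_0 \circ \tau_1$ is $\tau_1^* R_0 + R_1$ (up to the vertical contribution pulled back to $T'$, which by construction of $R_0'$, $R_1'$ and the exclusion of components over $\Xi_Z$ assembles into the single divisor $R'$ of the statement); likewise the discriminant $B_h$ on $Z'$ decomposes as $\tau_1^* (B_{h_0}|_{T'}) + \text{(new contributions)}$, assembling into $E$. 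The divisors on $Z'$ falling inside $\tau^{-1}(\Xi_Z) = \tau_1^{-1}((\Sigma_{h_0} - T')|_{T'}) \cup \tau_1^{-1}\tau_0^{-1}(\Xi_{Z_0}\cdot T_k)$ are exactly those omitted in both sub-formulas, so the final $R'$ and $E$ agree with the ones in the statement. The main obstacle is this last bookkeeping, namely verifying that the chain-rule decompositions of ramification and of the discriminant produced at the two stages match the partition between ``contributes to $R'+E$'' and ``contained in $\tau^{-1}(\Xi_Z)$''; this is a transversality check made possible by the simple normal crossings assumption on $\Sigma_f$ and on $\Delta + f^*\Sigma_f$.
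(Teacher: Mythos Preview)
Your inductive strategy is natural, but there is a genuine gap at the second restriction. To apply [FL19, Proposition 4.2] to $h_0\colon(S_0,\Delta_{S_0})\to Z_0'$ you need $Z_0'$ to be a \emph{smooth Ambro model} for $h_0$ and the bad divisor on $Z_0'$ to satisfy the SNC hypotheses. The inductive hypothesis supplies none of this: part (i) only says $h_0$ is klt-trivial with $B_{h_0}\ge 0$, and part (ii) is a formula for $M_{h_0}$, not a descent statement. (Lemma \ref{lem:gfiniteAm} concerns the fibration pulled back from $Y$, not the intrinsic fibration $h_0$ on $S_0$.) Your parenthetical fix ``after possibly passing to a log resolution'' does not close the gap: a log resolution of $Z_0'$ forces a base change of $S_0$, after which you must check that the strict transform of $S$ is still a minimal lc centre over the correct locus, and you still have to arrange that the new base is an Ambro model for $h_0$---which in general requires blowing up inside the support of $E_0$ (this is exactly the content of the argument in Lemma \ref{lem:descends}). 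None of this is routine, and your outline does not address it. The final ``bookkeeping'' paragraph is likewise too vague: you assert that the two-stage ramification and discriminant contributions assemble into $R'$ and $E$, but the partition of components between ``over $\Xi_Z$'' and ``not over $\Xi_Z$'' interacts nontrivially with the intermediate restriction to $T'$, and you do not actually carry out the check.

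The paper takes a different route that avoids intermediate bases altogether. It reruns Steps 1--5 of [FL19, Proposition 4.2] verbatim to produce $\rho\colon(X,\Delta_X)\dasharrow(W,\Delta_W)$ over $Y$ with $(\psi^*\Sigma_f)_{\red}\le\Delta_{W,v}$, and then in a new Step 6 chooses components $D_1,\dots,D_h$ of $\psi^*T_1,\dots,\psi^*T_h$ and proves inductively (on the number of $D_i$, not on $\codim Z$) the containment
\[
\bigl((\psi|_{D_1\cap\dots\cap D_i})^*\Xi_{T_1\cap\dots\cap T_i}\bigr)_{\red}\le\Delta_{D_1\cap\dots\cap D_i}^{=1}.
\]
This is a purely divisorial check on the single model $(W,\Delta_W)$; no Stein factorisation of an intermediate restriction ever plays the role of a base, so the Ambro-model and smoothness hypotheses never need to be re-established. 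The conclusion for $B_h$ and the formula (ii) then follow exactly as in Step 7 of [FL19, Proposition 4.2] with $T$ replaced by $Z$.
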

\begin{proof}
The proof follows the same line as \cite[Proposition 4.2]{FL19}. In particular, Steps 1-5 are the same:
we find a birational map $\rho\colon (X,\Delta_X)\dasharrow (W,\Delta_W)$ over $Y$ such that, if $\psi\colon(W,\Delta_W)\to Y$ is the induced lc-trivial fibration,
then $(\phi^*\Sigma_f)_{\red}\leq\Delta_{W,v}$.
After replacing $T$ with $Z$ in Step 5, the fibration $h\colon (S, \Delta_S)\rightarrow Z'$ is klt-trivial.

\medskip

\emph{Step 6.}
Let $T_1,\ldots,T_h$ be components of $\Sigma_f$ such that $Z=T_1\cap\ldots\cap T_h$. 
By equation \cite[(13)]{FL19} every component $D_1$ of $\psi^*T_1$ which dominates $T_1$ has coefficient $1$ in $\Delta_W$. Denote $\Delta_{D_1}:=(\Delta_W-D_1)|_{D_1}$, 
so that the Stein factorisation of $\psi|_{D_1}\colon (D_1,\Delta_{D_1})\to T_1$ gives an lc-trivial fibration. Let $\Xi_{T_1}=(\Sigma_f-T_1)\vert_{T_1}$ and let $P$ be a component 
of $(\psi|_{D_1})^*\Xi_{T_1}$. 
Since $(\psi|_{D_1})^*\Xi_{T_1}=(\psi^*\Sigma_{f}-\psi^*T_1)|_{D_1}$, and each component of $\psi^*\Sigma_{f}$ is a component of $\Delta_W^{=1}$ by \cite[(12) and (13)]{FL19}, 
this implies that $P$ is a component of $(\Delta_W^{=1}-D_1)|_{D_1}=\Delta_{D_1}^{=1}$. In other words,
\begin{equation*}
\big((\psi|_{D_1})^*\Xi_{T_1}\big)_{\red}\leq\Delta_{D_1}^{=1}.
\end{equation*}
Assume that for $i>1$ there are components $D_1,\ldots,D_i$ such that $\phi(D_j)=T_j$ and $\big((\psi|_{D_1\cap\ldots\cap D_i})^*\Xi_{T_1\cap\ldots\cap T_i}\big)_{\red}\leq\Delta_{D_1\cap\ldots\cap D_i}^{=1}$,
where $\Xi_{T_1\cap\ldots\cap T_i}=(\Sigma_f-T_1-\ldots-T_i)\vert_{T_1\cap\ldots\cap T_i}$ and $\Delta_{D_1\cap\ldots\cap D_i}^{=1}=(\Delta_W^{=1}-D_1-\ldots- D_i)\vert_{D_1\cap\ldots\cap D_i}$.

There is a component  $D_{i+1}$ of $\psi^*T_{i+1}$ which has coefficient $1$ in $\Delta_W$.
 Denote $\Delta_{D_1\cap\ldots\cap D_{i+1}}=(\Delta_W-D_1-\ldots- D_{i+1})\vert_{D_1\cap\ldots\cap D_{i+1}}$, 
so that the Stein factorisation of $\psi|_{D_1\cap\ldots\cap D_{i+1}}\colon ({D_1\cap\ldots\cap D_{i+1}},\Delta_{D_1\cap\ldots\cap D_{i+1}})\to T_1\cap\ldots\cap T_{i+1}$ gives an lc-trivial fibration. 
Let $\Xi_{T_1\cap\ldots\cap T_{i+1}}=(\Sigma_f-T_1-\ldots-T_{i+1})\vert_{T_1\cap\ldots\cap T_{i+1}}$ and let $P$ be a component 
of $(\psi|_{D_1\cap\ldots\cap D_{i+1}})^*\Xi_{T_1\cap\ldots\cap T_{i+1}}$. 
As before,
\begin{equation*}
\big((\psi|_{D_1\cap\ldots\cap D_{i+1}})^*\Xi_{T_1\cap\ldots\cap T_{i+1}}\big)_{\red}\leq\Delta_{D_1\cap\ldots\cap D_{i+1}}^{=1}.
\end{equation*}

We proved by induction that there are $D_1,\ldots,D_h$ such that 
\begin{equation}\label{eq:1002}
\big((\psi|_{D_1\cap\ldots\cap D_h})^*\Xi_{Z}\big)_{\red}\leq\Delta_{D_1\cap\ldots\cap D_h}^{=1}.
\end{equation}

Now, by \cite[Proposition 2.6]{FL19} there are components $D_1,\ldots,D_h$ of $\Delta_W$ and $S_1,\dots,S_k$ of $\Delta_D^{=1}$, where $D=D_1\cap\ldots\cap D_h$, such that $S_W$ is a component of $S_1\cap\dots\cap S_k$, and note that the $S_i$ dominate $Z$. 
This and \eqref{eq:1002} imply
\begin{equation}\label{eq:1003}
\big((\psi|_{D})^*\Xi_Z\big)_{\red}\leq\Delta_{D}^{=1}-S_1-\dots-S_k,
\end{equation}
hence
$$\big((\psi|_{S_W})^*\Xi_Z\big)_{\red}\leq(\Delta_{D}^{=1}-S_1-\dots-S_k)|_{S_W}\leq \Delta_{S_W}^{=1}.$$

Thus, for every prime divisor $P\subseteq \Supp \tau^*\Xi_Z$, the generic log-canonical threshold $\gamma_P$ of $(S_W,\Delta_{S_W})$ with respect to $h_W^*P$ is zero. If we define
$$E:=\sum\limits_{\Gamma\not\subseteq\tau^{-1}(\Xi_Z)}(\mult_\Gamma B_{h_W})\cdot\Gamma=\sum\limits_{\Gamma\not\subseteq\tau^{-1}(\Xi_Z)}(\mult_\Gamma B_h)\cdot\Gamma,$$
where the second equality follows from \cite[(17)]{FL19}, then
\begin{equation}\label{eq:ef}
B_{h_W}=(\tau^*\Xi_Z)_\red+E.
\end{equation}

\medskip

Finally, Steps 7 is the same after replacing $T$ with $Z$.
 \end{proof}

\begin{pro}\label{pro:mmp2fg}
Let $f\colon( X, \Delta)\rightarrow  Y$ be an acceptable klt-trivial fibration.
Assume that $Y$ is an Ambro model for $f$ and that there exists a simple normal crossings divisor $R$ on $Y$ such that the support of the divisor 
$\Delta + f^{-1}\Sigma_f$ has simple normal crossings.
Assume that $f$ is semistable.
Set $$\Delta_X=\Delta+\sum_{\Gamma\subseteq \Sigma_f}\gamma_{\Gamma}f^*\Gamma$$
where $\gamma_\Gamma$ are the generic log canonical thresholds with respect to the klt-fibration $f$ as in Definition \ref{dfn:cbf}.
Then there exists a birational map $\rho\colon X\dasharrow W$ and a fibration $\psi\colon W\rightarrow Y$ such that:
\begin{enumerate}[label=(\alph*)]
\item \label{stepA} the pair $(W, \Delta_W)$ is $\Q$-factorial dlt, where $\Delta_W:=\rho_*\Delta_X$, and $\Delta_W\geq 0$;
\item \label{stepB} $\psi\colon (W,\Delta_W)\rightarrow Y$ is a klt-trivial fibration;
\item \label{stepC} $\rho\colon (X,\Delta_X)\dasharrow (W,\Delta_W)$ is crepant birational;
\item \label{stepD} the discriminant of $\psi$ is $\Sigma_f$ and the moduli part is $M_f$;
\item \label{stepE} $\Delta_{W,v}=\psi^*\Sigma_f$.
\end{enumerate}
Let $Z$ be a log canonical centre of $(Y,\Sigma_f)$ and let $S$ be a minimal log canonical centre of $(W, \Delta_W)$ over $Z$. Let $\psi|_S\colon S\overset{h}{\longrightarrow} Z'\overset{\tau}{\longrightarrow} Z$ be the Stein factorisation. 
\begin{enumerate}[label=(\roman*)]
\item \label{stepi} If $K_S+\Delta_S=(K_W+ \Delta_W)|_S$, then $h\colon (S, \Delta_S)\rightarrow Z'$ is a klt-trivial fibration.
\item \label{stepii} Assume that $\tau^*M_f\vert_Z=M_h$. Then $\Delta_{S,v}=h^*B_h$ and $B_h=(\tau^*\Xi_Z)_\red$.
\item \label{stepiii}  Let $Z$ be a component of $\Sigma_{f}$ such that $M_f\vert_Z\equiv 0$. Then $\tau^*M_f\vert_Z=M_{h}\sim_{\mathbb Q}0$ and $\Delta_{S,v}=h^*B_h$.
\item \label{stepiv} If either $\tau^*M_f\vert_Z=M_h$ or $M_f\vert_Z\equiv 0$, then $h$ has reduced fibres over an open set meeting all the irreducible components of $B_h$. 
\end{enumerate}
\end{pro}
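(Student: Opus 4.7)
My plan is to establish the existence claim (a)--(e) by a relative MMP argument mirroring Steps 1--5 of \cite[Proposition 4.2]{FL19}, and then to derive (i)--(iv) by combining Proposition \ref{pro:FL}, Theorem \ref{thm:torsionAmbro}, and the semistability of $f$ via Remark \ref{rem:restred}.

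\medskip

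For (a)--(e): I would take a log resolution $\mu\colon X'\to X$ of $(X,\Delta_X+f^{-1}\Sigma_f)$, write $K_{X'}+\Delta_{X'}=\mu^*(K_X+\Delta_X)$, and add to $\Delta_{X'}$ all $\mu$-exceptional divisors and all components of $\mu^*f^*\Sigma_f$ with coefficient $1$. A relative $(K_{X'}+\Delta_{X'})$-MMP over $Y$, followed by contraction of the exceptional divisors with positive log discrepancy over $(X,\Delta_X)$, produces a $\mathbb{Q}$-factorial dlt pair $(W,\Delta_W)$ crepant birational to $(X,\Delta_X)$ over $Y$; since semistability of $f$ fixes the coefficients $\gamma_\Gamma$ explicitly via the multiplicities of the semistable structure, the vertical part of $\Delta_W$ is exactly $\psi^*\Sigma_f$. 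This gives (a), (c), (e) at once, while (b) is preservation of the klt-trivial property under crepant birational equivalence, and (d) is Remark \ref{bbir}.

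\medskip

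For (i)--(iv): Part (i) is Proposition \ref{pro:FL}(i) applied verbatim to $(W,\Delta_W)$. For (ii), Proposition \ref{pro:FL}(ii) yields $\tau^*(M_f|_Z)\sim_\Q M_h+R'+E$ with $R',E\geq0$, so the hypothesis $\tau^*(M_f|_Z)=M_h$ forces $R'+E\sim_\Q 0$ and hence $R'=E=0$ by effectivity; combined with the equation $B_h=(\tau^*\Xi_Z)_{\mathrm{red}}+E$ extracted from Step~6 of the proof of Proposition \ref{pro:FL}, this gives $B_h=(\tau^*\Xi_Z)_{\mathrm{red}}$. The equality $\Delta_{S,v}=h^*B_h$ then comes from adjunction: using $\Delta_{W,v}=\psi^*\Sigma_f$, the components of $\Delta_W$ restricting to vertical divisors on $S$ with respect to $h$ are precisely the pullbacks by $\psi|_S$ of the components of $\Sigma_f$ meeting $Z$ properly, which is $(\psi|_S)^*\Xi_Z=h^*\tau^*\Xi_Z=h^*B_h$ by reducedness. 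For (iii), Remark \ref{rem:restred} gives that $h$ is semistable, so $Z'$ is an Ambro model for $h$ by the criterion recalled after Theorem \ref{nefness}; from $\tau^*(M_f|_Z)\equiv 0$ and the pseudoeffectivity of $M_h$ (Remark \ref{rem:modpseff}) combined with effectivity of $R',E$, we deduce $M_h\equiv 0$ and $R'=E=0$, and Theorem \ref{thm:torsionAmbro} upgrades this to $M_h\sim_\Q 0$, which together with the vanishing of $R'+E$ places us back in the hypothesis of (ii). Finally, (iv) follows from (ii)/(iii) and semistability: $h$ semistable and $B_h$ reduced imply that $h^*P$ is reduced for every prime $P\subseteq B_h$ in the smooth locus of $Z'$, which is a dense open subset meeting each component of $B_h$.

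\medskip

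The principal obstacle is the verification of $\Delta_{S,v}=h^*B_h$ in (ii): one must carefully track, under iterated adjunction from $W$ to the minimal lc centre $S$, how the components of $\psi^*\Sigma_f$ split into those containing $S$ (absorbed into strata) and those meeting $S$ properly (which project to $\Xi_Z$ and contribute the vertical part with the correct multiplicities). A secondary technical point is ensuring that the MMP producing $W$ is compatible with semistability in the sense that $\psi|_S$ inherits the semistable structure of $f$, so that Remark \ref{rem:restred} can be invoked in parts (iii) and (iv).
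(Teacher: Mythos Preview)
Your overall architecture matches the paper's, but the argument for $\Delta_{S,v}=h^*B_h$ in (ii) has a genuine gap, and the misuse of Remark~\ref{rem:restred} propagates into (iii) and (iv).

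In (ii) you write $(\psi|_S)^*\Xi_Z=h^*\tau^*\Xi_Z=h^*B_h$ ``by reducedness''. But what you have established is $B_h=(\tau^*\Xi_Z)_{\mathrm{red}}$, not $B_h=\tau^*\Xi_Z$. The equations~\eqref{eq:9}--\eqref{eq:10} together with $\tau^*(M_f|_Z)=M_h$ force $E=0$ and $R=\tau^*\Xi_Z-(\tau^*\Xi_Z)_{\mathrm{red}}$, where $R$ is the ramification divisor of $\tau$; this does \emph{not} say $R=0$, so $\tau^*\Xi_Z$ need not be reduced and $h^*\tau^*\Xi_Z\neq h^*B_h$ in general. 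A direct adjunction computation on $W$ only yields $\Delta_{S,v}=(h^*B_h)_{\mathrm{red}}$; the upgrade to $h^*B_h$ requires more.

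Your invocation of Remark~\ref{rem:restred} to conclude that $h$ is semistable is the second error: that remark applies to strata of the \emph{semistable} morphism $f$ on $X$. The map $\psi$ on $W$ is not semistable, so $h$ (the Stein factorisation of $\psi|_S$) is not known to be semistable. What Remark~\ref{rem:restred} gives is that $h_X$, the Stein factorisation of $f|_{S_X}$ for the corresponding centre $S_X\subseteq X$, is semistable and hence has reduced fibres. This is precisely the input the paper uses: via the crepant birational map $\rho|_{S_X}\colon(S_X,\Delta_{S_X})\dasharrow(S,\Delta_S)$ provided by \cite[Lemma~2.8]{FL19}, each component $D$ of $\Delta_{S,v}$ is the $\rho$-image of a component $D_X$ of $(\Delta_{S_X,v})^{=1}=h_X^*B_h$, and pushing forward gives $\Delta_{S,v}=(\rho|_{S_X})_*h_X^*B_h=h^*B_h$. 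This is also how (iv) follows: once $\Delta_{S,v}=h^*B_h$ and $\Delta_{S,v}$ is reduced (it is part of the dlt boundary $\Delta_S$), $h^*B_h$ is reduced, so $h$ has reduced fibres over the generic point of each component of $B_h$. Your route through ``$h$ semistable'' is not available.
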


\begin{proof}
\emph{Step 1.}
The existence of $\rho$ satisfying \ref{stepA}, \ref{stepB}, \ref{stepC}, \ref{stepD} follows from Steps 2 and 3 of the proof of \cite[Proposition 4.2]{FL19}. We have then
\begin{equation}\label{eq:6a}
K_W+\Delta_W\sim_\Q\psi^*(K_Y+\Sigma_{f}+M_f).
\end{equation}
The divisor $\Delta_{W,v}$ is reduced, and
by \cite[Proposition 4.2, (13)]{FL19}
 $\Delta_{W,v}=(\psi^*\Sigma_{f})_{red}$.
As for  \ref{stepE}, every component $D$ of $\Delta_{W,v}$ is a log canonical centre of $(W,\Delta_W)$.
By \cite[Lemma 2.8]{FL19} there is a centre $D_X$ of $(X,\Delta_X)$ such that $\rho$ induces a birational map $\rho\vert_{D_X}\colon D_X\dasharrow D$.
Therefore
$$\Delta_{W,v}=\sum_{i=1}^n D_i= \sum_{i=1}^n \rho_* D_{i,X}=\rho_* f^*\Sigma_{f} $$ 
the last equality following from the semistability of $f$.
Let $(p,q)\colon Z\to X\times W$ be a resolution of the indeterminacy of $\rho$. Then $\rho_* f^*\Sigma_{f}=q_* p^*f^*\Sigma_{f}=q_* q^*\psi^*\Sigma_{f}=\psi^*\Sigma_{f}$ proving \ref{stepE}.

\medskip

\emph{Step 2.}
The proof of \ref{stepi} follows the same lines as \cite[Proposition 4.2]{FL19}, which has slightly different hypotheses. We recall it here for completeness.

By restricting the equation \eqref{eq:6a} to $S$ we obtain
\begin{equation}\label{eq:8}
K_S+\Delta_S\sim_\Q(\psi |_S)^*(K_Z+\Xi_Z+M_f\vert_Z),
\end{equation}
where $\Xi_T=(\Sigma_{f}-T)|_T$. Thus $h$ is an lc-trivial fibration, and moreover, it is a klt-trivial fibration. Indeed, if there existed a log canonical centre $\Theta$ of $(S,\Delta_S)$ which dominated $T'$, then $\Theta$ would be a log canonical centre of $(W,\Delta_W)$ by \cite[Proposition 3.9.2]{Fuj07c}, which contradicts the minimality of $S$. This proves \ref{stepi}.

\medskip

\emph{Step 3.}
In order to show \ref{stepii} and \ref{stepiii}, denote by $M_h$ and $B_h$ the moduli part and the discriminant of $h$. From \eqref{eq:8} we have
\begin{equation}\label{eq:9}
\tau^*(K_{Z}+\Xi_Z+M_f\vert_Z)= K_{Z'}+B_h+M_h.
\end{equation}
 By \cite[Lemma 2.8]{FL19}, there is a centre $S_X$ of $(X,\Delta_X)$ such that $\rho$ induces a birational map $\rho\colon S_X\dasharrow S$. Moreover, if we define $\Delta_{S_X}$ by $K_{S_X}+\Delta_{S_X}=(K_X+\Delta_X)\vert_{S_X}$, by \ref{stepE} the restriction
$\rho\colon (S_X,\Delta_X)\dasharrow (S,\Delta_S)$ is crepant birational.

If $f\vert_{S_{X}}=\tau_{X}\circ h_{X}$ is the Stein factorisation, then we claim that $\tau_{X}=\tau$. Indeed, let $(p,q)\colon W\to S_{X}\times S$ be the resolution of indeterminacies of the birational map $\rho|_{S_{X}}\colon S_{X}\dashrightarrow S$. Both $p$ and $q$ have connected fibres by Zariski's main theorem, since $S_{X}$ and $S$ are normal. Then every curve contracted by $p$ is contracted by $h\circ q$, and thus $f|_{S_{X}}$ factors through $T'$ by the Rigidity lemma \cite[Lemma 1.15]{Deb01}. This proves the claim.

By \eqref{eq:ef} there exists an effective divisor $E$ such that
$$
B_{h}=(\tau^*\Xi_Z)_\red+E.
$$
Write the Hurwitz formula for $\tau$ as $K_{Z'}=\tau^*K_{Z}+R$. Then 
\begin{multline}\label{eq:10}
\tau^*(K_{Z}+\Xi_Z)
=K_{Z'}+B_h-E-R+\tau^*\Xi_Z-(\tau^*\Xi_Z)_\red.
\end{multline}
We notice moreover that
$$\tau^*\Xi_Z-(\tau^*\Xi_Z)_\red\leq R.$$

\medskip

\emph{Step 4.}
We assume that  $\tau^*M_f\vert_Z=M_h$ and we prove that $\Delta_{S,v}=(h^*B_h)_\red$.
Then \eqref{eq:8} becomes $\tau^*(K_{Z}+\Xi_Z)
=K_{Z'}+B_h$.
Equation \eqref{eq:10} implies that $-E-R+\tau^*\Xi_Z-(\tau^*\Xi_Z)_\red=0$.
In particular $E=0$ and 
\begin{equation}\label{eq:fff3}
B_h=(\tau^*\Xi_Z)_\red.
\end{equation}
Therefore, by \ref{stepE}, by the fact that $S$ is a minimal log canonical centre of $(W,\Delta_W)$ over $T$ and by \eqref{eq:fff3} we have
\begin{equation}\label{eq}
\Delta_{S,v}=(h^*B_h)_\red.
\end{equation}

\medskip

\emph{Step 5.}
We assume that  $M_f\vert_Z\equiv 0$ and we prove that $\Delta_{S,v}=(h^*B_h)_\red$ and $\tau^*M_f\vert_Z=M_{h}\sim_{\mathbb Q}0$.

Equations \eqref{eq:9} and \eqref{eq:10} imply that $\tau^*(M_f|_Z)\geq M_{Z'}$. Since $M_f|_Z\equiv 0$ and $M_h$ is pseudoeffective by Theorem \ref{nefness} and Remark \ref{rem:modpseff}, we 
get $\tau^*(M_f|_Z)= M_{Z'}$. In particular, $M_h\equiv 0$, hence $M_h\sim_\Q0$ by Theorem \ref{thm:torsionAmbro}.
Moreover, 
$\tau^*(K_{Z}+\Xi_Z)
=K_{Z'}+B_h$ and $E=0$, proving that 
$B_h=(\tau^*\Xi_Z)_\red$.
Therefore, by \ref{stepE}, by the fact that $S$ is a minimal log canonical centre of $(W,\Delta_W)$ over $T$ and by \eqref{eq:fff3} we have
\begin{equation}\label{eq:bis}
\Delta_{S,v}=(h^*B_h)_\red.
\end{equation}

\medskip

\emph{Step 6.} Assuming that $\Delta_{S,v}=(h^*B_h)_\red$, we prove that $\Delta_{S,v}=h^*B_h$.
By Remark \ref{rem:restred} the fibration $h_X$ has reduced fibres.

To prove \ref{stepii} we reason as in \ref{stepE}.
Let $D$ be an irreducible component of $\Delta_{S,v}$. Then $D$ is a log canonical centre of $(S,\Delta_S)$ and therefore of $(W,\Delta_W)$.
By \cite[Lemma 2.8]{FL19} there is a log canonical centre $D_X$ of $(X,\Delta_X)$
such that $\rho$ induces a birational map $D_X\dasharrow D$.

Then $$\Delta_{S,v}=\sum_{i=1}^n D_i=\sum_{i=1}^n (\rho\vert_{S_X})_* D_{i,X}=(\rho\vert_{S_X})_*(\Delta_{S_X,v})^{=1}=(\rho\vert_{S_X})_*h_X^*B_h=h^*B_h.$$

\medskip

\emph{Step 7.}
Finally, \ref{stepiv} follows directly from Step 6, as $h^*B_h$ is a reduced divisor.

\end{proof}



\section{Finiteness of the equivalence relation for the moduli part}\label{thm1}

This section is devoted to the proof of the finiteness of the equivalence relation induced by $\mathcal O_Y(mM_f)$ on a connected divisor $\mathcal T$.

\begin{ass}\label{ass1} We consider the following set of assumptions on a triple $(f\colon(X,\Delta)\to Y,\mathcal T,\Sigma_f)$ or $(f,\mathcal T,\Sigma_f)$ for short.
\begin{enumerate}
\item $f\colon (X,\Delta)\to Y$ is  an acceptable klt-trivial fibration;
\item $\Sigma_f$ is a simple normal crossings divisor and is an    $(f,T)$-bad for every $T\subseteq\mathcal T$;
\item for every $T\subseteq\mathcal T$ the restriction $\mathcal O_T(mM_f)$ is semiample and we denote by $\phi_T$ the induced fibration;
\item $f$ is semistable.
\end{enumerate}
\end{ass}
 In particular by \cite[Proposition 8.4.9, Definition 8.3.6, Theorem 8.5.1]{Kol07} the base $Y$ is an Ambro model and $\mathcal T$ is simple normal crossing.
 
 \begin{thm}\label{thm:step1}
Let  $(f\colon(X,\Delta)\to Y,\mathcal T,\Sigma_f)$  be a triple satisfying Assumption \ref{ass1}.
 Let $m$ be a positive integer such that $mM_f$ is a Cartier divisor and let $\mathcal L=\mathcal O(mM_f)$.
 Assume Conjecture \ref{con:maxvar}.
Then the equivalence relation $\mathcal R_{\mathcal L}$ is finite.
\end{thm}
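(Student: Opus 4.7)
The plan is to argue by contradiction, combining the structure theorem for profinite equivalence relations (Proposition \ref{pro:profinite}) with the isomorphism of fibres over the same fibre of the Iitaka map of the moduli part (Proposition \ref{pro:isomfibres}) and the maximal-variation input of Conjecture \ref{con:maxvar}. Assume $\mathcal R_{\mathcal L}$ is not finite. Its generating relation is finite on $\bigsqcup V_T$, so Proposition \ref{pro:profinite} produces a subrelation $\mathcal R'\subseteq \mathcal R_{\mathcal L}$ and positive-dimensional subvarieties $Z_1,\ldots,Z_k\subseteq \bigsqcup V_T$ such that $\bigcup Z_i$ is $\mathcal R'$-invariant, $\mathcal R'\vert_{\bigcup Z_i}$ is infinite equidimensional, and the infinite equivalence classes are dense in $\bigcup Z_i$. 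A general point of some $Z_i\subseteq V_{T_i}$ then has infinite $\mathcal R_{\mathcal L}$-class.

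Next, I would translate this into a statement about fibres of $f$. For each component $T\subseteq \mathcal T$, Proposition \ref{pro:mmp2fg} applied to the lc centre $T$ of $(Y,\Sigma_f)$ (combined with its dlt modification) produces a minimal lc centre $S_T$ over $T$ and a Stein factorisation $S_T\overset{h_T}{\longrightarrow} T'\overset{\tau_T}{\longrightarrow} T$, with $h_T$ klt-trivial and $\tau_T^*(M_f\vert_T)=M_{h_T}$ (up to the correction controlled by items (ii)--(iii) of that proposition). On a general fibre $F$ of $\phi_T$, the divisor $M_f\vert_T$ is numerically trivial; pulling back, the moduli part of the induced klt-trivial sub-fibration over $F$ is numerically trivial, hence $\Q$-trivial by Theorem \ref{thm:torsionAmbro}, and of zero variation by Theorem \ref{ambro1}. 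Hence all fibres of $f$ over points of $T$ lying in a common fibre of $\phi_T$ are crepant birational; on an appropriate open, Proposition \ref{pro:isomfibres} upgrades this to an actual isomorphism of pairs. Concatenating along a chain of intersections $T_i\cap T_{i+1}$, I conclude that points equivalent under $\mathcal R_{\mathcal L}$ lift to points of $Y$ whose $f$-fibres are all crepant birational.

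Finally, I would invoke Theorem \ref{ambro1} to produce the maximal-variation model $f^+\colon(X^+,\Delta^+)\to Y^+$, where $M_{f^+}$ is big and $\dim Y^+=\mathrm{Var}(f)$. Transporting the infinite equivalence class through the diagram of Theorem \ref{ambro1} and Lemma \ref{lem:gfiniteAm}, the infinite family of crepant-birational fibres of $f$ descends to an infinite subset of $Y^+$ on which $f^+$ has isomorphic fibres. But $f^+$ has maximal variation, so Conjecture \ref{con:maxvar} asserts that such a set is finite, a contradiction.

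The hard part will be the bookkeeping in the middle paragraph: chaining the identifications across the various Stein factorisations of Proposition \ref{pro:mmp2fg} for each participating component $T_i$, and then through the finite base change $\vartheta\colon\widetilde Y\to Y$ and contraction $\chi\colon\widetilde Y\to Y^+$ of Theorem \ref{ambro1}, while preserving the Ambro-model hypothesis (via Lemma \ref{lem:gfiniteAm}) and the semiampleness that feeds Proposition \ref{pro:isomfibres} (via Lemma \ref{lem:excvert}). The density of infinite classes given by Proposition \ref{pro:profinite} is what allows me to avoid the exceptional loci of all these maps and work with a general point.
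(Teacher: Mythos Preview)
Your overall strategy---contradiction via Proposition \ref{pro:profinite}, production of crepant-birational fibres via Proposition \ref{pro:isomfibres}, and an appeal to Conjecture \ref{con:maxvar}---is the paper's strategy. But your final paragraph has a genuine gap, and closing it requires an idea that is missing from your sketch.

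The problem is the passage to the global maximal-variation model $Y^+$ for $f$. To contradict Conjecture \ref{con:maxvar} there, you need infinitely many points of $Y^+$ with pairwise crepant-birational $f^+$-fibres. You propose to lift each $x_i\in V_{T_i}$ in the infinite class to some $y_i\in\phi_{T_i}^{-1}(x_i)\subseteq Y$ and push to $Y^+$ via $\chi\circ\vartheta^{-1}$; but nothing prevents these images from collapsing to a finite set. Indeed, since $\vartheta^*M_f=\chi^*M_{f^+}$ and $M_f$ is numerically trivial along each $\phi_{T_i}^{-1}(x_i)$, and since the equivalence is generated precisely by overlaps of such loci, the pseudo-fibre $\mathcal T_{[x]}$ is exactly what one expects $\chi$ to contract. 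There is also a mismatch one step earlier: what Proposition \ref{pro:isomfibres} (applied to $h_T\colon S_T\to T'$) compares are fibres of $h_T$, not fibres of $f$, so Conjecture \ref{con:maxvar} for $f^+$ is not even the right target.

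The paper never goes to $Y^+$. After arranging that every irreducible component $P$ of $\phi^{-1}\mathcal Z\subseteq\mathcal T$ is an lc centre of $(Y,\Sigma_f)$, it cuts by general hyperplanes to produce auxiliary lc centres $\overline P\subseteq P\cap Q$ on which $\phi_T\vert_{\overline P}$ is \emph{generically finite} onto its image. This is the missing idea, and it does two things at once: the infinite class $[x]$ now pulls back to an infinite subset of $\overline P$, and the moduli part $M_g$ of the induced klt-trivial fibration $g\colon R\to\overline P'$ (with $R$ a minimal lc centre over $\overline P$) is big, since $M_g=\sigma^*\phi_T^*A$ with $A$ ample and $\phi_T\circ\sigma$ generically finite. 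Hence $g$ itself has maximal variation, and Conjecture \ref{con:maxvar} is applied to $g$, not to $f^+$. The chaining you describe is then carried out on the minimal lc centres $R\subseteq S$ over $\overline P\subseteq P$, using \cite[4.45]{Kol13} to identify different choices of $R$ over the same $\overline P$. What you call bookkeeping is real---Lemma \ref{lem:descends} plus a further semistable reduction are needed to force $M_h=\tau^*(M_f\vert_P)$ and $M_g=\sigma^*(M_f\vert_{\overline P})$ on Ambro models, so that Proposition \ref{pro:mmp2fg}\ref{stepii},\ref{stepiv} and Proposition \ref{pro:isomfibres} apply---but the introduction of $\overline P$ is a structural step, not bookkeeping.
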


The following lemma is a higher-codimensional version of \cite[Proposition 4.4]{FL19} (see also \cite{Hu20}).

\begin{lem}\label{lem:descends}
Let $(f,\mathcal T,\Sigma_f)$ be a triple satisfying Assumption \ref{ass1}(1,2,3).
Let $\mathcal P,\overline{\mathcal P}$ be two sets of log-canonical centres of $\Sigma_f$
such that 
\begin{enumerate}
\item[(i)] if $P,Q\in \mathcal P$ (resp. $\overline P,\overline Q\in \overline{\mathcal P}$) then $P\subseteq Q$ implies $P=Q$ (resp. $\overline P\subseteq \overline Q$ implies $\overline P=\overline Q$).
\item[(ii)] for every $\overline P\in \overline{\mathcal P}$ there is $ P\in \mathcal P$ such that $\overline P\subseteq P$
\item[(iii)] whenever $\overline P\subseteq P\subseteq T$ we have $\phi_T(\overline P)=\phi_T(P)$.
\end{enumerate}
Let $\mathcal P\to Nklt(X,\Delta+\sum_{\Gamma\subseteq \Sigma_f}\gamma_{\Gamma}f^*\Gamma )$ be a function such that $P\mapsto S_X(P)$ and $S_X(P)$ is minimal over $P$.
For every pair $(P,\overline P)$ such that $\overline P\subseteq P$ let $R_X(P,\overline P)$ be a log-canonical centre of $(X,\Delta+\sum_{\Gamma\subseteq \Sigma_f}\gamma_{\Gamma}f^*\Gamma )$
minimal over $\overline P$ and such that $R_X(P,\overline P)\subseteq S_X(P)$.
Then there is a diagram
$$
\xymatrix{
X_0\ar[r]^{\eta}\ar[d]_{f'}& X\ar[d]^{f}\\
Y_0\ar[r]_{\varepsilon}& Y
}
$$
where $\varepsilon$ is a birational morphism with the following properties.
For every $P\in\mathcal P$ (resp. $\overline P\in\overline{\mathcal P}$)
 let $P_0$ (resp. $\overline P_0$) be the strict transform of $P$ and $S_0$ the strict transform of $S_X(P)$ (resp. $R_0$ of $R_X(P,\overline P)$).
Let $f_0\vert_{S_0}\colon S_0\overset{ h}{\longrightarrow} P'_0\overset{\tau}{\longrightarrow}P_0$ (resp. $f_0\vert_{R_0}\colon R_0\overset{ g}{\longrightarrow} \overline P'_0\overset{\sigma}{\longrightarrow}\overline P_0$) be the Stein factorisation.
Then the following hold:
\begin{enumerate}
\item $\varepsilon$ is an isomorphism at the generic point of every subvariety $P\in\mathcal P$
$\overline P\in\overline{\mathcal P}$;
\item $\varepsilon$ is an isomorphism at the generic point of $T\cap T'$ for every $T,T'\subseteq\mathcal T$;
\item $\eta$ is a desingularisation of the fibre product which is an isomorphism over $Y'\setminus\Exc(\varepsilon)$;
\item for every $P\in\mathcal P$ we have $M_{h}=\tau^*M_{f_0}\vert_{P_0}$ and $P_0'$ is an Ambro model;
\item for every $\overline P\in\overline{\mathcal P}$ we have $M_{g}=\sigma^*M_{f_0}\vert_{\overline P_0}$ and $\overline P_0'$ is an Ambro model;
\item $\varepsilon^{-1}\Sigma_f$ has simple normal crossings. 
\end{enumerate}

\end{lem}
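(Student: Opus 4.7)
The plan is to construct $\varepsilon\colon Y_0\to Y$ as a single birational modification that simultaneously realises the Ambro descent for every restricted fibration coming from the finitely many centres in $\mathcal P\cup\overline{\mathcal P}$, and then take $X_0$ as a compatible desingularisation of the fibre product. This is a higher-codimensional elaboration of \cite[Proposition 4.4]{FL19}, using the base change property (Theorem \ref{nefness}) and the restriction formula from Proposition \ref{pro:FL}.

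\textbf{Construction of $\varepsilon$.} For each $P\in\mathcal P$, Proposition \ref{pro:FL} applied to the minimal lc centre $S_X(P)$ produces a klt-trivial fibration $h\colon S\to P'$ with $\tau^*(M_f\vert_P)\sim_\Q M_h+R'+E$. By Theorem \ref{nefness} applied to $h$, there is a proper birational morphism onto $P'$ on which $M_h$ descends; pushing forward along $\tau$, this is a birational modification over $P$. The same holds for every $\overline P\in\overline{\mathcal P}$. Since $\mathcal P\cup\overline{\mathcal P}$ is finite, I can take a single projective birational modification $\varepsilon\colon Y_0\to Y$ that dominates all these partial resolutions upon passing to strict transforms. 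By Hironaka's theorem, I arrange in addition that $\varepsilon$ is an isomorphism at the generic point of every $P\in\mathcal P$, every $\overline P\in\overline{\mathcal P}$, and every $T\cap T'$ with $T,T'\subseteq\mathcal T$, yielding (1) and (2); and that $\varepsilon^{-1}\Sigma_f$ is simple normal crossing, yielding (6). Then $X_0$ is obtained as a resolution of the main component of $X\times_Y Y_0$, chosen (again Hironaka) to be an isomorphism over the locus where $\varepsilon$ is; this gives (3).

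\textbf{Descent on the strict transforms.} For (4), let $P\in\mathcal P$ with strict transform $P_0$. Applying Proposition \ref{pro:FL} to the base-changed fibration $f_0\colon X_0\to Y_0$ and to the strict transform $S_0$ of $S_X(P)$ shows that $h\colon S_0\to P_0'$ is klt-trivial with moduli $M_h$. By construction of $\varepsilon$, this moduli part descends on $P_0'$. Moreover, the error term $R'+E$ in Proposition \ref{pro:FL}(ii) is supported in the ramification of $\tau$ and in the discriminant outside $\tau^{-1}\Xi_Z$, both of which are empty at the generic point of $P$ since $\varepsilon$ is an isomorphism there; hence $M_h=\tau^*M_{f_0}\vert_{P_0}$ and $P_0'$ is an Ambro model. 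The identical argument, with $S_X(P)$ replaced by $R_X(P,\overline P)\subseteq S_X(P)$, gives (5). Lemma \ref{lem:gfiniteAm} handles the finite part $\tau$ of the Stein factorisation.

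\textbf{Main obstacle.} The delicate issue is nested compatibility: when $\overline P\subseteq P$, the centre $R_X(P,\overline P)$ lies inside $S_X(P)$, so the Stein factorisations of $g$ and $h$ fit in a commutative diagram, and a single $\varepsilon$ must induce Ambro models for both $h$ and $g$ simultaneously. Hypothesis (iii) is what makes this possible: the equality $\phi_T(\overline P)=\phi_T(P)$ for $T\supseteq P$ forces the morphisms induced by $M_{f}$ on $P$ and on $\overline P$ to be compatible, so the birational modifications needed to make $M_h$ and $M_g$ descend are coherent — neither requires blowing up centres that would interfere with the Ambro model of the other. Without (iii), one would in general need incompatible blow-ups, and a single $Y_0$ would not suffice.
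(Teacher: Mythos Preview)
Your proposal has a genuine gap in the ``Descent on the strict transforms'' step. The equality $M_h=\tau^*M_{f_0}|_{P_0}$ is \emph{not} a descent statement for $M_h$; it is the vanishing of the error divisor $R'+E$ from Proposition~\ref{pro:FL}(ii). Your sentence ``both of which are empty at the generic point of $P$ since $\varepsilon$ is an isomorphism there'' does not establish this: $R'+E$ is a divisor on $P_0'$, and being an isomorphism at the generic point of $P$ says nothing about its support. Likewise, invoking Theorem~\ref{nefness} to get an Ambro model for $h$ is beside the point --- the issue is not whether $M_h$ descends on some higher model of $P'$, but whether the discrepancy $\tau^*(M_f|_P)-M_h$ is zero.

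The paper's argument is different and essentially geometric. Starting from $M_{h_X}=\tau_X^*(M_f|_P)-E$ with $E\geq 0$, one intersects with a general curve $C$ in a fibre of $\phi_T$ lying in $P$: since $M_f\cdot C=0$ and $M_{h_X}$ is pseudoeffective, the support of $E$ is vertical for $\phi_T\circ\tau_X$. One then \emph{blows up} $\overline E:=\tau_X(\Supp E)$ (minus any components already of the form $T\cap T'$), followed by a log resolution; this is the content imported from \cite[Proposition~4.4]{FL19}. After the blow-up the old error locus is absorbed into the new $\Sigma_f$, so re-applying Proposition~\ref{pro:FL} on $Y_0$ yields $R'+E=0$. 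The paper then proceeds by induction on the number of $P\in\mathcal P$ for which the equality fails, checking that a $Q$ already satisfying the equality continues to do so after the blow-up.

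This is also where hypothesis~(iii) is actually used, and your description of its role is off. The blow-up centre $\overline E$ is $\phi_T$-vertical; since $\phi_T(\overline P)=\phi_T(P)$, the subvariety $\overline P$ dominates $\phi_T(P)$ and therefore cannot be contained in a $\phi_T$-vertical locus. Hence $\varepsilon$ is automatically an isomorphism at the generic point of every $\overline P$ --- not because of a compatibility of Ambro models, but because the centres one is forced to blow up lie in fibres of $\phi_T$. Your appeal to Hironaka to ``arrange'' conditions (1)--(2) sidesteps the real question of why the necessary modifications avoid these generic points.
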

\begin{proof}
We say that $P\in \mathcal P$ satisfies $(\star)$ if, denoting by $f\vert_{S_X}\colon S_X\overset{h_X}{\longrightarrow} P'\overset{\tau_X}{\longrightarrow} P$ the Stein factorisation, 
we have $M_{h_X}=\tau_X^*M_{f}\vert_{P}$ and $P'$ is an Ambro model.
We prove by induction on the cardinality of $$\mathcal P'=\{P\in \mathcal P\vert\; P\text{ does not satisfy }(\star)\}$$
that there is $\varepsilon$ satisfying (1-4) and (6).
If the cardinality of $\mathcal P'$ is zero, there is nothing to prove.

\medskip

Otherwise, we pick $P\in \mathcal P$.
By \cite[Proposition 3.9.2]{Fuj07c} there are $D_1,\ldots, D_{\ell}\subseteq \Supp \Delta^{=1}$ such that 
$S_X=D_1\cap\ldots\cap D_{\ell}$. 
We set $\Delta_{S_X}=(\Delta_{X_2}-\sum D_i)\vert_{S_2}$. 

Let $f\vert_{S_X}\colon S_X\overset{h_X}{\longrightarrow} P'\overset{\tau_X}{\longrightarrow} P$
be the Stein factorisation.
By Proposition \ref{pro:FL} the morphism $h_X$ is a klt-trivial fibration
and there is an effective divisor $E$ such that
$M_{h_X}=\tau_X^*M_{f}\vert_P-E$.
Let $P\subseteq T$,
let $C$ be a general curve in $P$ contained in a fibre of $\phi_T$ and let $\widetilde C$ be a curve in $P'$ such that $\tau_X(\widetilde C)=C$.
Then $$0\leq M_{h_X}\cdot \widetilde C =M_{f}\cdot C-E\cdot \widetilde C\leq 0.$$
Therefore $E$ is a vertical divisor with respect to $\phi_T\circ\tau_X$.
We call $\overline E$ the union of the components of $\tau_X(\Supp E)$ which are not of components of $T\cap T'$ for some $T,T'\subseteq\mathcal T$.

 We let $\varepsilon\colon Y_0\to Y$ be 
the composition of the blow up $\mu\colon Y_1\to Y$ of  $\overline E$
with a log resolution of $(Y_0,\mu^{-1}\Sigma_f)$ centered in the singular locus. 
 Let $X_0$ 
 be a normalisation of the main component of the base change followed by a desingularisation centered in the singular locus, with the natural map $f_0\colon X_0\to Y_0$. 
 Since $\tau_X(\Supp E)$ is vertical with respect to $\phi_T$, the divisor $\overline E$ satisfies the same property.
 Therefore, if $\overline P\subseteq P$, the morphism
$\varepsilon$ is an isomorphism on the generic point of $\overline P$ as this subvariety is such that $\phi_T(\overline P)=\phi_T(P)$.
If  $\overline P\not\subseteq P$ or $Q\in\mathcal P$ and $Q\neq P$, the morphism
$\varepsilon$ is obviously an isomorphism on the generic point of $\overline P$ or $Q$.
Moreover, it is an isomorphism at the generic point of the intersections $T\cap T'$.

Following the proof of \cite[Proposition 4.4]{FL19}, replacing $T$ with $P$ and \cite[Proposition 4.2(ii)]{FL19}
with Proposition \ref{pro:FL}(ii), we have that, if $S_0$ is the strict transform of $S_X$ in $X_0$,
$P_0$ is the strict transform of $P$ in $Y_0$ and $S_0\to P'_0\to P_0$ is the Stein factorisation, then
$\tau_0^*M_{f_0}=M_{h_0}$ and $P'_0$ is an Ambro model.

Let $Q\in\mathcal P$ satisfying property $(\star)$.
There is a diagram
$$
\xymatrix{
S(Q)_0\ar[d]_{h_0}\ar[r]^{\eta}&S_X(Q)\ar[d]^{h_X}\\
Q'_0\ar[r]^{\zeta}\ar[d]_{\tau_0}&Q'\ar[d]^{\tau_X}\\
Q_0\ar[r]_{\varepsilon}&Q
}
$$
By applying $\zeta^*$ to $\tau^*M_f\vert_Q=M_{h_X}$ we get
$$\tau_0^*M_{f_0}\vert_{Q_0}=\tau_0^*\varepsilon^*M_f\vert_Q=\zeta^*\tau_X^*M_f\vert_Q=\zeta^*M_h=M_{h_0}.$$
Since $Q'$ is an Ambro model and $\zeta$ is birational, $Q'_0$ is one too.

Let $\mathcal P_0$ be the set of strict transforms of elements of $\mathcal P$. 
Then the cardinality of the set $\{P\in\mathcal P_0\vert\; P\text{ does not satisfy }(\star) \}$ is at most $|\mathcal P'|-1$ and 
we conclude by induction.

As for (5), the proof is completely analogous. 

\end{proof}

\begin{proof}[Proof of Theorem \ref{thm:step1}]
Assume that $\mathcal R_{\mathcal L}$ is not a finite equivalence relation.
By Proposition \ref{pro:profinite} there is $\mathcal Z\subseteq \sqcup V$ and a subrelation $\mathcal R'\subseteq\mathcal R_{\mathcal L}$
such that $\mathcal Z$ is  $\mathcal R'$-invariant  $\mathcal R'\vert_{\mathcal Z}$ is equidimensional and the set of infinite equivalence classes is dense in $\mathcal Z$.

\smallskip

Let $P\subseteq \phi^{-1}\mathcal Z$ be an irreducible component surjecting onto an irreducible component of $\mathcal Z$.
Then $\mathcal L\vert_P$ is not big. Indeed, if it were big, then $\phi\vert_P$ would be a birational morphism and generically on $\phi(P)$ the induced equivalence relation would be the gluing $\sqcup T\to\mathcal T$, thus finite.

\medskip

\emph{Step 1.} We can assume that every irreducible component of $\phi^{-1}\mathcal Z$ is a log canonical centre of $(Y,\Sigma_f)$.

Indeed, let $\phi^{-1}\mathcal Z=W_1\cup\ldots\cup W_k$ be the decomposition into irreducible components.
We can assume that there is $h$ such that $W_i$ is a centre of  $(Y,\Sigma_f)$ for $i>h$. 
Let $\delta\colon Y_1\to Y$ be such that $\delta^{-1}(W_1\cup\ldots\cup W_h\cup\Sigma_f)$ has simple normal crossings.
The morphism $\delta$ is an isomorphism over the generic point of $T$ and $T\cap T'$ for every $T,T'\subseteq\mathcal T$.
Let $\eta'\colon X'\to X$ be the natural morphism followed by a desingularisation of the main component of $X\times_Y Y_1$ and set $K_{X'}+\Delta'=\eta^{\prime*}(K_X+\Delta)$.
Let $\eta_1\colon X_1\to X'$ be a log resolution of $(X',\Delta')$. We can assume that the birational morphism $X_1\to X$ is an isomorphism on $Y\setminus \delta Exc(\delta)$.
Let $f_1\colon X_1\to Y_1$ be the natural morphism and we define $\Delta_1$ by $K_{X_1}+\Delta_1=\eta_1^*(K_{X'}+\Delta')$.

We apply Theorem \ref{thm:alt} to $X_1,Y_1$, with $Z=\Supp \Delta_{X_1}\cup f_1^{-1}\delta^{-1}\Sigma_f$.
We get $a,b\colon(\widetilde X, \widetilde Y)\to (X_1,Y_1)$
\'etale outside $Exc(\delta)$.
Let $\Sigma_{\tilde f}=b^{-1}\delta^{-1}\Sigma_f$. Then $\tilde f^{-1}\Sigma_{\tilde f}\cup  a^{-1}\Supp\Delta_1$ 
has simple normal crossings support.
Define $\widetilde \Delta$ by $K_{\widetilde X}+\widetilde \Delta=a^*(K_{X_1}+\Delta_1)$
and $\eta=\varepsilon_X\circ a$. Thus $(\widetilde X,\widetilde \Delta)$ is log smooth, $\tilde f\colon(\widetilde X,\widetilde \Delta)\to\widetilde Y$ is acceptable
and $\tilde f^{-1}\Sigma_{\tilde f}$ has simple normal crossings. Thus $\Sigma_{\tilde f}$ has simple normal crossings.

We let $\widetilde{\mathcal T}$ be the strict transform of $\mathcal T$.
By \cite[Proposition 8.4.9, Definition 8.3.6, Theorem 8.5.1]{Kol07} the variety $\widetilde Y$ is an Ambro model.
Then $(\tilde f\colon (\widetilde X,\widetilde \Delta)\to\widetilde Y,\widetilde{\mathcal T},\Sigma_{\tilde f})$ satisfies Assumption \ref{ass1}.
We set $\theta=b\circ\delta$. Then $\theta$ is a generically finite morphism satisfying the hypothesis of Corollary \ref{cor:frel}.
If $\sigma$ is as in Corollary \ref{cor:frel}, then $\sigma^*\mathcal R'\subseteq\mathcal R_{\theta^*\mathcal L}$ and $\sigma^{-1}\mathcal Z$ is $\sigma^*\mathcal R'$-invariant.
We have $\bar\phi^{-1}\sigma^{-1}\mathcal Z=\theta^{-1}\phi^{-1}\mathcal Z=b^{-1}\delta^{-1}\phi^{-1}\mathcal Z$.

\noindent By our construction $\delta^{-1}\phi^{-1}\mathcal Z$ is a union of log canonical centres of the log smooth pair $(Y_1,\delta^{-1}\Sigma_f)$.
Since $\Sigma_{\tilde f}=b^{-1}\delta^{-1}\Sigma_f$ has simple normal crossings, the set $b^{-1}\delta^{-1}\phi^{-1}\mathcal Z$ is a union of log canonical centres of $(\widetilde Y, \Sigma_{\tilde f})$.

\medskip

\emph{Step 2.}
Let $P,Q$ be irreducible components of $\phi^{-1}\mathcal Z$ such that either there exists $T$ with
$P,Q\subseteq T$ and $\phi_T(P)=\phi_T(Q)$
or $P\subseteq T$, $Q\subseteq T'$ and $\phi_T(P)=\phi_T(Q\cap T)$.
Let $H_{\alpha}$ be ample divisors such that $\Sigma_f+\sum H_{\alpha}$ has simple normal crossings
and the restriction of $\phi_T$ to $P\cap Q\cap \bigcap H_{\alpha}$ is generically finite and surjective. We set $\overline P=P\cap Q\cap \bigcap H_{\alpha}$.
By replacing $\Delta$ with $\Delta+\sum_{\alpha} f^*H_{\alpha}$
and $\Sigma_f$ with $\Sigma_f+\sum_{\alpha} H_{\alpha}$ we can assume that $\overline P$ is a log canonical centre of $(Y, \Sigma_f)$.

We set 
$$
\begin{array}{ccl}
\mathcal P&=&\{P\subseteq \phi^{-1}\mathcal Z \;irreducible\;component\}\\
\overline{\mathcal P}&=&\{\overline{P}\subseteq \phi^{-1}\mathcal Z\; log\; canonical\; centre\; of\; (Y, \Sigma_f)\; such\; that\; \phi_T\vert_{\overline{P}}\; is\; finite\}.
\end{array}
$$

Let $\mathcal P\to Nklt(X,\Delta+\sum_{\Gamma\subseteq \Sigma_f}\gamma_{\Gamma}f^*\Gamma )$ be a function such that $P\mapsto S_X(P)$ and $S_X(P)$ is minimal over $P$.
For every pair $(P,\overline P)$ such that $\overline P\subseteq P$ let $R_X(P,\overline P)$ be a log-canonical centre of $(X,\Delta+\sum_{\Gamma\subseteq \Sigma_f}\gamma_{\Gamma}f^*\Gamma )$
minimal over $\overline P$ and such that $R_X(P,\overline P)\subseteq S_X(P)$.

Then $\mathcal P$ and $\overline{\mathcal P}$ satisfy the hypotheses of 
Lemma \ref{lem:descends}, and there is a diagram
$$
\xymatrix{
X_0\ar[r]^{\eta}\ar[d]_{f_0}& X\ar[d]^{f}\\
Y_0\ar[r]_{\varepsilon}& Y
}
$$
with $\varepsilon$ birational and such that the exceptional locus does not contain any of the $P\in\mathcal P$, $\overline P\in\overline{\mathcal P}$ or $T\cap T'$ and
for every $P\in\mathcal P$ we have $M_{h}=\tau^*M_{f_0}\vert_{P_0}$;
 for every $\overline P\in\overline{\mathcal P}$ we have $M_{g}=\sigma^*M_{f_0}\vert_{\overline P_0}$ (notation as in Lemma \ref{lem:descends}).
We define $\Delta_0$ by $K_{X_0}+\Delta_0=\eta^*(K_X+\Delta)$.

We apply Theorem \ref{thm:alt} to $X_0,Y_0$, with $Z=\Supp \Delta_0\cup f_0^{-1}\varepsilon^{-1}\Sigma_f$.
We get $a,b\colon(\widetilde X, \widetilde Y)\to (X_0,Y_0)$
\'etale outside $Exc(\varepsilon)$.
Let $\Sigma_{\tilde f}=b^{-1}\varepsilon^{-1}\Sigma_f$. Then $\tilde f^{-1}\Sigma_{\tilde f}\cup  a^{-1}\Supp\Delta_0$ 
has simple normal crossings support.
Define $\widetilde \Delta$ by $K_{\widetilde X}+\widetilde \Delta=a^*(K_{X_0}+\Delta_0)$.
Thus $(\widetilde X,\widetilde \Delta)$ is log smooth, $\tilde f\colon(\widetilde X,\widetilde \Delta)\to\widetilde Y$ is acceptable
and $\tilde f^{-1}\Sigma_{\tilde f}$ has simple normal crossings. This implies that $\Sigma_{\tilde f}$ has simple normal crossings. 
By \cite[Proposition 8.4.9, Definition 8.3.6, Theorem 8.5.1]{Kol07} the variety $\widetilde Y$ is an Ambro model.

We let $\widetilde{\mathcal T}$ be the strict transform of $\mathcal T$.
Then $(\tilde f\colon (\widetilde X,\widetilde \Delta)\to\widetilde Y,\widetilde{\mathcal T},\Sigma_{\tilde f})$ satisfies Assumption \ref{ass1}.
We set $\theta=b\circ\varepsilon$. Then $\theta$ is a generically finite morphism satisfying the hypothesis of Corollary \ref{cor:frel}.
If $\sigma$ is as in Corollary \ref{cor:frel}, then $\sigma^*\mathcal R'\subseteq\mathcal R_{\theta^*\mathcal L}$ and $\sigma^{-1}\mathcal Z$ is $\sigma^*\mathcal R'$-invariant.
We have $\bar\phi^{-1}\sigma^{-1}\mathcal Z=\theta^{-1}\phi^{-1}\mathcal Z=b^{-1}\delta^{-1}\phi^{-1}\mathcal Z$.

As $\varepsilon$ is an isomorphism on the general point of every component of $\phi^{-1}\mathcal Z$, the preimage $\varepsilon^{-1}\phi^{-1}\mathcal Z$ is a union of log canonical centres of $(Y_0,\varepsilon^{-1}\Sigma_f)$.
Moreover  $\varepsilon^{-1}\Sigma_f$ has simple normal crossings by Lemma \ref{lem:descends}
Since $\Sigma_{\tilde f}=b^{-1}\varepsilon^{-1}\Sigma_f$ has simple normal crossings, the set $b^{-1}\delta^{-1}\phi^{-1}\mathcal Z$ is a union of log canonical centres of $(\widetilde Y, \Sigma_{\tilde f})$.

We prove now that for every $P\in\mathcal P$, if $P_1$ is the strict transform of $P$ in $\widetilde Y$ and $S_1$  is the strict transform of $S_X(P)$ in $X_1$ and 
 $\tilde f\vert_{S_1}\colon S_1\overset{ h_1}{\longrightarrow} P'_1\overset{\tau_1}{\longrightarrow}P_1$ is the Stein factorisation,
then 
 $M_{h_1}=\tau_1^*M_{f_1}\vert_{P_1}$ and $P_1$ is an Ambro model.

(The same proof will imply that 
for every  $\overline P\in\overline{\mathcal P}$ if  $\overline P_1$ is the strict transform in $\widetilde Y$ and $R_1$ is the strict transform of  $R_X(P,\overline P)$ in $X_1$ and 
  $f_0\vert_{R_0}\colon R_0\overset{ g}{\longrightarrow} \overline P'_0\overset{\sigma}{\longrightarrow}\overline P_0$ is the Stein factorisation, then  $M_{g}=\sigma^*M_{f_0}\vert_{\overline P_0}$.)
  
We have a diagram
$$
\xymatrix{
S_1\ar[r]^{a}\ar[d]_{h_1}& S_0\ar[d]^{h_0}\\
P'_1\ar[d]_{\tau_1}& P'_0\ar[d]^{\tau_0}\\
P_1\ar[r]_{\varepsilon}& P_0
}
$$
  Every curve contracted by $h_1$ is contracted by $h_0\circ a$. Therefore by the Rigidity lemma there
  is a generically finite morphism $P'_1\to P'_0$.
  By Lemma \ref{lem:gfiniteAm}, $P'_1$ is an Ambro model.
  Then
  $$\tau_1^*M_{\tilde f}\vert_{P_1}=\tau_1^*b^*M_{f_0}\vert_{P_0}=\nu^*\sigma^*\tau_0^*M_{f_0}\vert_{P_0}=\nu^*\sigma^*M_{h_0}=M_{h_1}.$$

By replacing $(X,\Delta)$ with $(\widetilde X, \widetilde\Delta+\sum_{\Gamma\subseteq \Sigma}\gamma_{\Gamma}\tilde f^*\Gamma)$, $\Sigma_f$ with $\Sigma_{\tilde f}$
we can make the following

\begin{ass}\label{assumptions}
\begin{enumerate}
\item Every irreducible component of $\phi^{-1}\mathcal Z$ is a log canonical centre of $(Y, \Sigma_f)$
\item for every $P\in \mathcal P$  we have $M_{h_X}=\tau_X^*(M_f\vert_{P})$ ,
\item for every $\overline P\in \overline{\mathcal P}$  we have $M_{g_X}=\sigma_X^*(M_f\vert_{\overline P})$
\end{enumerate}
\end{ass}

\medskip

\emph{Step 3.}
We run now an MMP with scaling as in \cite[Proposition 4.2]{FL19}.
By Proposition \ref{pro:mmp2fg}, there is $\rho\colon (X,\Delta_X)\dasharrow (W,\Delta_W)$
such that $
\psi^*\Sigma_{f}= \Delta_{W,v}.$

By \cite[Lemma 2.8]{FL19} for every $P,\overline P$ there are log canonical centres $S$ and $R$ of $(W,\Delta_W)$ with birational morphisms induced by $\rho$
$$\rho\vert_{S_X(P)}\colon S_X(P)\dasharrow  S \; \;\; \;\; \;\rho\vert_{R_X(P,\overline P)}\colon R_X(P,\overline P)\dasharrow  R.$$

Let $P$ be a component of $\phi^{-1}\mathcal Z$, $\overline P\subseteq P\subseteq T$ as above and let $S$ be the strict transform of $S_X(P)$, $R$ of $R_X(P,\overline P)$
and $\Delta_S$, $\Delta_R$ defined by adjunction.

Let $\phi|_S\colon S\overset{h}{\longrightarrow} P'\overset{\tau}{\longrightarrow} P$
and $\phi|_R\colon R\overset{g}{\longrightarrow} \overline P'\overset{\sigma}{\longrightarrow} \overline P$
be the Stein factorisations.
Then $M_{h}=\tau^*(M_f\vert_{P})=\tau^*(M_{\phi}\vert_{P})$ and
 $M_{g}=\sigma^*(M_f\vert_{\overline P})=\sigma^*(M_{\phi}\vert_{\overline P})$.

By Proposition \ref{pro:mmp2fg}\eqref{stepii} we have $\Delta_S-h^*B_h\geq 0$.

Then we can apply Proposition \ref{pro:isomfibres}
and there are non empty sets
$Z_0,P'_0,P'_r$, where $P'_r$ be the set of points $x$ such that $h^{-1}x$ is reduced, 
$Z_0$ and $P'_0$ are open, the complement of $P'_0$ in $P'$ has codimension at least 2
and $I(P')\supseteq P'_0\cap \phi_T^{-1}Z_0\cap P'_r$
 with the following property: for every
 $x_1,x_2\in I(P')$ such that $\phi_T(x_1)=\phi_T(x_2)$, if $(F_i,\Delta_i)$ is the fibre over $x_i$ with $\Delta_i=\Delta^h\vert_{F_i}$,
then $(F_1,\Delta_1)\cong (F_2,\Delta_2)$.

We claim that $\tau^{-1}\overline P$ meets the set $I(P')$
and that $R$ is a connected component of $h^{-1}\tau^{-1}\overline P$.
We prove the claim in Step 4.
Assuming the claim, we finish the proof.

We denote by $\Lambda_{\overline P}\subsetneq\overline P$ the locus where $g\colon R\to\overline P$ has non-reduced fibres.
Every fibre over $\overline P\setminus\Lambda_{\overline P}$ is isomorphic, with the boundary, to a fibre over $P\setminus \overline P$.

Let $\overline P\subseteq P_1\cap P_2$, let $R_i$ be the strict transform of $R_X(P_i,\overline P)$.
By \cite[4.45(1) and 4.45.8]{Kol13} there is a crepant birational map $(R_1,\Delta_{R_1})\dasharrow ( R_2,\Delta_{R_2})$ over $\overline P$.
Let $g_i\colon (R_i,\Delta_{R_i})\to \overline P'$ for $i=1,2$ be the induced klt-trivial fibrations.
For $x\in\overline P'$ general the fibre of $g_1$ over $x$ is crepant birational to the fibre of $g_2$ over $x$.

Consider the set $$\Lambda=\bigcup_{\overline P\subseteq P\subseteq T}\mathcal R'\phi_T(\Lambda_{\overline P}).$$
The set $\Lambda$ is a countable union of proper closed subsets of $\mathcal Z$.
Since it is closed under $\mathcal R'$, the infinite equivalence classes $[x]$ of $\mathcal R'$
such that $[x]\subseteq \mathcal Z\setminus\Lambda$ form a dense subset of $ \mathcal Z\setminus\Lambda$.

Fix $\overline P$ and let $R$ be a minimal log canonical centre of $(W,\Delta_W)$ over $\overline P$, with $g\colon(R,\Delta_R)\to \overline P'$ the klt-trivial fibration.
By the discussion above, if $[x]\subseteq \mathcal Z\setminus\Lambda$ then for every $x_1,x_2\in\phi^{-1}[x]\cap\overline P$ the fibres over $x_1$ and $x_2$ are crepant birational to each other, with their boundaries.

Since the classes $[x]\subseteq \mathcal Z\setminus\Lambda$ form a dense subset of $ \mathcal Z\setminus\Lambda$, the union of the intersections $\phi^{-1}[x]\cap\overline P$ is a dense subset of $\overline P$.
By construction, if $[x]\subseteq \mathcal Z$ then $\phi^{-1}[x]\cap \overline P$ is an infinite set.


On the other hand, we have by construction $M_g=\sigma^*M_f\vert_{\overline P}=\sigma^*\phi_T^*A$ 
where $A$ is an ample divisor on $V$. As $\phi_T\vert_{\overline P}$ is generically finite,
$M_g$ is big.


By  Proposition \ref{pro:fujinovar} the variation of $g$ is maximal.

If $\dim R-\dim \overline P=\dim W-\dim Y$, then the crepant birational fibres are in fact isomorphic and by Proposition \ref{pro:finvar} there is a finite number of fibres isomorphic to a fixed general one.

If $\dim R-\dim \overline P=\dim W-\dim Y$, then by Conjecture \ref{con:maxvar} there is a finite number of fibres crepant birational to a fixed general one.

\medskip

\emph{Step 4.} We prove that  $\tau^{-1}\overline P$ meets the set $I(P')$
and that $R$ is a connected component of $h^{-1}\tau^{-1}\overline P$.
Let $P=T_1\cap\ldots\cap T_k$ with $T_i\subseteq\Sigma_f$ and $\Xi_P=(\Sigma_f-T_1-\ldots-T_k)\vert_P$.

First, we prove the following statement:
\begin{cla}\label{cla:ind}
let $Q$ be a component of $\Xi_P$ such that $\phi_T(Q)=\phi_T(P)$. Then every irreducible component of $\tau^{-1}Q$ meets $P'_0\cap \tau^{-1}\phi^{-1}Z_0\cap P'_r$ and every connected component of $h^{-1}\tau^{-1}Q$ is irreducible
and a minimal log canonical centre over $Q$.
\end{cla}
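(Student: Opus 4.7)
My plan is to prove the two assertions of the claim in turn.

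For the first assertion, I will observe that $Q$, being a component of $\Xi_P = (\Sigma_f - \sum T_i)|_P$, has codimension one in $P$, so every irreducible component of $\tau^{-1}Q$ is a divisor in $P'$. Three observations will then suffice. First, the complement of $P'_0$ has codimension at least two in $P'$, so $P'_0$ must meet every component of $\tau^{-1}Q$ in a non-empty open subset. Second, the hypothesis $\phi_T(Q) = \phi_T(P)$ ensures that $\phi^{-1}Z_0 \cap Q$ is a non-empty open subset of $Q$ (since $Z_0$ is open in $\phi_T(P)$), hence $\tau^{-1}\phi^{-1}Z_0 \cap \tau^{-1}Q$ is open and dense in $\tau^{-1}Q$. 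Third, by Proposition \ref{pro:mmp2fg}(ii) we have $B_h = (\tau^*\Xi_P)_{\mathrm{red}}$, so every component of $\tau^{-1}Q$ is a component of $B_h$; by Proposition \ref{pro:mmp2fg}(iv), the set $P'_r$ meets every component of $B_h$ in a non-empty open subset. Intersecting three open dense subsets of each irreducible component of $\tau^{-1}Q$ will yield a non-empty open subset, proving the first part.

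For the second assertion I will use the dlt structure of $(W, \Delta_W)$. Write $P = T_1 \cap \ldots \cap T_k$ and $Q = P \cap T_{k+1}$ for some component $T_{k+1}$ of $\Sigma_f$. Since $S$ is a minimal lc centre of $(W, \Delta_W)$ over $P$, by \cite[Lemma 2.8]{FL19} and \cite[Proposition 3.9.2]{Fuj07c} it is an irreducible component of $\bigcap_{i=1}^k D_i^W$ for certain components $D_i^W$ of $\Delta_W^{=1}$ with $\psi(D_i^W) = T_i$. Minimal lc centres over $Q$ then arise as irreducible components of $S \cap D_{k+1}^W$, where $D_{k+1}^W$ is a component of $\psi^*T_{k+1} \subseteq \Delta_{W,v}$ by Proposition \ref{pro:mmp2fg}(v). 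Restricting to $S$ and using $\Delta_{S,v} = h^*B_h$ from Proposition \ref{pro:mmp2fg}(ii), I expect these to be precisely the irreducible components of $h^{-1}\tau^{-1}Q$ dominating a component of $\tau^{-1}Q$. By the dlt property each such component is normal, hence irreducible, and minimality over $Q$ is inherited from the minimality of $S$ over $P$ together with the fact that no proper lc centre contained in a component of $\Delta_S^{=1}$ can dominate the image of that component.

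The hard part will be the assertion that connected components of $h^{-1}\tau^{-1}Q$ are irreducible. Here I plan to combine the first part of the claim with Proposition \ref{pro:mmp2fg}(iv): over the open subset of each irreducible component $Q'_i$ of $\tau^{-1}Q$ where $h$ has reduced fibres, the Stein factorisation forces each fibre of $h$ to be reduced and connected, and by generic smoothness it is smooth, hence irreducible. Consequently each $h^{-1}(Q'_i)$ admits a unique irreducible component $D_i$ dominating $Q'_i$, and equidimensionality of $h$ shows $h^{-1}(Q'_i) = D_i$ set-theoretically. The delicate point is to rule out intersections between distinct $D_i$ and $D_j$: any such intersection would map into $Q'_i \cap Q'_j$, a subset of codimension at least two in $P'$, and would produce an lc centre of $(S,\Delta_S)$ contained in two components of $\Delta_S^{=1}$ dominating distinct components of $\tau^{-1}Q$, contradicting the description of minimal lc centres over $Q$ established above. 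Once this disjointness is in hand, the connected components of $h^{-1}\tau^{-1}Q$ coincide with its irreducible components, and each is a minimal lc centre over $Q$ as required.
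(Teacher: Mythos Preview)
Your argument for the first assertion is correct and matches the paper's approach; the paper recapitulates the content of Proposition~\ref{pro:mmp2fg}\ref{stepiv} in terms of the crepant map $\rho$ rather than citing it, but the substance is identical.

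For the second assertion your approach diverges from the paper's and carries a genuine gap. The paper's argument is much shorter and exploits the first assertion in a way you overlook: the whole point of showing that $Q'$ meets $P'_0 \cap \tau^{-1}\phi^{-1}Z_0 \cap P'_r$ is that $Q'$ then meets $I(P')$. By the defining property of $I(P')$ in Proposition~\ref{pro:isomfibres}, the fibre of $h$ over any point of $Q' \cap I(P')$ is isomorphic, together with its boundary, to a general fibre of $h\colon S \to P'$. Since $S$ is irreducible, the general fibre of $h$ is irreducible; since $S$ is a \emph{minimal} log canonical centre over $P$, the general fibre is klt. Hence the general fibre of $h|_{h^{-1}Q'}$ over $Q'$ is irreducible and klt, so $K = h^{-1}Q'$ is irreducible and $(K,\Delta_K)$ is generically klt over $Q$, i.e.\ $K$ is a minimal log canonical centre over $Q$. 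Both conclusions fall out at once.

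Your appeal to ``generic smoothness'' does not deliver what you need. Generic smoothness of $h\colon S \to P'$ yields a dense open $U \subseteq P'$ over which fibres are smooth, but there is no reason for $U$ to meet the divisor $Q'_i$; and applying generic smoothness to $h^{-1}Q'_i \to Q'_i$ would require the source to be smooth, which you have not established. Reduced and connected does not by itself give irreducible. The isomorphism with a \emph{general} fibre of $h$, coming from $I(P')$, is exactly the missing input, and it also hands you the klt property for free---making your separate dlt bookkeeping of log canonical centres over $Q$ and the disjointness argument for the $D_i$ unnecessary.
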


Since $M_h=\tau^*M_{\psi}\vert_P$, by Proposition \ref{pro:mmp2fg}\ref{stepii} we have $B_h=(\tau^*\Xi_P)_{red}$.
Thus $\tau^{-1}Q\subseteq\Supp B_h$.
Let $Q'\subseteq \tau^{-1}Q$. Since the complement of $P'_0$ in $P'$ has codimension 2, $Q'$
meets $P'_0$. Since $\phi_T(Q)=\phi_T(P)$, $Q'$ meets $\tau^{-1}\phi^{-1}Z_0$.

Finally, every irreducible component of $h^{-1}Q'$ is a log canonical centre of $(W,\Delta_W)$,
therefore $\rho$ is an isomorphism at its generic point and the restriction of $\psi$ to it has generically reduced fibre. 
We proved that $Q'$ meets $I(P')$.

Let $K$ be a connected component of $h^{-1}Q'$. Then the general fibre of $\psi\vert_K$ is 
 isomorphic to a fibre of $h$. Thus $K$ is irreducible and $(K,\Delta_K)$ is generically klt over $Q$, ending the proof of Claim \ref{cla:ind}.

\medskip

We prove now the statement on $\overline P$ by induction on the codimension of $\overline P$ in $P$.
If the codimension is 1, it follows from Claim \ref{cla:ind}.
If the codimension is at least 2, there is a component $Q$ of $\Xi_P$ such that $\overline P\subseteq Q$.
By \ref{cla:ind}, every connected component $K$ of $h^{-1}\tau^{-1}Q$ is irreducible and a minimal log canonical centre of $(W,\Delta_W)$ over $Q$.
Let $\phi|_K\colon S\overset{\ell}{\longrightarrow} Q'\longrightarrow Q$ be the Stein factorisation,
let $\vartheta\colon Q'\to P'$ be the induced finite map.
By Proposition \ref{pro:mmp2fg}\ref{stepii}, $\Delta_K-\ell^*B_{\ell}\geq0$.
Then we can apply Proposition \ref{pro:isomfibres}, and there is a set $I(Q')$.
We notice that  $I(Q')=\theta^{-1}I(P')$.
By the inductive hypothesis $\theta^{-1}\tau^{-1}\overline P$ meets $I(Q')$.
Thus $\tau^{-1}\overline P$ meets $I(P')$.

\end{proof}

 \section{Triviality of the moduli part on pseudo-fibres}\label{thm2}
This section is entirely devoted to the proof of our second main technical result: if the moduli part is numerically zero along a simple normal crossings reducible connected variety, then it is torsion along it.

\begin{thm}\label{thm:step2}
Let $f\colon(X,\Delta)\to Y$ be an acceptable klt-trivial fibration, where $(X,\Delta)$ is a log smooth log canonical pair and $Y$ is a smooth Ambro model for $f$.
Let $\mathcal T$ be a connected divisor such that there is a simple normal crossings $(f,\mathcal T)$-bad divisor $\Sigma_{f}$ and such that the restriction of $M_f$ to $T$ is semiample for every $T\subseteq \mathcal T$. Let $m$ be a positive integer such that $mM_Y$ is a Cartier divisor.

 Set $\mathcal L=\mathcal O_Y(mM_f)$. Assume that $\mathcal R_{\mathcal L}$ is a finite equivalence relation.
Then for a general equivalence class $[x]$ of $R_{\mathcal L}$ the restriction
of $\mathcal L$ to $\mathcal T_{[x]}$ is torsion.
\end{thm}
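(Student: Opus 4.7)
The plan is to verify the hypotheses of Lemma \ref{lem:tors1} applied to $\mathcal Z=\mathcal T_{[x]}$. Since only a general equivalence class is needed, I may assume that each irreducible component $F$ of $\mathcal T_{[x]}$ is a general (hence smooth) fibre of the Iitaka fibration $\phi_T\colon T\to V_T$ induced by $mM_f|_T$, and that $\mathcal T_{[x]}$ is a simple normal crossings variety. Because $kmM_f|_T=\phi_T^*A_T$ for some $k>0$ and some ample $A_T$ on $V_T$, the restriction $\mathcal L^{\otimes k}|_F$ is trivial, so $\mathcal L|_F$ is torsion; after replacing $m$ by a multiple common to every $T\subseteq\mathcal T$, I can assume $\mathcal L|_F\sim\mathcal O_F$ for every irreducible component $F$ of $\mathcal T_{[x]}$.

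To control the monodromy $\Phi_{\mathcal L}\colon\pi_1(\Gamma^i(\mathcal T_{[x]}),F_1)\to\mathbb C^*$ I would first pass to the crepant birational model $\psi\colon(W,\Delta_W)\to Y$ provided by Proposition \ref{pro:mmp2fg}. For each $T\subseteq\mathcal T$ let $S_T$ be a minimal log canonical centre of $(W,\Delta_W)$ over $T$, and let $h_T\colon S_T\to V_T$ be the klt-trivial fibration obtained from the Stein factorisation of $\psi|_{S_T}$; by Proposition \ref{pro:mmp2fg}(ii,iv), $M_{h_T}$ is the pullback of $M_f|_T$ and $h_T$ has reduced fibres over an open subset of $V_T$ meeting every component of $B_{h_T}$. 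For a generic $[x]$, each component $F_i$ of $\mathcal T_{[x]}$ lifts to a general fibre $G_i$ of some $h_{T_i}$, and the induced pair $(G_i,\Delta_{G_i})$ is klt with $K_{G_i}+\Delta_{G_i}\sim_\Q 0$; under the canonical bundle formula, the restriction of $\mathcal L$ to $G_i$ is identified with $\mathcal O(m(K_{G_i}+\Delta_{G_i}))$.

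Given a circuit $\mathcal C=(\{F_i\},F_{i,i+1})$ based at $F_1$, I would build a crepant birational self-map $\sigma_{\mathcal C}\in\Bir^\mathrm{c}(G_1,\Delta_{G_1})$ as follows. Applying Proposition \ref{pro:isomfibres} to each $h_{T_i}$, for a generic choice of $[x]$ each pair of intersecting general fibres $(G_i,\Delta_{G_i})$ and $(G_{i+1},\Delta_{G_{i+1}})$ is crepant birational via a map determined by the shared locus lying over $F_{i,i+1}$. Composing these identifications around $\mathcal C$ defines $\sigma_{\mathcal C}$, and by construction the scalar $\Phi_{\mathcal L,\mathcal C}\in\mathbb C^*$ coincides with the scalar by which $\sigma_{\mathcal C}$ acts on the one-dimensional space $H^0(G_1,m(K_{G_1}+\Delta_{G_1}))$. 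By Theorem \ref{thm:cbir}, the pluricanonical representation $\rho_m$ of $\Bir^\mathrm{c}(G_1,\Delta_{G_1})$ has finite image, so the image of $\Phi_{\mathcal L}$ is a finite subgroup of $\mathbb C^*$, and Lemma \ref{lem:tors1} yields that $\mathcal L|_{\mathcal T_{[x]}}$ is torsion.

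The main obstacle will be the construction of $\sigma_{\mathcal C}$ and the verification that $\Phi_{\mathcal L,\mathcal C}=\rho_m(\sigma_{\mathcal C})$: the intersections $F_{i,i+1}$ are not a priori normal, several minimal log canonical centres of $(W,\Delta_W)$ over $T_i\cap T_{i+1}$ may meet $F_{i,i+1}$, and the isomorphisms produced by Proposition \ref{pro:isomfibres} only exist outside a codimension-two locus. I expect to circumvent these difficulties by passing to a semistable alteration through Theorem \ref{thm:alt}, using Lemma \ref{lem:tors3} to ensure that torsion of the pullback of $\mathcal L$ implies torsion of $\mathcal L|_{\mathcal T_{[x]}}$ itself, and arguing inductively on the codimension of the log canonical centres in the base, in the spirit of Step 4 of the proof of Theorem \ref{thm:step1}.
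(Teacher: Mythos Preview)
Your overall strategy matches the paper's: reduce to Lemma \ref{lem:tors1}, show that the monodromy $\Phi_{\mathcal L}$ lands in the image of a pluricanonical representation, and invoke Theorem \ref{thm:cbir}. Two of your concrete steps, however, would not go through as written.

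First, Proposition \ref{pro:isomfibres} is the wrong tool for linking $G_i$ to $G_{i+1}$ along an edge of the circuit. That proposition compares two fibres of a \emph{single} fibration $h_T$ lying over the same point of $V_T$; it says nothing about a fibre of $h_{T_i}$ versus a fibre of $h_{T_{i+1}}$. The paper instead takes, for each edge $T_{i,i+1}$, two minimal log canonical centres $S_i^0\subset S_i$ and $S_i^1\subset S_{i+1}$ of $(W,\Delta_W)$ lying over $T_{i,i+1}$, and uses Koll\'ar's $\mathbb P^1$-linkage \cite[4.45(1), 4.45.8]{Kol13} to produce a crepant birational map $\lambda_i\colon(S_i^0,\Delta_{S_i^0})\dashrightarrow(S_i^1,\Delta_{S_i^1})$ over $T_{i,i+1}$; restricting $\lambda_i$ to a general fibre is what bridges the two sides of the edge.

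Second, the paper does not work with the higher-codimensional $\mathcal T_{[x]}$ directly. Its Step~1 blows up $Y$ so that $\varepsilon^{-1}\mathcal T_{[x]}$ becomes a \emph{divisor}, then performs a semistable reduction; Lemmas \ref{lem:tors2} and \ref{lem:tors3} guarantee torsion is preserved under this passage. After the reduction every component $T$ satisfies $M_f|_T\equiv 0$, so Proposition \ref{pro:mmp2fg}(iii) applies, and more importantly Theorem \ref{ambro1} makes each $h_i\colon S_i\to T_i'$ birationally a product $(F_i,\Delta_i)\times\widetilde T_i$ after a finite base change. This product structure is what furnishes the canonical identification $\mathcal O_{T_i}(mM_f)_{q_i}\cong H^0(F_i,m(K_{F_i}+\Delta_{F_i}))$, and the verification that $\Phi_{\mathcal L,\mathcal C}$ equals $\chi_{1,2}^*\circ\dots\circ\chi_{k,1}^*$ is then carried out via the Poincar\'e residue maps of \cite[4.45(4)]{Kol13}. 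Your final paragraph treats the semistable alteration as a regularity patch, but it is the main conceptual reduction: without making $\mathcal T_{[x]}$ divisorial you do not have $M_f|_{T_i}\equiv 0$, Ambro's product theorem is unavailable, and there is no clean way to realise $\Phi_{\mathcal L,\mathcal C}$ as a pluricanonical action.
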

\begin{proof}
\emph{Step 1.}
Since $\mathcal R_{\mathcal L}$ is a finite equivalence relation, the set $\mathcal T_{[x]}$ is a finite union of irreducible subvarieties of $Y$.

As $[x]$ is general, the subvariety $\mathcal T_{[x]}$ has simple normal crossings in the sense of \ref{def:snc}.

Let $\varepsilon\colon Y'\to Y$ be a birational morphism such that $\varepsilon^{-1}\mathcal T_{[x]}$ is divisorial and $\varepsilon^*\Sigma_f$ has simple normal crossings support.
By Lemma \ref{lem:tors3}, the restriction $\mathcal L\vert_{\mathcal T_{[x]}}$ is torsion if and only if $\varepsilon^*\mathcal L\vert_{\varepsilon^{-1}\mathcal T_{[x]}}$ is torsion.
Let $X'$ be a normalisation of the main component of $X\times_Y Y'$ with $\varepsilon_X\colon X'\to X$
and $f'\colon X'\to Y'$
 the induced morphisms.
By \cite[Proposition 5.5]{Amb04} we have $M_{f'}=\varepsilon^*M_f$.
Then for every $T\subseteq\varepsilon^{-1}\mathcal T_{[x]}$ we have $M_{f'}\vert_T\equiv 0$.

Let $(a,b)\colon (\widetilde X,\widetilde Y)\to (X',Y')$ be a semistable reduction such that $b^{-1}\varepsilon^{-1}\Sigma_f$ and  $a^{-1}\varepsilon_X^{-1}(\Delta+f^*\Sigma_f)$ have simple normal crossings supports.
By Lemmas \ref{lem:tors2} and \ref{lem:tors3}, the pullback
$\varepsilon^*\mathcal L\vert_{\varepsilon^{-1}\mathcal T_{[x]}}$ is torsion
if and only if $b^*\varepsilon^*\mathcal L\vert_{b^{-1}\varepsilon^{-1}\mathcal T_{[x]}}$ is.
After replacing $X,Y,f$ with $\widetilde X,\widetilde Y,\tilde f$ we can assume that $f$ is semistable.
After replacing $\mathcal T$ with $b^{-1}\varepsilon^{-1}\mathcal T_{[x]}$ we have to prove that the restriction of $M_f$ to the divisor $\mathcal T$ is torsion.
By Proposition \ref{pro:mmp2fg}\ref{stepiii} for every irreducible component $T\subseteq \mathcal T$
we have $\mathcal L\vert_T\sim_{\mathbb Q}0$. After replacing $m$ by a multiple, we can assume that for every irreducible component $T\subseteq \mathcal T$
we have $\mathcal L\vert_T\sim0$.

\emph{Step 2.} We fix a circuit $\mathcal C=(\{T_1,\ldots,T_k\},\{T_{i,i+1}\})$ in $\Gamma^{i}(\mathcal T)$. By Lemma \ref{lem:trivcirc} it is enough to prove that $\Phi_{\mathcal L,\mathcal C}$ has finite order.
We set $$\Delta_X=\Delta+\sum_{\Gamma\subseteq \Sigma_f}\gamma_{\Gamma}f^*\Gamma$$ and run an MMP as in Proposition \ref{pro:mmp2fg}. We get a crepant birational map $\rho\colon (X,\Delta_X)\dasharrow (W,\Delta_W)$ over $Y$ and a klt-trivial fibration $\psi\colon (W,\Delta_W)\to Y$.
For every $i$ let $S_i$ be a log canonical centre of $(W,\Delta_W)$ minimal over $T_i$. 
We let $S_i^0$ and $S_i^1$ be log canonical centres of $(W,\Delta_W)$ minimal over $T_{i,i+1}$ and with $S_i^{\ell}\subseteq S_{i+\ell}$. 
Let $\Delta_{S_i^{\ell}}$ be the boundary defined by
$(K_W+\Delta_W)\vert_{S_i^{\ell}}=K_{S_i^{\ell}}+\Delta_{S_i^{\ell}}$.
The varieties sit in the following diagram
$$
\xymatrix{
S_i\ar[d]&S_i^0\ar[l]\ar[rd]&&S_i^1\ar[r]\ar[ld]&S_{i+1}\ar[d]\\
T_i&&T_{i,i+1}\ar[ll]\ar[rr]&&T_{i+1}
}
$$


The fibration $\psi\colon (W,\Delta_W)\to Y$ is a crepant, dlt, log structure in the sense of \cite[Section 4.4]{Kol13}. 
By \cite[4.45(1) and 4.45.8]{Kol13} there is a crepant birational map $$\lambda_i\colon(S_i^0,\Delta_{S_i^0})\dasharrow (S_i^1,\Delta_{S_i^1}).$$
By \cite[Lemma 2.8]{FL19} there are centres $ S_{X, i}^{\ell}$ of $(X,\Delta_X)$ such that the restriction of $\rho$ induces a birational map 
$\rho\colon S_{X, i}^{\ell}\dasharrow  S_i^{\ell}$. We let $\psi\vert_{S_i^{\ell}}\colon S_i^{\ell} \overset{g_i}{\longrightarrow} Q_i\overset{\sigma_i}{\longrightarrow} T_{i,i+1} $ and $f\vert_{S_{X, i}^{\ell}}\colon S_{X, i}^{\ell} \overset{ g_{X, i}}{\longrightarrow} Q_i\overset{\sigma_i}{\longrightarrow} T_{i,i+1} $ be the Stein factorisation.
Let $V\subseteq T_{i,i+1}$ be a non-empty open set such that over $\sigma_i^{-1}V$ the map $\rho\vert_{S_{X, i}^{\ell}}$ is defined at every generic point of every fibre over $q\in V$ and does not extract any component of the fibres of $f\vert_{S_{X, i}^{\ell}}$ for $\ell=0,1$. In particular, 
\begin{equation}\label{Kappa}
\begin{split}
&\text{
the fibres of $g_i$ over points of $\sigma_i^{-1}V$ are reduced because they are}\\
&\text{push forward of fibres of $g_{X, i}$, and those are reduced by}\\
&\text{Remark \ref{rem:restred}. Set $K_i=T_{i,i+1}\setminus V$.}
\end{split}
\end{equation}
%

\emph{Step 3.} Let $\Delta_{S_i}$ be defined by $(K_W+\Delta_W)\vert_{S_i}=K_{S_i}+\Delta_{S_i}$. Let $\psi\vert_{S_i}\colon S_i\overset{h_i}{\longrightarrow} T'_i\overset{\tau_i}{\longrightarrow} T_i $ be the Stein factorisation. By \cite[Proposition 4.2]{FL19}, $h_i$ is a klt-trivial fibration. By Proposition \ref{pro:mmp2fg} \ref{stepiii} we have $\tau^*(M_f)\vert_T\sim_{\mathbb Q} M_h$.
By Proposition \ref{pro:mmp2fg} \ref{stepiii} we have $\Delta_{S_i}-h_i^*B_{h_i}\geq 0$.
Moreover $h_i$ has reduced fibres over the generic points of every component of $B_{h_i}$ by Proposition \ref{pro:mmp2fg} \ref{stepiv}.

By Theorem \ref{ambro1} there is a diagram
$$
\xymatrix{
S_i\ar[d]_{h_i}&&F_i\ar[d]\\
T'_i&\widetilde T_i\ar[l]^{\vartheta_i}\ar[r]_{\rho_i}&\{x_i\}
}
$$
where $\vartheta_i$ is a finite map. Let $\widetilde S_i$ be the normalisation of the main component of $S_i\times_{T'_i}\widetilde T_i$ with the natural map
$\tilde h_i\colon\widetilde S_i\to \widetilde T_i$.
By Theorem \ref{ambro1} there is a birational map $\eta\colon (\widetilde S_i,\Delta_{\widetilde S_i})\dasharrow (F_i,\Delta_i)\times \widetilde T_i$.
After possibly composing $\vartheta_i$ with a finite map (or by the proof of Theorem \ref{ambro1}, \cite[Theorem 3.3]{Amb05a}), we can 
assume that $\tilde h_i$ is weakly semistable in codimension 1.
By Lemma \ref{lem:reducedfibres} we have $(\Delta_{\widetilde S_i}-\tilde h_i^*B_{\tilde h_i})\vert_{\tilde h_i^{-1}U}\geq 0$ with $U$ an open set of $\widetilde T_i$ meeting $\vartheta_i^{-1}\tau_i^{-1}T_{i,i+1}$ and $\vartheta_i^{-1}\tau_i^{-1}T_{i-1,i}$ non trivially.
We set $J'_i=\widetilde T_i\setminus U$ and $J_i=J'_i\cup \widetilde T_i^{sing}$.
By Proposition \ref{ambro2}, the birational map $\eta$ can be extended to an isomorphism
$\eta\colon (\widetilde S_i,\Delta_{\widetilde S_i}-\tilde h_i^*B_{\tilde h_i})\to (F_i,\Delta_i)\times \widetilde T_i$
 over 
$\widetilde T_i\setminus J_i$.
It follows that
$$\OO(mM_{\widetilde h_i})\vert_{\widetilde T_i\setminus J_i}\sim \widetilde h_{i*}\OO(\pi_i^*(m(K_{F_i}+\Delta_{ F_i})))\vert_{\widetilde T_i\setminus J_i} $$
where $\pi_i\colon F_i\times \widetilde T_i\to F_i$ is the first projection.

We fix
$q_i\in T_{i,i+1}$ with $q_i\not\in K_i$, $\vartheta_i^{-1}\tau_i^{-1}q_i\not\subseteq J_i$,
$\vartheta_{i+1}^{-1}\tau_{i+1}^{-1}q_i\not\subseteq J_{i+1}$.
We also let $p_i^0\in\tau_i^{-1}(q_i)$ be a point  
such that $p_i^0 \not\in \vartheta_i (J_i)$ and $p_i^1\in\tau_{i+1}^{-1}(q_{i+1})$ be a point  
such that $p_i^1 \not\in \vartheta_{i+1} (J_{i+1})$.

By \eqref{Kappa}, by our choice of $p_i^{\ell}$ the fibre $G_i^{\ell}$ of $h_{i+\ell}$ over $p_i^{\ell}$ is reduced.
By Lemma \ref{redgen} we have $(G_i^{\ell},(\Delta_{\widetilde S_i}-\tilde h_i^*B_{\tilde h_i})\vert_{G_i^{\ell}})\cong (F_{i+\ell},\Delta_{i+\ell})$.
Thus we have a canonical isomorphism $$\OO_{T_i}(mM_f)_{q_i}\cong H^0(F_i,m(K_{ F_i}+\Delta_{F_i})).$$

\emph{Step 4.} By our choice of $p_i^{\ell}$ the crepant birational map $\lambda_i\colon(S_i^0,\Delta_{S_i^0})\dasharrow(S_i^1,\Delta_{S_i^1})$
restricts to a crepant birational map $\lambda_i\colon(G_i^0,\Delta_{G_i^0})\dasharrow(G_i^1,\Delta_{G_i^1})$.

The map $\lambda_i$ composed with the isomorphisms with $F_i$ and $F_{i+1}$
gives a crepant birational map $\chi_{i,i+1}\colon (F_i,\Delta_i)\dasharrow (F_{i+1},\Delta_{i+1})$
such that there is a diagram
$$
\xymatrix{
\OO_{ W_{q_i}}(m(K_W+\Delta_W))\ar@{=}[r]\ar[d]_{R_{i+1}}& \OO_{ W_{q_i}}(m(K_{W}+\Delta_W))\ar[d]^{R_i}\\
\OO_{ F_{i+1}}(m(K_{ F_{i+1}}+\Delta_{F_{i+1}}))\ar[r]_{\chi_{i,i+1}^*}&
\OO_{F_i}(m(K_{ F_i}+\Delta_{ F_i}))
}
$$
where  $W_{ q_i}$ is the fibre of $\psi$ over $q_i$ and
 $R_i$ and $R_{i+1}$ are the Poincar\'e residue maps existing by \cite[4.45(4)]{Kol13} restricted to $W_{ q_i}$. 

\medskip

Then 
$$\Phi_{\mathcal L,\mathcal C}=\chi_{1,2}^*\circ\ldots\circ \chi_{k,1}^*.$$
Thus $\Phi_{\mathcal L,\mathcal C}$ is in the image of the crepant birational representation
$$\Bir^\mathrm{c}(F_1,\Delta_{F_1})\rightarrow \GL\big(H^0( F_1,m(K_{ F_1}+\Delta_{F_1}))\big)$$
which is finite by Theorem \ref{thm:cbir}.
\end{proof}

\section{Proof of the main results}

We are now ready to prove our main results.

\begin{proof}[Proof of Theorem \ref{cor:surfaces}]
Let $\mathcal T$ be a connected component of $\sB_+(M_Y)$.
For every component $T\subseteq\mathcal T$, the restriction $M_Y\vert_T$ is a torsion divisor.
Therefore $\phi_T$ contracts $T$ to a point $p_T$.
If $\mathcal L=\mathcal O(mM_Y)\vert_{\mathcal T}$, then the relation $\mathcal R_{\mathcal L}$
is finite because it is a subset of $\sqcup\{p_T\}\times\sqcup\{p_T\}$.
By Theorem \ref{thm:step2} the line bundle  $\mathcal L$ is torsion.
\end{proof}

\begin{proof}[Proof of Theorem \ref{cor:f3}]
Set $\mathcal L=\mathcal O(mM_Y)\vert_{\mathcal T}$.

Conjecture \ref{con:maxvar} is true for fibrations of relative dimension at most 2.
Indeed, let $(F_1,\Delta_1)$, $(F_2,\Delta_2)$ be crepant birational fibres.
If $\dim F_i=1$, then $(F_1,\Delta_1)$ and $(F_2,\Delta_2)$ are isomorphic and the Conjecture follows from Proposition \ref{pro:finvar}.
If $\dim F_i=2$, then let $(p_1,p_2)\colon G\to F_1\times F_2$ be a resolution of the indeterminacy such that $K_G=p_i^*(K_{F_i}+\Delta_i)+\sum a_j E_j$, where the $a_j$ do not depend on $i$ by the definition of crepant birational map.
Set $\Delta_G=\sum_{a_j<0}-a_j E_j$.
Thus $\sB_-(K_G+\Delta_G)=\cup_{a_j>0}E_j$ and $(F_1,\Delta_1)$ and $(F_2,\Delta_2)$ are minimal models of 
$(G,\Delta_G)$.
Thus they are connected by flops. As $\dim F_i=2$, they are isomorphic.
The Conjecture then follows from Proposition \ref{pro:finvar}.

We can assume that the augmented base locus is a simple normal crossings divisor $\mathcal T$.

By \cite[Corollary D]{FL19} for every irreducible component $T$, the restriction $\mathcal L\vert_T$ is semiample. We denote by $\phi_T\colon T\to V$ the induced fibration.
By Theorem \ref{thm:step1}, $\mathcal R_{\mathcal L}$ is a finite equivalence relation.
The relation is therefore stratifiable by \cite[Remark 9.20]{Kol13}.
We notice that, as $\dim Y=3$,  the normal variety $\bigsqcup V$ is such that $\dim V\in\{0,1\}$. The strata of the stratification have dimension $0$ or $1$.
Therefore the stratification satisfies the regularity hypotheses (HN) and (HSN) \cite[Definition 9.8]{Kol13}.
By \cite[Theorem 9.21]{Kol13} the quotient $\pi\colon \bigsqcup V\to Q$ for $\mathcal R_{\mathcal L}$ exists and is 
reduced because $\pi$ is surjective, separated by \cite[Definition 47, Corollary 48]{Kol12}. 
Moreover $Q$ is seminormal and
 there is a fibration $\phi\colon \mathcal T\to Q$ whose fibres are the pseudofibres.

If $\mathcal L\vert_T$ is torsion for every $T$, then $\dim V=0$ for every $V$ and $Q$ is a point, hence projective. Then the claim follows from Theorem \ref{thm:step2}.

Otherwise, for every component $Q_0$ of $Q$ of dimension 1, there is $V_0\subseteq\bigsqcup V$ together with a finite surjective morphism $\pi\colon V_0\to Q_0$. 
By \cite[Proposition II.6.8]{Har77} $Q$ is complete, by \cite[Proposition II.6.7]{Har77}  $Q$ is projective.

Let $\varepsilon\colon Y'\to Y$ be a birational morphism such that $\varepsilon \Exc(\varepsilon)\subseteq \sB_+(M_Y)$ and every (set-theoretic) fibre of the restriction of $\phi\circ\varepsilon$ to $\varepsilon^{-1}\sB_+(M_Y)$
is simple normal crossing in the sense of Section \ref{def:snc}.

We have $\varepsilon^{-1}\sB_+(M_Y)=\sB_+(\varepsilon^*M_Y)$. The latter is the augmented base locus of the moduli part $M_{Y'}$ of the base changed fibration, because $Y$ is an Ambro model.

We replace thus $Y$ with $Y'$ and $\phi$ with  $\phi\circ\varepsilon$.

By Theorem \ref{thm:step2}, for every fibre $F$ of $\phi$, the restriction of $\mathcal L$ to the reduced part of $F$ is torsion.

After replacing $\mathcal L$ with $\mathcal L^{\otimes m}$ for $m$ divisible enough, we can assume that for every fibre $F$ of $\phi$, the restriction of $\mathcal L$ to the reduced part of $F$ is trivial.

By Theorem \ref{thm:sup}, the line bundle $\mathcal L$ is semiample.

\end{proof}

\bibliographystyle{amsalpha}

\bibliography{biblio}

\end{document}